\mathchardef\mhyphen="2D 
\DeclareMathOperator{\mods}{mod}
\newcommand{\stautilt}{\mathrm{s\tau\mhyphen tiltp}\,}
\newcommand{\staurigid}{\mathrm{\tau\mhyphen rigidp}\,}
\newcommand{\stauinvtilt}{\mathrm{s\tau^{-1}\mhyphen tiltp\,}}
\newcommand{\stauinvrigid}{\mathrm{s\tau^{–1}\mhyphen rigid}}
\newcommand\taurigid{
\operatorname{\tau\mhyphen rigid}}
\newcommand{\twopresilt}{2\mhyphen\operatorname{presilt}}
\newcommand{\twosilt}{2\mhyphen\operatorname{silt}}
\DeclareMathOperator{\Ext}{Ext}
\DeclareMathOperator{\Hom}{Hom}
\DeclareMathOperator{\End}{End}
\DeclareMathOperator{\projdim}{proj.dim}
\DeclareMathOperator{\add}{add}
\DeclareMathOperator{\thick}{thick}
\DeclareMathOperator{\inj}{inj}
\DeclareMathOperator{\proj}{proj}
\DeclareMathOperator{\gldim}{gl.\!dim}
\DeclareMathOperator{\brick}{brick}
\DeclareMathOperator{\sbrick}{sbrick}
\DeclareMathOperator{\flsbrick}{f_L-sbrick}
\DeclareMathOperator{\frsbrick}{f_R-sbrick}
\DeclareMathOperator{\Filt}{Filt}
\DeclareMathOperator{\Fac}{Fac}
\DeclareMathOperator{\Sub}{Sub}
\DeclareMathOperator{\Hasse}{Hasse}
\DeclareMathOperator{\wide}{wide}
\DeclareMathOperator{\torf}{torf}
\DeclareMathOperator{\ftorf}{f-torf}
\DeclareMathOperator{\charac}{char}
\DeclareMathOperator{\spann}{span}
\DeclareMathOperator{\ind}{ind}
\DeclareMathOperator{\rad}{rad}
\newcommand{\op}{^{\mathrm{op}}}
\DeclareMathOperator{\soc}{soc}
\DeclareMathOperator{\tauint}{\tau-itv}
\DeclareMathOperator{\ftors}{f-tors}
\newcommand{\sqbinom}[2]{\left\llbracket \!\begin{smallmatrix} #1 \\ #2 \end{smallmatrix}\!\right\rrbracket}
\DeclareMathOperator{\tors}{tors}
\newcommand{\plus}[1]{\underline{#1}}
\newcommand{\minus}[1]{\overline{#1}}
\newcommand{\calB}{\mathcal{B}}
\newcommand{\calL}{\mathcal{L}}
\newcommand{\calU}{\mathcal{U}}
\newcommand{\calT}{\mathcal{T}}
\newcommand{\calW}{\mathcal{W}}
\newcommand{\calF}{\mathcal{F}}
\newcommand{\calS}{\mathcal{S}}
\newcommand{\calR}{\mathcal{R}}
\newcommand{\calX}{\mathcal{X}}
\newcommand{\calY}{\mathcal{Y}}
\newcommand{\Wfrak}{\mathfrak{W}}
\newcommand{\R}{\mathbb{R}}
\newcommand{\C}{\mathbb{C}}
\newcommand{\calC}{\mathcal{C}}
\newcommand{\bfg}{\mathbf{g}}
\newcommand{\Z}{\mathbb{Z}}
\newcommand{\T}{\mathcal{T}}
\newcommand{\U}{\mathcal{U}}
\newcommand{\V}{\mathcal{V}}
\newcommand{\W}{\mathcal{W}}
\newcommand{\covered}{{\,\,<\!\!\!\!\cdot\,\,\,}}
\DeclareMathOperator{\fbrick}{\mathrm{f-brick}}
\DeclareMathOperator{\intheart}{\mathrm{int}\text{-}\mathrm{heart}}
\DeclareMathOperator{\hall}{\widehat{\mathscr{H}}}
\newtheorem{theorem}{Theorem}[section]
\newtheorem{corollary}[theorem]{Corollary}
\newtheorem{lemma}[theorem]{Lemma}
\newtheorem{proposition}[theorem]{Proposition}
\newtheorem{definitionproposition}[theorem]{Definition-Proposition}
\newtheorem{conjecture}[theorem]{Conjecture}
\theoremstyle{definition}
\newtheorem{definition}[theorem]{Definition}
\newtheorem{example}[theorem]{Example}
\newtheorem{remark}[theorem]{Remark}
\newtheorem*{rep@theorem}{\rep@title}
\newcommand{\newreptheorem}[2]{%
	\newenvironment{rep#1}[1]{%
		\def\rep@title{#2 \ref{##1}}%
		\begin{rep@theorem}}%
		{\end{rep@theorem}}}
\newcommand{\support}[1]{\color{red}}
\title{Bricks and \texorpdfstring{$\tau$}{tau}-tilting theory under base field extensions}
\author[E. D. Børve]{Erlend D. Børve}
\address{Institut für Mathematik und Wissenschaftliches Rechnen, Universität Graz, Mozartgasse 14, 8010 Graz, Austria}
\email{erlend.borve@uni-graz.at}
\thanks{E.D.B. gratefully acknowledges support from the French ANR grant CHARMS (ANR-19-CE40-0017-02), the Deutsche Forschungsgemeinschaft (DFG, German Research Foundation) -- Project ID 281071066 -- TRR 191, and from the NAWI Graz Postdoc fellowship.}
\author[M. Kaipel]{Maximilian Kaipel}
\address{Abteilung Mathematik, Department Mathematik/Informatik der Universität
zu Köln, Weyertal 86-90, 50931 Cologne, Germany}
\email{mkaipel@uni-koeln.de}
\thanks{M.K. acknowledges support from the Deutsche Forschungsgemeinschaft (DFG, German Research Foundation) -- Project ID 281071066 -- TRR 191.}
\let\@wraptoccontribs\wraptoccontribs
\address{Department of Mathematics, North Carolina State University, Raleigh, NC 27695, USA}
\email{ejhanso3@ncsu.edu}
\thanks{E.J.H. is supported by an AMS-Simons travel grant.}
\keywords{$\tau$-tilting theory, brick, {semibrick}, {g-vector fan}, {wall-and-chamber structure}, field extension, {MacLane separable field extension}, {$\tau$-cluster morphism category}, {faithful group functor}, picture group, {$\tau$-tilting finiteness.}}
\subjclass[2020]{{12F10, 16G10, 16G60, 18E40, 52A20, 55P20.}}
\begin{document}

\begin{abstract}
    Let $K:k$ be a field extension and let $\Lambda$ be a finite-dimensional $k$-algebra. We investigate the relationship between $\Lambda$ and $\Lambda_K {\coloneqq} \Lambda \otimes_k K$ with particular emphasis on various aspects of $\tau$-tilting theory and bricks. We show that many types of objects for $\Lambda$ lift injectively to the same type of object for $\Lambda_K$, and many common constructions in $\tau$-tilting theory commute with the process of extending the base field. One of our main applications is the construction of a faithful functor from the $\tau$-cluster morphism category $\Wfrak(\Lambda)$ of $\Lambda$ to the $\tau$-cluster morphism category $\Wfrak(\Lambda_K)$ of $\Lambda_K$. In particular, this establishes a faithful functor from $\Wfrak(\Lambda)$ to a group whenever $k$ is of characteristic zero which has many important consequences. In the appendix, E. J. Hanson shows the analogous result whenever $k$ is a finite field. Moreover, we give some nontrivial examples to illustrate the behaviour of $\tau$-tilting finiteness under base field extension.
\end{abstract}

\maketitle

\section{Introduction}

Let $K:k$ be a field {extension} and fix a finite-dimensional $k$-algebra $\Lambda$. The endeavours of many people have aimed to understand the relationship between the representation theory of the finite-dimensional $k$-algebra $\Lambda$ and that of the finite-dimensional $K$-algebra $\Lambda_K \coloneqq \Lambda \otimes_k K$. These investigations were initiated in \cite{JL82}, where various homological dimensions of $\Lambda$, like the global dimension and the finitistic dimension, were compared with those of $\Lambda_K$. For this purpose, the scalar extension functor $- \otimes_k K: \mods \Lambda \to \mods \Lambda_K$, between the categories of finite-dimensional right modules, plays a central role. For many of the homological questions in \cite{JL82}, the best behaviour is exhibited when $K:k$ is MacLane separable, see \cref{defn:MacLaneseparable}. With this assumption, the global dimensions of $\Lambda$ and $\Lambda_K$ coincide. 

Moreover, an important feature of an algebra is its representation type. The celebrated trichotomy theorem of \cite{Drozd1977} states that a finite-dimensional algebra over an algebraically closed field is either representation finite, tame or wild. Over arbitrary base fields, the notion of tameness has to be adapted \cite{CB1991,CB1992}. The relationship between the representation type of $\Lambda$ and the representation type of $\Lambda_K$ was first considered {in \cite{JL82}}. It was shown that when $K:k$ is MacLane separable, then $\Lambda$ is representation finite if and only if $\Lambda_K$ is representation finite. The goal of establishing a similar relationship for generalised representation tame algebras was pursued in \cite{Kas2001,MendezPerez2013,Perez2016}.

One important tool for investigating the category of finite-dimensional modules of $\Lambda$ is Auslander--Reiten theory \cite{AuslanderReiten1975}. In \cite{Kas00}, another proof was given of the fact that representation finiteness is preserved by MacLane separable field extension by relating the Auslander--Reiten theory of $\Lambda$ with that of $\Lambda_K$. 
In fact, representation finiteness of $\Lambda$ is characterised by properties of the scalar extension functor when $k$ is an infinite field \cite{Zayed94}. Moreover, different classes of algebras have been shown to be preserved and reflected under base field extensions in certain cases \cite{Li2021,Li2023derivediscrete}. 

Even if an investigation of the behaviour under base field extensions is not the primary objective of the research, it is often an essential tool, which enables more geometric insights into the objects of interest \cite{DIJ2019,IK2024}. This last idea is also common in algebraic geometry, being applied in descent theory, see \cite{Hohl2023survey, Milne2024survey} and the references therein. A particularly important case is the extension of Gabriel's theorem beyond algebraically closed fields, where the setting of species replaces that of quivers \cite{DlabRingel75,DlabRingel76}. The relationship between these two settings is deeply rooted in the theory of field extensions, see also \cite{DengDu2006,DengDu2007,Hubery2004}. Moreover, the proof of the Second Brauer--Thrall Conjecture can be extended to perfect fields using the theory of base field extensions \cite{BautistaSalmeron2005}.

The work in the present paper is both intrinsically as well as extrinsically motivated. We begin by studying the behaviour of many objects within representation theory under the scalar extension functor, but with the ultimate aim of gaining new insights into an important category of interest, namely the \textit{$\tau$-cluster morphism category}. The general setting of our work is $\tau$-tilting theory \cite{AIR2014}, which uses Auslander--Reiten theory to extend classical tilting theory.

\subsection{General results about \texorpdfstring{$\tau$}{tau}-tilting theory and bricks}

The study of $\tau$-tilting theory has gained widespread popularity since its introduction in \cite{AIR2014} and has widely been accepted as the correct framework for generalising statements about hereditary algebras to arbitrary finite-dimensional algebras. In particular, it can be seen as a generalisation of (classical) tilting theory to higher projective dimensions, since whenever the $\tau$-tilting modules, the central objects of $\tau$-tilting theory, have projective dimension at most one, they coincide with (classical) tilting modules \cite[Cor. IV.4.7]{ARS1995}.

As its name suggests, $\tau$-tilting theory makes use of the Auslander--Reiten translation $\tau$, whose behaviour under base field extension has already been studied in \cite{Kas00}. Modules which do not have nonzero homomorphisms to their Auslander--Reiten translation are called \textit{$\tau$-rigid}, and give rise to $\tau$-perpendicular subcategories of $\mods \Lambda$ \cite{Jas15}. Similar to perpendicular subcategories for hereditary algebras \cite{GL91}, these are of utmost importance within $\tau$-tilting theory. Notably, they are equivalent to module categories of other finite-dimensional algebras. Closely related to this are 
$\tau$-exceptional sequences \cite{BM18t}{, which}
generalise exceptional sequences over hereditary algebras \cite{cbw92,ringel_exceptional} to guarantee that sequences of maximal length always exist. 

Moreover, $\tau$-tilting theory is closely related with the study of bricks, since the endotop of each $\tau$-rigid module decomposes into bricks \cite{DIJ2019,Asa21}. These indecomposable direct summands are $\Hom$-orthogonal{,} which makes them (left-finite) semibricks \cite{Asai2020}. It is a well-known result that semibricks correspond bijectively to wide subcategories \cite[Thm. 1.2]{Rin76} (recited in \Cref{eq:Ringelbij}), and the strategy of lifting individual objects using the scalar extension functor appears to be the correct way to lift subcategories. Most of the terms in the following theorem have thus been explained. For the
{remaining ones}, we refer to later sections for precise definitions. 

\begin{theorem}(simplified)
    Let $K:k$ be a field extension and $\Lambda$ be a finite-dimensional $k$-algebra. Then the scalar extension functor $- \otimes_k K: \mods \Lambda \to \mods \Lambda_K$ induces injective maps on the following objects:
    \begin{enumerate}
        \item $\tau$-rigid modules, which restricts to support $\tau$-tilting modules and tilting modules; 
        \item $\tau$-rigid pairs, which restricts to support $\tau$-tilting pairs;
        \item functorially finite torsion classes;
        \item $\tau$-perpendicular intervals of torsion classes;
        \item complete signed $\tau$-exceptional sequences;
        \item {complete $\tau$-exceptional sequences;}
        \item left-finite semibricks;
        \item right-finite semibricks.
    \end{enumerate}
    If $k$ is perfect, then the same holds for the following objects:
    \begin{enumerate}
        \item[(9)] semibricks;
        \item[(10)] wide subcategories.
    \end{enumerate}
    Moreover, the partial orders on (3), (4) and (10) are preserved. 
\end{theorem}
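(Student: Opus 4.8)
The plan is to isolate a few compatibility statements between the scalar extension functor and the basic operations of $\tau$-tilting theory, and then to deduce all ten items by transporting the standard bijections of the subject (support $\tau$-tilting modules $\leftrightarrow$ functorially finite torsion classes $\leftrightarrow$ left-finite semibricks \cite{AIR2014,Asai2020}, and Ringel's bijection \Cref{eq:Ringelbij}) across the extension. Write $M_K \coloneqq M \otimes_k K$. The three load-bearing facts I would prove first are: \textbf{(i)} scalar extension commutes with $\Hom$ and $\Ext^1$, i.e. $\Hom_{\Lambda_K}(M_K,N_K) \cong \Hom_\Lambda(M,N)\otimes_k K$ and similarly for $\Ext^1$, which holds because $K$ is free over $k$ and $-\otimes_k K$ carries a projective presentation of $M$ to one of $M_K$; \textbf{(ii)} scalar extension commutes with the Auslander--Reiten translate, $\tau_{\Lambda_K}(M_K) \cong (\tau_\Lambda M)_K$, which follows by applying (i) to the transpose construction and is essentially due to \cite{Kas00}; and \textbf{(iii)} a Noether--Deuring statement, namely that the unit $M \to (M_K)|_\Lambda$ is a split monomorphism of $\Lambda$-modules and that $M_K \cong N_K$ forces $M \cong N$. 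Fact (iii) is what makes every map in the theorem injective, while (i) and (ii) transport the relevant $\Hom$-vanishing conditions.

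Granting (i)--(iii), item (1) is immediate: combining (i) and (ii) gives $\Hom_{\Lambda_K}(M_K, \tau_{\Lambda_K} N_K) \cong \Hom_\Lambda(M, \tau_\Lambda N)\otimes_k K$, so $M$ is $\tau$-rigid if and only if $M_K$ is, and injectivity is (iii); the same argument applied to $\Hom_\Lambda(P,M)$ handles $\tau$-rigid pairs in item (2), and the tilting subcase is cut out by $\projdim \le 1$, which (i) preserves. For the \emph{support $\tau$-tilting} and functorially finite torsion clauses I would verify that scalar extension commutes with the Adachi--Iyama--Reiten bijection: $\Gen(M_K)$ is again functorially finite (being generated by a $\tau$-rigid module) and coincides with the smallest torsion class of $\mods \Lambda_K$ containing $\{T_K : T \in \Gen M\}$, which I denote $\overline{\calT}$. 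This yields item (3); order preservation and injectivity of $\calT \mapsto \overline{\calT}$ follow from the adjunction of (iii), which lets me recover $\calT$ as $\{M \in \mods\Lambda : M_K \in \overline{\calT}\}$, so that $\calT_1 \subseteq \calT_2 \iff \overline{\calT_1} \subseteq \overline{\calT_2}$. Item (4) is then handled by checking that base change is compatible with Jasso's $\tau$-tilting reduction \cite{Jas15}: the $\Hom$- and $\Ext$-orthogonality conditions cutting out a $\tau$-perpendicular interval base-change by (i), so an interval of $\tors\Lambda$ maps order-preservingly to the corresponding interval of $\tors\Lambda_K$.

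The reduction set up for (4) also furnishes an induction on rank for the exceptional sequences. A (signed) complete $\tau$-exceptional sequence is built by repeatedly passing to the module category of a $\tau$-perpendicular subcategory; since each such subcategory and each $\tau$-rigid, resp. brick, building block base-changes compatibly, the whole sequence lifts, injectively by (iii), giving (5) and (6). For left- and right-finite semibricks (7), (8) I would transport Asai's bijection \cite{Asai2020} with functorially finite torsion, resp. torsion-free, classes through item (3); the key simplification here is that one need not understand the brick decomposition of each $B_K$ individually, but only the torsion class it generates, which is already controlled.

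The genuinely delicate statements are (9) and (10), and this is where the hypothesis on $k$ and the main obstacle appear. For a brick $B$ the algebra $\End_\Lambda(B)$ is a finite-dimensional division algebra $D$ over $k$, but by (i) $\End_{\Lambda_K}(B_K) \cong D \otimes_k K$, and $B_K$ is typically \emph{not} a brick. The crux is to control $D \otimes_k K$. When $k$ is perfect I would argue it is semisimple: writing $Z$ for the centre of $D$, the extension $Z:k$ is separable, so $Z \otimes_k K$ is a finite product of fields $L_i$, whence $D \otimes_k K \cong \prod_i (D \otimes_Z L_i)$ is a product of central simple $L_i$-algebras and hence semisimple. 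Consequently $B_K$ decomposes as a direct sum of bricks, and by (i) the $\Hom$'s between summands arising from distinct members of a semibrick vanish, so a semibrick lifts to a semibrick; via Ringel's bijection \Cref{eq:Ringelbij} the same statement transports to wide subcategories, with injectivity and order preservation as before. Over a non-perfect field $D \otimes_k K$ can acquire nilpotent elements, so that $B_K$ has an indecomposable summand with local but non-division endomorphism ring, no longer a brick, which is exactly why the hypothesis is needed for (9), (10) but not for (1)--(8). The main obstacle is therefore twofold: establishing the semisimplicity of $D \otimes_k K$ that governs how bricks split, the heart of (9) and (10); and, more routinely, checking that \emph{maximality and completeness} are preserved, i.e. that the lift of a support $\tau$-tilting module or of a complete exceptional sequence is again complete, which I would secure through the g-vector and torsion-class dictionary rather than by directly counting indecomposable summands, since base change may introduce multiplicities.
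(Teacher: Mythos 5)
Your foundational facts (i)--(iii) and your treatment of $\tau$-rigidity, of item (3), and of items (7)--(10) are essentially sound; in particular, your central-simple-algebra argument for (9)--(10) (the centre $Z$ of $D=\End_\Lambda(B)$ is separable over the perfect field $k$, so $Z\otimes_k K$ is a finite product of fields $L_i$ and $D\otimes_k K\cong\prod_i D\otimes_Z L_i$ is semisimple) is a legitimate and arguably more self-contained alternative to the paper's route, which instead combines reducedness of $D\otimes_k K$ (Bourbaki) with preservation of global dimension under MacLane separable extensions (\cref{thm:JL}). However, there is a genuine gap exactly at the step you postpone to ``the $g$-vector and torsion-class dictionary'': showing that the lift of a support $\tau$-tilting pair is again support $\tau$-tilting (and likewise that a tilting module lifts to a tilting module). $\tau$-rigidity of $(M_K,P_K)$ is clear, but completeness is the statement $|M_K|+|P_K|=|\Lambda_K|$, and $|\Lambda_K|$ can strictly exceed $|\Lambda|$ (e.g.\ $\Lambda=\C$ as an $\R$-algebra, $K=\C$, $\Lambda_K\cong\C\times\C$). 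The $g$-vector route cannot repair this: the map $K_0(\proj\Lambda)\to K_0(\proj\Lambda_K)$, $[P]\mapsto[P_K]$, is in general not surjective (in the example above its image is the line spanned by $e_1+e_2$), so the linear image of a full-dimensional cone need not be full-dimensional, and forming the cone $\calC_{(M_K,P_K)}$ instead presupposes knowledge of the indecomposable decomposition of $M_K$ --- which is circular. The paper's essential tool here, absent from your outline, is the passage to 2-term silting complexes (\cref{lem:siltlift} and \cref{cor:tautiltlift}): silting equals presilting plus thick-generation of $K^b(\proj\Lambda)$, and both conditions are preserved by the triangle functor $-\otimes_k^{\mathbf{L}}K$, with no counting of summands required. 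The same missing ingredient affects your item (4): well-definedness of the interval map requires that ${}^\perp\tau_{\Lambda_K}(M_K)\cap P_K^\perp$ be determined by ${}^\perp\tau_\Lambda M\cap P^\perp$, i.e.\ that Bongartz completions commute with base change (\cref{lem:bongartzcommute}); this is not a formal consequence of Hom/Ext base change, since the orthogonality is tested against all $\Lambda_K$-modules, not only those of the form $X_K$.

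The second genuine gap is in (5) and (6). By \cref{def:tauexcep} the entries of a (signed) $\tau$-exceptional sequence are indecomposable, and a complete sequence over $\Lambda_K$ has length $|\Lambda_K|$, not $|\Lambda|$. Your recursive, entry-wise lifting produces entries $(M_i)_K$ that are typically decomposable, and a sequence of the wrong length, so it does not output a $\tau$-exceptional sequence over $\Lambda_K$ at all. This is precisely why the paper routes (5) through the Buan--Marsh bijection with ordered decompositions of support $\tau$-tilting pairs (\cref{thm:tauexcpbij}, \cref{lem:signedexcplift}), which tolerate decomposable chunks and are only afterwards refined into indecomposables, and routes (6) through TF-ordered decompositions (\cref{lem:TFordertoTFpreorder}, \cref{cor:tauexceptionallift}) --- where, notably, the unsigned case requires the additional hypothesis that $K:k$ be finite, a restriction your argument would never detect.
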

\begin{proof}
    (1) and (2) are both a combination of \cref{lem:taurigidlift} and \cref{cor:tautiltlift} and \cref{lem:tiltinglift}; (3) is a combination of (2) and \cite[Thm. 2.7]{AIR2014}, the fact that the poset structure is preserved is \cref{lem:posetlift}; (4) is \cref{cor:tauperpitvlift}, the fact that the poset structure is preserved follows from the additivity of $- \otimes_k K$; (5) is \cref{lem:signedexcplift}; {(6) is \cref{cor:tauexceptionallift};} (7) and (8) follow from (2) and \cite[Thm. 1.3]{Asai2020} and the fact that $- \otimes_k K$ is injective-on-objects; (9) is \cref{lem:semibrickslift}; (10) is a combination of (9) and \cite[Thm. 1.2]{Rin76} (the latter result is recited in \Cref{eq:Ringelbij}). 
    {T}he fact that the poset structure is preserved follows from the exactness of $- \otimes_k K$.
\end{proof}

Besides individual objects and subcategories, it is important for our purpose to consider additional constructions and their interaction with base field extension. We briefly describe the terms appearing here. The inverse Auslander--Reiten translation $\tau^{-1}$ gives rise to $\tau^{-1}$-tilting theory. Any support $\tau^{-1}$-tilting module can be constructed explicitly from a support $\tau$-tilting module, as we recall in \Cref{prop:AIRp2.15}. Moreover, each $\tau$-rigid module can be completed to a support $\tau$-tilting module. In the poset of support $\tau$-tilting modules there is a unique maximal completion and a unique minimal completion, which are known as the Bongartz and co-Bongartz completions, respectively \cite{AIR2014}. We have already discussed that taking the endotop of a $\tau$-rigid module gives a left-finite semibrick. The dual construction for $\tau^{-1}$-rigid modules gives right-finite semibricks \cite{Asai2020}. Finally, constructing tilted \cite{HappelRingelTilted1982} and preprojective algebras \cite{BGL1987} from hereditary algebras is classical. We refer to later sections for precise statements of the claims in the following theorem. 

\begin{theorem}(simplified)
    Let $K:k$ be a field extension and let $\Lambda$ be a finite-dimensional $k$-algebra. Then the following constructions for $\Lambda$ commute with scalar extension along $K:k$:
    \begin{enumerate}
        \item The bijection from $\tau$-rigid pairs to $\tau^{-1}$-rigid pairs;
        \item Taking co-Bongartz completions of $\tau$-rigid pairs;
        \item Taking Bongartz completions of $\tau$-rigid pairs{.}
    \end{enumerate}
    If $K:k$ is MacLane separable, then the following constructions also commute with taking base field extensions:
    \begin{enumerate}
        \item[(4)] Turning $\tau$-rigid modules into left-finite semibricks;
        \item[(5)] Turning $\tau^{-1}$-rigid modules into right-finite semibricks;
        \item[(6)] Constructing the tilted algebra for a tilting module of a hereditary algebra.
        \item[(7)] Constructing the preprojective algebra for a projective module of a hereditary algebra.
    \end{enumerate}
\end{theorem}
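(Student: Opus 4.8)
The plan is to treat all seven constructions uniformly: each is assembled from the functors $\tau$, $\tau^{-1}$, $\nu$, $\Hom$, $\Ext$ and from short exact sequences carrying a universal or extremal characterisation, so it suffices to check that $-\otimes_k K$ commutes with each ingredient and respects the relevant characterisation. The basic toolkit is that $-\otimes_k K$ is exact and additive, that $\Lambda\otimes_k K=\Lambda_K$ and $D\Lambda\otimes_k K\cong D\Lambda_K$, together with the base-change identity $\Hom_{\Lambda_K}(M\otimes_k K,N\otimes_k K)\cong\Hom_\Lambda(M,N)\otimes_k K$ for finite-dimensional $M,N$. A first key observation, valid for an arbitrary extension, is that $-\otimes_k K$ sends a minimal projective presentation of $M$ to a minimal projective presentation of $M\otimes_k K$: the relevant kernels and images lie in $((\rad\Lambda)\cdot P)\otimes_k K\subseteq\rad(\Lambda_K)\cdot(P\otimes_k K)$, so superfluous epimorphisms stay superfluous. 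Consequently $\tau(M\otimes_k K)\cong(\tau M)\otimes_k K$, and dually $\tau^{-1}$ and $\nu$ commute with $-\otimes_k K$, for every field extension.

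For (1)--(3) I would then unwind the explicit descriptions. The $\tau$/$\tau^{-1}$ bijection of \cref{prop:AIRp2.15} is written in terms of $\tau$, $\nu$, and projective/injective summands together with direct-sum bookkeeping; applying $-\otimes_k K$ and invoking the compatibilities above, together with the injective lifting of $\tau$-rigid and support $\tau$-tilting pairs from the preceding theorem, identifies the image with the corresponding pair over $\Lambda_K$. For the completions I would use that the (co-)Bongartz completion of a $\tau$-rigid pair is produced by a universal extension built from a left, respectively right, $\add$-approximation of $\Lambda$, respectively $D\Lambda$ \cite{AIR2014}. The mechanical heart is that an approximation property is a $\Hom$-surjectivity condition, and since it suffices to test against the single module $M\otimes_k K$, the base-change identity shows that $f\otimes_k K$ is again an $\add(M\otimes_k K)$-approximation; exactness then carries the defining sequence over, and uniqueness of the completion finishes the identification. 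I expect the fiddly point here to be matching the extremal characterisation (maximality, resp.\ minimality, of the associated torsion class among all of $\mods\Lambda_K$, not merely among lifted modules), which I would settle through the torsion-class description ${}^\perp(\tau M)$ using $\tau(M\otimes_k K)\cong(\tau M)\otimes_k K$.

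For (4) and (5) MacLane separability genuinely enters. The left-finite semibrick attached to a $\tau$-rigid module is its endotop, a cokernel formed from $\rad\End_\Lambda(M)$ acting on $M$; by base change $\End_{\Lambda_K}(M\otimes_k K)\cong\End_\Lambda(M)\otimes_k K$, and separability guarantees $\rad(\End_\Lambda(M)\otimes_k K)=\rad(\End_\Lambda(M))\otimes_k K$, i.e.\ that $-\otimes_k K$ preserves semisimplicity and hence radicals of finite-dimensional algebras. Exactness then yields that the endotop of $M\otimes_k K$ is the scalar extension of the endotop of $M$, matching the lift of the associated left-finite semibrick from the preceding theorem; the $\tau^{-1}$-version (5) follows dually, or by passing to $\Lambda\op$ via $D$. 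For (6) and (7), separability is needed to keep the algebra hereditary: by \cite{JL82} a MacLane separable extension preserves the global dimension, so $\gldim\Lambda=1$ forces $\gldim\Lambda_K=1$. Given this, (6) reduces to the isomorphism $\End_{\Lambda_K}(T\otimes_k K)\cong\End_\Lambda(T)\otimes_k K$ together with the fact (preceding theorem) that $T\otimes_k K$ is tilting over $\Lambda_K$, so the tilted algebra of $\Lambda_K$ along $T\otimes_k K$ is $\End_\Lambda(T)\otimes_k K$. For (7) I would compute the preprojective algebra one graded piece at a time: each summand $\Hom_\Lambda(P,\tau^{-i}P)$ base-changes to $\Hom_{\Lambda_K}(P\otimes_k K,\tau^{-i}(P\otimes_k K))$ by the $\tau^{-1}$- and $\Hom$-compatibilities above, while the multiplication, being natural composition, is compatible with $-\otimes_k K$; heredity of $\Lambda_K$ ensures these pieces assemble into $\Pi(\Lambda_K)$. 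The main obstacle throughout is isolating the two places where separability is unavoidable---radical-preservation for (4),(5) and global-dimension-preservation for (6),(7)---and then verifying that everything else is a formal consequence of exactness, $\Hom$-base change, and the lifting results already established.
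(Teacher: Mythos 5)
Your proposal is correct, and on five of the seven items it is essentially the paper's own argument: (1) is proved exactly as in \cref{lem:dualtaucommutes} (compatibility of $\tau$, $\nu$ and largest projective summands with $-\otimes_k K$); (4) and (5) match \cref{lem:flbrickcommute} and \cref{lem:frbrickcommute}, where your radical-of-the-algebra formulation $\rad(\End_\Lambda(M)\otimes_k K)=(\rad \End_\Lambda(M))\otimes_k K$ is equivalent to the paper's application of \cref{lem:Kasradsoclift} to $M$ viewed as a module over its endomorphism algebra; (6) and (7) match \cref{lem:tiltedlift} and \cref{preprojcommute}, including your identification of exactly where MacLane separability enters (radical/socle preservation for the semibricks, preservation of global dimension for the hereditary constructions). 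Where you genuinely diverge is on the completions (2) and (3): you build the (co-)Bongartz completion from a left $\add$-approximation and its universal extension/cokernel, show via \cref{lem:Kas00.2.2} that the approximation property (a $\Hom$-surjectivity condition, tested against the single module $M_K$) survives base change, and conclude by uniqueness of the completion. The paper never touches approximations; it characterises the co-Bongartz completion as the unique basic support $\tau$-tilting pair with $\Fac(M^-)=\Fac(M)$ and the Bongartz completion via $\Fac(M^+)^\perp=\Sub(\tau_\Lambda M\oplus\nu_\Lambda P)$, then invokes the order embedding \cref{cor:posetembedftors} (whose engine is the elementary epimorphism argument of \cref{lem:posetlift}), the pair-lifting result \cref{cor:tautiltlift}, the bijection $H$ of \cref{prop:AIRp2.15}, and uniqueness in \cref{thm:AIRftorsbij}. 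Your route treats Bongartz and co-Bongartz symmetrically and is more constructive, at the price of importing the approximation-based construction of completions from \cite{AIR2014} and of some care with two points you should make explicit: lifted approximations need not be left minimal (harmless, since a non-minimal approximation changes the cone only by summands in $\add(M_K)$, so the basic completion is unchanged), and the projective (support) parts of the pairs must also be matched, which is where \cref{cor:tautiltlift} plus uniqueness of the pair attached to a torsion class is really needed. The paper's route is less symmetric (its Bongartz case detours through $\tau^{-1}$-tilting and $H$) but requires nothing beyond exactness and the $\Fac$/$\Sub$ characterisations.
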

\begin{proof}
    (1) is \cref{lem:dualtaucommutes}; (2) is \cref{lem:cobongartzcommute}; (3) is \cref{lem:bongartzcommute};
    (4) is \cref{lem:flbrickcommute}; (5) is \cref{lem:frbrickcommute}; (6) is \cref{lem:tiltedlift} and
    (7) is \cref{preprojcommute}.
\end{proof}

Geometrically, the $\tau$-tilting theory of $\mods \Lambda$ is encoded in the $g$-vector fan $\Sigma(\Lambda)$ \cite{DIJ2019}. The idea of considering a fan for such a purpose originates in the theory of cluster algebras \cite{FZ07}{,} and its properties have been studied in various settings, see \cite{AHIKM2022,AokiYurikusa2023} and the references therein. Moreover, the $g$-vector fan embeds into the wall-and-chamber structure \cite{Asai2020,BST19}{,} which is the support of the stability scattering diagram \cite{Bri17}. Since all stable modules are bricks, this relationship highlights another facet of the connection between bricks and $\tau$-tilting theory. In contrast to the $g$-vector fan, investigating the wall-and-chamber structure goes beyond the domain of $\tau$-tilting theory. Nonetheless, we are able to describe the behaviour of both under base field extensions.

\begin{theorem}\label{thm:wallcintro}
    Let $K:k$ be a field extension and let $\Lambda$ be a finite-dimensional $k$-algebra. Then the $g$-vector fan $\Sigma(\Lambda)$ embeds into the $g$-vector fan $\Sigma(\Lambda_K)$. Moreover, if $K:k$ is a finite and separable extension, then the wall-and-chamber structure of $\Lambda$ embeds into the wall-and-chamber structure of $\Lambda_K$.
\end{theorem}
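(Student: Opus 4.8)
The plan is to handle both statements through a single injective linear map on projective Grothendieck groups and to reduce everything geometric to its compatibility with $g$-vectors and with stability. Write $n$ and $m$ for the numbers of isomorphism classes of simple modules of $\Lambda$ and $\Lambda_K$, and define $\iota\colon K_0(\proj\Lambda)_{\R}\to K_0(\proj\Lambda_K)_{\R}$ by $[P]\mapsto [P\otimes_k K]$. First I would check that $\iota$ is injective: the top of $P_i\otimes_k K$ is $\topp(P_i)\otimes_k K=S_i\otimes_k K$, and for $i\neq j$ the modules $S_i\otimes_k K$ and $S_j\otimes_k K$ have disjoint sets of composition factors, since restricting any constituent of $S_i\otimes_k K$ back to $\Lambda$ yields an $S_i$-isotypic module; hence the columns of the decomposition matrix recording $\iota([P_i])$ in the basis of indecomposable projectives of $\Lambda_K$ have disjoint support and are linearly independent. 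Next, because $-\otimes_k K$ is exact it carries a projective presentation $P_1\to P_0\to M\to 0$ to one of $M\otimes_k K$, and since the class $[P_0]-[P_1]$ is unchanged by deleting a common split summand, the $g$-vector may be computed from any (not necessarily minimal) presentation. Therefore $\iota$ sends the $g$-vector of any $\tau$-rigid pair $(M,P)$ to that of its lift $(M\otimes_k K,P\otimes_k K)$.

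For the first assertion I would then argue with open cones. A maximal cone of $\Sigma(\Lambda)$ is the simplicial cone spanned by the $g$-vectors of the $n$ indecomposable summands of a support $\tau$-tilting pair $(M,P)$; its lift is again a support $\tau$-tilting pair, as recalled above, with $m$ indecomposable summands refining those of $(M,P)$. Writing each $X_i\otimes_k K=\bigoplus_j Y_{ij}$, additivity of $g$-vectors gives $\iota(g(X_i))=\sum_j g(Y_{ij})$, where the $Y_{ij}$ form a basis and span the simplicial cone of the lift. A point $\sum_i c_i\,g(X_i)$ with all $c_i>0$ in the relative interior of the cone of $(M,P)$ is thus sent to the point $\sum_{i,j}c_i\,g(Y_{ij})$, all of whose barycentric coordinates are strictly positive, so it lands in the open cone of the lift. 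Since $\iota$ is injective and distinct support $\tau$-tilting pairs lift to distinct ones, this realises $\Sigma(\Lambda)$ as a subfan of $\Sigma(\Lambda_K)$ with the face poset of the former embedding into that of the latter.

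For the second assertion I would identify the ambient space of the wall-and-chamber structure with $K_0(\proj\Lambda)_{\R}$, paired with $K_0(\mods\Lambda)$ via $\langle [P],[M]\rangle=\dim_k\Hom_\Lambda(P,M)$, and use the same $\iota$. Flat base change $\Hom_{\Lambda_K}(P\otimes_k K,M\otimes_k K)\cong\Hom_\Lambda(P,M)\otimes_k K$ and $\dim_K(V\otimes_k K)=\dim_k V$ yield the pairing identity $\langle\iota(\theta),[M\otimes_k K]\rangle=\langle\theta,[M]\rangle$ for all $\theta$ and $M$. Now fix a brick $B$ of $\Lambda$ and a parameter $\theta$ in its wall $D(B)$, so that $B$ is $\theta$-semistable. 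Because $K:k$ is separable the Harder--Narasimhan filtration descends along it, so $B\otimes_k K$ is $\iota(\theta)$-semistable; and because $\End_\Lambda(B)$ is a division algebra while the finite separable $K$ is a separable $k$-algebra, $\End_{\Lambda_K}(B\otimes_k K)\cong\End_\Lambda(B)\otimes_k K$ has vanishing radical and is semisimple, so $B\otimes_k K$ decomposes as a direct sum of bricks $B_1,\dots,B_r$ of $\Lambda_K$. Each $B_\ell$ is an $\iota(\theta)$-semistable brick, whence $\iota(\theta)\in D(B_\ell)$ and $\iota(D(B))\subseteq\bigcup_\ell D(B_\ell)$; this is the preservation of walls, and it is precisely here that both finiteness (keeping $\Lambda_K$ and $\End_\Lambda(B)\otimes_k K$ finite-dimensional) and separability are used.

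The hard part will be to upgrade these wall and chamber inclusions to a genuine embedding of the stratified spaces, which requires the converse: that no wall of $\Lambda_K$ meets the subspace $\iota(K_0(\proj\Lambda)_{\R})$ except along images of walls of $\Lambda$, for otherwise a chamber of $\mathcal{W}(\Lambda)$ could be cut into pieces. I would establish this reflection statement through the restriction-of-scalars adjunction: if a $\Lambda_K$-brick $B'$ is $\iota(\theta)$-semistable, then the adjunction identity $\langle\iota(\theta),[B']\rangle=\tfrac{1}{[K:k]}\langle\theta,[\operatorname{Res}B']\rangle$ together with the preservation of semistability under restriction of scalars shows that $\operatorname{Res}B'$ is $\theta$-semistable, so that its $\theta$-stable brick factors are $\Lambda$-bricks placing $\theta$ on a wall of $\mathcal{W}(\Lambda)$. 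I expect this reflection direction to be the main obstacle, since the $\Lambda_K$-submodules of $B\otimes_k K$ and the $\Lambda$-submodules of $\operatorname{Res}B'$ need not descend to the base field; controlling them is exactly where the separability and finiteness of $K:k$ must do the decisive work, in contrast to the first assertion, which required no such hypotheses.
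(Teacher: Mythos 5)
Your first half follows essentially the same route as the paper's proof of \cref{lemma:gvectorfan} (the map $[P]\mapsto[P\otimes_k K]$, preservation of $g$-vectors, injectivity from distinctness of lifted pairs), but it rests on a false supporting claim: the $g$-vector can \emph{not} ``be computed from any (not necessarily minimal) presentation.'' Adjoining to a minimal presentation a summand of the form $Q\to 0$, i.e.\ replacing $P_1\xrightarrow{f}P_0$ by $P_1\oplus Q\xrightarrow{(f\ \ 0)}P_0$, still yields a projective presentation of $M$, yet it changes $[P_0]-[P_1]$ by $-[Q]$; arbitrary presentations differ from the minimal one not only by common split summands $Q\xrightarrow{1}Q$ but also by such summands $Q\to 0$. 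What you actually need is that $-\otimes_k K$ sends minimal presentations to minimal presentations, which holds for \emph{every} field extension because $\ker(p^i_K)=(\ker p^i)_K\subseteq(\rad P^i)_K\subseteq\rad(P^i_K)$ by \cref{lem:Kasradsoclift}; this is how the paper obtains $g^{M_K}=(g^M)\otimes_k K$ (alternatively cite \cite[Prop.~6.6(c)]{DIJ2019} directly). With that repair your open-cone argument goes through. (Your injectivity argument for $\iota$ on $K_0$, via restricting composition factors back to $\Lambda$, also only makes literal sense for finite extensions; for arbitrary ones use the Noether--Deuring theorem, \cref{lem:adddeterminesbasic}(2).)

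The second half is where the genuine gap lies. Your strategy has the same architecture as the paper's \cref{thm:WACembed} --- indeed your functional satisfies $\langle\iota(\theta),[N]\rangle=\tfrac{1}{[K:k]}\,\theta|^*_\Lambda([N])$, so it defines the same walls as the paper's embedding map --- but the two decisive steps are asserted rather than proved: (a) that separability makes Harder--Narasimhan filtrations descend, so that $B$ being $\theta$-semistable implies $B_K$ is $\iota(\theta)$-semistable, and (b) that restriction of scalars preserves semistability; you yourself flag (b) as the expected ``main obstacle'' and leave it open. Both claims are in fact true --- (a) follows by Galois descent of the unique maximal destabilizing submodule after passing to a Galois closure, and (b) follows because right multiplication by each $\lambda\in K^{\times}$ is a $\Lambda$-automorphism of $N|_{\Lambda}$, so the maximal destabilizing $\Lambda$-submodule of $N|_{\Lambda}$ is a $K$-subspace and hence a $\Lambda_K$-submodule destabilizing $N$ --- but neither argument appears in your proposal, so beyond the value computations nothing in the second half is established. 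The paper avoids descent entirely: by \cite[Prop.~4.13]{Kas00}, for a separable algebraic extension every $\Lambda_K$-module is a direct summand of $L_K$ for some $L\in\mods\Lambda$, and $(M_K)|_{\Lambda}\simeq M^{[K:k]}$ for finite extensions (\cref{lem:nicerestriction}); \cref{prop:semistablelift} and \cref{thm:WACembed} then reduce both the preservation and the reflection of walls to elementary evaluations of $\theta$ on restricted modules. To complete your proof you must either supply proofs of (a) and (b), or substitute Kasjan's result as the paper does.
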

\begin{proof}
    The first part of the statement is \cref{lemma:gvectorfan} and the moreover part is \cref{thm:WACembed}. 
\end{proof}

{Wall-and-chamber structures are intrinsically linked with stability conditions \cite{King1994, KS08, Bri17, BPP16}. It has been studied how the stability manifold is altered by extension of scalars \cite{Sos12,ChangQiu2024}. See also \cite{ChenMandelQin2024} for a similar result about quiver coverings and scattering diagrams.}

\subsection{\texorpdfstring{$\tau$}{tau}-cluster morphism category}

Generally speaking, the $\tau$-cluster morphism category $\Wfrak(\Lambda)$ of a finite-dimensional algebra $\Lambda$ encompasses many of the aforementioned objects of the $\tau$-tilting theory of $\Lambda$. More specifically, its objects are $\tau$-perpendicular subcategories of $\mods \Lambda$ and morphisms are given by $\tau$-rigid pairs (which represent corresponding $\tau$-tilting reductions \cite{Jas15}). These categories were originally introduced for hereditary algebras, where they are closely connected to exceptional sequences, cluster algebras and maximal green sequences \cite{IT17, IgusaTodorovMGS2021}. In these cases, the classifying space often forms a $K(\pi,1)$ space for the picture group,  which is of independent interest \cite{ITW16, IgusaTodorov22}. Similar questions about arbitrary finite-dimensional algebras have received much attention \cite{BarnardHanson2022,BH21, BM18w, HI21,HI21p, Kai23, Kai24, Bor21}, but the understanding of this category is limited to a few special cases. Part of a sufficient condition for the classifying space to be a $K(\pi,1)$ space is the existence of a faithful functor $\Wfrak(\Lambda) \to G$ for some group $G$ viewed as a groupoid with one object \cite{Igu14} (see \Cref{Igu14.3.5}). We apply the theory developed in this article to obtain new families of algebras for which the $\tau$-cluster morphism category admits a \textit{faithful group functor}, that is, a faithful functor from $\Wfrak(\Lambda)$ into a group.

\begin{theorem}(\cref{thm:basefieldmain})\label{repthm:basefieldmain}
    Let $K:k$ be a MacLane separable field extension and let $\Lambda$ be a finite-dimensional $k$-algebra. There exists a faithful functor $\Wfrak(\Lambda) \to \Wfrak(\Lambda_K)$. Therefore, if $\Wfrak(\Lambda_K)$ admits a faithful group functor, so does $\Wfrak(\Lambda)$.
\end{theorem}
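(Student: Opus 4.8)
The plan is to build the functor $F\colon \Wfrak(\Lambda) \to \Wfrak(\Lambda_K)$ out of the scalar extension lifts already established, and then to deduce faithfulness from the injectivity of the lift on $\tau$-rigid pairs. On objects, each $\tau$-perpendicular subcategory $\mathcal{W} \subseteq \mods\Lambda$ is recorded by a $\tau$-perpendicular interval of torsion classes, which lifts to a $\tau$-perpendicular interval of $\tors\Lambda_K$ by \cref{cor:tauperpitvlift}; I would set $F(\mathcal{W}) = \widetilde{\mathcal{W}}$ to be the corresponding $\tau$-perpendicular subcategory of $\mods\Lambda_K$. The foundational step, on which everything else rests, is to pin down the compatibility of the equivalences $\mathcal{W} \simeq \mods\Lambda_{\mathcal{W}}$ and $\widetilde{\mathcal{W}} \simeq \mods(\Lambda_{\mathcal{W}})_K$, so that scalar extension along $K:k$ for $\Lambda$ restricts, under these equivalences, to scalar extension for the smaller algebra $\Lambda_{\mathcal{W}}$. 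This is what lets me re-apply all of the earlier lifting and commutation results \emph{inside} each $\tau$-perpendicular subcategory, as the recursive definition of $\Wfrak$ demands.

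On morphisms, a map $\mathcal{W} \to \mathcal{W}'$ is a $\tau_{\mathcal{W}}$-rigid pair $(M,P)$ in $\mathcal{W}$ with $J_{\mathcal{W}}(M,P) = \mathcal{W}'$, where $J_{\mathcal{W}}$ denotes the Jasso reduction; I would send it to $(M\otimes_k K, P\otimes_k K)$, regarded inside $\widetilde{\mathcal{W}}$. Well-definedness reduces to checking that the target matches, namely $\widetilde{J_{\mathcal{W}}(M,P)} = J_{\widetilde{\mathcal{W}}}(M\otimes_k K, P\otimes_k K)$. Since the Jasso reduction is assembled from Bongartz and co-Bongartz completions, this follows from \cref{lem:bongartzcommute} and \cref{lem:cobongartzcommute}, together with \cref{lem:flbrickcommute}, which guarantees that the semibrick data cut out by the pair commutes with scalar extension; it is exactly here that the MacLane separability hypothesis enters. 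Preservation of identities is immediate, as the zero pair lifts to the zero pair.

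Functoriality then comes down to preservation of composition, which I expect to be the main obstacle. The composite of $[M_1,P_1]\colon \mathcal{W}_0 \to \mathcal{W}_1$ with $[M_2,P_2]\colon \mathcal{W}_1 \to \mathcal{W}_2$ is computed by lifting the pair $(M_2,P_2)$ from the reduced category $\mathcal{W}_1$ back up to $\mathcal{W}_0$ and amalgamating it with $(M_1,P_1)$, and this lift is itself built from (co-)Bongartz completions. The hard part is to verify that this lifting-through-reduction procedure commutes with $-\otimes_k K$. I would handle it one reduction step at a time, expressing the composite pair through the commuting constructions of \cref{lem:cobongartzcommute}, \cref{lem:bongartzcommute} and \cref{lem:dualtaucommutes}, and using the compatibility of the equivalences $\mathcal{W}_i \simeq \mods\Lambda_{\mathcal{W}_i}$ fixed in the first step, so that completions performed in $\mathcal{W}_1$ correspond under scalar extension to those performed in $\widetilde{\mathcal{W}_1}$.

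Faithfulness is then the formally easy part. With $\mathcal{W}$ and $\mathcal{W}'$ fixed, a morphism $\mathcal{W} \to \mathcal{W}'$ is determined by its underlying $\tau_{\mathcal{W}}$-rigid pair, and $F$ acts on this datum by the scalar extension lift, which is injective by \cref{lem:taurigidlift} applied to $\Lambda_{\mathcal{W}}$ through the compatibility of the first step. Hence $F$ is injective on each hom-set, so it is faithful. The final assertion is then immediate: viewing a group $G$ as a one-object groupoid, if $\Phi\colon \Wfrak(\Lambda_K) \to G$ is faithful then so is the composite $\Phi\circ F\colon \Wfrak(\Lambda)\to G$, since a composition of faithful functors is faithful.
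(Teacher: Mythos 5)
Your route is genuinely different from the paper's: you work with the model of $\Wfrak(\Lambda)$ from \cite{BM18w,BH21} (objects are $\tau$-perpendicular subcategories, morphisms are relative $\tau$-rigid pairs, composition via lifting-through-reduction), whereas the paper works with the lattice model of \cref{defn:latticedef} (objects and morphisms are equivalence classes of $\tau$-perpendicular intervals and of interval inclusions). Your choice has a real payoff: faithfulness becomes nearly free, since a morphism is literally an isomorphism class of pairs and $-\otimes_k K$ is injective on such pairs by Noether--Deuring, i.e. \cref{lem:adddeterminesbasic}(3) --- note that \cref{lem:taurigidlift}, which you cite for this, gives preservation and reflection of $\tau$-rigidity, not injectivity. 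By contrast, the paper has to work for faithfulness: it argues via \cref{thm:wideintiso} that equal image morphisms force equal lifted intervals, then pulls this back using \cref{lem:cobongartzcommute}, \cref{lem:bongartzcommute} and the injectivity in \cref{cor:posetembed}. The price of your model is that well-definedness and functoriality become the hard part, and these are exactly the steps your proposal defers.

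Three concrete gaps. First, well-definedness on objects is never addressed: a $\tau$-perpendicular subcategory satisfies $\calW = \calW_{(M,P)} = \calW_{(M',P')}$ for many different pairs, and you must show that the lift $\calW_{(M_K,P_K)}$ is independent of this choice. This is precisely \cref{prop:objectslift} in the paper; it is where \cref{lem:flbrickcommute} and \cref{lem:frbrickcommute}, and hence the MacLane separability hypothesis, really enter; and you cannot dodge it by fixing a pair for each object, because the same independence is needed again when you match the Jasso reduction $J_{\widetilde{\calW}}(M_K,P_K)$ with the chosen lift of $J_{\calW}(M,P)$. Second, your ``foundational step'' --- that the equivalences $\calW\simeq\mods\Lambda_{\calW}$ and $\widetilde{\calW}\simeq\mods(\Lambda_{\calW})_K$ intertwine the two scalar extension functors --- is asserted but never proved. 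It is true and provable with the paper's tools (Jasso's equivalence is $\Hom_\Lambda(T,-)$ with $T$ the Bongartz completion, so combine \cref{lem:bongartzcommute}, the naturality in \cref{lem:Kas00.2.2}, and Noether--Deuring to identify the idempotent ideals), but without this argument nothing downstream is justified: not the relative $\tau$-rigidity of $(M_K,P_K)$ in $\widetilde{\calW}$, not the relative projectivity of $P_K$, not the application of the commutation lemmas inside $\calW$. Third, composition: you correctly identify commutation of the lifting-through-reduction bijection with $-\otimes_k K$ as the main obstacle, but offer only a plan; in your model this is the technical heart of the whole proof, and it is exactly what the paper's lattice model is engineered to avoid, since there composition is simply composition of interval inclusions after choosing compatible representatives via \cref{lem:composition}. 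In short: the strategy is viable and its faithfulness step is genuinely slicker than the paper's, but as written the proposal postpones the three statements that constitute the actual content of the theorem.
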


This result establishes a well-behaved relationship between the $\tau$-cluster morphism categories of different algebras. Such a connection is quite special and only few other settings are known to admit such a relationship between $\tau$-cluster morphism categories of distinct algebras \cite{BarnardHanson2022,Kai24}.

In concurrent work, the first-named author shows that $\Wfrak(\Lambda)$ admits a faithful group functor whenever the base field is algebraically closed and of characteristic 0 \cite{BorMot}. Since any field extension of a field of characteristic 0 is MacLane separable (by definition), one can clearly use \Cref{repthm:basefieldmain} to remove the assumption of algebraic closure on $k$ (see \Cref{cor:mainbasefield+BorMot}).

\begin{corollary}(\cref{cor:mainbasefield+BorMot})
    Let $k$ be a field of characteristic 0 and let $\Lambda$ be a finite-dimensional $k$-algebra. Then $\Wfrak(\Lambda)$ admits a faithful group functor.
\end{corollary}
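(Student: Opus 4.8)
The plan is to reduce the statement to the case of an algebraically closed base field, where the desired faithful group functor is already available from concurrent work, and then transfer the conclusion back to $\Lambda$ along a carefully chosen field extension via \Cref{repthm:basefieldmain}. The whole argument is therefore a short combination of two external inputs, once the correct extension is selected.

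First I would fix an algebraic closure $\bar{k}$ of $k$ and consider the field extension $\bar{k}:k$. Since $k$ has characteristic $0$, this extension is MacLane separable (see \cref{defn:MacLaneseparable}): in characteristic zero \emph{every} field extension is MacLane separable by definition, so no verification beyond citing the defining convention is required. The extended algebra $\Lambda_{\bar{k}} = \Lambda \otimes_k \bar{k}$ is then a finite-dimensional algebra over $\bar{k}$, because $\dim_{\bar{k}} \Lambda_{\bar{k}} = \dim_k \Lambda < \infty$, and $\bar{k}$ is itself algebraically closed and of characteristic $0$. This places $\Lambda_{\bar{k}}$ exactly in the regime covered by \cite{BorMot}, which supplies a faithful group functor $\Phi \colon \Wfrak(\Lambda_{\bar{k}}) \to \mathcal{G}$ for some group $\mathcal{G}$, regarded as a one-object groupoid.

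Next, because $\bar{k}:k$ is MacLane separable, \Cref{repthm:basefieldmain} yields a faithful functor $F \colon \Wfrak(\Lambda) \to \Wfrak(\Lambda_{\bar{k}})$. The composite $\Phi \circ F \colon \Wfrak(\Lambda) \to \mathcal{G}$ is then a functor from $\Wfrak(\Lambda)$ into a group, and it is faithful since a composite of faithful functors is faithful. This is precisely the ``Therefore'' clause of \Cref{repthm:basefieldmain}, so one may equivalently cite that clause directly with $K = \bar{k}$ rather than spelling out the composition.

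I do not expect a genuine obstacle here, since the substantive content is entirely contained in \Cref{repthm:basefieldmain} and in \cite{BorMot}; the corollary is the formal bridge between them. The only point demanding any care is the choice of extension: one must pass to an algebraically closed field that is simultaneously MacLane separable over $k$. Taking the algebraic closure resolves both requirements at once, as characteristic zero forces MacLane separability and guarantees that $\bar{k}$ remains of characteristic $0$ and algebraically closed. Any algebraically closed field containing $k$ would serve equally well, so the construction is robust and the reduction is clean.
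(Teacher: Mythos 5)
Your proposal is correct and matches the paper's own proof essentially verbatim: both pass to an algebraic closure $\overline{k}$ of $k$, note that $\overline{k}:k$ is MacLane separable because $\charac(k)=0$, invoke \cite{BorMot} for the faithful group functor on $\Wfrak(\Lambda_{\overline{k}})$, and compose with the faithful functor of \Cref{thm:basefieldmain}. No discrepancies to report.
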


With a similar goal in mind, it is shown in the appendix that the analogous result holds for algebras over finite fields.

\begin{theorem}[Theorem~\ref{thm:finite_field}, simplified]
    Let $k$ be a finite field and let $\Lambda$ be a finite-dimensional $k$-algebra. Then $\mathfrak{W}(\Lambda)$ admits a faithful group functor.
\end{theorem}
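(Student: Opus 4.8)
The plan is to reduce the statement about a finite field $k$ to the already-established \Cref{repthm:basefieldmain}, by exhibiting a suitable MacLane separable extension $K:k$ for which $\Wfrak(\Lambda_K)$ is known to admit a faithful group functor. First I would recall that for a finite field $k = \mathbb{F}_q$, the natural candidate is to pass to the algebraic closure $\overline{k} = \overline{\mathbb{F}_q}$, or at least to a sufficiently large algebraically closed field. The key technical point to verify is that the extension $\overline{k}:k$ is MacLane separable: since finite fields are perfect, every algebraic extension of $k$ is separable in the classical sense, and I would check that this classical separability suffices for MacLane separability in the sense of \cref{defn:MacLaneseparable} (this is standard, as MacLane separability coincides with ordinary separability for algebraic extensions). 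Granting this, \Cref{repthm:basefieldmain} immediately yields a faithful functor $\Wfrak(\Lambda) \to \Wfrak(\Lambda_{\overline{k}})$.

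It then remains to establish that $\Wfrak(\Lambda_{\overline{k}})$ itself admits a faithful group functor, where $\Lambda_{\overline{k}}$ is a finite-dimensional algebra over the algebraically closed field $\overline{k}$ of characteristic $p > 0$. Since the characteristic is positive, I cannot directly invoke the characteristic-zero result of \cite{BorMot} cited in the main text; this is precisely where the bulk of the appendix's own work must lie. The approach I would take is to construct the faithful group functor for $\Wfrak(\Lambda_{\overline{k}})$ directly, likely by adapting the sufficient condition from \cite{Igu14} (recalled as \Cref{Igu14.3.5}) or by building an explicit homomorphism into a picture group. A natural strategy is to use the $g$-vector fan $\Sigma(\Lambda_{\overline{k}})$ and the associated combinatorial or geometric invariants: one assigns to each morphism in $\Wfrak(\Lambda_{\overline{k}})$ a group element recording the $\tau$-rigid pair (equivalently the $g$-vectors of the corresponding $\tau$-tilting reduction), and checks that distinct morphisms receive distinct images, using faithfulness criteria such as the freeness or cancellation properties encoded in the wall-and-chamber structure.

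Composing the two faithful functors gives a faithful functor $\Wfrak(\Lambda) \to \Wfrak(\Lambda_{\overline{k}}) \to G$, where $G$ is the target group, which is the desired conclusion. The main obstacle I expect is the second step: proving that $\Wfrak(\Lambda_{\overline{k}})$ admits a faithful group functor in positive characteristic, since the concurrent work \cite{BorMot} is restricted to characteristic zero and its techniques may not transfer. Whether the characteristic-zero argument genuinely requires $\operatorname{char} k = 0$, or merely algebraic closure together with some geometric input that survives in positive characteristic, will determine how much of \cite{BorMot} can be reused versus reproven. If the latter, the appendix would need to isolate the characteristic-independent core of that argument; if the former, a genuinely new construction of the group functor over $\overline{\mathbb{F}_q}$ is required, and verifying faithfulness there—rather than the reduction via MacLane separability, which is comparatively routine—is the real content of the theorem.
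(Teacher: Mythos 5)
Your first step is correct but counterproductive: finite fields are perfect, so $\overline{k}:k$ is separable and hence MacLane separable, and \Cref{thm:basefieldmain} does give a faithful functor $\Wfrak(\Lambda) \to \Wfrak(\Lambda_{\overline{k}})$. However, this reduces the theorem to a statement that is strictly harder and, as far as this paper goes, open: that $\Wfrak(\Lambda_{\overline{k}})$ admits a faithful group functor when $\overline{k}$ is algebraically closed of characteristic $p>0$. The only existence results available are \cite{BorMot} (algebraically closed, characteristic zero, which you rightly note does not apply) and the very theorem you are trying to prove; \Cref{conj:faithful} records that the general case is conjectural. Your sketch for filling the gap---assigning to each morphism a group element recording $g$-vectors and invoking ``cancellation properties encoded in the wall-and-chamber structure''---is not an argument: no target group is constructed, no relations are verified, and no faithfulness criterion of that kind exists in the literature or in this paper. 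So the proposal has a genuine gap exactly where you concede the real content lies.

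The paper's proof goes in the opposite direction and never leaves the finite field, precisely because finiteness of $k$ is what makes the key tool available. The appendix introduces the interval-heart group $\intheart(\tors\Lambda)$, with generators $Z_{[\U,\T]}$ indexed by intervals of torsion classes, and shows in \Cref{thm:functor} that there is always a group functor $\Gamma\colon \Wfrak(\Lambda)\to\intheart(\ftors\Lambda)$, which is faithful provided the generators are \emph{heart-controlled}, i.e.\ $Z_{[\U,\T]} = Z_{[\U',\T']}$ exactly when $\U^\perp\cap\T = (\U')^\perp\cap\T'$. This condition is then verified over a finite field by mapping $\intheart(\tors\Lambda)$ into the group of units of the completed Ringel--Hall algebra $\hall(\Lambda)$ via $Z_{[\U,\T]}\mapsto E_{\U^\perp\cap\T}$ (\Cref{thm:MapToHall}, resting on Treffinger's factorisation \Cref{thm:treffinger} and the invertibility criterion \Cref{lem:invertible}); since $E_{\U^\perp\cap\T}$ has the heart $\U^\perp\cap\T$ as its support, distinct hearts yield distinct group elements. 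The Hall numbers $c_{M,N}^E$ count submodules and are only defined because $k$ is finite. Passing to $\overline{k}$, as you propose, destroys this tool rather than enabling it; over $\overline{\mathbb{F}_q}$ one would need a motivic or constructible-function replacement, which is exactly the technology that \cite{BorMot} only provides in characteristic zero.
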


The proof of Theorem~\ref{thm:finite_field} is based upon the theory of \emph{picture groups} and \emph{completed (finitary) Ringel--Hall algebras}, see Appendix~\ref{sec:appendix} for the relevant definitions and historical context. There are also cases where these arguments can be lifted to infinite fields:

\begin{corollary}[Corollary~\ref{cor:field_change}, simplified]\label{cor:introfieldchange}
    Let $\Lambda$ be a finite-dimensional algebra over an arbitrary field, and suppose there exists a lattice isomorphism $\eta: \tors\Lambda \rightarrow \tors\Lambda'$ for some finite-dimensional algebra $\Lambda'$ over a finite field. Then $\Wfrak(\Lambda)$ admits a faithful group functor.
\end{corollary}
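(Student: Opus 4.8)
The plan is to transfer the existence of a faithful group functor across a lattice isomorphism of torsion classes, reducing the problem over $\Lambda$ to the finite-field case already handled. The key conceptual point is that the $\tau$-cluster morphism category $\Wfrak(\Lambda)$ should be recoverable, up to isomorphism of categories, from purely lattice-theoretic data of $\tors\Lambda$. Recall that the objects of $\Wfrak(\Lambda)$ are the $\tau$-perpendicular subcategories, which are in bijection with the $\tau$-perpendicular intervals of $\tors\Lambda$ (equivalently, with certain intervals $[\calT_1,\calT_2]$ that are themselves isomorphic, as lattices, to the torsion lattice of the associated $\tau$-perpendicular subcategory), and the morphisms are encoded by $\tau$-rigid pairs, which correspond to the functorially finite (indeed completely join-irreducible/brick-labelled) data inside these intervals. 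So the first step is to show that $\eta\colon\tors\Lambda \to \tors\Lambda'$ induces an isomorphism of categories $\Wfrak(\Lambda)\xrightarrow{\sim}\Wfrak(\Lambda')$.

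The key steps, in order, are as follows. First I would verify that $\eta$ restricts to a bijection on the relevant sublattices: since $\eta$ is a lattice isomorphism, it preserves meets and joins, hence preserves the functorially finite torsion classes (characterised lattice-theoretically as those admitting arrows in the Hasse quiver, i.e. the completely join- and meet-irreducible decompositions, via the brick labelling), and it preserves $\tau$-perpendicular intervals (these are cut out by lattice-congruence/interval conditions that $\eta$ respects). This gives a bijection on objects of the two $\tau$-cluster morphism categories. Second, I would check that morphisms transport correctly: a morphism in $\Wfrak(\Lambda)$ is determined by a pair of nested $\tau$-perpendicular intervals together with the labelling data of the Hasse arrows between them, and because $\eta$ is an isomorphism of posets it sends Hasse covers to Hasse covers and matches the brick labels combinatorially, so composition (which in $\Wfrak$ is defined through $\tau$-tilting reduction, itself expressible via interval restriction in the lattice) is preserved. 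Together these yield an isomorphism of categories $\Phi\colon\Wfrak(\Lambda)\xrightarrow{\sim}\Wfrak(\Lambda')$. Third, since $\Lambda'$ is a finite-dimensional algebra over a finite field, Theorem~\ref{thm:finite_field} supplies a faithful group functor $F\colon\Wfrak(\Lambda')\to G$. The composite $F\circ\Phi\colon\Wfrak(\Lambda)\to G$ is then faithful, because $\Phi$ is an isomorphism and $F$ is faithful, which is exactly the desired conclusion.

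The main obstacle I anticipate is the second step: proving that $\Wfrak(\Lambda)$ depends only on the lattice $\tors\Lambda$, i.e. that the entire categorical structure (objects, morphisms, and composition law) is intrinsic to the poset. While the bijection on functorially finite torsion classes is immediate from $\eta$ being a lattice isomorphism, the subtlety is that the definition of $\Wfrak(\Lambda)$ a priori uses representation-theoretic input—$\tau$-rigid pairs and $\tau$-tilting reduction—rather than lattice data alone. I would therefore need to invoke (or establish) a lattice-theoretic reconstruction of $\Wfrak(\Lambda)$, presumably building on the fact that $\tau$-perpendicular subcategories and their torsion lattices are recovered as interval sublattices of $\tors\Lambda$ and that the composition is governed by the nesting of these intervals; this is the content that makes the transport along $\eta$ legitimate. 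Once this intrinsic description is in hand, everything else is formal, and the faithfulness of $F\circ\Phi$ follows immediately since the composition of a faithful functor with an isomorphism of categories is faithful.
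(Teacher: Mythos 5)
Your proposal hinges on a claim that is precisely the hard (and, in this generality, unestablished) part: that a lattice isomorphism $\eta\colon \tors\Lambda \to \tors\Lambda'$ induces an isomorphism of categories $\Wfrak(\Lambda)\cong\Wfrak(\Lambda')$. You acknowledge this as ``the main obstacle'' but do not resolve it, and it does not follow from anything cited. Concretely, three things would have to be lattice-theoretic invariants of $\tors\Lambda$: (i) the property of a torsion class being functorially finite, (ii) the property of an interval $[\calU,\calT]$ being $\tau$-perpendicular (i.e.\ of the form $[\Fac M,\,{}^\perp\tau M\cap P^\perp]$ for a $\tau$-rigid pair), and (iii) the equivalence relations on intervals and inclusions used in Definition~\ref{defn:latticedef}. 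Your parenthetical justification for (i) --- that functorially finite torsion classes are ``characterised lattice-theoretically as those admitting arrows in the Hasse quiver'' --- is not a correct characterisation, and no such purely order-theoretic characterisation is invoked or proved; (ii) is likewise representation-theoretic on its face. In the $\tau$-tilting finite case these issues disappear (all torsion classes are functorially finite and the lattice is finite, which is why the remark after Corollary~\ref{cor:field_change} can cite \cite[Sec.~7]{Kai24}), but the corollary is stated for arbitrary $\Lambda$, where $\tors\Lambda$ may be infinite and your transport argument has no support.

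The paper's proof is designed exactly to sidestep this. It never transports the category: the group functor stays on the $\Lambda$-side, namely $\Gamma\colon\Wfrak(\Lambda)\to\intheart(\ftors\Lambda)$ from Theorem~\ref{thm:functor}(1), which exists for every algebra. What gets transported across $\eta$ is only a property of a group presentation: by Lemma~\ref{lem:invariant_under_isom}, the interval-heart group $\intheart(\tors\Lambda)$ and, crucially, the heart-equality relation $\calU^\perp\cap\calT=(\calU')^\perp\cap\calT'$ are invariants of the abstract lattice (the latter via Enomoto's $\mathsf{j}$-labels). Theorem~\ref{thm:finite_field} (via completed Ringel--Hall algebras) gives that $\intheart(\tors\Lambda')$ has heart-controlled generators; pulling this back along $\eta$ shows the same for $\intheart(\tors\Lambda)$, and then Proposition~\ref{prop:distinct_generators} and Theorem~\ref{thm:functor}(2) yield faithfulness of $\Gamma$. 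So the correct repair of your argument is not to reconstruct $\Wfrak(\Lambda)$ from the lattice, but to weaken what you transport: move only the heart-controlled-generators condition, which is both sufficient for faithfulness and manifestly lattice-invariant.
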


For example, \cref{cor:introfieldchange} implies that, over any field, the $\tau$-cluster morphism categories of the following classes of $k$-algebras admit faithful group functors: representation finite hereditary algebras (as already shown in \cite{IgusaTodorov22}), preprojective algebras of representation finite hereditary algebras (\cref{prop:preproj_distinct_generators}), and representation finite string algebras (\cref{prop:string}).

Moreover, if an algebra $\Lambda$ defined over a finite field or a field of characteristic zero has at most three isomorphism classes of simple objects, then the results of \cite{BarnardHanson2022,Igu14} imply that the classifying space $\calB \Wfrak(\Lambda)$ is in fact a $K(\pi,1)$ space.

\subsection{\texorpdfstring{$\tau$}{tau}-tilting finiteness under base field extension}

Inspired by representation finite algebras, the class of $\tau$-tilting finite algebras consists of algebras admitting only finitely many isomorphism classes of {basic} support $\tau$-tilting modules \cite{DIJ2019}. For hereditary algebras, these two notions coincide, but even representation wild algebras may be $\tau$-tilting finite in general. The importance of $\tau$-tilting finiteness is highlighted by the plethora of equivalent conditions to being $\tau$-tilting finite, see \cite{AHMV19,DIJ2019, Gar24,IRTT2015,MS17,SchrollTreffinger2022,Sentieri2023}. Given a Maclane separable field extension $K:k$ and a $k$-algebra $\Lambda$, it turns out that $\Lambda$ is representation finite if and only if $\Lambda_K$ is, by \cite[Thm. 3.3]{JL82}, as recited in \Cref{thm:JL}\eqref{thm:JL2}. Similar relationships for representation tame algebras have been investigated \cite{Kas2001,MendezPerez2013,Perez2016}, but generally more care needs to be taken because the definition of tameness relies on algebraic closure of the ground field as mentioned previously. In the setting of $\tau$-tilting theory one possible analogue of tameness has been proposed in \cite{AokiYurikusa2023}. An algebra $\Lambda$ is \textit{$g$-tame} if its $g$-vector fan is dense in the ambient space. Tame algebras (naturally defined over algebraically closed fields) were shown to be $g$-tame in \cite{PY23}.

It is a natural and important question to ask whether $\tau$-tilting finiteness behaves similarly with respect to base field extension. In this direction, we are able to show the following.

\begin{theorem}\label{thm:introtaufinite}
    There exists a $\tau$-tilting finite $k$-algebra $\Lambda$ and a field extension $K:k$ such that $\Lambda_K$ is not $g$-tame and thus not $\tau$-tilting finite. On the other hand, let $K:k$ be MacLane separable, and $\Pi(\Gamma)$ the preprojective algebra of a representation finite hereditary $k$-algebra $\Gamma$. Then $\Pi(\Gamma_K) \cong \Pi(\Gamma) \otimes_k K$. Thus, the generally representation infinite algebra $\Pi(\Gamma)$ remains $\tau$-tilting finite under MacLane separable field extensions.
\end{theorem}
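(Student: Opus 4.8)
The statement splits into a negative existence part and a positive part about preprojective algebras, and I would treat them separately.

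For the negative part, the plan is to exploit that a non-MacLane-separable extension is precisely one with an inseparable algebraic component, and that such extensions create nilpotents: if $k$ has characteristic $p$ and $K = k(\alpha_1,\dots,\alpha_s)$ with $\alpha_i^p = a_i \in k$ for $p$-independent $a_i$, then $K \otimes_k K \cong K[u_1,\dots,u_s]/(u_1^p,\dots,u_s^p)$ is local but non-reduced. I would take $K = \overline{k}$ (or a suitable finite inseparable subextension) and let $\Lambda$ be a representation-finite, hence $\tau$-tilting finite, $k$-algebra whose structure involves an inseparable field extension as a vertex endomorphism ring --- for instance a triangular algebra $\bigl(\begin{smallmatrix} k & 0 \\ M & K' \end{smallmatrix}\bigr)$ of valued Dynkin type with $K' : k$ inseparable --- chosen so that under base change the vertex ring $K'$ is replaced by the non-reduced ring $K' \otimes_k \overline{k}$, producing loops at that vertex. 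With a sufficiently large inseparable part (enough $p$-independent generators to enlarge the local vertex ring) the resulting algebra $\Lambda_{\overline{k}}$ leaves the tame range and becomes $g$-wild. The single-generator, degree-$p$ case does \emph{not} suffice: there the base change is a string (special biserial) algebra, hence tame and, by \cite{PY23}, $g$-tame, so the combinatorics of the extension must be chosen to genuinely exit the tame range.

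To certify non-$g$-tameness I would work directly over the algebraically closed field $\overline{k}$, where $\Lambda_{\overline{k}}$ is an explicit quiver with relations, and exhibit an open cone in $\R^n$ disjoint from every maximal cone of $\Sigma(\Lambda_{\overline{k}})$; equivalently, an open set of stability functions $\theta$ whose semistable subcategory is nonzero but contains no brick. If one prefers a finite subextension, \cref{thm:wallcintro} provides the fan embedding that allows the density question to be pushed toward the closure. Finally, failing to be $g$-tame forces $\tau$-tilting infiniteness, since a $\tau$-tilting finite algebra has a complete, hence dense, $g$-vector fan. The main obstacle is exactly this verification: wildness of $\Lambda_{\overline{k}}$ alone does not imply non-$g$-tameness (there are $g$-tame wild algebras), so one must pin down a specific example --- possibly drawing on the known failures of representation-finiteness to ascend along inseparable extensions that motivate the hypotheses of \cite{JL82} --- and rigorously locate an uncovered open region of the fan.

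For the positive part I would proceed in three steps. First, since $K:k$ is MacLane separable, global dimension is preserved, so $\Gamma_K$ is again hereditary and $\Pi(\Gamma_K)$ is defined; the isomorphism $\Pi(\Gamma_K) \cong \Pi(\Gamma) \otimes_k K$ is then exactly \cref{preprojcommute}, the point being that the tensor-algebra description $\Pi(\Gamma) = T_\Gamma(\Ext^1_\Gamma(D\Gamma,\Gamma))$ commutes with the flat base change $-\otimes_k K$, while MacLane separability makes $\Ext^1$ and the duality $D$ commute with it. Second, representation finiteness is preserved under MacLane separable extension by \cite[Thm. 3.3]{JL82} (as recited in \cref{thm:JL}), so $\Gamma_K$ is a representation-finite hereditary $K$-algebra, i.e. of valued Dynkin type, and in particular $\Pi(\Gamma_K)$ is finite-dimensional. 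Third, preprojective algebras of representation-finite hereditary algebras are $\tau$-tilting finite, their basic support $\tau$-tilting modules being in bijection with the corresponding finite Weyl group; hence $\Pi(\Gamma)\otimes_k K \cong \Pi(\Gamma_K)$ is $\tau$-tilting finite. This does not contradict the negative part: we are not extending $\tau$-tilting finiteness along the field extension (which can fail), but instead identifying $\Pi(\Gamma)_K$ with the preprojective algebra of a Dynkin-type algebra, whose $\tau$-tilting finiteness is intrinsic. Since $\Pi(\Gamma)$ is itself representation infinite outside a handful of small Dynkin types, this yields a robust family of representation-infinite algebras whose $\tau$-tilting finiteness survives MacLane separable base change.
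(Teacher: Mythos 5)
Your treatment of the positive half (the preprojective algebra statement) is essentially the paper's own argument: \cref{preprojcommute} for the isomorphism $\Pi(\Gamma_K)\cong\Pi(\Gamma)\otimes_k K$, preservation of representation finiteness under MacLane separable extensions via \cref{thm:JL}, and \cite[Thm.~7.9]{AHIKM2022} for $\tau$-tilting finiteness of preprojective algebras of representation finite hereditary algebras. That part stands.

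The negative half has a genuine gap, and it is also aimed in the wrong direction. The gap: you never exhibit an algebra, and you concede yourself that the ``main obstacle'' --- certifying non-$g$-tameness of whatever $\Lambda_{\overline{k}}$ your construction produces --- is left open. Since wildness does not imply failure of $g$-tameness, nothing in your outline locates an uncovered open cone, so the existence claim is simply not proved. Moreover, the mechanism you propose is suspect: inseparability manufactures nilpotents, i.e.\ loops, but enlarging the local ring sitting at a vertex does not by itself destroy $g$-tameness. In the extreme case where the algebra is the vertex ring itself, any local algebra is $\tau$-tilting finite (its only basic support $\tau$-tilting pairs are $(\Lambda,0)$ and $(0,\Lambda)$), hence $g$-tame by \cite[Thm.~4.7]{Asa21}; so ``enough $p$-independent generators'' is not the relevant parameter, and representation type is not the right invariant to track. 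What must happen under base change is the splitting of a \emph{semisimple} quotient --- a division algebra at a vertex breaking into several idempotents --- not the growth of the radical.

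That is exactly what the paper's example (\cref{exmp:Acounterexample}) does, and note that it uses the finite Galois extension $\C:\R$, which \emph{is} MacLane separable (characteristic zero), not an inseparable extension. One takes the $\R$-species with a single vertex carrying the division algebra $\C$ and three loops carrying the conjugation-twisted bimodule ${}_{\C}\overline{\C}_{\C}$, modulo the square of the radical. This $\Lambda$ is local over $\R$, hence $\tau$-tilting finite and $g$-tame. Over $\C$ the vertex splits into two idempotents and the twisted loops become arrows between the two new vertices, so $\Lambda_\C$ is a radical-square-zero algebra obtained by gluing (in the sense of \cite[Thm.~4.2]{AHIKM2023}) two $3$-Kronecker algebras; since the $3$-Kronecker algebra is not $g$-tame, neither is $\Lambda_\C$. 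This is both a complete proof and a strictly stronger statement than your plan could deliver: it shows $\tau$-tilting finiteness fails even under a finite Galois extension in characteristic zero, in sharp contrast with representation finiteness, which \cref{thm:JL} preserves under all MacLane separable extensions. An inseparable-extension example, even if you completed it, would miss this contrast entirely, since for such extensions the failure of finiteness properties is already the expected behaviour.
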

\begin{proof}
    Such an example is given in \cref{exmp:Acounterexample}. The isomorphism $\Pi(\Gamma_K) \cong \Pi(\Gamma) \otimes_k K$ is shown in \cref{preprojcommute}. By \cref{thm:JL} this means that $\Pi(\Gamma) \otimes_k K$ is again a preprojective algebra of a representation finite hereditary $K$-algebra. The conclusion follows from \cite[Thm. 7.9]{AHIKM2022} which implies that preprojective algebras over representation finite hereditary algebras are $\tau$-tilting finite. This is because the poset of support $\tau$-tilting modules is isomorphic to a weak order on the Weyl group corresponding to the Dynkin type of the hereditary algebra, which is a finite group if the algebra is representation finite. 
\end{proof}

We consider it an interesting problem to find further examples or conditions for representation infinite but $\tau$-tilting finite algebras to remain $\tau$-tilting finite under base field extension, see \cref{lem:taufinbrickcondition}. See also \cite[Sec. 3.2]{KKKMM2024} for similar results about skew-group algebras. \\

\textbf{Acknowledgements.} Some of the main ideas of this text were conceived during a visit of E.D.B. at the University of Cologne in May 2024. E.D.B. thanks M.K. for an extraordinarily productive and enjoyable week, as well as Sibylle Schroll for hosting him. Both authors are grateful to Sondre Kvamme, Calvin Pfeifer and Håvard U. Terland for useful discussions. 

E.J.H. is thankful to E.D.B. and M.K. for encouraging him to write Appendix~\ref{sec:appendix}, for including it in their paper, and for offering suggestions for improvement. He also thanks Kiyoshi Igusa and Hipolito Treffinger for useful conversations pertaining to Ringel--Hall algebras.

\section{Preliminaries on field extensions}

Let $k$ be a field. {Let $\overline{k}$} denote {an} algebraic closure of $k$. Recall that $\Lambda$ will always denote a finite-dimensional $k$-algebra. Given a field extension $K:k$, denote by $\Lambda_K$ the $K$-algebra $\Lambda \otimes_k K${.} The \textit{scalar extension functor} is then defined as:
\begin{align*} 
- \otimes_k K: \mods \Lambda &\to \mods \Lambda_K \\
M &\mapsto  M_K \coloneqq M \otimes_k K.
\end{align*}
Since the field extension $K$ of $k$ is a faithfully flat $k$-module, the scalar extension functor is an exact and faithful functor. Moreover, it admits a right adjoint (forgetful) \textit{restriction functor} $(-)|_\Lambda: \mods \Lambda_K \to \mods \Lambda$. We begin by collecting some essential properties of the scalar extension functor $- \otimes_k K$.

In the introduction, we defined $\mods \Lambda$ to be the (abelian) category of finite-dimensional right $\Lambda$-modules. Throughout the entire text, let $\proj \Lambda$ denote the additive subcategory of $\mods \Lambda$ spanned by finite-dimensional projective $\Lambda$-modules. Given a finite-dimensional right $\Lambda$-module, let $\add(M)$ denote the full subcategory of $\mods \Lambda$ spanned by all direct summands of modules of the form $M^{m}$, for some $m\geq 1$ (in particular, one could define $\proj \Lambda$ as $\add(\Lambda_{\Lambda})$, where ${\Lambda_{\Lambda}}$ denotes $\Lambda$ considered as a right $\Lambda$-module).

\begin{lemma}\label{lem:adddeterminesbasic}\label{lem:intersectlift}\label{lem:Kasprojs}
    Let $K:k$ be a field extension. Let $M,N \in \mods \Lambda$.
    \begin{enumerate}
        \item Suppose that $M$ is indecomposable. Then $M$ is a direct summand of $N$ if and only if $M_K$ and $N_K$ have a common nonzero direct summand.
        \item If $M$ and $N$ are indecomposable, then $M_K$ and $N_K$ share a common nonzero direct summand if and only if $M \simeq N$. 
        \item $M \simeq N$ if and only if $M_K \simeq N_K$. In other words, the functor $- \otimes_k K$ is injective-on-objects up to isomorphism.
        \item {Let $P \in \proj \Lambda$. Then $P_K \in \proj \Lambda_K$. Moreover, each projective $\Lambda_K$-module arises as a direct summand of $P_K$ for some $P \in \proj \Lambda$.}
        \item Suppose that $M$ and $N$ are basic. If $\add(M_K) = \add(N_K)$, then $M \simeq N$. 
        \item $\bigcup_{X \in \add(M) \cap \add(N)} \add(X_K) = \{ X: X \in \add(M_K) \cap \add (N_K)\}$. 
    \end{enumerate} 
\end{lemma}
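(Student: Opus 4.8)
The plan is to first note that the right-hand side is exactly the additive subcategory $\add(M_K)\cap\add(N_K)$ of common direct summands of $M_K$ and $N_K$, so the assertion amounts to the equality of additive subcategories
\[
\bigcup_{X \in \add(M) \cap \add(N)} \add(X_K) = \add(M_K) \cap \add(N_K),
\]
which I would establish by proving the two inclusions separately. The inclusion ``$\subseteq$'' is the easy one: if $X \in \add(M) \cap \add(N)$, then $X$ is a direct summand of some $M^m$ and of some $N^n$, so by additivity of $-\otimes_k K$ the module $X_K$ is a direct summand of $(M_K)^m$ and of $(N_K)^n$. Hence $\add(X_K) \subseteq \add(M_K) \cap \add(N_K)$, and taking the union over all such $X$ preserves this inclusion.

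For the reverse inclusion ``$\supseteq$'' I would exhibit a single $X$ that works for every $Y$ at once, namely the maximal common summand: let $S$ be the finite set of isomorphism classes of indecomposable $\Lambda$-modules occurring as direct summands of both $M$ and $N$, and set $X \coloneqq \bigoplus_{L \in S} L$. Then $X$ is a direct summand of $M$ and of $N$, so $X \in \add(M) \cap \add(N)$, and it suffices to prove $\add(M_K) \cap \add(N_K) \subseteq \add(X_K)$. Given $Y \in \add(M_K)\cap\add(N_K)$, I would decompose $Y$ into indecomposable $\Lambda_K$-modules and treat each summand $Y_\alpha$ separately. Since $Y_\alpha$ is an indecomposable summand of $M_K$ and $M_K$ is the direct sum of the modules $L_K$ as $L$ ranges over the indecomposable summands of $M$, Krull--Schmidt over $\Lambda_K$ yields an indecomposable summand $L$ of $M$ with $Y_\alpha \in \add(L_K)$; likewise there is an indecomposable summand $L'$ of $N$ with $Y_\alpha \in \add(L'_K)$.

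The crux of the argument --- and the step I expect to require the most care --- is identifying $L$ with $L'$, so that the relevant indecomposable is in fact common to $M$ and $N$ and hence lies in $S$. This is precisely where part~(2) enters: $Y_\alpha$ is a nonzero common direct summand of $L_K$ and $L'_K$, and part~(2) forces $L \simeq L'$. Thus $L \in S$, giving $Y_\alpha \in \add(L_K) \subseteq \add(X_K)$, and summing over $\alpha$ yields $Y \in \add(X_K)$. The remaining subtleties --- that the decomposition and the summand-matching are legitimate --- are guaranteed by the Krull--Schmidt property of $\mods\Lambda_K$ together with the additivity and injectivity-on-objects of the scalar extension functor recorded in the earlier parts of this lemma.
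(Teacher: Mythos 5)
Your proof of part (6) is correct and follows essentially the same route as the paper's. The forward inclusion via additivity of $-\otimes_k K$ is the paper's argument (phrased there through part (1) after reducing to indecomposable $X$), and your reverse inclusion has exactly the paper's crux: reduce by Krull--Schmidt to an indecomposable summand $Y_\alpha$, find indecomposable $L \in \add(M)$ and $L' \in \add(N)$ with $Y_\alpha \in \add(L_K) \cap \add(L'_K)$, and invoke part (2) to force $L \simeq L'$ (the paper's $Z \simeq Z'$). The only real difference is packaging: you produce a single module $X$ (the maximal common summand) with $\add(M_K) \cap \add(N_K) \subseteq \add(X_K)$, while the paper treats only indecomposable $Y$ and asserts this ``is enough.'' Your version is actually slightly more careful here, since the left-hand side of (6) is a union of additive subcategories and need not itself be closed under direct sums, so a decomposable $Y$ must be placed inside $\add(X_K)$ for one single $X$; your choice of the maximal common summand does this cleanly.

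There is, however, an omission: the statement has six parts and your proposal proves only (6). For parts (1)--(4) this is defensible --- the paper itself does not prove them but cites Kasjan's Lemmas 2.1 and 2.5 and the Noether--Deuring theorem, and you invoke them in the same spirit as known inputs. But part (5) is genuinely proved in the paper and nothing in your proposal addresses it. It requires a short argument along the following lines: if $X$ is an indecomposable direct summand of $M$, then $X_K \in \add(M_K) = \add(N_K)$, so $X_K$ and $N_K$ share a nonzero direct summand; by part (1), $X$ is a direct summand of $N^r$ for some $r$, hence of $N$ since $X$ is indecomposable; by symmetry every indecomposable summand of $N$ is a summand of $M$, so $\add(M) = \add(N)$, and basicness of $M$ and $N$ yields $M \simeq N$. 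You should append such an argument (or an equivalent one) to make the proposal a complete proof of the stated lemma.
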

\begin{proof}
    (1)-(3) are \cite[Lem. 2.5]{Kas00}, where (2) is also known as the Noether--Deuring Theorem, see \cite[Thm. 19.25]{Lam2001}. The asserion in (4) is taken from \cite[Lem. 2.1]{Kas00}.

    (5) Let $X$ be an indecomposable direct summand of $M$. From $\add(M_K) = \add(N_K)$ we have that $X_K$ is a direct summand of $(N_K)^r$ for some $r \geq 1$. By (1), it follows that $X$ is a direct summand of $N^r$, and since $X$ is indecomposable, we further deduce that $X$ is a direct summand of $N$. Repeating this for all indecomposable direct summands of $M$, we obtain that every indecomposable direct summand of $M$ is a direct summand of $N$. Reversing the argument, we find similarly that every indecomposable direct summand of $N$ is a direct summand of $M$. It follows that $\add(M) = \add(N)$. Since $M$ and $N$ are basic, the isomorphism $M \simeq N$ follows.

     (6) To show the inclusion $(\subseteq)$, let $X \in \add(M) \cap \add(N)$ be indecomposable. Then $X$ is a direct summand of $M$ and a direct summand of $N$. It follows from (1) that $X_K$ is a direct summand of $M_K$ and of $N_K$, hence $X_K \in \add(M_K)  \cap \add(N_K)$. Thus $\add(X_K) \subseteq \add(M_K) \cap \add(N_K)$, whence the inclusion $(\subseteq)$ holds. Conversely, to show the inclusion $(\supseteq)$, let $Y \in \add(M_K) \cap \add(N_K)$ be indecomposable. It is easy to see that $Y \in \add(Z_K) \subseteq \add(M_K)$ and that $Y \in \add(Z_K') \subseteq \add(N_K)$, for some indecomposable modules $Z,Z' \in \mods \Lambda$. From (1) it follows that $Z \in \add(M)$ and that $Z' \in \add(M)$. It now follows from (2) that $Z \simeq Z'$. In conclusion, $Y \in \add (Z_K)$, where $Z \in \add(M) \cap \add(N)$, which is enough to show that the inclusion $(\supseteq)$ also holds.
\end{proof}

Most importantly, the homomorphisms and extensions between two modules behave well.

\begin{lemma}{\cite[Lem. 2.2]{Kas00}}\label{lem:Kas00.2.2}
    Let $X,Y \in \mods \Lambda$ and $i$ a nonnegative integer. Then the canonical homomorphism of $K$-vector spaces
    \begin{equation*}
         \Ext^i_{\Lambda}(X,Y) \otimes_k K \to \Ext^i_{\Lambda_K}(X_K,Y_K) 
    \end{equation*}
    is an isomorphism which is natural in both arguments.
\end{lemma}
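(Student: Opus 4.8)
The plan is to compute both sides through a projective resolution and to reduce the whole statement to the flatness of $K$ over $k$ together with base change for $\Hom$ out of projectives. First I would fix a projective resolution $\cdots \to P_1 \to P_0 \to X \to 0$ in $\mods \Lambda$ in which every $P_n$ is a finitely generated (hence finite-dimensional) projective $\Lambda$-module; such a resolution exists because $\Lambda$ is finite-dimensional, even though it need not terminate when $\gldim \Lambda = \infty$. Applying the exact functor $- \otimes_k K$ and invoking \cref{lem:Kasprojs}(4), each $(P_n)_K$ is a projective $\Lambda_K$-module, and since $K$ is flat over $k$ the complex $\cdots \to (P_1)_K \to (P_0)_K \to X_K \to 0$ stays exact. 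Thus $(P_\bullet)_K \to X_K$ is a projective resolution, so that $\Ext^i_{\Lambda_K}(X_K, Y_K) = H^i\big(\Hom_{\Lambda_K}((P_\bullet)_K, Y_K)\big)$.

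Next I would establish the base change isomorphism for $\Hom$ out of a finitely generated projective. For $P = \Lambda$ the canonical map is the identity of $Y \otimes_k K$ under the identifications $\Hom_\Lambda(\Lambda, Y) \cong Y$ and $\Hom_{\Lambda_K}(\Lambda_K, Y_K) \cong Y_K$; additivity in the first argument extends this to every finitely generated free module, and since the canonical map is natural, passing to a direct summand shows that
\[
\Hom_\Lambda(P, Y) \otimes_k K \xrightarrow{\ \sim\ } \Hom_{\Lambda_K}(P_K, Y_K)
\]
is an isomorphism for every finitely generated projective $P$. Applying this degreewise yields an isomorphism of cochain complexes $\Hom_\Lambda(P_\bullet, Y) \otimes_k K \cong \Hom_{\Lambda_K}((P_\bullet)_K, Y_K)$.

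It then remains to move $- \otimes_k K$ past cohomology. As $K$ is flat over $k$, tensoring commutes with taking the $i$-th cohomology of a complex, so that
\[
\Ext^i_\Lambda(X, Y) \otimes_k K = H^i\big(\Hom_\Lambda(P_\bullet, Y)\big) \otimes_k K \cong H^i\big(\Hom_\Lambda(P_\bullet, Y) \otimes_k K\big).
\]
Chaining this with the complex isomorphism of the previous paragraph and the computation of $\Ext^i_{\Lambda_K}(X_K, Y_K)$ above gives the desired isomorphism. To see that the resulting composite is the canonical map, I would unwind it on a cocycle: each isomorphism above is induced by applying $- \otimes_k K$ to cochains, so the composite sends the class of a cocycle $\xi$ to the class of $\xi \otimes_k K$, which is exactly how the canonical map is defined.

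Finally, for naturality: naturality in the second argument is immediate, since $\Hom_\Lambda(P_\bullet, -)$ and $- \otimes_k K$ are functorial and all identifications above respect this. Naturality in the first argument is the only delicate point, and I expect it to be the main obstacle: a morphism $X \to X'$ does not act on the chosen resolutions on the nose, so one must lift it via the comparison theorem to a chain map between projective resolutions, unique up to chain homotopy, and then check that the induced diagram commutes, the ambiguity by homotopies being harmless after passing to cohomology. The remaining content, by contrast, is a formal consequence of flatness and of the triviality of $\Hom$ base change for free modules.
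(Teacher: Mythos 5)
Your proof is correct. There is nothing in the paper to compare it against: the lemma is quoted verbatim from \cite[Lem. 2.2]{Kas00}, and the citation itself serves as the paper's proof. Your argument — resolving $X$ by finitely generated projectives, using exactness of $- \otimes_k K$ together with \cref{lem:Kasprojs}(4) to see that the extended complex is again a projective resolution of $X_K$, establishing $\Hom$ base change for finitely generated projectives by reduction to free modules and direct summands, and invoking flatness of $K$ over $k$ to commute $- \otimes_k K$ past cohomology — is the standard flat-base-change argument and is in substance the proof underlying the cited source; your identification of the composite with the canonical map on cocycles, and your handling of naturality in the first argument via the comparison theorem up to chain homotopy, are likewise the expected completions of the argument.
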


The remainder of this section recalls different types fields and field extensions which we frequently use throughout this paper. Standard notions of Galois theory will be presumed well-known to the reader. Recall that a field extension $K:k$ is a \textit{Galois extension} if it is separable, normal and algebraic. If $K:k$ is a finite, we denote its degree by $[K:k]$.

\begin{definitionproposition}\label{defprop:perfect} 
A field $k$ is said to be \textit{perfect} if the following equivalent assertions hold:
    \begin{enumerate}
        \item The field extension $\overline{k}:k$ is separable, where  $\overline{k}$ denotes an algebraic closure of $k$.
        \item Every algebraic field extension of $k$ is separable. 
        \item Every (possibly nonalgebraic) field extension $k$ is separable.
    \end{enumerate}
\end{definitionproposition}
\begin{proof}
    It is obvious that (3) $\implies$ (2) $\implies$ (1). Also, since every algebraic extension of $k$ is intermediate to $\overline{k}:k$, one easily proves that (1) $\implies$ (2), whence the first two assertions are equivalent. To complete the proof, we refer to {\cite[Thm. V.15.5.3(b)]{BourbakiAlgII}}\label{prop:Bou03.V53b}, where it is shown that (2) $\implies$ (3).
\end{proof}

For example, by \cite[Thm. IV.3]{Jac53}, all fields of characteristic zero and all finite fields are perfect. It is obvious from the definition that algebraically closed fields are perfect.

{
\begin{remark}\label{lem:nicerestriction}
    Let $K:k$ be a finite field extension. There is a functorial isomorphism 
\[(M_K)|_{\Lambda} \simeq M^{[K:k]}. \]  
Consequently, for any indecomposable direct summand $M_i \in \add (M_K)$ we have $M_i|_{\Lambda} \simeq M^{r}$ for some $r \leq [K:k]$. Consider for instance the Galois extension $\C:\R$ which satisfies $[\C: \R]=2${,} and consider $\R$ as a 1-dimensional $\R$-algebra. {It is then clear that} ${(\R \otimes_\R \C)}|_{\R} \simeq \R^2$.
\end{remark}
}

Of particular importance is the following slightly less well-known type of field extension.

\begin{definition}\label{defn:MacLaneseparable}
    Given a field extension $L:k$, let $L_1\colon k$ and $L_2\colon k$ be intermediate field extensions of $L:k$. We say that $L_1\colon k$ and $L_2\colon k$ are \textit{linearly disjoint over $k$ in $L$} if the $k$-homomorphism 
    \begin{equation*}
    \begin{tikzcd}
        L_1 \otimes_k L_2 \arrow[r,"\mu"] & L_1 L_2 \arrow[r,phantom, sloped,"\subseteq"] & L \\
        \lambda_1 \otimes \lambda_2 \arrow[r,mapsto]\arrow[u,phantom, sloped,"\in"] & \lambda_1 \lambda_2  \arrow[u,phantom, sloped,"\in"] &
    \end{tikzcd} 
    \end{equation*}
    is an isomorphism of $k$-algebras, where $L_1 L_2$ is the smallest subalgebra of $L$ containing $L_1$ and $L_2$. 
    A field extension $K:k$ is said to be \textit{MacLane separable} if either $\charac(k)=0$ or $\charac(k)>p\neq 0$ and the field extensions $K$ and $k^{p^{-1}}$ are linearly disjoint over {$k$} in $\overline{K}$. Here $k^{p^{-1}}$ denotes the subfield $\{\lambda \in {\overline{K}} : \lambda^p \in k\}$ of ${\overline{K}}$.
\end{definition}

For example, an algebraic field extension is separable if and only if it is MacLane separable \cite[Thm. IV.9]{Jac53}. In particular, a finite field extension is MacLane separable if and only if it is separable. Moreover, every separable field extension is MacLane separable \cite[p. 163, MacLane's Criterion]{Jac53}, but the converse is not true in general, see \Cref{eg:nonsep} below.

\begin{example}\label{eg:nonsep}
    There are examples of MacLane separable field extensions that are not separable. By the proceeding discussion, the larger field must be infinite and of prime characteristic. One can construct a general class of examples by fixing a field $k$ of characteristic $p\neq0$ and letting $K = k(\xi, \xi^{p^{-1}}, \xi^{p^{-2}}, \dots)$, where $\xi$ is transcendental over $k$. Then $K:k$ is MacLane separable but not separable \cite[Exercise IV.5.1]{Jac53}. The field $K$ is indeed infinite and of characteristic $p$. It happens to also be perfect, since every element in $K$ is a $p$th power.
\end{example}

\begin{theorem}{\cite[Thms. 2.4 and 3.3]{JL82}}\label{thm:JL}
Let $K:k$ be a field extension.
\begin{enumerate}
    \item\label{thm:JL1} The field extension $K:k$ is MacLane separable if and only if scalar extension along $K:k$ preserves the global dimension of all finite-dimensional $k$-algebras, which is to say that
    {the equality} $\gldim \Lambda = \gldim \Lambda_K$ {holds for all finite-dimensional $k$-algebras $\Lambda$}.
    \item\label{thm:JL2} Let $\Lambda$ be a finite-dimensional $k$-algebra. If $K:k$ is MacLane separable, then $\Lambda$ is representation finite if and only if $\Lambda_{K}$ is representation finite.
    \item\label{thm:JL3} In the setting of \eqref{thm:JL2} above, every indecomposable $\Lambda_K$-module is a direct summand of a $\Lambda_K$-module $M_K$ for some indecomposable $M \in \mods \Lambda$.
\end{enumerate}
\end{theorem}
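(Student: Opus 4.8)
The plan is to treat the three assertions in turn, handling \eqref{thm:JL1} homologically and grouping \eqref{thm:JL2}--\eqref{thm:JL3}, which both concern indecomposables. For \eqref{thm:JL1} I would first record the inequality $\gldim\Lambda\le\gldim\Lambda_K$, valid for \emph{every} extension: writing $n=\gldim\Lambda$ and choosing $X,Y\in\mods\Lambda$ with $\Ext^n_\Lambda(X,Y)\ne 0$, the isomorphism $\Ext^n_{\Lambda_K}(X_K,Y_K)\cong\Ext^n_\Lambda(X,Y)\otimes_k K$ of \cref{lem:Kas00.2.2} forces $\Ext^n_{\Lambda_K}(X_K,Y_K)\ne 0$. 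The same lemma, combined with the facts that $-\otimes_k K$ is exact and sends projectives to projectives (\cref{lem:Kasprojs}), gives $\projdim_{\Lambda_K}X_K=\projdim_\Lambda X$ for all $X$ by base-changing a projective resolution. The heart of the forward implication is the claim that, for $K:k$ MacLane separable, $(\Lambda/\rad\Lambda)\otimes_k K$ is semisimple. Granting this, $(\rad\Lambda)_K$ is a nilpotent ideal with semisimple quotient, hence equals $\rad\Lambda_K$, so $\Lambda_K/\rad\Lambda_K\cong(\Lambda/\rad\Lambda)_K$ and $\gldim\Lambda_K=\projdim_{\Lambda_K}\!\left((\Lambda/\rad\Lambda)_K\right)=\projdim_\Lambda(\Lambda/\rad\Lambda)=\gldim\Lambda$.

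It then remains to tie semisimplicity to MacLane separability. I would reduce ``$(\Lambda/\rad\Lambda)_K$ semisimple'' to the statement that $Z\otimes_k K$ is reduced for every finite field extension $Z:k$, by decomposing the semisimple algebra into matrix rings over division algebras and passing to their centres; and then invoke that MacLane separability of $K:k$ is precisely the assertion that $K\otimes_k k^{p^{-1}}$ is reduced (\cref{defn:MacLaneseparable}), equivalently that $K\otimes_k Z$ is reduced for all finite $Z:k$, the general finite case reducing to the purely inseparable one. For the converse direction of \eqref{thm:JL1} I would argue by contraposition: if $K:k$ is not MacLane separable then $\charac k=p>0$ and $K\otimes_k k^{p^{-1}}$ has a nonzero nilpotent, which by a direct-limit argument already appears over a finite purely inseparable subextension $F:k$. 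Taking $\Lambda=F$ produces a finite-dimensional $k$-algebra with $\gldim\Lambda=0$ (a field), whereas $\Lambda_K=F\otimes_k K$ is a commutative Artinian $K$-algebra that is not reduced, hence has $\gldim\Lambda_K=\infty$; this breaks the equality for a specific $\Lambda$.

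For \eqref{thm:JL2}--\eqref{thm:JL3}, one direction of \eqref{thm:JL2} needs no separability: if $\Lambda$ has infinitely many pairwise non-isomorphic indecomposables $M_i$, then by \cref{lem:intersectlift} the $(M_i)_K$ share no common indecomposable summand, so $\Lambda_K$ is representation infinite; contrapositively, $\Lambda_K$ representation finite forces $\Lambda$ representation finite. The remaining direction follows from \eqref{thm:JL3}: if $\Lambda$ is representation finite with indecomposables $M_1,\dots,M_r$ and every indecomposable $\Lambda_K$-module is a summand of some $(M_i)_K$, then $\Lambda_K$ has only finitely many indecomposables. To prove \eqref{thm:JL3} I would work through the restriction functor $(-)|_\Lambda$: for indecomposable $U\in\mods\Lambda_K$ the counit $(U|_\Lambda)\otimes_k K\to U$ is a $\Lambda_K$-linear surjection, and it suffices to split it, for then $U$ is a summand of $(U|_\Lambda)_K=\bigoplus_j(M_j)_K$ (the $M_j$ being the indecomposable summands of $U|_\Lambda$) and Krull--Schmidt places $U$ inside some $(M_j)_K$.

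The main obstacle is exactly the splitting of this counit. When $K:k$ is finite separable it splits canonically via the separability idempotent $e=\sum_i x_i\otimes y_i$ of $K\otimes_k K$: the map $s(u)=\sum_i (x_i u)\otimes y_i$ is a $\Lambda_K$-linear section, the identities $\sum_i x_i y_i=1$ and $\sum_i\lambda x_i\otimes y_i=\sum_i x_i\otimes\lambda y_i$ giving $\epsilon s=\mathrm{id}$ and $K$-linearity respectively. The difficulty is the general MacLane separable case, where $K:k$ may be infinite and no separability idempotent exists; here I expect to lean on the representation-finiteness hypothesis together with the structure of MacLane separable extensions (a separating transcendence basis, reducing to a separable algebraic extension of a rational function field) to bound and lift the finitely many indecomposables. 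This is the genuinely technical core, and it is the step I would anticipate consuming the bulk of the argument in \cite{JL82}.
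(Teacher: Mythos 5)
First, a point of comparison: the paper does not prove \cref{thm:JL} at all --- it is quoted directly from \cite[Thms.~2.4 and 3.3]{JL82} --- so your proposal has to be judged as a standalone reconstruction of Jensen--Lenzing's argument rather than against a proof in the text. On those terms, your treatment of part \eqref{thm:JL1} is essentially sound: the inequality $\gldim\Lambda\le\gldim\Lambda_K$ via \cref{lem:Kas00.2.2}, the identification $\rad\Lambda_K=(\rad\Lambda)_K$ deduced from semisimplicity of $(\Lambda/\rad\Lambda)_K$, the reduction through centres of division algebras to MacLane's criterion, and the purely inseparable counterexample $\Lambda=F$ for the converse are all correct (modulo standard facts you cite rather than prove, which is reasonable at this level). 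Likewise the direction ``$\Lambda_K$ representation finite $\Rightarrow$ $\Lambda$ representation finite'' of \eqref{thm:JL2}, via \cref{lem:intersectlift}, is correct and indeed requires no separability hypothesis.

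The genuine gap is in \eqref{thm:JL3}, and hence in the forward direction of \eqref{thm:JL2}. Your splitting of the counit $(U|_\Lambda)\otimes_k K\to U$ by a separability idempotent is valid only when $K:k$ is finite and separable. In the general MacLane separable case --- which is the entire point of the theorem, since it covers every extension of a field of characteristic zero as well as infinite extensions in positive characteristic such as the one in \cref{eg:nonsep} --- the argument breaks down at the first step: when $[K:k]=\infty$, the restriction $U|_\Lambda$ of a finite-dimensional $\Lambda_K$-module is infinite-dimensional over $k$, so it is not an object of $\mods\Lambda$, Krull--Schmidt cannot be applied to it, and $K\otimes_k K$ admits no separability idempotent. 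You acknowledge this and gesture at a reduction via separating transcendence bases to finitely generated subextensions, but you do not carry it out; note that after such a reduction the crucial remaining case is the purely transcendental one, i.e.\ showing that representation finiteness ascends along $k\subset k(t_1,\dots,t_n)$, and this is not a formality --- it is precisely where the substance of \cite[Thm.~3.3]{JL82} lies. As it stands, the proposal establishes \eqref{thm:JL1} and one half of \eqref{thm:JL2}, but not \eqref{thm:JL3}.
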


In light of the assertions in \Cref{thm:JL}, one may argue that MacLane separability interacts well with homological algebra, as can be seen by many of the other results in \cite{JL82}.
In \Cref{sec:ttfin}, we explore the behaviour of $\tau$-tilting finiteness, where the analogous statement of \Cref{thm:JL}\eqref{thm:JL2} fails to hold. Further properties in relation to the radical and the socle of a module are summarised in the following lemma.

\begin{lemma}\cite[Lem. 3.3, 3.5]{Kas00}\label{lem:Kasradsoclift}
Let $\Lambda$ be a finite-dimensional $k$-algebra. Let $K:k$ be a field extension and $M \in \mods \Lambda$. 
\begin{enumerate}
	\item\label{lem:Kasradsoclift_rad} There is an inclusion $(\rad \Lambda)_K \subseteq \rad \Lambda_K$.
	\item There is an inclusion $(\rad M)_K \subseteq \rad M_K$.
	\item There is an inclusion $(\soc M)_K \subseteq \rad M_K$.
\end{enumerate}
If $K:k$ is MacLane separable, then all of the above are equalities.
\end{lemma}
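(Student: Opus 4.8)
The plan is to handle the three inclusions first, since they hold for an arbitrary extension $K:k$ and follow from flatness of $-\otimes_k K$ together with elementary facts about radicals. For the algebra radical, I would use that over the Artinian ring $\Lambda$ the ideal $\rad\Lambda$ is the largest nilpotent ideal, say $(\rad\Lambda)^n=0$; then $(\rad\Lambda)_K$ is a two-sided ideal of $\Lambda_K$ with $((\rad\Lambda)_K)^n=((\rad\Lambda)^n)_K=0$, so it is nilpotent and hence contained in $\rad\Lambda_K$. For the module radical I would use $\rad M=M\cdot\rad\Lambda$: flatness identifies $(\rad M)_K$ with the $\Lambda_K$-submodule $M_K\cdot(\rad\Lambda)_K$ of $M_K$, and the first inclusion gives $M_K\cdot(\rad\Lambda)_K\subseteq M_K\cdot\rad\Lambda_K=\rad M_K$. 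For the socle I would use the description $\soc M=\{m\in M:m\cdot\rad\Lambda=0\}$, realized as the kernel of the action map $M\to\Hom_k(\rad\Lambda,M)$; applying the exact functor $-\otimes_k K$ identifies $(\soc M)_K$ with the annihilator of $(\rad\Lambda)_K$ in $M_K$, whereas $\soc M_K$ is the annihilator of the larger ideal $\rad\Lambda_K$. The inclusion $(\rad\Lambda)_K\subseteq\rad\Lambda_K$ therefore yields $\soc M_K\subseteq(\soc M)_K$, the socle comparison running opposite in direction to the radical one.

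Next I would observe that all three asserted equalities under MacLane separability reduce to the single statement $(\rad\Lambda)_K=\rad\Lambda_K$: granting it, the computations above immediately upgrade to $(\rad M)_K=\rad M_K$ and $\soc M_K=(\soc M)_K$, since each side is then an image or annihilator built from the same ideal. As the inclusion $(\rad\Lambda)_K\subseteq\rad\Lambda_K$ is already in hand, it suffices to prove the reverse inclusion, and for this it is enough to show that the quotient $\Lambda_K/(\rad\Lambda)_K\cong(\Lambda/\rad\Lambda)_K$ is semisimple: its radical then vanishes, forcing $\rad\Lambda_K\subseteq(\rad\Lambda)_K$. Thus the whole lemma comes down to the assertion that base change along a MacLane separable extension preserves semisimplicity of finite-dimensional algebras.

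This last assertion is where I expect the main obstacle to lie, and I see two routes. The quick one is to invoke \cref{thm:JL}\eqref{thm:JL1}: the semisimple algebra $\Lambda/\rad\Lambda$ has $\gldim=0$, so MacLane separability of $K:k$ gives $\gldim(\Lambda/\rad\Lambda)_K=0$, i.e. $(\Lambda/\rad\Lambda)_K$ is semisimple. A self-contained route, which exposes where separability genuinely enters, is to decompose $\Lambda/\rad\Lambda\cong\prod_i M_{n_i}(D_i)$ into matrix algebras over division algebras $D_i$ with centres $Z_i$ (finite field extensions of $k$), and to reduce semisimplicity of the base change to reducedness of each commutative algebra $Z_i\otimes_k K$: writing $D_i\otimes_k K=D_i\otimes_{Z_i}(Z_i\otimes_k K)$ and using that base change of a central simple algebra along a field extension of its centre stays simple, one finds that $(\Lambda/\rad\Lambda)_K$ is semisimple exactly when every $Z_i\otimes_k K$ is reduced. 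The crux is then that MacLane separability of $K:k$ forces $Z\otimes_k K$ to be reduced for every finite field extension $Z:k$; this is automatic in characteristic $0$, and in characteristic $p$ I would deduce it from the defining linear disjointness of $K$ and $k^{p^{-1}}$, via the standard fact that reducedness of $K\otimes_k Z$ may be tested after base change to the perfect closure $k^{p^{-\infty}}$, over which reducedness is preserved by arbitrary field extension. This reduction to the perfect-closure case is the technical heart of the argument.
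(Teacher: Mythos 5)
The paper does not actually prove \cref{lem:Kasradsoclift}: it is imported wholesale from \cite[Lem.~3.3, 3.5]{Kas00}, so there is no internal argument to compare you against. Judged on its own, your proof is correct and is in substance the standard one underlying Kasjan's and Jensen--Lenzing's results. The generic inclusions follow, as you say, from nilpotency of $(\rad \Lambda)_K$, from $\rad M = M \cdot \rad \Lambda$, and from the description of $\soc M$ as the annihilator of $\rad\Lambda$, all transported along the exact functor $-\otimes_k K$; and the MacLane separable case correctly reduces to the single claim that $(\Lambda/\rad\Lambda)_K \cong \Lambda_K/(\rad\Lambda)_K$ is semisimple, which forces $\rad\Lambda_K \subseteq (\rad\Lambda)_K$ and then upgrades all three inclusions to equalities. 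Your first route to semisimplicity, via \cref{thm:JL}\eqref{thm:JL1}, is the efficient one in the context of this paper and involves no circularity, since \cref{thm:JL} is itself an independent citation of \cite{JL82}; your second route (Wedderburn decomposition, base change of central simple algebras, and reducedness of $Z_i \otimes_k K$ via MacLane's criterion) is the classical self-contained argument, correctly outlined, though the final reducedness step is left as a sketch and would need the full strength of the Bourbaki/MacLane equivalence to be complete. Since route 1 already closes the argument rigorously, the proof stands.

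One substantive point in your favour: item (3) of the statement as printed, $(\soc M)_K \subseteq \rad M_K$, is a typo and is false as written (for $M$ simple, $(\soc M)_K = M_K \not\subseteq \rad M_K$, since $\rad M_K \subsetneq M_K$ by Nakayama's lemma). What you prove, namely $\soc M_K \subseteq (\soc M)_K$ with equality in the MacLane separable case, is the correct content of \cite[Lem.~3.5]{Kas00}, and your remark that the socle comparison runs opposite in direction to the radical one is exactly the point. Indeed, \cref{exmp:nonsemibrick} gives a strict inclusion $\soc \Lambda_K \subsetneq (\soc \Lambda)_K$ over a non-MacLane-separable extension, so no inclusion of the shape $(\soc M)_K \subseteq \soc M_K$ can hold in general; your annihilator argument, which derives the reversal directly from $(\rad\Lambda)_K \subseteq \rad\Lambda_K$, is the right mechanism.
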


\section{\texorpdfstring{$\tau$}{tau}-tilting theory}

The area of $\tau$-tilting theory \cite{AIR2014} can be seen as a generalisation of classical tilting theory to modules of arbitrary projective dimensions, alternatively a completion of the theory with respect to mutation. As the name suggests, the Auslander--Reiten translation $\tau$ plays a central role. In the work of \cite{Kas00}, the behaviour of almost split exact sequences of Auslander--Reiten theory under base field extension was studied. In particular, the Auslander--Reiten translations of $\mods \Lambda$ and $\mods \Lambda_K$, denoted by $\tau_\Lambda$ and $\tau_{\Lambda_K}$, respectively, are connected. We will make this statement precise in \Cref{lem:tautranslatelift}, where we also include a proof for the convenience of the reader. First, we establish a similar result for the Nakayama functors  $\nu_\Lambda: \mods \Lambda \to \mods \Lambda$ and $\nu_{\Lambda_K}: \mods \Lambda_K \to \mods \Lambda_K$, on which our proof of \Cref{lem:tautranslatelift} will depend.

\begin{lemma}\label{cor:Nakfunctlift}
    Let $K:k$ be a field extension and let $M \in \mods \Lambda$. There is an isomorphism of $\Lambda_K$-modules $(\nu_{\Lambda} M)_K \simeq \nu_{\Lambda_K} M_K$.
\end{lemma}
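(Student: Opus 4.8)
The plan is to factor the Nakayama functor as $\nu_\Lambda = D \circ \Hom_\Lambda(-,\Lambda)$, where $D \coloneqq \Hom_k(-,k)$ denotes the standard $k$-duality, and to show that each of the two constituent functors commutes with scalar extension along $K:k$. Concretely, I would produce a chain of natural isomorphisms of right $\Lambda_K$-modules
\begin{align*}
(\nu_\Lambda M)_K
&= \big(\Hom_k(\Hom_\Lambda(M,\Lambda),k)\big)_K \\
&\simeq \Hom_K\big((\Hom_\Lambda(M,\Lambda))_K,\,K\big) \\
&\simeq \Hom_K\big(\Hom_{\Lambda_K}(M_K,\Lambda_K),\,K\big)
= \nu_{\Lambda_K} M_K.
\end{align*}
The second isomorphism is the "duality commutes with base change" step, and the third is the "$\Hom$ commutes with base change" step. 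I would begin by recalling the relevant module-theoretic bookkeeping: since $M$ is a right module and $\Lambda$ is viewed as an $(\Lambda,\Lambda)$-bimodule, $\Hom_\Lambda(M,\Lambda)$ is a \emph{left} $\Lambda$-module, and applying $D=\Hom_k(-,k)$ then returns a right $\Lambda$-module, so that $\nu_\Lambda M$ is a right module as expected.

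For the $\Hom$ step, I would invoke \cref{lem:Kas00.2.2} with $i=0$ and $Y=\Lambda$, which yields a $K$-linear isomorphism $(\Hom_\Lambda(M,\Lambda))_K \simeq \Hom_{\Lambda_K}(M_K,\Lambda_K)$ (using $\Lambda_K = \Lambda\otimes_k K$ as a right module over itself). The point to verify here is that this is in fact a \emph{left} $\Lambda_K$-module isomorphism: the left $\Lambda$-action on $\Hom_\Lambda(M,\Lambda)$ is induced by post-composition with the left-multiplication maps $\ell_\lambda\colon \Lambda\to\Lambda$, which are right $\Lambda$-linear, so naturality of the isomorphism in the second argument (asserted in \cref{lem:Kas00.2.2}) upgrades it to a left-module isomorphism, which then persists after applying $D$.

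For the duality step, given a finite-dimensional left $\Lambda$-module $N$, I would write down the canonical map
\[
\theta\colon \Hom_k(N,k)\otimes_k K \longrightarrow \Hom_K(N\otimes_k K,\,K),\qquad
\psi\otimes c \longmapsto \big(n\otimes d \mapsto \psi(n)\,c\,d\big),
\]
check that it is $K$-linear and right $\Lambda_K$-equivariant with respect to the actions $(\psi\lambda)(n)=\psi(\lambda n)$ (and their base changes), and conclude it is an isomorphism by a dimension count over $K$, since both sides have $K$-dimension $\dim_k N$. I expect the main obstacle to be precisely this module-structure bookkeeping in the duality step—tracking which side each module lives on and confirming $\theta$ is equivariant—together with confirming that the naturality in \cref{lem:Kas00.2.2} genuinely produces a left-module isomorphism rather than a mere $K$-linear one; both are routine but require care. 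Composing the three displayed isomorphisms, each natural in $M$, then gives the desired isomorphism $(\nu_\Lambda M)_K \simeq \nu_{\Lambda_K} M_K$ of $\Lambda_K$-modules.
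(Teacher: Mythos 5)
Your proposal is correct and takes essentially the same route as the paper: both factor $\nu_\Lambda$ as $\Hom_k(\Hom_\Lambda(-,\Lambda),k)$ and run the identical chain of isomorphisms, the paper obtaining both the duality and the $\Hom$ base-change steps by applying \cref{lem:Kas00.2.2} twice. Your explicit treatment of the duality step (canonical map plus dimension count) and the left/right module bookkeeping merely spells out details the paper leaves implicit.
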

\begin{proof}
    The result follows from applying \cref{lem:Kas00.2.2} twice to obtain the isomorphisms and using the definition $\nu_?(-) = \Hom_k(\Hom_?(-,?),k)$. Indeed, the proof is then as follows:
    \begin{align*} 
    \nu_{\Lambda}(M) \otimes_k K &= \Hom_k(\Hom_\Lambda(M, \Lambda),k) \otimes_k K \\
    & \simeq \Hom_K(\Hom_{\Lambda} (M, \Lambda) \otimes_k K, K) \\
    & \simeq \Hom_K(\Hom_{\Lambda_K} (M_K, \Lambda_K),K) \\
    &= \nu_{\Lambda_K}(M_K).
    \end{align*}
    This completes the proof.
\end{proof}

Similarly, we establish a relationship between the Auslander--Reiten translations. 

\begin{lemma} \cite[Cor 3.6]{Kas00} \label{lem:tautranslatelift}
	Let $K:k$ be a field extension and let $M \in \mods \Lambda$. There is an isomorphism of $\Lambda_K$-modules $(\tau_{\Lambda} M)_K \simeq \tau_{\Lambda_K} M_K$.
\end{lemma}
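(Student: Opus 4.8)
The plan is to compute $\tau$ through the Nakayama functor applied to a minimal projective presentation, and to show that scalar extension carries a minimal projective presentation over $\Lambda$ to one over $\Lambda_K$. Recall that if $P_1 \xrightarrow{p} P_0 \xrightarrow{\pi} M \to 0$ is a \emph{minimal} projective presentation, then $\tau_\Lambda M$ fits into an exact sequence $0 \to \tau_\Lambda M \to \nu_\Lambda P_1 \xrightarrow{\nu_\Lambda p} \nu_\Lambda P_0$, i.e. $\tau_\Lambda M = \ker(\nu_\Lambda p)$. So the first step is to fix such a presentation of $M$ and apply the exact functor $-\otimes_k K$. Exactness produces an exact sequence $(P_1)_K \xrightarrow{p_K} (P_0)_K \xrightarrow{\pi_K} M_K \to 0$, and by \cref{lem:Kasprojs}(4) each $(P_i)_K$ is a projective $\Lambda_K$-module, so this is a projective presentation of $M_K$.

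The crux of the argument — and the step I expect to be the main obstacle — is showing that this base-changed presentation is \emph{again minimal}, since minimality is exactly what makes the $\nu$-construction compute $\tau$ rather than $\tau$ plus spurious injective summands. Minimality of the original presentation means that $\ker\pi \subseteq \rad P_0$ and $\ker p \subseteq \rad P_1$ (equivalently, that $\pi$ and the induced surjection $P_1 \twoheadrightarrow \ker\pi$ are projective covers). The key observation is that both conditions are of the form \enquote{kernel contained in radical of source}, and both are preserved under scalar extension: exactness of $-\otimes_k K$ gives $\ker(\pi_K) = (\ker\pi)_K$ and $\ker(p_K) = (\ker p)_K$, while \cref{lem:Kasradsoclift}(2) gives the inclusions $(\rad P_0)_K \subseteq \rad (P_0)_K$ and $(\rad P_1)_K \subseteq \rad (P_1)_K$. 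Chaining these yields $\ker(\pi_K) = (\ker\pi)_K \subseteq (\rad P_0)_K \subseteq \rad (P_0)_K$ and likewise $\ker(p_K) \subseteq \rad (P_1)_K$. What I want to emphasise here is that only the \emph{inclusion} direction of the radical lemma is needed, and it points the right way; this is precisely why minimality survives even though $\rad$ does not commute with scalar extension in general, and hence why the lemma holds for \emph{arbitrary} field extensions rather than only MacLane separable ones.

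Having established minimality, I would finish by applying the exact functor $-\otimes_k K$ to the defining sequence of $\tau_\Lambda M$, obtaining an exact sequence
\[
0 \to (\tau_\Lambda M)_K \to (\nu_\Lambda P_1)_K \xrightarrow{(\nu_\Lambda p)_K} (\nu_\Lambda P_0)_K,
\]
so that $(\tau_\Lambda M)_K \simeq \ker\bigl((\nu_\Lambda p)_K\bigr)$. Then I would invoke \cref{cor:Nakfunctlift} to identify $(\nu_\Lambda P_i)_K \simeq \nu_{\Lambda_K}(P_i)_K$; since that isomorphism is natural in its argument (it is built from the natural isomorphism of \cref{lem:Kas00.2.2}), these identifications commute with the maps, so $(\nu_\Lambda p)_K$ corresponds to $\nu_{\Lambda_K}(p_K)$ and the kernels agree. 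Because $(P_1)_K \xrightarrow{p_K} (P_0)_K \to M_K \to 0$ is a minimal projective presentation over $\Lambda_K$, the module $\ker\bigl(\nu_{\Lambda_K}(p_K)\bigr)$ is by definition $\tau_{\Lambda_K} M_K$, giving the desired isomorphism $(\tau_\Lambda M)_K \simeq \tau_{\Lambda_K} M_K$. The remaining verifications (naturality of the Nakayama identification and the standard equivalence between minimality and the two cover conditions) are routine and I would only state them.
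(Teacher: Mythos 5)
Your proposal is correct and follows essentially the same route as the paper's proof: fix a minimal projective presentation, use \cref{lem:Kasprojs}(4) and the inclusion of \cref{lem:Kasradsoclift}(2) to see that the base-changed presentation is again minimal and projective, and then identify the $\tau$-parts via \cref{cor:Nakfunctlift}. The only cosmetic difference is that the paper compares the two four-term Nakayama sequences and invokes the Five Lemma, whereas you extract the kernels directly using naturality of the identification $(\nu_\Lambda -)_K \simeq \nu_{\Lambda_K}(-)_K$ — the same argument in slightly different packaging, and your explicit remark that only the inclusion direction of the radical lemma is needed correctly explains why the statement holds for arbitrary field extensions.
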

\begin{proof}
	Let $P^1 \xrightarrow{p^1} P^0 \xrightarrow{p^0} M \to 0$ be a minimal projective presentation of $M$ in $\mods \Lambda$. By \cref{lem:Kasprojs}(4), $P_K^1$ and $P_K^0$ are projective $\Lambda_K$-modules{,} and by \cref{lem:Kasradsoclift}(2) the exact sequence $P_K^1 \xrightarrow{p_K^1} P_K^0 \xrightarrow{p_K^0} M_K \to 0$ is a minimal projective presentation of $M_K$ in $\mods \Lambda_K$. After applying the (exact) Nakayama functor to these sequences, consider the following diagram:
\[
\begin{tikzcd}
	0 \arrow[r] & \tau_{\Lambda_K} M_K \arrow[r]  \arrow[d, dashed] &\nu_{\Lambda_K} P_K^0 \arrow[r]  \arrow[d, "\simeq"] &\nu_{\Lambda_K} P_K^1 \arrow[r]  \arrow[d, "\simeq"] &\nu_{\Lambda_K} M_K \arrow[r]  \arrow[d, "\simeq"] & 0 \\
	0 \arrow[r] & (\tau_{\Lambda} M)_K \arrow[r] &(\nu_{\Lambda} P^0)_K \arrow[r] &(\nu_{\Lambda} P^1)_K \arrow[r] &(\nu_{\Lambda} M)_K \arrow[r] & 0 
\end{tikzcd}
\]
The three maps on the right are isomorphisms by \cref{cor:Nakfunctlift}, and since we have applied Nakayama functors to minimal projective presentations, the relevant squares commute. By exactness and the Five Lemma, there is an induced isomorphism $\tau_{\Lambda_K} M_K \simeq (\tau_\Lambda M)_K$, as required.
\end{proof}

We now introduce the central objects of $\tau$-tilting theory.

\begin{definition}
    Let $M \in \mods \Lambda$ and $P \in \proj \Lambda$.
    \begin{enumerate}
        \item We say $M$ is \textit{$\tau$-rigid} if $\Hom(M, \tau_{\Lambda} M)=0$. If additionally $|M|=|\Lambda|$ then we say $M$ is \textit{$\tau$-tilting}.
        \item We say that the pair $(M,P)$ is \textit{$\tau$-rigid} if {$M$ is a $\tau$-rigid $\Lambda$-module, $P$ is a projective $\Lambda$-module} and $\Hom(P,M)=0$. If additionally $|M|+|P|=|\Lambda|$ then we say $(M,P)$ is a \textit{support $\tau$-tilting pair}. {In this case, the $\Lambda$-module $M$ is called \textit{support $\tau$-tilting}.}
        \item We say that $\Lambda$ is \textit{$\tau$-tilting finite} if it only admits finitely many isomorphism classes of indecomposable $\tau$-rigid modules. Otherwise it is said to be \textit{$\tau$-tilting infinite}.
    \end{enumerate}
\end{definition}

We denote by $\taurigid \Lambda$ the collection of basic $\tau$-rigid $\Lambda$-modules and by $\staurigid \Lambda$ the collection of {basic} $\tau$-rigid pairs $(M,P)$ of $\mods \Lambda${, that is,} such that $M$ and $P$ are basic. We denote the set of basic support $\tau$-tilting pairs by $\stautilt \Lambda$.
The following result extends \cite[Prop. 6.6(d)]{DIJ2019}.
\begin{lemma}\label{lem:taurigidlift}
    Let $K:k$ be a field extension, let $M \in \mods \Lambda$ and let $P \in \proj \Lambda$. Then $M$ is $\tau$-rigid if and only if $M_K$ is $\tau$-rigid. Moreover, $(M,P)$ is a $\tau$-rigid pair if and only if $(M_K, P_K)$ is a $\tau$-rigid pair.
\end{lemma}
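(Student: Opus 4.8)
The plan is to reduce everything to the two key naturality results already established, namely \cref{lem:Kas00.2.2} (which gives $\Ext^i_\Lambda(X,Y)\otimes_k K \simeq \Ext^i_{\Lambda_K}(X_K,Y_K)$, in particular for $\Hom$ when $i=0$) and \cref{lem:tautranslatelift} (which gives $(\tau_\Lambda M)_K \simeq \tau_{\Lambda_K} M_K$). The whole statement is really an exercise in showing that the two defining vanishing conditions for a $\tau$-rigid pair---namely $\Hom(M,\tau_\Lambda M)=0$ and $\Hom(P,M)=0$---transfer back and forth along $-\otimes_k K$.

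First I would treat $\tau$-rigidity of the module $M$. Using \cref{lem:tautranslatelift} there is an isomorphism $(\tau_\Lambda M)_K \simeq \tau_{\Lambda_K} M_K$, so applying the $i=0$ case of \cref{lem:Kas00.2.2} to the pair $(M,\tau_\Lambda M)$ yields a chain of $K$-vector space isomorphisms
\[
\Hom_\Lambda(M,\tau_\Lambda M)\otimes_k K \simeq \Hom_{\Lambda_K}(M_K,(\tau_\Lambda M)_K) \simeq \Hom_{\Lambda_K}(M_K,\tau_{\Lambda_K}M_K).
\]
Since $-\otimes_k K$ is faithfully flat, the left-hand side vanishes if and only if $\Hom_\Lambda(M,\tau_\Lambda M)=0$; hence $\Hom_\Lambda(M,\tau_\Lambda M)=0$ if and only if $\Hom_{\Lambda_K}(M_K,\tau_{\Lambda_K}M_K)=0$, which is exactly the equivalence ``$M$ is $\tau$-rigid $\iff$ $M_K$ is $\tau$-rigid''.

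For the pair statement I would add two further ingredients. By \cref{lem:Kasprojs}(4), $P\in\proj\Lambda$ implies $P_K\in\proj\Lambda_K$, so the projectivity half of the pair condition is automatically preserved; and the remaining orthogonality condition $\Hom(P,M)=0$ transfers by the identical argument, applying the $i=0$ case of \cref{lem:Kas00.2.2} to the pair $(P,M)$ to get $\Hom_\Lambda(P,M)\otimes_k K \simeq \Hom_{\Lambda_K}(P_K,M_K)$ and invoking faithful flatness once more. Combining the $\tau$-rigidity equivalence for $M$ with these two observations gives that $(M,P)$ is a $\tau$-rigid pair precisely when $(M_K,P_K)$ is.

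I do not expect any genuine obstacle here, since all the heavy lifting---the compatibility of $\tau$ and of $\Ext/\Hom$ with scalar extension---is already done in the cited lemmas and both directions fall out symmetrically from faithful flatness. The only point requiring a little care is that the naturality in \cref{lem:Kas00.2.2} should be used so that the $\Hom$-groups are identified as $K$-vector spaces (so that the tensor-up $\Hom_\Lambda(-,-)\otimes_k K$ is the object whose vanishing is equivalent, via faithful flatness, to that of $\Hom_\Lambda(-,-)$); otherwise the argument is routine.
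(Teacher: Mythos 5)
Your proof is correct and follows essentially the same route as the paper: the same chain of isomorphisms $\Hom_\Lambda(M,\tau_\Lambda M)\otimes_k K \simeq \Hom_{\Lambda_K}(M_K,(\tau_\Lambda M)_K) \simeq \Hom_{\Lambda_K}(M_K,\tau_{\Lambda_K}M_K)$ via \cref{lem:Kas00.2.2} and \cref{lem:tautranslatelift}, followed by the same application of \cref{lem:Kas00.2.2} to the condition $\Hom_\Lambda(P,M)=0$. Your explicit appeals to faithful flatness (for the vanishing equivalence) and to \cref{lem:Kasprojs}(4) (for preservation of projectivity) only spell out details the paper leaves implicit.
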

\begin{proof}
    The first statement follows directly from the sequence of isomorphisms
    \[ \Hom_\Lambda(M, \tau_\Lambda M) \otimes_k K \simeq \Hom_{\Lambda_K} (M_K, (\tau_\Lambda M)_K) \simeq \Hom_{\Lambda_K}(M_K, \tau_{\Lambda_K} M_K) \]
    where the first isomorphism is \cref{lem:Kas00.2.2} and the second follows from \cref{lem:tautranslatelift}. Moreover, \cref{lem:Kas00.2.2} similarly implies that $\Hom_\Lambda(P,M)=0$ if and only if $\Hom_{\Lambda_K}(P_K,M_K)${,} which gives the desired result.
\end{proof}

Since the number of indecomposable direct summands of modules may be difficult to control under field extension, it is necessary to take a more homological approach to understand (support) $\tau$-tilting modules and support $\tau$-tilting pairs. 

\begin{definition}
    Let $K^b(\proj \Lambda)$ denote the bounded homotopy category of $\proj \Lambda$, and for $i\in \Z$, let $[i]$ denote the $i$th power of the suspension functor in $K^b(\proj \Lambda)$.
    Let $P^\bullet \in K^b(\proj \Lambda)$.
    \begin{enumerate}
        \item We say that $P^\bullet$ is \textit{presilting} if $\Hom(P^\bullet,P^\bullet[i])=0$ for all $i>0$.
        \item We say that $P^\bullet$ is \textit{silting} if it is presilting and $\thick(P^\bullet) = K^b(\proj \Lambda)$.
    \end{enumerate}
    Finally, $P^\bullet$ is called \textit{2-term} if it is isomorphic to an object $(P^i,d^i)$ in $K^b(\proj \Lambda)$ such that $P^i = 0$ for $i \neq -1,0$.
\end{definition}

Denote the collection of basic 2-term presilting and basic silting objects of $K^b(\proj \Lambda)$ by $\twopresilt \Lambda$ and $\twosilt \Lambda$ respectively. The derived scalar extension functor $- \otimes_k^{\mathbf{L}} K: K^b(\proj \Lambda) \to K^b(\proj \Lambda_K)$ behaves nicely with respect to presilting and silting complexes.

\begin{lemma}(see also \cite[§2.4]{IK2024}) \label{lem:siltlift}
    Let $K:k$ be a field extension. If $P^\bullet \in K^b(\proj \Lambda)$ is a presilting object, then $P_K^\bullet \in K^b(\proj \Lambda_K)$ is a presilting object. Moreover, if $P^\bullet$ is a silting object, then $P_K^\bullet$ is a silting object.
\end{lemma}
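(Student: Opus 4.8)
The plan is to reduce both assertions to the interplay between the total Hom complex and the exact, faithful functor $- \otimes_k K$, together with the fact that the latter induces a triangulated functor on homotopy categories of projectives.

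First I would establish the key computational fact that taking morphisms in the homotopy category commutes with scalar extension. For bounded complexes $P^\bullet, Q^\bullet \in K^b(\proj\Lambda)$ of finite-dimensional projectives, recall that $\Hom_{K^b(\proj\Lambda)}(P^\bullet, Q^\bullet[n])$ is the $n$-th cohomology of the total Hom complex $\mathcal{H}om^\bullet_\Lambda(P^\bullet, Q^\bullet)$, whose degree-$n$ term is $\prod_p \Hom_\Lambda(P^p, Q^{p+n})$. Applying \cref{lem:Kas00.2.2} in homological degree $0$ (where $\Ext^0 = \Hom$), together with its naturality in both arguments, identifies the base change $\mathcal{H}om^\bullet_\Lambda(P^\bullet, Q^\bullet) \otimes_k K$ termwise, and compatibly with the differentials, with $\mathcal{H}om^\bullet_{\Lambda_K}(P_K^\bullet, Q_K^\bullet)$; here I use \cref{lem:Kasprojs}(4) to know that $P_K^p, Q_K^p \in \proj\Lambda_K$ so that the right-hand complex is the correct one. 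Since $- \otimes_k K$ is exact, it commutes with the formation of cohomology, and I thereby obtain a natural isomorphism
\[
\Hom_{K^b(\proj\Lambda)}(P^\bullet, Q^\bullet[n]) \otimes_k K \simeq \Hom_{K^b(\proj\Lambda_K)}(P_K^\bullet, Q_K^\bullet[n]).
\]
The presilting statement is then immediate: taking $Q^\bullet = P^\bullet$ and $n = i > 0$, the left-hand side equals $0 \otimes_k K = 0$, so $\Hom_{K^b(\proj\Lambda_K)}(P_K^\bullet, P_K^\bullet[i]) = 0$ for all $i > 0$, which is exactly presilting of $P_K^\bullet$.

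For the silting statement I would argue via thick subcategories. The functor $- \otimes_k^{\mathbf{L}} K$ is additive, commutes with the shift, and sends mapping cones to mapping cones (these being formed termwise), hence it is triangulated; in particular it satisfies $F(\thick(X)) \subseteq \thick(F(X))$ for any object $X$. Since the silting hypothesis gives $\Lambda \in K^b(\proj\Lambda) = \thick(P^\bullet)$, applying $F$ yields $\Lambda_K \in \thick(P_K^\bullet)$. On the other hand, $\thick(\Lambda_K) = K^b(\proj\Lambda_K)$ holds in general, because $\proj\Lambda_K = \add(\Lambda_K)$ is contained in $\thick(\Lambda_K)$. Combining these, $K^b(\proj\Lambda_K) = \thick(\Lambda_K) \subseteq \thick(P_K^\bullet) \subseteq K^b(\proj\Lambda_K)$, forcing equality and showing that $P_K^\bullet$ is silting.

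I expect the main obstacle to be the bookkeeping in the first step: verifying that the isomorphisms of \cref{lem:Kas00.2.2}, which are a priori only term-by-term, assemble into an isomorphism of complexes intertwining the Hom-complex differentials. This is precisely where naturality in both arguments is required. Once that compatibility is in place, the commutation of cohomology with the exact functor $- \otimes_k K$ and the triangulated-functor argument for the silting part are entirely formal.
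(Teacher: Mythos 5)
Your proposal is correct and follows essentially the same route as the paper: the presilting part is the adaptation of \cref{lem:Kas00.2.2} to morphisms $\Hom(-,-[i])$ in the homotopy category (you simply carry out in detail, via the total Hom complex and exactness of $- \otimes_k K$, what the paper invokes in one line), and the silting part is the identical argument that $- \otimes_k^{\mathbf{L}} K$ is a triangle functor, so $\Lambda \in \thick(P^\bullet)$ implies $\Lambda_K \in \thick(P_K^\bullet)$, which generates all of $K^b(\proj \Lambda_K)$.
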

\begin{proof}
    The first part of the statement follows directly from the adaptation of \cref{lem:Kas00.2.2} to this setting, where $\Ext^i(-,-)$ becomes $\Hom(-,-[i])$. If $P^\bullet \in K^b(\proj \Lambda)$ is silting, then $\Lambda \in \thick(P^\bullet) \subseteq K^b(\proj \Lambda)$. Since $- \otimes_k^{\mathbf{L}} K$ is a triangle functor, it follows that $\Lambda_K \in \thick(P_K^\bullet) \subseteq K^b(\proj \Lambda_K)$ which implies that $P_K^\bullet$ is silting in $K^b(\proj \Lambda_K)$. 
\end{proof}

The study of 2-term (pre)silting objects is the main focus of \cite{DerksenFei2015}. By \cite[Thm. 3.2]{AIR2014} the connection between 2-term (pre)silting objects and $\tau$-rigid pairs is given by the following mutually inverse maps:
\[
\begin{tikzcd}
    \stautilt \Lambda \arrow[r, "F", shift left] & \twosilt \Lambda{,} \arrow[l, "G", shift left]
\end{tikzcd}
\]
where $F(M,P) = (P^1 \oplus P \xrightarrow{(f\quad 0)} P^0)$ and $P_1 \xrightarrow{f} P_0 \to M$ is a minimal projective presentation of $M$, \\
and $G(P^\bullet) = (H^0(P),(P_1)'')$, where we decompose $P^\bullet =P^1 \xrightarrow{d} P^0$ as $P^\bullet =  (P^1)' \oplus (P^1)'' \xrightarrow{(d' \quad 0)} P^0$ with $d'$ right minimal, see \cite[Prop. 3.6(b)]{AIR2014}.
This relationship commutes with taking field extensions.

\begin{lemma}\label{cor:tautiltlift}
    Let $K:k$ be a field extension. If $(M,P)$ is a $\tau$-tilting pair in $\mods \Lambda$, then $(M_K, P_K)$ is a $\tau$-tilting pair in $\mods \Lambda_{K}$. {In particular}, if $M$ is a $\tau$-tilting module in $\mods \Lambda$, then $M_K$ is a $\tau$-tilting module in $\mods \Lambda_K$.
\end{lemma}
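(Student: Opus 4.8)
The plan is to circumvent the maximality condition in the definition of a support $\tau$-tilting pair, namely $|M_K|+|P_K|=|\Lambda_K|$, which cannot be verified directly: the number of indecomposable summands of a module is not controlled under scalar extension, and multiplicities may even be introduced. Instead I would pass to the homological description via $2$-term silting objects recalled above, where this maximality is encoded by the silting property and is therefore preserved by \cref{lem:siltlift}. The $\tau$-rigidity of $(M_K,P_K)$ is immediate from \cref{lem:taurigidlift}, so the whole difficulty lies in maximality.

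First I would set $P^\bullet \coloneqq F(M,P)$, so that $P^\bullet = (P^1 \oplus P \xrightarrow{(f\ 0)} P^0)$, where $P^1 \xrightarrow{f} P^0 \to M \to 0$ is a minimal projective presentation of $M$. Since $(M,P)$ is a support $\tau$-tilting pair, $P^\bullet$ lies in $\twosilt\Lambda$. Applying the derived scalar extension functor yields the complex $P_K^\bullet = ((P^1)_K \oplus P_K \xrightarrow{(f_K\ 0)} (P^0)_K)$, which is again concentrated in degrees $-1$ and $0$, hence $2$-term, and which is silting by \cref{lem:siltlift}; thus $P_K^\bullet \in \twosilt\Lambda_K$. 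Applying the inverse map $G$ for the algebra $\Lambda_K$ then returns a support $\tau$-tilting pair $G(P_K^\bullet)$ in $\mods\Lambda_K$.

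The crux is to identify $G(P_K^\bullet)$ with $(M_K,P_K)$. Here I would invoke the fact, established in the proof of \cref{lem:tautranslatelift} via \cref{lem:Kasradsoclift}(2) and \cref{lem:Kasprojs}(4), that the minimal projective presentation of $M$ scalar-extends to a minimal projective presentation $(P^1)_K \xrightarrow{f_K} (P^0)_K \to M_K \to 0$ of $M_K$; in particular $f_K$ is right minimal and $P_K$ is projective. Consequently the right-minimal decomposition of the differential $(f_K\ 0)$ that enters the definition of $G$ splits off exactly the projective summand $P_K$, yielding $H^0(P_K^\bullet) = \coker(f_K) \simeq (\coker f)_K \simeq M_K$ by exactness of $-\otimes_k K$, together with projective part $P_K$. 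Hence $G(P_K^\bullet) = (M_K,P_K)$, which is therefore a support $\tau$-tilting pair in $\mods\Lambda_K$. The ``in particular'' assertion follows by specialising to $P = 0$: then $(M,0)$ is support $\tau$-tilting, so $(M_K,0)$ is as well, and the equality $|M_K| = |\Lambda_K|$ exhibits $M_K$ as $\tau$-tilting. I expect the principal obstacle to be the verification that $G$ commutes with scalar extension, which amounts to checking that the right-minimal decomposition and the cokernel used to define $G$ are preserved; both reduce to the preservation of minimal projective presentations recorded in the cited lemmas.
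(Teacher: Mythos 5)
Your proposal is correct and follows essentially the same route as the paper: pass through the AIR bijection with $2$-term silting objects, use \cref{lem:siltlift} to preserve the silting property, and use \cref{lem:Kasradsoclift}(2) to preserve minimal projective presentations. The only difference is that you spell out the identification $G(P_K^\bullet) = (M_K,P_K)$ (via right-minimality of $f_K$ and exactness of $-\otimes_k K$), which the paper dispatches with the word ``clearly.''
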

\begin{proof}
    Let $P^1 \xrightarrow{d} P^0 \to M \to 0$ be a minimal projective presentation of $M$. By \cref{lem:Kasradsoclift}(2), $P_K^1 \xrightarrow{d_K} P_K^0 \to M_K \to 0$ is a minimal projective presentation of $M_K$. Since $(M,P)$ is $\tau$-tilting, \cite[Thm. 3.2]{AIR2014} states that the corresponding 2-term complex $P^1 \oplus P \xrightarrow{(d \quad 0)} P^0$ is a silting object of $K^b(\proj \Lambda)$. By \cref{lem:siltlift}, the complex $P_K^1 \oplus P_K \xrightarrow{(d_K \quad 0)} P_K^0$ is a silting object in $K^b(\proj \Lambda_K)$. Moreover, its corresponding $\tau$-tilting pair is clearly $(M_K,P_K)$, completing the proof.
\end{proof}

As a corollary of the above, we obtain a similar result for classical tilting modules. Recall that $M \in \mods \Lambda$ is a \textit{tilting module} if $\projdim_\Lambda M \leq 1$, $\Ext_\Lambda^1(M,M)=0$ and $|M|=|\Lambda|$. 
\begin{corollary}\label{lem:tiltinglift}
	Let $K:k$ be a field extension and let $M \in \mods \Lambda$ be a tilting $\Lambda$-module. Then $M_K \in \mods \Lambda_K$ is a tilting $\Lambda_K$-module.
\end{corollary}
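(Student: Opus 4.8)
The plan is to deduce the corollary from \cref{cor:tautiltlift}, using the classical fact recalled in the introduction that the (classical) tilting modules are exactly the $\tau$-tilting modules of projective dimension at most one \cite[Cor.~IV.4.7]{ARS1995}. The advantage of routing the argument through $\tau$-tilting modules is that the delicate condition on the number of indecomposable summands, $|M_K| = |\Lambda_K|$, is then supplied automatically: it is part of the definition of a $\tau$-tilting module over $\Lambda_K$, and \cref{cor:tautiltlift} guarantees that $M_K$ satisfies it. Thus the proof never has to count indecomposable summands over $\Lambda_K$ directly, which is the point at which a naive argument would break down.

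Concretely, I would first observe that the tilting module $M$ is in particular a $\tau$-tilting $\Lambda$-module, by the cited equivalence. Hence \cref{cor:tautiltlift} applies and shows that $M_K$ is a $\tau$-tilting $\Lambda_K$-module; in particular $\Hom_{\Lambda_K}(M_K, \tau_{\Lambda_K} M_K) = 0$ and $|M_K| = |\Lambda_K|$. It then remains only to verify that $\projdim_{\Lambda_K} M_K \le 1$. For this I would fix a projective resolution $0 \to P^1 \to P^0 \to M \to 0$ with $P^0, P^1 \in \proj \Lambda$, which exists since $\projdim_\Lambda M \le 1$. Because $- \otimes_k K$ is exact and sends projectives to projectives by \cref{lem:Kasprojs}(4), the sequence $0 \to P^1_K \to P^0_K \to M_K \to 0$ is a projective resolution of $M_K$, so $\projdim_{\Lambda_K} M_K \le 1$. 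Being a $\tau$-tilting module of projective dimension at most one, $M_K$ is therefore a tilting $\Lambda_K$-module, again by \cite[Cor.~IV.4.7]{ARS1995}. (If one prefers to check the classical conditions directly, the vanishing $\Ext^1_{\Lambda_K}(M_K, M_K) \cong \Ext^1_\Lambda(M,M) \otimes_k K = 0$ follows from \cref{lem:Kas00.2.2}, while $|M_K| = |\Lambda_K|$ and $\projdim \le 1$ are as above.)

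The only genuinely subtle point is the equality $|M_K| = |\Lambda_K|$, since an indecomposable module can decompose further under base field extension and so the number of summands is not controlled in general. This is precisely the difficulty that is sidestepped by invoking \cref{cor:tautiltlift} rather than arguing solely with the classical tilting conditions; once the correct count is known, everything else reduces to exactness of the scalar extension functor together with the preservation of projectivity in \cref{lem:Kasprojs}(4).
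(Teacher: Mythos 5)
Your proof is correct and follows essentially the same route as the paper: both arguments identify tilting modules with $\tau$-tilting modules of projective dimension at most one via \cite[Cor.~IV.4.7]{ARS1995}, invoke \cref{cor:tautiltlift} to see that $M_K$ is $\tau$-tilting, and then bound $\projdim_{\Lambda_K} M_K$. The only (harmless) difference is that the paper cites \cite[Prop.~2.1(i)]{JL82} for the preservation of projective dimension, whereas you verify it directly by applying the exact functor $-\otimes_k K$ to a length-one projective resolution together with \cref{lem:Kasprojs}(4).
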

\begin{proof}
	By \cite[Cor. IV.4.7]{ARS1995}, if $\projdim_\Lambda M \leq 1$, then $\Ext_\Lambda^1(M,M)= 0$ if and only if $\Hom_\Lambda(M,\tau M)=0$. Consequently, tilting modules are exactly $\tau$-tilting modules of projective dimension at most 1. By \cite[Prop. 2.1(i)]{JL82} the $\Lambda_K$-module $M_K$ is such that
	\[ \projdim_{\Lambda_K} M_K = \projdim_\Lambda M \leq 1. \]
	Together with \cref{cor:tautiltlift}, this means that $M_K$ is a $\tau$-tilting $\Lambda_K$-module of projective dimension at most 1, and hence tilting as required. 
\end{proof}

Recall that a $k$-algebra $\Lambda$ is called \textit{tilted} provided that there is a hereditary $k$-algebra $\Gamma$ and a tilting $\Gamma$-module $T \in \mods \Gamma$ such that $\Lambda \simeq \End_\Gamma(T)^{\op}$. Much of the representation theory of tilted algebras can be understood by understanding the original hereditary algebra. We refer to \cite{Bon81, HappelRingelTilted1982,HandbookTilting} for thorough background on tilting theory.
\begin{lemma}\label{lem:tiltedlift}
	Let $K:k$ be a MacLane separable field extension. If $\Lambda$ is a tilted $k$-algebra, then $\Lambda_K$ is a tilted $K$-algebra.
\end{lemma}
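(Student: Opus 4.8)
The plan is to lift the hereditary algebra and tilting module witnessing that $\Lambda$ is tilted, and then to check that endomorphism algebras behave well under scalar extension. Since $\Lambda$ is tilted, fix a hereditary $k$-algebra $\Gamma$ and a tilting $\Gamma$-module $T$ with $\Lambda \simeq \End_\Gamma(T)^{\op}$. The first step is to observe that $\Gamma_K$ is again hereditary: being hereditary means $\gldim \Gamma \leq 1$, and because $K:k$ is MacLane separable, \cref{thm:JL}\eqref{thm:JL1} gives $\gldim \Gamma_K = \gldim \Gamma \leq 1$. This is the one place where the MacLane separability hypothesis is genuinely needed; without it the global dimension could jump, and $\Gamma_K$ would fail to be hereditary.

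The second step is to note that $T_K$ is a tilting $\Gamma_K$-module, which is precisely \cref{lem:tiltinglift} applied to the hereditary algebra $\Gamma$ and its tilting module $T$. It then remains to identify the endomorphism algebra of $T_K$ with the scalar extension of the endomorphism algebra of $T$. Here the key input is \cref{lem:Kas00.2.2} with $i = 0$ and $X = Y = T$, which furnishes a $K$-linear isomorphism
\[
\End_\Gamma(T) \otimes_k K \xrightarrow{\;\sim\;} \End_{\Gamma_K}(T_K).
\]

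The point requiring care—and the main, if mild, obstacle—is that this isomorphism must be upgraded from one of $K$-vector spaces to one of $K$-algebras. This follows from the explicit form of the canonical map underlying \cref{lem:Kas00.2.2}, which sends an elementary tensor $f \otimes \lambda$ to $\lambda \cdot (f \otimes_k K)$; since $(g \otimes_k K) \circ (f \otimes_k K) = (g \circ f) \otimes_k K$, the map respects composition and is therefore an algebra homomorphism, hence an algebra isomorphism. Combining this with the elementary fact that forming opposite algebras commutes with scalar extension, namely $\bigl(\End_\Gamma(T)^{\op}\bigr)_K \simeq \bigl(\End_\Gamma(T) \otimes_k K\bigr)^{\op}$ (using that $K$ is commutative), we obtain
\[
\Lambda_K \simeq \bigl(\End_\Gamma(T)^{\op}\bigr)_K \simeq \bigl(\End_{\Gamma_K}(T_K)\bigr)^{\op}.
\]
Since $\Gamma_K$ is hereditary and $T_K$ is a tilting $\Gamma_K$-module, this exhibits $\Lambda_K$ as the opposite endomorphism algebra of a tilting module over a hereditary $K$-algebra, that is, as a tilted $K$-algebra, completing the proof.
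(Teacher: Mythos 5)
Your proof is correct and takes essentially the same route as the paper's: lift the hereditary algebra $\Gamma$ and tilting module $T$, use \cref{thm:JL}\eqref{thm:JL1} to see that $\Gamma_K$ is hereditary, \cref{lem:tiltinglift} to see that $T_K$ is tilting, and \cref{lem:Kas00.2.2} to identify $\End_{\Gamma_K}(T_K)$ with $\End_{\Gamma}(T)\otimes_k K$. The only difference is that where the paper cites \cite[Lem. 2.3]{Kas00} for the compatibility of scalar extension with opposite algebras, you verify this, together with the multiplicativity of the canonical isomorphism, by hand — a welcome but minor elaboration of a step the paper leaves to the reference.
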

\begin{proof}
	Assume that $\Lambda \simeq \End_{\Gamma}(T)^{\op}$ for some hereditary $k$-algebra $\Gamma$ and some tilting $\Gamma$-module $T \in \mods \Gamma$. Then 
	\begin{align*}
		\Lambda_K & \simeq (\End_{\Gamma}(T)^{\op})_K \\
		& \simeq ((\End_{\Gamma}(T))_K)^{\op} & \text{(by \cite[Lem. 2.3]{Kas00})} \\
		& \simeq \End_{\Gamma_K}(T_K)^{\op} & \text{(by \cref{lem:Kas00.2.2})}
	\end{align*}
	Since $\Gamma$ is hereditary, it follows that $\Gamma_K$ is hereditary since its global dimension is preserved by \Cref{thm:JL}\eqref{thm:JL1}. Moreover, by \cref{lem:tiltinglift}, $T_K$ is a tilting $\Gamma_K$-module, whence $\Lambda_K$ is a tilted $K$-algebra as required.
\end{proof}

Recall that a \textit{torsion class} $\calT \subseteq \mods \Lambda$ is a full subcategory which is closed under factor modules and extensions \cite{Dickson66}. Dually, a \textit{torsion-free class} $\calF \subseteq \mods \Lambda$ is a full subcategory which is closed under submodules and extensions. Denote the set of torsion classes (resp. torsion-free classes) of $\mods \Lambda$ by $\tors \Lambda$ (resp. $\torf \Lambda$). In fact, $\tors \Lambda$ and $\torf \Lambda$ form lattices under inclusion. For a full subcategory $\calX$ of $\mods \Lambda$, we define its \textit{right Hom-orthogonal} by
\[\calX^{\perp} \coloneqq \{Y\in \mods \Lambda \colon \Hom_{\Lambda}(X,Y)=0 \quad \forall X\in\calX \}.\]
Taking the Hom-orthogonal $(-)^{\perp}$ provides an anti-isomorphism from the lattice $\tors \Lambda$ to the lattice $\torf \Lambda$. We define the \textit{left Hom-orthogonal} ${}^\perp \calX$ dually. {For a module $M \in \mods \Lambda$, define the subcategories $M^\perp$ and ${}^\perp M$ of $\mods \Lambda$ as $(\add M)^\perp$ and ${}^\perp (\add M)$, respectively.}

There is an important subset of \textit{functorially finite} torsion classes (resp. torsion-free classes), denoted by $\ftors \Lambda \subseteq \tors \Lambda$ (resp. $\ftorf \Lambda \subseteq \torf \Lambda$). We recall the definition here. An arbitrary additive subcategory $\mathcal{X}$ of $\mods \Lambda$ is \textit{contravariantly finite} (resp \textit{covariantly finite}) if every $M \in \mods \Lambda$ admits a $\Lambda$-homomorphism $X \xrightarrow{\beta} M$ (resp. $M \xrightarrow{\alpha} X$) with the (nonuniversal) property that the induced map $\Hom_{\Lambda}(X{'},\beta)$ (resp. $\Hom_{\Lambda}(\alpha,X{'})$) is surjective for all $X'\in\mathcal{X}$. We call the $\Lambda$-homomorphism $\beta$ (resp. $\alpha$) a \textit{right $\mathcal{X}$-approximation} (resp. \textit{left $\mathcal{X}$-approximation}) of $M$. A \textit{functorially finite} subcategory of $\mods \Lambda$ is by definition a full subcategory that is both contravariantly and covariantly finite.

For a $\Lambda$-module $M$, let $\Fac M$ denote the smallest full subcategory of $\mods \Lambda$ containing $M$ which is closed under factor modules and extensions. Dually, let $\Sub M$ denote the smallest full subcategory of $\mods \Lambda$ containing $M$ which is closed under submodules and extensions.

\begin{theorem}\cite[Thm. 2.7]{AIR2014}\label{thm:AIRftorsbij}
	The map
	\[ \stautilt \Lambda \longrightarrow \ftors \Lambda \]
	which maps $(M,P) \in \stautilt \Lambda$ to $\Fac M$, is a bijection. Its inverse maps $\calT \in \ftors \Lambda$ to the unique support $\tau$-tilting pair $(\mathrm{P}(\calT), P)$, where $\mathrm{P}(\calT)$ is {the largest basic} $\Ext$-projective $\Lambda$-module in $\calT${, which is defined by \[\mathrm{P}(\calT)\coloneqq \bigoplus_{\substack{X\in \ind' \Lambda \\ \Ext^1_{\Lambda}({X},\calT)=0 }} X,\]}
    where $\ind' \Lambda$ denotes a set of indecomposable $\Lambda$-modules, in which exactly one representative of each isomorphism class occurs.
\end{theorem}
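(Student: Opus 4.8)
The plan is to exhibit an explicit inverse and verify that both maps are well defined. Write $\Phi\colon (M,P)\mapsto \Fac M$ for the proposed map and $\Psi\colon \calT\mapsto (\mathrm{P}(\calT),P_\calT)$ for its candidate inverse, where $\mathrm{P}(\calT)$ is the direct sum of the indecomposable $\Ext$-projective objects of $\calT$ and $P_\calT\coloneqq\bigoplus\{P_i : P_i\text{ indecomposable projective},\ P_i\notin\calT\}$. The single technical engine behind everything is the Auslander--Reiten formula $\Ext^1_\Lambda(X,Y)\cong D\,\overline{\Hom}_\Lambda(Y,\tau_\Lambda X)$, from which I would extract the key equivalence (essentially due to Auslander--Smalø): a module $M$ is $\tau$-rigid if and only if $\Ext^1_\Lambda(M,\Fac M)=0$, i.e. $M$ is $\Ext$-projective in $\Fac M$. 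The forward direction is immediate, since any map from a quotient $N$ of $M^r$ into $\tau_\Lambda M$ dies after precomposition with the surjection $M^r\twoheadrightarrow N$; the converse is the subtle direction, where one must upgrade vanishing of $\overline{\Hom}$ (maps modulo injectives) to genuine $\Hom$, and I would treat it as standard input.

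First I would check that $\Phi$ is well defined. The subcategory $\Fac M$ is closed under quotients by construction, and it is closed under extensions by a lifting argument: given a short exact sequence $0\to X\to Y\to Z\to 0$ with $X,Z\in\Fac M$, a surjection $M^s\twoheadrightarrow Z$ lifts along $Y\to Z$ because $\Ext^1_\Lambda(M,X)=0$ by the key lemma, and combining this lift with a surjection onto $X$ presents $Y$ as a quotient of a sum of copies of $M$. Thus $\Fac M$ is a torsion class, and it is functorially finite because it is generated by a single module. Hence $\Phi$ lands in $\ftors\Lambda$.

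Next I would check that $\Psi$ is well defined. Functorial finiteness of $\calT$ guarantees that $\calT$ admits a finite basic additive generator whose $\Ext$-projective summands assemble to $\mathrm{P}(\calT)$, and that $\calT=\Fac\mathrm{P}(\calT)$; the key lemma then forces $\mathrm{P}(\calT)$ to be $\tau$-rigid, while $P_\calT$ is projective with $\Hom_\Lambda(P_\calT,\mathrm{P}(\calT))=0$ by the choice of unsupported vertices, so $(\mathrm{P}(\calT),P_\calT)$ is a $\tau$-rigid pair. The crux of the whole theorem, and the step I expect to be the main obstacle, is the numerical claim that this pair is actually \emph{support} $\tau$-tilting, namely that $|\mathrm{P}(\calT)|+|P_\calT|=|\Lambda|$; equivalently, one must show that the number of indecomposable $\Ext$-projectives of $\calT$ exactly fills up the support left over by $P_\calT$. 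I would establish this through a Bongartz-type completion argument, or, in the present paper's ordering, by transporting the problem to the count of summands of the associated $2$-term silting complex, where the silting condition pins the number down to $|\Lambda|$.

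Finally I would verify that the two maps are mutually inverse. The identity $\Phi\Psi=\mathrm{id}$ is exactly the statement $\Fac\mathrm{P}(\calT)=\calT$ used above, i.e. that a functorially finite torsion class is generated by its $\Ext$-projectives. For $\Psi\Phi=\mathrm{id}$, starting from a support $\tau$-tilting pair $(M,P)$ I would note that $M$ is $\Ext$-projective in $\Fac M$, hence a summand of $\mathrm{P}(\Fac M)$; the maximality built into the condition $|M|+|P|=|\Lambda|$, together with the count from the previous step, forces $M=\mathrm{P}(\Fac M)$ and recovers $P=P_{\Fac M}$ from the support of $\calT=\Fac M$. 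Here again the numerical input is precisely what rules out $M$ being a proper summand of the full $\Ext$-projective generator, which is why I regard the rank count as the heart of the argument.
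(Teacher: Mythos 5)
The paper itself offers no proof of this statement---it is recalled verbatim from \cite[Thm.~2.7]{AIR2014}---so your attempt has to stand on its own; its skeleton does follow the original AIR route (the Auslander--Smal{\o} characterisation of $\tau$-rigidity via $\Ext$-projectivity in $\Fac M$, then a Bongartz-type count). However, your candidate inverse is wrongly defined: you set $P_\calT=\bigoplus\{P_i : P_i\notin\calT\}$, whereas the correct condition is $\Hom_\Lambda(P_i,\calT)=0$ (vertices outside the support of $\calT$), which is strictly stronger than $P_i\notin\calT$. Concretely, let $\Lambda$ be the path algebra of an $A_2$ quiver, with simple projective $P'$, projective-injective $P$ (so $\soc P\simeq P'$), and $S=\topp P$. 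Then $\calT=\Fac P=\add(P\oplus S)$ is a functorially finite torsion class, $\mathrm{P}(\calT)=P\oplus S$, and the pair asserted by the theorem is $(P\oplus S,0)$. Your recipe instead outputs $P_\calT=P'$, since $P'\notin\calT$; but $\Hom_\Lambda(P',P)\neq 0$ (the socle embedding), so $(P\oplus S,\,P')$ is not even a $\tau$-rigid pair, and $|P\oplus S|+|P'|=3\neq 2=|\Lambda|$. In particular, your claim that $\Hom_\Lambda(P_\calT,\mathrm{P}(\calT))=0$ holds ``by the choice of unsupported vertices'' is false for the definition you actually wrote; the definition must literally be the support condition, not non-membership in $\calT$.

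Even after this repair, the argument has a load-bearing hole that you flag but never fill: the rank count $|\mathrm{P}(\calT)|+|P_\calT|=|\Lambda|$ is only ``established'' by naming two possible strategies (a Bongartz-type completion, or counting summands of an associated $2$-term silting complex). That count is precisely the nontrivial content of the theorem---in \cite{AIR2014} it rests on the Bongartz completion of $\tau$-rigid modules and the characterisation of $\tau$-tilting modules as the maximal $\tau$-rigid ones---and your final step $\Psi\Phi=\mathrm{id}$ silently relies on it twice, both to force $M=\mathrm{P}(\Fac M)$ and to recover $P$ from the support. A smaller gloss of the same kind: the identity $\calT=\Fac\mathrm{P}(\calT)$ for functorially finite $\calT$ is not automatic; it needs the Auslander--Smal{\o} argument that a minimal left $\calT$-approximation of $\Lambda$ is an $\Ext$-projective generator of $\calT$. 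As written, then, the proposal is a correct outline of the standard proof whose two hardest steps are, respectively, incorrect (the definition of $P_\calT$) and missing (the rank count).
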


This relationship with torsion classes endows the set of support $\tau$-tilting pairs with a partial order given by $(M,P) \leq (N,Q)$ whenever $\Fac M \subseteq \Fac N$. In fact, this partial order can also be seen intrinsically via a process of mutation, see \cite[Sec. 2.3]{AIR2014}. 

\begin{lemma}\label{lem:posetlift}
    Let $K:k$ be a field extension and let $M$ and $N$ be $\tau$-rigid $\Lambda$-modules such that $\Fac M \subseteq \Fac N \subseteq \mods \Lambda$.  Then $\Fac M_K \subseteq \Fac N_K \subseteq \mods \Lambda_K$.
\end{lemma}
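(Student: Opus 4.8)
The plan is to reduce the statement to the single assertion $M_K \in \Fac N_K$, and then to establish this by pulling the torsion class $\Fac N_K$ back along the scalar extension functor. First I would observe that it suffices to prove $M_K \in \Fac N_K$. Indeed, $\Fac N_K$ is a torsion class by its very definition, being the smallest full subcategory of $\mods \Lambda_K$ containing $N_K$ that is closed under factor modules and extensions, while $\Fac M_K$ is the smallest torsion class containing $M_K$; hence $M_K \in \Fac N_K$ immediately forces $\Fac M_K \subseteq \Fac N_K$. Moreover, since $M \in \Fac M \subseteq \Fac N$ by hypothesis, we already know $M \in \Fac N$, so the task becomes showing that every module of $\Fac N$ is sent into $\Fac N_K$ by $- \otimes_k K$.

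To capture this, I would introduce the full subcategory
\[ \calD \coloneqq \{ X \in \mods \Lambda : X_K \in \Fac N_K \} \]
and verify that $\calD$ is a torsion class containing $N$. It contains $N$ because $N_K \in \Fac N_K$. For closure under factor modules, an epimorphism $X \twoheadrightarrow Y$ with $X \in \calD$ yields an epimorphism $X_K \twoheadrightarrow Y_K$ by exactness of $- \otimes_k K$, and since $\Fac N_K$ is closed under factor modules we get $Y_K \in \Fac N_K$. For closure under extensions, a short exact sequence $0 \to X \to Y \to Z \to 0$ with $X, Z \in \calD$ is carried to a short exact sequence $0 \to X_K \to Y_K \to Z_K \to 0$, again by exactness, and closure of $\Fac N_K$ under extensions gives $Y_K \in \Fac N_K$. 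Thus $\calD$ is a torsion class containing $N$, so $\Fac N \subseteq \calD$ by minimality. In particular $M \in \Fac N \subseteq \calD$, that is, $M_K \in \Fac N_K$, which is exactly what was needed.

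The whole argument rests on the exactness of the scalar extension functor $- \otimes_k K$, so there is no serious obstacle beyond setting up the pullback category $\calD$ correctly; the only point requiring care is to invoke exactness (not merely right-exactness) in the extension-closure step. It is worth noting that $\tau$-rigidity of $M$ and $N$ is never used in this approach: the conclusion holds for arbitrary $M, N \in \mods \Lambda$, the hypothesis merely reflecting the intended application to the poset $\stautilt \Lambda$. An alternative, equally short route would instead use that for a $\tau$-rigid module $\Fac(-)$ coincides with $\Gen(-)$, so that $\Fac M \subseteq \Fac N$ produces an epimorphism $N^m \twoheadrightarrow M$; applying the exact functor $- \otimes_k K$ gives an epimorphism $(N_K)^m \twoheadrightarrow M_K$, whence $M_K \in \Gen N_K = \Fac N_K$, where \cref{lem:taurigidlift} ensures $N_K$ is again $\tau$-rigid. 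I would prefer the first approach, as it is self-contained and isolates exactness as the only ingredient.
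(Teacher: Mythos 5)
Your main argument is correct, and it takes a genuinely different route from the paper. The paper's proof chases epimorphisms: it takes $X \in \Fac M_K$, writes it as a quotient of $M_K^r$, uses $\Fac M \subseteq \Fac N$ to produce an epimorphism $N^s \twoheadrightarrow M^r$ in $\mods\Lambda$, applies the exact functor $-\otimes_k K$, and composes. Note that this tacitly identifies $\Fac$ (defined in the paper as the smallest factor- and extension-closed subcategory) with the class of quotients of powers, which is where $\tau$-rigidity of $M$, $N$ and of $M_K$, $N_K$ (via \cref{lem:taurigidlift}) enters, even though the paper presents these steps as ``by definition.'' Your pullback argument with $\calD = \{X \in \mods\Lambda : X_K \in \Fac N_K\}$ avoids this entirely: closure of $\calD$ under factors and extensions follows from exactness of $-\otimes_k K$ alone, and minimality of $\Fac N$ then gives $M_K \in \Fac N_K$, hence $\Fac M_K \subseteq \Fac N_K$ by minimality of $\Fac M_K$. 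What this buys is generality and cleanliness -- the statement holds for arbitrary $M, N \in \mods\Lambda$ under the paper's definition of $\Fac$, with exactness isolated as the sole ingredient -- and your observation that $\tau$-rigidity is superfluous is a genuine (if minor) sharpening. The alternative route you sketch at the end, via $\Gen N = \Fac N$ for $\tau$-rigid $N$ and an epimorphism $N^m \twoheadrightarrow M$, is essentially the paper's own proof, and you handle the $\Gen = \Fac$ identification more carefully than the paper does.
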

\begin{proof}
	Take $X \in \Fac M_K$. By definition, there is an epimorphism $M_K^{r} \twoheadrightarrow X \to 0$ in $\mods \Lambda_K$ for some $r \geq 1$. Since $\Fac M \subseteq \Fac N \subseteq \mods \Lambda$, it follows that in particular $M^r \in \Fac N$ which gives rise to an epimorphism $N^s \twoheadrightarrow M^r \to 0 $ in $\mods \Lambda$ for some $s \geq 1$. Applying the {(}exact{)} scalar extension functor then gives an epimorphism $N_K^s \twoheadrightarrow M_K^r \to 0$ in $\mods \Lambda_K$. In conclusion, there is a chain of epimorphisms $N_K^s \twoheadrightarrow M_K^r \twoheadrightarrow X${,} which yields $X \in \Fac N_K$ as required. 
\end{proof}

Consequently, we obtain the following relationship, which is to be compared with \cite[Thm. 2.14(a)]{IK2024}.
\begin{proposition}\label{cor:posetembed}
    Let $K:k$ be a field extension. For a $\Lambda_K$-module $N$, let $N^b$ denote a (choice of) basic direct summand of $N$ with $\add N = \add N^b$.
    We have an embedding of posets 
    \[ \stautilt \Lambda \to \stautilt \Lambda_K\]
    sending $(M,P)$ to $(M^b_K,P^b_K)$. {This map is well-defined up to isomorphism.}
\end{proposition}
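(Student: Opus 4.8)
The plan is to check three things in turn: that $\Phi(M,P) \coloneqq ((M_K)^b,(P_K)^b)$ is a well-defined element of $\stautilt \Lambda_K$, that $\Phi$ is order-preserving, and that it is order-reflecting; injectivity is then automatic from antisymmetry of the partial order. For well-definedness, \cref{cor:tautiltlift} shows that $(M_K,P_K)$ is a support $\tau$-tilting pair of $\mods \Lambda_K$. Passing to basic direct summands changes neither the additive closures nor, therefore, the defining conditions of a support $\tau$-tilting pair, so $((M_K)^b,(P_K)^b)$ is the unique basic representative of this pair and lies in $\stautilt \Lambda_K$; concretely, via \cref{thm:AIRftorsbij} it is the preimage of the functorially finite torsion class $\Fac M_K$. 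That $\Phi$ is well defined up to isomorphism, and descends to isomorphism classes, follows from Krull--Schmidt and \cref{lem:adddeterminesbasic}(3).

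Since the partial order is given by $(M,P)\leq(N,Q)\iff \Fac M \subseteq \Fac N$, and $\Fac$ depends only on the additive closure (so $\Fac (M_K)^b = \Fac M_K$), the order-embedding property reduces to the equivalence
\[ \Fac M \subseteq \Fac N \iff \Fac M_K \subseteq \Fac N_K. \]
As $M$ and $M_K$ are $\tau$-rigid (\cref{lem:taurigidlift}), both $\Fac M$ and $\Fac M_K$ are the smallest torsion classes containing $M$ and $M_K$ respectively, so the two inclusions are equivalent to the memberships $M \in \Fac N$ and $M_K \in \Fac N_K$. The forward implication $M \in \Fac N \Rightarrow M_K \in \Fac N_K$ is \cref{lem:posetlift}.

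The main obstacle is the reverse implication, i.e. descending $M_K \in \Fac N_K$ to $M \in \Fac N$, since \cref{lem:posetlift} supplies only the ascending direction. I would obtain it from the uniqueness of torsion decompositions. Because $N$ is $\tau$-rigid, $(\Fac N, N^\perp)$ is a torsion pair and $\Fac N = \Gen N$; let $0 \to tM \to M \to M/tM \to 0$ be the canonical sequence of $M$, with $tM \in \Fac N$ and $M/tM \in N^\perp$. Applying the exact functor $-\otimes_k K$ gives $0 \to (tM)_K \to M_K \to (M/tM)_K \to 0$. An epimorphism $N^s \twoheadrightarrow tM$ lifts to $N_K^s \twoheadrightarrow (tM)_K$, so $(tM)_K \in \Gen N_K = \Fac N_K$, while $\Hom_\Lambda(N,M/tM)=0$ lifts through \cref{lem:Kas00.2.2} to $\Hom_{\Lambda_K}(N_K,(M/tM)_K)=0$, placing $(M/tM)_K$ in $N_K^{\perp} = (\Fac N_K)^\perp$. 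By uniqueness of the torsion decomposition for the torsion pair $(\Fac N_K, N_K^\perp)$, this is the canonical sequence of $M_K$; hence $M_K \in \Fac N_K$ if and only if its torsion-free quotient $(M/tM)_K$ vanishes, which by faithfulness of $-\otimes_k K$ occurs if and only if $M/tM=0$, i.e. if and only if $M \in \Fac N$. This completes the equivalence and shows $\Phi$ is an embedding of posets.
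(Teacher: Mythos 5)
Your proof is correct, and it does more than the paper's. The paper's own argument is a one-liner: well-definedness comes from \cref{cor:tautiltlift}, injectivity from the fact that $-\otimes_k K$ is injective-on-objects up to isomorphism (\cref{lem:adddeterminesbasic}(3), applied via part (5) to the basic modules $M,P$), and order-preservation from \cref{lem:posetlift}; order-\emph{reflection} is never addressed, so the paper in effect only establishes an injective order-preserving map. You instead prove the genuinely stronger statement that the map is an order-embedding, i.e.\ $\Fac M \subseteq \Fac N \iff \Fac M_K \subseteq \Fac N_K$, and then get injectivity for free from antisymmetry. The descent step is the new content: you take the canonical sequence $0 \to tM \to M \to M/tM \to 0$ for the torsion pair $(\Fac N, N^\perp)$, check via \cref{lem:Kas00.2.2} and exactness of $-\otimes_k K$ that its extension is the canonical sequence of $M_K$ for $(\Fac N_K, N_K^\perp)$, and conclude by faithful flatness that the torsion-free part vanishes over $K$ iff it vanishes over $k$. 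All the ingredients ($\Fac N = \Gen N$ and $(\Fac N)^\perp = N^\perp$ for $\tau$-rigid $N$, uniqueness of torsion decompositions, $X_K = 0 \iff X = 0$) are used correctly. What your approach buys is a literal reading of ``embedding of posets''; what the paper's buys is brevity, since only injectivity and monotonicity are used downstream (e.g.\ in \cref{cor:posetembedftors} and \cref{thm:basefieldmain}). Your reverse implication is a worthwhile addition that the paper leaves unstated.
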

\begin{proof}
    Since $-\otimes_k K$ is injective-on-objects by \cref{lem:adddeterminesbasic}(3), it follows from \cref{cor:tautiltlift} and \cref{lem:posetlift} that it can be regarded to send support $\tau$-tilting pairs to support $\tau$-tilting pairs. This is all we need to show.
\end{proof}

Similarly, it is possible to obtain the following result concerning functorially finite torsion classes.

\begin{corollary}\label{cor:posetembedftors}
    We have an embedding of posets
    \[ \ftors \Lambda \to \ftors \Lambda_K\]
    sending $\calT$ to $\calT_K \coloneqq \Fac M_K$, where $(M,P)$ is a support $\tau$-tilting pair such that $\calT = \Fac M$. Moreover, if $\calT=\Fac N$ for some $\tau$-rigid module $N$, then $\calT_K =  \Fac N_K$.
\end{corollary}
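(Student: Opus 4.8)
The plan is to derive Corollary~\ref{cor:posetembedftors} as a direct consequence of the poset embedding in Proposition~\ref{cor:posetembed} together with the bijection between functorially finite torsion classes and support $\tau$-tilting pairs recorded in Theorem~\ref{thm:AIRftorsbij}. The essential point is that the map $\calT \mapsto \calT_K$ is really the composite of three maps we already control: first invert the bijection of Theorem~\ref{thm:AIRftorsbij} to pass from $\calT \in \ftors \Lambda$ to its associated support $\tau$-tilting pair $(M,P)$; then apply the poset embedding of Proposition~\ref{cor:posetembed} to land at $(M^b_K, P^b_K) \in \stautilt \Lambda_K$; and finally apply the bijection of Theorem~\ref{thm:AIRftorsbij} over $\Lambda_K$ to obtain the functorially finite torsion class $\Fac M^b_K \in \ftors \Lambda_K$. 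Since each of these three maps is order-preserving (the outer two being lattice bijections, the middle one an embedding by Proposition~\ref{cor:posetembed}), the composite is an order-preserving injection, as required.

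First I would verify that the description $\calT_K = \Fac M_K$ agrees with this composite. By Theorem~\ref{thm:AIRftorsbij} the pair $(M,P)$ recovered from $\calT = \Fac M$ has $\Fac M = \calT$, and applying the functor $-\otimes_k K$ gives $\Fac M^b_K = \Fac M_K$ (passing to the basic summand does not change $\add$, hence does not change $\Fac$). The image of $(M^b_K, P^b_K)$ under the $\Lambda_K$-version of Theorem~\ref{thm:AIRftorsbij} is then precisely $\Fac M^b_K = \Fac M_K$, matching the stated formula. Injectivity is immediate: if $\Fac M_K = \Fac N_K$ for two support $\tau$-tilting modules $M, N$ of $\Lambda$, then the corresponding support $\tau$-tilting pairs over $\Lambda_K$ coincide, so by the injectivity part of Proposition~\ref{cor:posetembed} the original pairs $(M,P)$ and $(N,Q)$ coincide, forcing $\calT = \calT'$.

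For the order-preservation, suppose $\calT \subseteq \calT'$ in $\ftors \Lambda$, with associated support $\tau$-tilting modules $M$ and $N$, so that $\Fac M \subseteq \Fac N$. Since $\calT$ and $\calT'$ are functorially finite, the modules $M$ and $N$ are $\tau$-rigid, so Lemma~\ref{lem:posetlift} applies directly and yields $\Fac M_K \subseteq \Fac N_K$, i.e.\ $\calT_K \subseteq \calT'_K$. This is exactly the content we need, and it shows the map respects the partial orders in both directions (using injectivity for the converse inclusion). The \emph{moreover} clause follows by the same reasoning: if $\calT = \Fac N$ for an arbitrary $\tau$-rigid module $N$ (not necessarily the canonical $\mathrm{P}(\calT)$), then $\add N$ determines the same torsion class and the same functorial data, so $\Fac N_K = \Fac N^b_K = \calT_K$ by Lemma~\ref{lem:posetlift}.

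I expect the only genuine subtlety, rather than a real obstacle, to be bookkeeping around the basic-summand operation $(-)^b$ and the well-definedness up to isomorphism: one must check that the formula $\calT_K = \Fac M_K$ does not depend on which $\tau$-rigid module $N$ with $\calT = \Fac N$ one starts from, and that passing to a basic representative commutes with $\Fac$ and with $-\otimes_k K$. Both are routine, since $\Fac$ depends only on $\add$, the scalar extension functor is injective-on-objects up to isomorphism by Lemma~\ref{lem:adddeterminesbasic}(3), and Lemma~\ref{lem:posetlift} gives the containment $\Fac M_K \subseteq \Fac N_K$ from $\Fac M \subseteq \Fac N$ for any $\tau$-rigid $M, N$. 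No new homological input beyond what is already established is needed.
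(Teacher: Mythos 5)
Your proposal is correct and follows essentially the same route as the paper's proof: both define the map as the composite of the bijection of \cref{thm:AIRftorsbij}, the embedding of \cref{cor:posetembed}, and the bijection of \cref{thm:AIRftorsbij} over $\Lambda_K$, and both deduce the \emph{moreover} clause by applying \cref{lem:posetlift} in both directions to the equality $\Fac N = \Fac M'$ with $(M',P')$ the canonical support $\tau$-tilting pair. No substantive difference.
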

\begin{proof}
    The map $\ftors \Lambda \to \ftors \Lambda_K$ is defined as follows:
    \begin{equation}\label{eq:corposet}
    \begin{tikzcd}[column sep=65]
        \calT \arrow[r, "\textnormal{\cref{thm:AIRftorsbij}}","\textnormal{bijection}"',mapsto] & {(M,P)}  \arrow[r, "\textnormal{\cref{cor:posetembed}}", mapsto, "\textnormal{{injection}}"'] & {(M_K^b,P_K^b)} \arrow[r, "\textnormal{\cref{thm:AIRftorsbij}}","\textnormal{bijection}"',mapsto] & {\calT_K.}
    \end{tikzcd}
    \end{equation}
    All maps in \cref{eq:corposet} are order-preserving. By the results cited along the arrows in \eqref{eq:corposet}, it follows that this is indeed a well-defined embedding. {Let $M$ be $\tau$-rigid and assume $\calT = \Fac M$. Consider the image of $\calT$ under the inverse of the map in \cref{thm:AIRftorsbij}, and denote it by $(M',P')$. It is clear from the construction that $\Fac M = \Fac M'$.  By \cref{lem:posetlift}, we have $\Fac M_K = \Fac M_K'$, and therefore $\calT_K = \Fac M_K' = \Fac M_K$, as required.}
\end{proof}

We remark that $\calT_K$ is usually not equal to the full subcategory of $\mods \Lambda_K$ obtained by applying $- \otimes_k K$ to every object of $\calT \in \ftors \Lambda$. This fact illustrates why it is often preferable to study the behaviour of individual objects under base field extension, rather than the behaviour of subcategories.

\subsection{\texorpdfstring{$\tau^{-1}$}{tau-inverse}-tilting theory}

Dual to $\tau$-tilting theory, which uses the Auslander--Reiten translation $\tau$, there is $\tau^{-1}$-tilting theory using the inverse AR-translation $\tau^{-1}$. We include some explicit results in this subsection for later reference.

\begin{definition}
    Let $M \in \mods \Lambda$ and let $Q \in \inj \Lambda$. 
    \begin{enumerate}
        \item We say $M$ is \textit{$\tau^{-1}$-rigid} if $\Hom_{\Lambda}(\tau^{-1}_{\Lambda} M , M)=0$. If additionally $|M| = |\Lambda|$ then we say $M$ is \textit{$\tau^{-1}$-tilting}.
        \item We say that the pair $(M,Q)$ is \textit{$\tau^{-1}$-rigid} if $M$ is a $\tau^{-1}$-rigid $\Lambda$-module, $Q$ is an injective $\Lambda$-module and $\Hom_{\Lambda}(M,Q)=0$. If additionally $|M|+|Q| = |\Lambda|$ then we say that $(M,Q)$ is \textit{support $\tau^{-1}$-tilting}. The $\Lambda$-module $M$ is a \textit{support $\tau^{-1}$-tilting $\Lambda$-module}.
    \end{enumerate}
\end{definition}

Denote by $\stauinvtilt \Lambda$ the collection of basic support $\tau^{-1}$-tilting pairs for $\Lambda$. It is clear that the dual statements of \cref{lem:taurigidlift} and \cref{cor:tautiltlift} hold for $\tau^{-1}$-rigid and support $\tau^{-1}$-tilting pairs.

\begin{proposition} \cite[Discussion following Thm. 2.15]{AIR2014}\label{prop:AIRp2.15}
We have a bijection
\begin{equation}\label{eq:tauinversebij} H: \stautilt \Lambda \to \stauinvtilt \Lambda \end{equation}
given by $H(M,P) = (\tau M \oplus \nu P, \nu M_{\mathrm{pr}})${,} where $M_{\mathrm{pr}}$ is the largest projective direct summand of $M$. It fits into a commutative square
\begin{equation}\label{eq:tauinversebijsq}
\begin{tikzcd}
\stautilt \Lambda \arrow[r, "H"] \arrow[d, "\Fac"] & \stauinvtilt \Lambda \arrow[d, "\Sub"] \\
\ftors \Lambda \arrow[r, "(-)^{\perp}"]             & \ftorf \Lambda.   
\end{tikzcd}
\end{equation}
\end{proposition}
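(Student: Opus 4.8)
The statement I want to prove is \cref{prop:AIRp2.15}, asserting that the map $H(M,P) = (\tau M \oplus \nu P, \nu M_{\mathrm{pr}})$ defines a bijection $\stautilt \Lambda \to \stauinvtilt \Lambda$ that completes the commutative square \eqref{eq:tauinversebijsq}. Since this result is attributed to the discussion following \cite[Thm. 2.15]{AIR2014}, the cleanest approach is to assemble it from the established $\tau$-tilting/$\tau^{-1}$-tilting correspondence, rather than rederiving everything from scratch. The plan is to verify three things in turn: that $H$ lands in $\stauinvtilt \Lambda$, that the square \eqref{eq:tauinversebijsq} commutes, and that $H$ is a bijection. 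The commutativity of the square is both the conceptual heart of the statement and the tool that yields bijectivity for free.

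\textbf{Step 1: $H$ is well-defined.}
First I would take a support $\tau$-tilting pair $(M,P)$ and check that $(\tau M \oplus \nu P, \nu M_{\mathrm{pr}})$ is a support $\tau^{-1}$-tilting pair. The key input is the duality between $\tau$-rigidity and $\tau^{-1}$-rigidity mediated by the Nakayama functor $\nu$ and the translation $\tau$: applying $\nu$ interchanges projectives and injectives, and $\tau M \oplus \nu P$ should be seen to be $\tau^{-1}$-rigid because $M$ is $\tau$-rigid. Concretely, one uses the Auslander--Reiten formulas relating $\Hom(\tau^{-1} X, X)$ to $\Hom(X, \tau X)$ to transport the vanishing condition $\Hom(M,\tau M)=0$ to $\Hom(\tau^{-1}(\tau M), \tau M)=0$. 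The support condition $|M|+|P|=|\Lambda|$ must be shown to transfer to $|\tau M \oplus \nu P| + |\nu M_{\mathrm{pr}}| = |\Lambda|$; here I would use that $\nu$ is an equivalence $\proj \Lambda \to \inj \Lambda$ preserving the number of indecomposable summands, and that passing from $M$ to $\tau M$ removes exactly the projective summands $M_{\mathrm{pr}}$ (which become $\nu M_{\mathrm{pr}}$, the injective part of the pair) while preserving the count of the remaining summands.

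\textbf{Step 2: the square commutes.}
This is the step I expect to be the main obstacle, and it is also what makes the argument efficient. I would show that for $(M,P) \in \stautilt \Lambda$ with torsion class $\Fac M$, the torsion-free class $\Sub(\tau M \oplus \nu P)$ coincides with $(\Fac M)^\perp$. The natural route is to invoke the characterisation of $(\Fac M)^\perp$ via the Auslander--Reiten theory: a module $Y$ lies in $(\Fac M)^\perp$ precisely when $\Hom(M,Y)=0$, and by the defining exact sequences of $\tau$-tilting theory this torsion-free class is generated under submodules and extensions by $\tau M$ together with $\nu P$. One clean way to see the inclusion is to use the functorial isomorphism $\Hom(M,Y) \cong D\overline{\Hom}(Y,\tau M)$ (a form of the Auslander--Reiten duality), so that the right-orthogonal condition becomes a statement about maps into $\tau M$. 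The subtlety is keeping track of the projective summand $P$ and its image $\nu P$, and ensuring the two subcategories agree exactly rather than merely one-sidedly; I would lean on the fact that both $(-)^\perp$ and $\Sub$, $\Fac$ are already known to be bijections onto $\ftorf \Lambda$ and $\ftors \Lambda$ respectively, so matching the generators on each side suffices.

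\textbf{Step 3: bijectivity.}
Once the square \eqref{eq:tauinversebijsq} is shown to commute, bijectivity of $H$ is essentially automatic. The left vertical map $\Fac$ is a bijection by \cref{thm:AIRftorsbij}, the bottom map $(-)^\perp$ is a lattice anti-isomorphism $\ftors \Lambda \to \ftorf \Lambda$ as recalled in the text, and the right vertical map $\Sub$ is the dual bijection $\stauinvtilt \Lambda \to \ftorf \Lambda$. Since three sides of the commuting square are bijections, the fourth map $H$ is forced to be a bijection as well, being the composite $\Sub^{-1} \circ (-)^\perp \circ \Fac$. I would state this composition explicitly to conclude, and note that the explicit formula $H(M,P)=(\tau M \oplus \nu P, \nu M_{\mathrm{pr}})$ is exactly what Step 1 and Step 2 identify this composite to be on the level of pairs.
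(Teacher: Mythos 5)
The paper does not actually prove this proposition itself — it is recited from \cite[Discussion following Thm.\ 2.15]{AIR2014} — so there is no in-paper argument to compare against. Your outline correctly reconstructs the standard proof from that reference: well-definedness via the adjunction $\Hom_\Lambda(X,\nu P)\cong D\Hom_\Lambda(P,X)$ together with the Auslander--Reiten formulas, commutativity of the square via the identity $(\Fac M)^{\perp}=\Sub(\tau M\oplus\nu P)$, and bijectivity of $H$ by writing it as the composite of the three bijective sides of the square.
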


The interaction between the bijection in \cref{prop:AIRp2.15} and the extension of scalars is described in the following.

\begin{lemma}\label{lem:dualtaucommutes}
   Let $K:k$ be a field extension. The following square is commutative:
    \[ \begin{tikzcd}
        \stautilt \Lambda \arrow[r, "H"] \arrow[d, "- \otimes_k K", swap]\ &\stauinvtilt \Lambda  \arrow[d, "- \otimes_k K"]\\
        \stautilt \Lambda_K \arrow[r, "H"] &\stauinvtilt \Lambda_K.
    \end{tikzcd}\]
\end{lemma}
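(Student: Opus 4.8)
The plan is to verify commutativity by chasing an arbitrary support $\tau$-tilting pair $(M,P) \in \stautilt \Lambda$ around both paths of the square and checking that the resulting support $\tau^{-1}$-tilting pairs in $\stauinvtilt \Lambda_K$ agree up to isomorphism. Recall that $H(M,P) = (\tau_\Lambda M \oplus \nu_\Lambda P, \nu_\Lambda M_{\mathrm{pr}})$, where $M_{\mathrm{pr}}$ is the largest projective direct summand of $M$. Going right-then-down, I apply $H$ over $\Lambda$ and then extend scalars, obtaining $\bigl((\tau_\Lambda M \oplus \nu_\Lambda P)_K,\, (\nu_\Lambda M_{\mathrm{pr}})_K\bigr)$. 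Going down-then-right, I first extend scalars to $(M_K, P_K)$ and then apply $H$ over $\Lambda_K$, obtaining $\bigl(\tau_{\Lambda_K} M_K \oplus \nu_{\Lambda_K} P_K,\, \nu_{\Lambda_K} (M_K)_{\mathrm{pr}}\bigr)$.

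The core of the argument is then a term-by-term comparison built entirely from results already established. First I would use the additivity of $-\otimes_k K$ together with \cref{lem:tautranslatelift} and \cref{cor:Nakfunctlift} to identify the first components:
\[
(\tau_\Lambda M \oplus \nu_\Lambda P)_K \simeq (\tau_\Lambda M)_K \oplus (\nu_\Lambda P)_K \simeq \tau_{\Lambda_K} M_K \oplus \nu_{\Lambda_K} P_K.
\]
For the second components I would use \cref{cor:Nakfunctlift} again, giving $(\nu_\Lambda M_{\mathrm{pr}})_K \simeq \nu_{\Lambda_K}(M_{\mathrm{pr}})_K$, so the only remaining point is to match $(M_{\mathrm{pr}})_K$ with $(M_K)_{\mathrm{pr}}$, the largest projective direct summand of $M_K$.

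The main obstacle I anticipate is precisely this last identification: showing that extending scalars commutes with extracting the largest projective direct summand, i.e. that $(M_{\mathrm{pr}})_K \simeq (M_K)_{\mathrm{pr}}$. One inclusion is easy, since $(M_{\mathrm{pr}})_K$ is a projective direct summand of $M_K$ by \cref{lem:Kasprojs}(4), so it is contained in $(M_K)_{\mathrm{pr}}$. For the reverse, I would argue that any indecomposable projective summand $Q$ of $M_K$ must, by \cref{lem:Kasprojs}(1), come from an indecomposable summand $X$ of $M$ with $Q \in \add(X_K)$; using \cref{lem:Kasprojs}(4) together with the fact that the restriction of a projective along the adjunction controls projectivity (so that $X_K$ projective forces $X$ projective), one concludes that such $X$ lies in $\add M_{\mathrm{pr}}$, whence $Q \in \add (M_{\mathrm{pr}})_K$. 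This yields $(M_K)_{\mathrm{pr}} \in \add (M_{\mathrm{pr}})_K$ and hence equality of the projective parts up to isomorphism.

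Once the two components are matched, commutativity of the square follows immediately, since the decomposition of a module into isomorphism classes of summands is unique and $-\otimes_k K$ is injective-on-objects by \cref{lem:adddeterminesbasic}(3). I would therefore structure the write-up as: (i) compute both composites explicitly, (ii) match first components via \cref{lem:tautranslatelift} and \cref{cor:Nakfunctlift}, (iii) prove the lemma $(M_{\mathrm{pr}})_K \simeq (M_K)_{\mathrm{pr}}$ as the key step, and (iv) conclude by matching second components through \cref{cor:Nakfunctlift}.
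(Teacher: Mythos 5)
Your proposal is correct and follows essentially the same route as the paper: match the first components via \cref{lem:tautranslatelift}, \cref{cor:Nakfunctlift} and additivity of $-\otimes_k K$, and reduce the second components to the identification $(M_{\mathrm{pr}})_K \simeq (M_K)_{\mathrm{pr}}$, which the paper attributes directly to \cref{lem:Kasprojs}(4). Your extra paragraph justifying that identification (via \cref{lem:Kasprojs}(1) and (4)) is a valid elaboration of a step the paper leaves terse.
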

\begin{proof}
    Let $(M,P) \in \stautilt \Lambda$. By \cref{lem:tautranslatelift}, there is an isomorphism $(\tau_{\Lambda} M)_K \simeq \tau_{\Lambda_K} M_K$ and by \cref{cor:Nakfunctlift}{,} there is an isomorphism $(\nu_{\Lambda} P)_K \simeq \nu_{\Lambda_K} P_K$. Combining these two observations, there is an isomorphism $(\tau_\Lambda M \oplus \nu_\Lambda P)_K \simeq \tau_{\Lambda_K} M_K \oplus \nu_{\Lambda_K} P_K$ as the tensor functor is additive. Finally, \cref{lem:Kasprojs}(4) implies that $(M_{\mathrm{pr}})_K \simeq (M_K)_{\mathrm{pr}}$ and using \cref{cor:Nakfunctlift} again implies the desired result.
\end{proof}

\subsection{\texorpdfstring{$g$}{g}-vector fans and wall-and-chamber structures}
Many aspects of $\tau$-tilting theory of an algebra $\Lambda$ are encoded geometrically in its $g$-vector fan $\Sigma(\Lambda)$ \cite{DIJ2019}. Consider the Grothendieck group $K_0(\proj \Lambda)$ of the full subcategory $\proj \Lambda \subseteq \mods \Lambda$, whose basis is given by the isomorphism classes of indecomposable projective $\Lambda$-modules. The $g$-vector fan then lives the \textit{real Grothendieck group} $K_0(\proj \Lambda)_{\R} = K_0(\proj \Lambda) \otimes_{\Z} \R \simeq \R^{|\Lambda|}$, which is a finite-dimensional $\R$-vector space.

\begin{definition}
    Let $(M,P)$ {be a $\tau$-rigid pair in $\mods \Lambda$}. Let $P^{-1} \to P^0 \to M$ be the minimal projective presentation of $M$. The \textit{$g$-vector} of $M$ is defined to be 
    \[ g^M = [P^0] - [P^{-1}] \in K_0(\proj \Lambda)_\R.\]
    The \textit{$g$-vector cone} of $(M,P)$ is the nonnegative linear span
    \[ \calC_{(M,P)} = \spann_{\geq 0} \{ g^{M_1}, \dots, g^{M_t}, -g^{P_{t+1}}, \dots, -g^{P_{r}}\} \subseteq K_0(\proj \Lambda)_\R,\]
    where $M \simeq \bigoplus_{i=1}^t M_i$ and $P \simeq \bigoplus_{i=t+1}^r P_i$ are decompositions of $M$ and $P$ into indecomposable direct summands, respectively. Finally, the \textit{$g$-vector fan} $\Sigma(\Lambda)$ is given by the collection of $g$-vector cones
    \[ \Sigma(\Lambda) = \{ \calC_{(M,P)} : (M,P) \in \staurigid \Lambda \}. \]
\end{definition}

By considering containment of cones, we may regard the $g$-vector fan as a poset. As we have demonstrated in the previous sections, the $\tau$-tilting theory of $\Lambda$ maps
{nicely} into the $\tau$-tilting theory of $\Lambda_K$ for any field extension $K:k$. The following result establishes such a relationship for the respective $g$-vector fans. In \Cref{lemma:gvectorfan}, we suppress the subscript $\R$ and denote the real Grothendieck group simply by $K_0(\proj \Lambda)$.

\begin{theorem}\label{lemma:gvectorfan}
    Let $K:k$ be a field extension. Denote by $K_0(\proj \Lambda)_K \subseteq K_0(\proj \Lambda_K)$ the image of the embedding $- \otimes_k K: K_0(\proj \Lambda) \hookrightarrow K_0(\proj \Lambda_K)$ given by $[P] \mapsto [P_K]$. 
    There is an injective map of fans (as posets)
    \begin{equation}\label{eq:injectivemaponcones}
    \begin{aligned}
    \Sigma(\Lambda) &\hookrightarrow \Sigma(\Lambda_K)  \\
    \calC_{(M,P)} &\mapsto \calC_{(M_K,P_K)}.
    \end{aligned}
    \end{equation}
    which restricts to a (geometric) embedding of fans
    \begin{equation}\label{eq:gfanembedding}
    \begin{aligned} 
    \Sigma(\Lambda) &\hookrightarrow \Sigma(\Lambda_K) \cap K_0(\proj \Lambda)_K \\
    \calC_{(M,P)} \ni v&\mapsto v \otimes_k K \in \calC_{(M_K,P_K)}.
    \end{aligned}
    \end{equation}
\end{theorem}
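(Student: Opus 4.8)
The plan is to run the combinatorial (poset) statement and the geometric statement off a single computational input, namely that the $g$-vector is compatible with scalar extension. First I would record this. Writing $g^M$ as the class in $K_0(\proj\Lambda)$ of a minimal projective presentation $P^{-1}\xrightarrow{d} P^0\to M\to 0$, the functor $-\otimes_k K$ (equivalently the triangle functor $-\otimes_k^{\mathbf L}K$ from \cref{lem:siltlift}) sends this two-term complex to $P^{-1}_K\xrightarrow{d_K}P^0_K\to M_K\to 0$, which is a projective presentation of $M_K$ by \cref{lem:Kasprojs}(4) and right-exactness; on $K_0$ it acts by $[P]\mapsto[P_K]$, so
\[
g^{M_K}=[P^0_K]-[P^{-1}_K]=\bigl([P^0]-[P^{-1}]\bigr)\otimes_k K=g^M\otimes_k K,
\]
and likewise $-g^{(P_j)_K}=(-g^{P_j})\otimes_k K$ for an indecomposable projective summand $P_j$. (Minimality is in fact preserved as in the proof of \cref{lem:tautranslatelift}, but is unnecessary here because the class of a two-term projective complex in $K_0$ is unchanged by adding contractible summands.) Thus the linear map $\iota\coloneqq -\otimes_k K\colon K_0(\proj\Lambda)\to K_0(\proj\Lambda_K)$ carries each ray generator of $\calC_{(M,P)}$ to the sum of the corresponding ray generators of $\calC_{(M_K,P_K)}$ (the $(M_i)_K$ and $(P_j)_K$ may split further into indecomposables, but the $g$-vector is additive on summands), and hence $\iota\bigl(\calC_{(M,P)}\bigr)\subseteq\calC_{(M_K,P_K)}$.

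Well-definedness of both maps is then immediate from \cref{lem:taurigidlift}: $(M,P)$ is $\tau$-rigid if and only if $(M_K,P_K)$ is, so $\calC_{(M_K,P_K)}$ (equivalently $\calC_{(M_K^b,P_K^b)}$, as the cone ignores multiplicities of summands) is a cone of $\Sigma(\Lambda_K)$. For the poset statement I would use that a $\tau$-rigid pair is recovered up to isomorphism from its $g$-vector cone \cite{DIJ2019} together with the face description: since the $g$-vectors of the indecomposable summands of a $\tau$-rigid pair are linearly independent \cite{DIJ2019}, each $\calC_{(M,P)}$ is simplicial, and a subcone of it that belongs to the fan is one of its faces, hence the nonnegative span of a subset of its rays; thus $\calC_{(N,Q)}\subseteq\calC_{(M,P)}$ if and only if $\add N\subseteq\add M$ and $\add Q\subseteq\add P$. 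Now $\add N\subseteq\add M$ gives $\add N_K\subseteq\add M_K$ by functoriality, and conversely $\add N_K\subseteq\add M_K$ forces $\add N\subseteq\add M$ by \cref{lem:adddeterminesbasic}(1); the same holds for $(Q,P)$. This shows at once that $\calC_{(M,P)}\mapsto\calC_{(M_K,P_K)}$ preserves and reflects containment, so it is an embedding of posets, and injectivity on cones follows from mutual containment together with \cref{lem:adddeterminesbasic}(5).

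For the geometric refinement, $\iota$ is an injective linear map with image $K_0(\proj\Lambda)_K$, so its restriction to the support of $\Sigma(\Lambda)$ is a linear homeomorphism onto its image; by the first paragraph it sends each $\calC_{(M,P)}$ into $\calC_{(M_K,P_K)}\cap K_0(\proj\Lambda)_K$, which already realizes $\Sigma(\Lambda)$ as a subfan inside the sliced fan. The delicate point, which I expect to be the main obstacle, is the reverse inclusion $\calC_{(M_K,P_K)}\cap K_0(\proj\Lambda)_K\subseteq\iota\bigl(\calC_{(M,P)}\bigr)$, i.e.\ that the image is the full slice. I would reduce this to a statement about spans: expanding a point of $\calC_{(M_K,P_K)}$ uniquely in the simplicial basis $\{g^{N}\}\cup\{-g^{Q}\}$ indexed by the indecomposable summands $N\mid M_K$ and $Q\mid P_K$, and using $g^{(M_i)_K}=\sum_{N\mid(M_i)_K}g^{N}$, membership in $K_0(\proj\Lambda)_K$ should force the coefficients to be constant along each fibre $\{N:N\mid(M_i)_K\}$ (and similarly for the $Q$'s), after which the positivity bookkeeping is automatic. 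The crux is thus the purely linear identity $\bigl(\spann\{g^N,-g^Q\}\bigr)\cap K_0(\proj\Lambda)_K=\iota\bigl(\spann\{g^{M_i},-g^{P_j}\}\bigr)$, whose proof must genuinely exploit the $\tau$-rigidity of $(M,P)$ to control how its summands and the projectives split under extension; absent that control the statement survives as the one-sided containment above, exhibiting $\Sigma(\Lambda)$ as a subfan of $\Sigma(\Lambda_K)\cap K_0(\proj\Lambda)_K$.
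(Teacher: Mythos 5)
Your proof is correct and follows essentially the same route as the paper's, which simply cites \cite[Prop. 6.6(c)]{DIJ2019} for the identity $g^{M_K}=g^M\otimes_k K$ rather than rederiving it from minimal projective presentations, and handles injectivity and the poset/fan compatibility via \cite[Cor. 6.7(2)]{DIJ2019} together with \cref{lem:adddeterminesbasic}(5) and \cref{lem:intersectlift}(6). The ``delicate point'' you flag at the end --- surjectivity of $\iota(\calC_{(M,P)})$ onto the full slice $\calC_{(M_K,P_K)}\cap K_0(\proj \Lambda)_K$ --- is not part of the claim: the theorem asserts only an embedding of fans, for which the one-sided containment you establish suffices, and the paper's proof likewise stops at exactly that containment.
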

\begin{proof}
    Since the $g$-vector fan $\Sigma(\Lambda)$ is completely determined by basic $\tau$-rigid pairs, we can restrict to those, see \cite[Lem. 6.12]{Kai23}.
    
    Let $(M_1,P_1)$ and $(M_2,P_2)$ be basic $\tau$-rigid pairs of $\mods \Lambda$. From \cref{lem:taurigidlift}{,} it follows that $((M_1)_K,(P_1)_K)$ and $((M_2)_K,(P_2)_K)$ are $\tau$-rigid pairs of $\mods \Lambda_K$. Therefore $\calC_{((M_1)_K, (P_1)_K)}, \calC_{(M_2)_K, (P_2)_K)} \in \Sigma(\Lambda_K)$, rendering our map of fans well-defined. Assume for a contradiction that $\calC_{((M_1)_K, (P_1)_K)} = \calC_{((M_2)_K, (P_2)_K)}$. By \cite[Cor. 6.7(2)]{DIJ2019}, see also \cite[Lem. 6.12, Thm. 6.13]{Kai23}, we have that $\add((M_1)_K) = \add((M_2)_K)$ and $\add((P_1)_K) = \add((P_2)_K)$. By \cref{lem:adddeterminesbasic}(5) $M_1 \simeq M_2$ and $P_1 \simeq P_2$ as required, hence the map in \cref{eq:injectivemaponcones} is well-defined and injective.

    Since $K_0(\proj \Lambda)_K$ is a subspace of $K_0(\proj \Lambda_K)$ it is clear that $\Sigma(\Lambda_K) \cap K_0(\proj \Lambda)_K$ is again a fan. Now, let $(M,P)$ be a basic $\tau$-rigid pair in $\mods \Lambda$. Take $v \in \calC_{(M,P)}$, then by definition we may write 
    \[ v= \sum_{i=1}^{l} \alpha_i g^{M_i} - \sum_{i=l}^{r} \alpha_i g^{P_i}\]
    for $|M|=l$ and $|M|+|P|=r$ and some $0 \leq \alpha_i \in \R$. From \cite[Prop. 6.6(c)]{DIJ2019} we know that $g^{(M_K)} = (g^M) \otimes_k K \in K_0(\proj \Lambda)_K$, from which we obtain
    \[ v \otimes_k K = \sum_{i=1}^l \alpha_i g^{(M_i)_K} - \sum_{i=l}^{r} \alpha_i g^{(P_i)_K}.\]
    It follows immediately that 
    \[ v \otimes_k K \in \calC_{(M_K,P_K)} \cap K_0(\proj \Lambda)_K. \]
    From \cref{lem:intersectlift} and \cite[Cor. 6.7(2)]{DIJ2019}, see also \cite[ Thm. 6.13]{Kai23}, it follows that the embedding is compatible with the fan structure, that is, intersections of cones are preserved.
\end{proof}

One important feature of the $g$-vector fan is that it embeds into the stability scattering diagram of \cite{Bri17}, and into its support, the so-called wall-and-chamber structure \cite{Asai2020, BST19}. The wall-and-chamber structure itself captures stability phenomena of modules as described shortly.
Through the Euler form $\langle -, ? \rangle : K_0(\proj \Lambda) \times K_0(\mods \Lambda) \to \Z$ we may view $\theta \in K_0(\proj \Lambda)$ as an element of $K_0(\mods \Lambda)^*$ via $\theta([M]) = \langle \theta, [M] \rangle$, see \cite{Asa21}. This establishes the connection between the $g$-vector fan and the following stability conditions. 

\begin{definition}\cite{King1994}
    Let $K_0(\mods \Lambda)_\R^*$ denote the dual $\R$-vector space of $K_0(\mods \Lambda)_\R$ and let $\theta \in K_0(\mods \Lambda)_\R^*$. We say that $M \in \mods \Lambda$ is \textit{$\theta$-semistable} (resp. \textit{$\theta$-stable}) if $\theta([M])=0$ and $\theta([L]) \leq 0$ (resp. $\theta([L])<0$) for all nonzero proper submodules $L$ of $M$. 
\end{definition}

Thus, each $g$-vector $\theta$ gives rise to a stability condition whose $\theta$-semistable objects are studied in \cite{Asa21,BST19, Yur}. Since $-\otimes_k K$ and $(-)|_\Lambda$ are both exact, they induce maps between Grothendieck groups and their duals
\begin{equation}\label{eq:Grothendieckmaps}
\begin{tikzcd}
    K_0(\mods \Lambda_K) \arrow[d, "{(-)|_\Lambda}", shift left] & K_0(\mods \Lambda_K)^* \arrow[d, "(- \otimes_k K)^*", shift left]\\
    K_0(\mods \Lambda) \arrow[u, shift left, "- \otimes_k K"]& K_0(\mods \Lambda)^* \arrow[u, shift left, "{(-)|_\Lambda^*}"]
\end{tikzcd}
\end{equation}
where for $\theta \in K_0(\mods \Lambda_K)^*$ we define $\theta_K^* \coloneqq (\theta \otimes_k K)^*$ by $\theta_K^*([M]) = \theta([M_K])$, and similarly we set $\theta|_\Lambda^*([M]) = \theta([M|_\Lambda])$. This alone is not enough to establish a well behaved relationship. However, recall from \cref{lem:nicerestriction} that when $K:k$ is a finite Galois extension, we have $(M_K)|_\Lambda \simeq M^{[K:k]}$. Using this observation we are able to prove the following. 

\begin{lemma}\label{prop:semistablelift}
    Let $K:k$ be a finite and separable. Let $M \in \mods \Lambda$ and let $\theta \in K_0(\mods \Lambda)_\R^*$. Then $M$ is $\theta$-semistable if and only if $M_K$ is $\theta|_\Lambda^*$-semistable. 
\end{lemma}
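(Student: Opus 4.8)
The plan is to reduce both conditions in the definition of semistability to statements about $\Lambda$-modules via the restriction functor, exploiting the isomorphism $(M_K)|_\Lambda \simeq M^{[K:k]}$ from \cref{lem:nicerestriction}. Since restriction is exact, for every $\Lambda$-module $L$ one has $\theta|_\Lambda^*([L_K]) = \theta([(L_K)|_\Lambda]) = \theta([L^{[K:k]}]) = [K:k]\,\theta([L])$, and as $[K:k]$ is a positive integer this shows at once that $\theta([M]) = 0$ if and only if $\theta|_\Lambda^*([M_K]) = 0$. This settles the vanishing condition in both directions, so it only remains to compare the inequalities on proper submodules.

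For the implication that $\theta$-semistability of $M$ forces $\theta|_\Lambda^*$-semistability of $M_K$, I would first record the auxiliary fact that whenever $M$ is $\theta$-semistable the power $M^n$ is $\theta$-semistable for every $n \geq 1$ (the vanishing $\theta([M^n]) = n\,\theta([M]) = 0$ being immediate). This is the standard observation that $\theta$-semistable modules on which $\theta$ vanishes are closed under direct sums; concretely, given a submodule $L \subseteq M \oplus M$, the short exact sequence $0 \to L \cap (0 \oplus M) \to L \to \pi_1(L) \to 0$ (with $\pi_1$ the first projection) together with additivity of the class in $K_0(\mods \Lambda)$ yields $\theta([L]) = \theta([L \cap (0 \oplus M)]) + \theta([\pi_1(L)]) \leq 0$, since both $L \cap (0\oplus M)$ and $\pi_1(L)$ embed into a copy of $M$; an induction then covers all $M^n$. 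Granting this, any nonzero proper $\Lambda_K$-submodule $N \subseteq M_K$ restricts to a proper $\Lambda$-submodule $N|_\Lambda \subseteq (M_K)|_\Lambda \simeq M^{[K:k]}$ (properness following from $\dim_k N|_\Lambda = [K:k]\dim_K N < [K:k]\dim_K M_K = \dim_k M^{[K:k]}$), so $\theta|_\Lambda^*([N]) = \theta([N|_\Lambda]) \leq 0$ by semistability of $M^{[K:k]}$.

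For the converse, suppose $M_K$ is $\theta|_\Lambda^*$-semistable and let $L \subseteq M$ be a nonzero proper submodule. Applying the exact functor $-\otimes_k K$ produces a $\Lambda_K$-submodule $L_K \subseteq M_K$, which is again nonzero (as $-\otimes_k K$ is faithful) and proper (since $\dim_K L_K = \dim_k L < \dim_k M = \dim_K M_K$). Semistability of $M_K$ then gives $\theta|_\Lambda^*([L_K]) \leq 0$, and by the identity of the first paragraph this reads $[K:k]\,\theta([L]) \leq 0$, hence $\theta([L]) \leq 0$. Together with the vanishing already established, $M$ is $\theta$-semistable.

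I expect the only genuine subtlety to be the auxiliary stability of the powers $M^n$ needed in the forward direction: a submodule of $M^{[K:k]}$ need not split as a direct sum of submodules of the factors, so one cannot argue summand-by-summand and must instead pass through the projection-and-additivity argument above. Everything else is bookkeeping with $k$- and $K$-dimensions, the exactness of $-\otimes_k K$ and $(-)|_\Lambda$, and the identification $(M_K)|_\Lambda \simeq M^{[K:k]}$ of \cref{lem:nicerestriction}; note in particular that finiteness of the extension is what this argument truly uses.
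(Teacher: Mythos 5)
Your proof is correct, and the forward direction takes a genuinely different route from the paper's. Where you prove that a finite direct sum of copies of a $\theta$-semistable module $M$ with $\theta([M])=0$ is again $\theta$-semistable (via the exact sequence $0 \to L \cap (0\oplus M) \to L \to \pi_1(L) \to 0$ and additivity of $\theta$ on $K_0$) and then simply restrict a submodule $N \subseteq M_K$ to a proper submodule of $(M_K)|_\Lambda \simeq M^{[K:k]}$, the paper instead descends the submodule itself: it invokes \cite[Prop. 4.13]{Kas00} to write any submodule $L'$ of $M_K$ as a direct summand of $L_K$ for some $L \in \mods\Lambda$, so that $(L')|_\Lambda \simeq L^r$ and $\theta|_\Lambda^*([L']) = r\,\theta([L]) \leq 0$. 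The backward direction is essentially identical in both arguments. The trade-off is notable: the paper's descent step is what forces the separability hypothesis (that is where \cite[Prop. 4.13]{Kas00} is used), whereas your argument only ever uses the identification $(M_K)|_\Lambda \simeq M^{[K:k]}$ of \cref{lem:nicerestriction} and exactness of the two functors, so it proves the lemma for an arbitrary \emph{finite} extension, as you correctly observe at the end. The paper's choice is natural in context because the same descent result is needed anyway in \cref{thm:WACembed} to show that no extra walls appear in the image of $(-)|_\Lambda^*$, but for the lemma in isolation your argument is both more elementary and strictly more general.
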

\begin{proof}
    $(\Rightarrow)$ Assume $M$ is $\theta$-semistable, then we obtain
    \[ \theta|_\Lambda^*([M_K]) = \theta([(M_K)|_\Lambda]) = \theta([M^{[K:k]}]) = [K:k] \cdot \theta([M])=0.\]
    Now let $L'$ be a submodule of $M_K$. 
    Since $K:k$ is separable and algebraic, it follows from \cite[Prop. 4.13]{Kas00} that $L'$ is a direct summand of $L_K$ for some $L \in \mods \Lambda$. Moreover, as $(-)|_\Lambda$ is exact, we have that $L$ is a submodule of $M$. We then get
    \[ \theta|_\Lambda^*([L']) = \theta([(L')|_\Lambda]) = \theta( [L^r]) = r \theta([L]) \leq 0,\]
    where $1 \leq r \leq [K:k]$.

    $(\Leftarrow)$ Assume $M_K$ is $\theta|_\Lambda^*$-semistable. Then we have
    \[ \theta([M]) = [K:k]^{-1} \theta([M^{[K:k]}]) = [K:k]^{-1} \theta( [(M_K)|_\Lambda]) = [K:k]^{-1} \theta|_\Lambda^* \theta([M_K]) = 0.\]
    Entirely analogously we have for a submodule $L$ of $M$ that 
    \[ \theta([L]) = [K:k]^{-1} \theta([L^{[K:k]}]) = [K:k]^{-1} \theta( [(L_K)|_\Lambda]) = [K:k]^{-1} \theta|_\Lambda^* \theta([L_K]) \leq 0,\]
    since $L_K$ is a submodule of $M_K$ as $- \otimes_k K$ is exact. 
\end{proof}

Now the stability spaces fit together nicely to form the \textit{wall-and-chamber structure}. More precisely, given a nonzero module $M \in \mods \Lambda$, define its stability space of $M$ to be
\[ \Theta_M = \{ \theta \in K_0(\mods \Lambda)_\R^*: M \text{ is $\theta$-semistable}\},\]
which we call the \textit{wall} associated to $M$. The collection of all walls defines the \textit{wall-and-chamber structure}, where a chamber is a part of $K_0(\mods \Lambda)_\R^*$ which does not lie in the closure of any wall.

\begin{theorem}\label{thm:WACembed}
    Let $K:k$ be a finite and separable extension. Then the wall-and-chamber structure of $\mods \Lambda$ embeds into that of $\mods \Lambda_K$ via the map $(-)|_\Lambda^*$ of \cref{eq:Grothendieckmaps}. 
\end{theorem}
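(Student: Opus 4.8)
The plan is to verify two things about the linear map $\iota \coloneqq (-)|_\Lambda^*\colon K_0(\mods \Lambda)_\R^* \to K_0(\mods \Lambda_K)_\R^*$ of \cref{eq:Grothendieckmaps}: first, that it is injective, so that it genuinely embeds the ambient dual space onto a subspace of $K_0(\mods \Lambda_K)_\R^*$; and second, that it carries each wall $\Theta_M$ of $\mods\Lambda$ precisely onto the trace $\Theta_{M_K} \cap \image \iota$ of the corresponding wall of $\mods\Lambda_K$. Together these say exactly that the arrangement of walls of $\mods\Lambda$ sits inside that of $\mods\Lambda_K$, the latter possibly refining the former, in analogy with the $g$-vector fan embedding of \cref{lemma:gvectorfan}.

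For injectivity I would use that, since $K:k$ is finite, \cref{lem:nicerestriction} gives $(N_K)|_\Lambda \simeq N^{[K:k]}$ for every $N \in \mods\Lambda$. At the level of Grothendieck groups this means the composite $(-)|_\Lambda \circ (-\otimes_k K)$ is multiplication by $[K:k]$ on $K_0(\mods\Lambda)_\R$. Dualising, $(-\otimes_k K)^* \circ \iota = [K:k]\cdot\mathrm{id}$ on $K_0(\mods\Lambda)_\R^*$, so $\tfrac{1}{[K:k]}(-\otimes_k K)^*$ is a left inverse of $\iota$ and hence $\iota$ is injective.

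The wall correspondence is then immediate from \cref{prop:semistablelift}. Fix a nonzero $M \in \mods\Lambda$ and note that $M_K \neq 0$ since $-\otimes_k K$ is injective-on-objects, so $\Theta_{M_K}$ is a genuine wall of $\mods\Lambda_K$. For $\iota(\Theta_M) \subseteq \Theta_{M_K}$, take $\theta \in \Theta_M$, i.e. $M$ is $\theta$-semistable; then \cref{prop:semistablelift} says $M_K$ is $\iota(\theta)$-semistable, so $\iota(\theta) \in \Theta_{M_K}$. Conversely, given $\psi \in \Theta_{M_K} \cap \image\iota$, injectivity lets me write $\psi = \iota(\theta)$ for a unique $\theta$; since $M_K$ is $\psi = \iota(\theta)$-semistable, the other direction of \cref{prop:semistablelift} forces $M$ to be $\theta$-semistable, whence $\theta \in \Theta_M$ and $\psi \in \iota(\Theta_M)$. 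This gives the equality $\iota(\Theta_M) = \Theta_{M_K}\cap\image\iota$, so $\iota$ restricts to a bijection of each wall of $\mods\Lambda$ onto the trace of the wall $\Theta_{M_K}$.

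I expect the main subtlety to be conceptual rather than computational, since \cref{prop:semistablelift} does the heavy lifting. One must be careful that the embedding identifies each wall of $\mods\Lambda$ with the \emph{trace} of a wall of $\mods\Lambda_K$ on the proper subspace $\image\iota$ (the map $\iota$ is far from surjective), and that the wall receiving $\Theta_M$ is exactly $\Theta_{M_K}$ for the single module $M_K$. One should also record, as in \cref{lemma:gvectorfan}, that the structure on $\mods\Lambda_K$ may be strictly finer: there can be walls $\Theta_N$ with $N$ not of the form $M_K$ that meet $\image\iota$, so a point in the interior of a chamber of $\mods\Lambda$ need not map into a chamber of $\mods\Lambda_K$. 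Thus the claim is an embedding of walls into walls, and is not asserted to be surjective onto all walls of $\mods\Lambda_K$ meeting $\image\iota$.
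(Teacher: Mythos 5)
Your injectivity argument (via $(-\otimes_k K)^* \circ (-)|_\Lambda^* = [K{:}k]\cdot\mathrm{id}$) and your identification $(-)|_\Lambda^*(\Theta_M) = \Theta_{M_K} \cap \image\bigl((-)|_\Lambda^*\bigr)$ are correct, and the latter is exactly the first half of the paper's proof, with \cref{prop:semistablelift} doing the work in both directions just as you use it. The gap is in your closing paragraph: the paper's proof has a second half that you omit and in fact explicitly contradict. Namely, it shows there are \emph{no other} walls cutting through $\image\bigl((-)|_\Lambda^*\bigr)$: if $M' \in \mods\Lambda_K$ is $\theta|_\Lambda^*$-semistable for some $\theta \in K_0(\mods\Lambda)_\R^*$, then $\theta$ already lies on a wall of $\mods\Lambda$. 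This is where the finiteness and separability of $K:k$ are really spent a second time: by \cite[Prop. 4.13]{Kas00} one has $M' \in \add(N_K)$ for some $N \in \mods\Lambda$ with $M'|_\Lambda \simeq N^r$, and the paper then argues that $N_K$ is also $\theta|_\Lambda^*$-semistable (computing $\theta|_\Lambda^*([N_K]) = \tfrac{[K:k]}{r}\,\theta|_\Lambda^*([M']) = 0$), whence $\theta \in \Theta_N$ by \cref{prop:semistablelift}. Consequently $\Theta_{M'} \cap \image\bigl((-)|_\Lambda^*\bigr) \subseteq (-)|_\Lambda^*(\Theta_N)$, so points lying on no wall of $\mods\Lambda$ are sent to points lying on no wall of $\mods\Lambda_K$.

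Without this step you have only an injection from walls of $\mods\Lambda$ to traces of walls of $\mods\Lambda_K$, and the chamber structure is not controlled; your assertion that ``a point in the interior of a chamber of $\mods\Lambda$ need not map into a chamber of $\mods\Lambda_K$'' is precisely the statement the paper's second half refutes, and accepting it would weaken ``embedding of wall-and-chamber structures'' to ``embedding of the set of walls.'' To close the gap, add the argument sketched above: every indecomposable $\Lambda_K$-module that is semistable for a stability condition pulled back from $K_0(\mods\Lambda)_\R^*$ is a summand of some $N_K$ with $N \in \mods\Lambda$, and the resulting $N$ is $\theta$-semistable, so the offending wall's trace on the image is already accounted for by a wall of $\mods\Lambda$.
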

\begin{proof}
    \cref{prop:semistablelift} states that the image of every wall in $K_0(\mods \Lambda)^*$ under $(-)|_\Lambda^*$ lies inside a wall of $K_0(\mods \Lambda_K)^*$. To see that there are no other walls in the image of $(-)|_\Lambda^*$, consider $M \in \mods \Lambda_K$ such that $M$ is $\theta|_\Lambda^*$-semistable for some $\theta \in K_0(\mods \Lambda)_\R^*$. Since $K:k$ is separable and algebraic{,} we know that $M \in \add N_K$ for some $N \in \mods \Lambda$ by \cite[Prop. 4.13]{Kas00}. We conclude the proof by showing that $N_K$ is also $\theta|_\Lambda^*$-semistable, hence any wall in the image of $(-)|_\Lambda^*$ lies inside the image of a wall of $K_0(\mods \Lambda)$ under $(-)|_\Lambda^*$. We have 
    \begin{align*}
         \theta|_\Lambda^*([N_K]) &= \theta([(N_K)|_\Lambda]) \\
         &= \theta([N^{[K:k]}]) \\
         &= [K:k] \theta([N]) \\
         &= \frac{[K:k]}{r} \theta([N^r]) \\
         &= \frac{[K:k]}{r}\theta([M|\Lambda]) \\
         &= \frac{[K:k]}{r}\theta|_\Lambda^*([M]) \\
         &= 0,
    \end{align*}
    where $M|_\Lambda \simeq N^r$ for some $1 \leq r \leq [K:k]$.
\end{proof}

Comparing \cref{lemma:gvectorfan} with \cref{thm:WACembed} shows that the $g$-vector fan, so in some sense the $\tau$-tilting theory, behaves well for all field extensions. Outside of the $g$-vector fan, the stability spaces may behave worse. Only with the additional assumption of having a finite {and separable} extension is it possible to gain control outside the $g$-vector fan. 

\subsection{\texorpdfstring{$\tau$}{tau}-perpendicular intervals}\label{sec:tauperp}
An interval $[\calU, \calT] \subseteq \tors \Lambda$ is called \textit{$\tau$-perpendicular} if $\calU = \Fac M$ and $\calT = {}^\perp \tau M \cap P^\perp$ for some $\tau$-rigid pair $(M,P)$. In this case{,} we also denote this interval by $[\calU_{(M,P)}, \calT_{(M,P)}]$. Via the bijection \cref{thm:AIRftorsbij}, the torsion class $\calU_{(M,P)}$ corresponds with the so-called \textit{co-Bongartz completion} $(M^-,P^-) \in \stautilt \Lambda$ and the torsion class $\calT_{(M,P)}$ corresponds with the so-called \textit{Bongartz completion} $(M^+, P) \in \stautilt \Lambda$, see also \cite[Thm. 4.4]{DIRRT17} (the second component of the Bongartz completion stays invariant when passing from $(M,P)$ to its Bongartz completion $(M^+, P)$). By analogy{,} an interval of  $\stautilt \Lambda$ is called $\tau$-perpendicular if it is of the form $[(M^-,P^-), (M^+,P)] \subseteq \stautilt \Lambda$ for some $\tau$-rigid pair $(M,P)$. We begin with the following observation, which is similar to \cref{lem:posetlift}.

\begin{lemma}\label{lem:cobongartzcommute}
    Taking co-Bongartz completions commutes with base field extension. More precisely, given
     a field extension {$K:k$} and $(M,P) \in \staurigid \Lambda$, then 
    \[((M_K)^-,(P_K)^-) = ((M^-)_K, (P^-)_K). \]
  \end{lemma}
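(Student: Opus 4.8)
The plan is to identify both pairs, via the Adachi--Iyama--Reiten bijection of \cref{thm:AIRftorsbij}, with a single functorially finite torsion class of $\mods \Lambda_K$, namely $\Fac M_K$, and then to conclude equality from the injectivity of that bijection. The key conceptual point is that the co-Bongartz completion of a $\tau$-rigid pair depends only on the torsion class it generates: by the definition preceding this lemma, the co-Bongartz completion $(M^-,P^-)$ is the support $\tau$-tilting pair corresponding under \cref{thm:AIRftorsbij} to $\calU_{(M,P)} = \Fac M$, and hence in particular $\Fac M^- = \Fac M$.

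First I would record that $(M_K,P_K)$ is a $\tau$-rigid pair in $\mods \Lambda_K$ by \cref{lem:taurigidlift}, so that its co-Bongartz completion $((M_K)^-,(P_K)^-)$ is defined and corresponds under \cref{thm:AIRftorsbij} to $\calU_{(M_K,P_K)} = \Fac M_K$. On the other side, $((M^-)_K,(P^-)_K)$ is a support $\tau$-tilting pair of $\Lambda_K$ by \cref{cor:tautiltlift}, and under \cref{thm:AIRftorsbij} it corresponds to the torsion class $\Fac (M^-)_K$. It therefore remains only to prove the equality of torsion classes
\[ \Fac (M^-)_K = \Fac M_K. \]

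To establish this I would invoke the compatibility of $\Fac$ with scalar extension proven earlier. Since $M$ and $M^-$ are both $\tau$-rigid and $\Fac M = \Fac M^-$, applying \cref{lem:posetlift} to the two inclusions $\Fac M \subseteq \Fac M^-$ and $\Fac M^- \subseteq \Fac M$ yields $\Fac M_K \subseteq \Fac (M^-)_K$ and $\Fac (M^-)_K \subseteq \Fac M_K$, whence the desired equality; alternatively this is immediate from the ``moreover'' clause of \cref{cor:posetembedftors}. Both pairs thus map to $\Fac M_K$ under the bijection of \cref{thm:AIRftorsbij}, and are therefore equal.

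The step requiring the most care is the bookkeeping between basic and non-basic representatives. Scalar extension need not preserve basicness (for instance, an indecomposable $\tau$-rigid summand whose endomorphism division ring splits after extension can acquire multiplicities in $(M^-)_K$), so the asserted identity should be read as an equality of support $\tau$-tilting pairs up to $\add$-equivalence, i.e. after passing to basic direct summands as in \cref{cor:posetembed}. I expect this to be the only genuine obstacle; once the argument is phrased entirely at the level of the torsion classes the pairs generate, the subtlety dissolves, since \cref{thm:AIRftorsbij} is a bijection onto $\ftors \Lambda_K$ and the comparison above never distinguishes between $\add$-equivalent modules.
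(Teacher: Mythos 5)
Your proposal is correct and follows essentially the same route as the paper's proof: both characterise the co-Bongartz completion as the unique basic support $\tau$-tilting pair corresponding under \cref{thm:AIRftorsbij} to the torsion class $\Fac M$, reduce the claim to the equality $\Fac (M^-)_K = \Fac M_K$ via \cref{cor:posetembedftors} (equivalently, two applications of \cref{lem:posetlift}), and conclude from the injectivity of the bijection. Your closing remark on passing to basic direct summands is a reasonable precision that the paper leaves implicit, but it does not alter the argument.
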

\begin{proof}
    {By definition, the co-Bongartz completion of $(M,P)$ is the uniquely determined basic support $\tau$-tilting pair $(M^-,P^-)$ with the property that $\Fac(M^-) = \Fac(M)$. }
    {By \cref{lem:taurigidlift}, $(M_K,P_K)$ is a $\tau$-rigid module in $\mods \Lambda_K$ and thus $((M_K)^-,(P_K)^-)$ is similarly defined as the unique basic $\tau$-tilting pair in $\mods \Lambda_K$ such that $\Fac((M_K)^-) = \Fac(M_K)$. However, from \Cref{cor:posetembedftors} we know that $\Fac(M^-) = \Fac(M)$ implies $\Fac((M^-)_K) = \Fac(M_K)$. This implies that $\Fac((M_K)^-) = \Fac(M_K) = \Fac((M^-)_K)$. Finally, since $(M^-,P^-)$ is a $\tau$-tilting pair in $\mods \Lambda$, it follows that $((M^-)_K,(P^-)_K)$ is a $\tau$-tilting pair in $\mods \Lambda_K$ by \cref{cor:tautiltlift}. Thus, \cref{thm:AIRftorsbij} implies that $((M_K)^-,(P_K)^-) = ((M^-)_K,(P^-)_K)$.}
\end{proof}

With a little more work, we are also able to obtain a dual statement.

\begin{lemma}\label{lem:bongartzcommute}
    Taking Bongartz completions commutes with base field extension. More precisely, let $K:k$ be a field extension and $(M,P) \in \staurigid \Lambda$. Then 
    \[((M_K)^+,P_K) = ((M^+)_K, P_K). \]
  \end{lemma}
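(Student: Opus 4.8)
The plan is to deduce the statement from the already-established co-Bongartz case \cref{lem:cobongartzcommute} by transporting it through the $\tau^{-1}$-tilting bijection, where the Bongartz completion acquires a description as simple as the one used for co-Bongartz completions. Recall from \cref{prop:AIRp2.15} the bijection $H\colon \stautilt\Lambda \to \stauinvtilt\Lambda$, $H(N,Q)=(\tau_\Lambda N \oplus \nu_\Lambda Q,\ \nu_\Lambda N_{\mathrm{pr}})$, and its compatibility square relating $\Fac$, $\Sub$ and $(-)^{\perp}$. Since $(-)^{\perp}$ is an anti-isomorphism of lattices, $H$ reverses the order; hence it carries the interval of support $\tau$-tilting pairs having $(M,P)$ as a summand to the interval of support $\tau^{-1}$-tilting pairs having the $\tau^{-1}$-rigid pair $\widehat H(M,P)\coloneqq(\tau_\Lambda M \oplus \nu_\Lambda P,\ \nu_\Lambda M_{\mathrm{pr}})$ as a summand, with the order reversed. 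In particular the \emph{maximum} $(M^+,P)$ of the former is sent to the \emph{minimum} of the latter, i.e.\ to the $\tau^{-1}$-co-Bongartz completion of $\widehat H(M,P)$. Writing $H(M^+,P)=(N^+,Q^+)$, the content of this identification is the simple characterisation $\Sub N^+ = \Sub(\tau_\Lambda M \oplus \nu_\Lambda P)$, which is exactly dual to the defining property $\Fac M^- = \Fac M$ of the co-Bongartz completion.

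First I would dualise the proof of \cref{lem:cobongartzcommute} to obtain that $\tau^{-1}$-co-Bongartz completion commutes with base field extension. This amounts to replacing $\Fac$ by $\Sub$, $\tau$-rigid by $\tau^{-1}$-rigid, and $\Lambda$ by $\Lambda_K$ throughout, and invoking the torsion-free-class analogues of the ingredients used there: the $\Sub$-version of \cref{cor:posetembedftors} (that $\Sub N = \Sub N'$ forces $\Sub N_K = \Sub N'_K$), the statement that support $\tau^{-1}$-tilting pairs lift (the dual of \cref{cor:tautiltlift}, already observed to hold), and the dual of the bijection \cref{thm:AIRftorsbij}. Concretely, $(N^+,Q^+)$ is characterised by $\Sub N^+ = \Sub(\tau_\Lambda M \oplus \nu_\Lambda P)$; applying the $\Sub$-analogue of \cref{cor:posetembedftors} together with the isomorphisms $(\tau_\Lambda M)_K \simeq \tau_{\Lambda_K}M_K$ of \cref{lem:tautranslatelift} and $(\nu_\Lambda P)_K \simeq \nu_{\Lambda_K}P_K$ of \cref{cor:Nakfunctlift} shows that $(N^+_K,Q^+_K)$ is the unique support $\tau^{-1}$-tilting pair with $\Sub N^+_K = \Sub(\tau_{\Lambda_K}M_K \oplus \nu_{\Lambda_K}P_K)$. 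By the very same identification applied over $\Lambda_K$, that pair is $H((M_K)^+,P_K)$.

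It then remains to assemble the pieces. By \cref{lem:dualtaucommutes} the bijection $H$ commutes with base field extension, so $(H(M^+,P))_K = H((M^+)_K,P_K)$; combined with the previous paragraph this yields $H((M^+)_K,P_K) = H((M_K)^+,P_K)$. Since $H$ is a bijection (\cref{prop:AIRp2.15}), we conclude $((M^+)_K,P_K) = ((M_K)^+,P_K)$, which is precisely the claim (note that the second components already agree, being $P_K$ on both sides).

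The hard part will be the identification in the first paragraph, namely that $H$ sends the Bongartz completion of $(M,P)$ to the $\tau^{-1}$-co-Bongartz completion of $\widehat H(M,P)$ --- equivalently, that $\tau_\Lambda M^+ \oplus \nu_\Lambda P$ and $\tau_\Lambda M \oplus \nu_\Lambda P$ generate the same torsion-free class --- together with the careful, if routine, transcription of the torsion-class results into the $\tau^{-1}$-setting. I would emphasise that the naïve alternative of proving the objectwise equality ${}^{\perp}\tau_{\Lambda_K}M_K \cap (P_K)^{\perp} = (\,{}^{\perp}\tau_\Lambda M \cap P^{\perp})_K$ directly is precisely what the remark after \cref{cor:posetembedftors} cautions against, since the extension $\calT_K$ of a torsion class is not computed objectwise; routing through the co-Bongartz completion via $H$ is exactly what sidesteps this difficulty.
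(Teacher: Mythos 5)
Your proposal is correct and follows essentially the same route as the paper's proof: both characterise the Bongartz completion $(M^+,P)$ by the identity $\Fac(M^+)^\perp = \Sub(\tau_\Lambda M \oplus \nu_\Lambda P)$ (equivalently, that $H$ sends it to the minimal support $\tau^{-1}$-tilting pair for that torsion-free class), transport the resulting equality $\Sub(\tau_\Lambda M \oplus \nu_\Lambda P) = \Sub(\tau_\Lambda M^+ \oplus \nu_\Lambda P)$ to $\Lambda_K$ via the dual of \cref{cor:posetembedftors}, and conclude using \cref{lem:dualtaucommutes} together with the bijectivity of $H$. The step you flag as "the hard part" is handled in the paper by the short chain ${}^\perp\tau_\Lambda M \cap P^\perp = {}^\perp(\tau_\Lambda M \oplus \nu_\Lambda P)$ and the $\tau^{-1}$-rigidity of $\tau_\Lambda M \oplus \nu_\Lambda P$, exactly as you anticipate.
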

  \begin{proof}
    {Recall from \cref{prop:AIRp2.15} that $H(M,P)$ is defined by $ (\tau M \oplus \nu P, M_\textnormal{pr})$.} {By definition, the Bongartz completion of $(M,P)$ is the uniquely determined basic support $\tau$-tilting pair $(M^+,P)$ in $\mods \Lambda$ with the property that $\Fac(M^+) = {^{\perp}\tau_\Lambda M} \cap P^{\perp}$.}
    Since ${^{\perp}\tau_\Lambda M} \cap P^{\perp} = {^{\perp}(\tau_{\Lambda} M \oplus \nu_{\Lambda} P)}$, this is equivalent to saying $\Fac(M^+) = {}^\perp (\tau_\Lambda M \oplus \nu_\Lambda P)$, and thus equivalent to $\Fac(M^+)^\perp = \Sub(\tau_\Lambda M \oplus \nu_\Lambda P)$ since $\tau_\Lambda M \oplus \nu_\Lambda P$ is $\tau^{-1}$-rigid in $\mods \Lambda$. Because torsion classes uniquely determine their corresponding torsion-free classes, we equivalently have that $(M^+,P)$ is the unique $\tau$-tilting pair in $\mods \Lambda$ such that $\Fac(M^+)^\perp = \Sub(\tau_\Lambda M \oplus \nu_\Lambda P)$. It clearly follows that $\Sub(\tau_\Lambda M \oplus \nu_\Lambda P) = \Sub(\tau_\Lambda (M^+) \oplus \nu_\Lambda P)$, so \cref{lem:dualtaucommutes} and the dual of \cref{cor:posetembedftors} imply that 
    \begin{equation} \label{eq:bongartzcommute} \Sub(\tau_{\Lambda_K} M_K \oplus \nu_{\Lambda_K} P_K) = \Sub(\tau_{\Lambda_K} (M^+)_K \oplus \nu_{\Lambda_K} P_K).\end{equation} 
    By \cref{lem:taurigidlift}, $(M_K,P_K)$ is a $\tau$-rigid pair in $\mods \Lambda_K$, whence $((M_K)^+,P_K)$ is the unique basic $\tau$-tilting pair in $\mods \Lambda_K$ such that $\Fac((M_K)^+)^\perp = \Sub(\tau_{\Lambda_K} M_K \oplus \nu_{\Lambda_K} P_K)$. 
    However, since $H(M^+,P)$ is a $\tau^{-1}$-tilting pair in $\mods \Lambda$, the dual of \cref{cor:tautiltlift} implies that $(H(M^+,P))_K$ is a $\tau^{-1}$-tilting pair in $\mods \Lambda$. By \cref{eq:bongartzcommute}, it also corresponds to the torsion-free class $\Sub(\tau_{\Lambda_K} M_K \oplus \nu_{\Lambda_K} P_K)$ under the dual of \cref{thm:AIRftorsbij}. This implies $(H(M^+,P))_K = H((M_K)^+,P_K)$. By \cref{lem:dualtaucommutes} we thus have
    \[ H((M^+)_K, P_K) = (H(M^+,P))_K = H((M_K)^+,P_K),\]
    so that applying $H^{-1}$ to both sides yields $((M^+)_K,P_K) = ((M_K)^+,P_K)$ as required.
  \end{proof}

It is important to remark that the intervals $[\calU_{(M,P)}, \calT_{(M,P)}] \subseteq \tors \Lambda$ and $[(M^-,P^-), (M^+,P)]$ are not generally isomorphic as lattices because there may exist torsion classes in the interval which are not functorially finite. Nonetheless, there is a bijection between $\tau$-perpendicular intervals of $\stautilt \Lambda$ and $\tau$-perpendicular intervals of $\tors \Lambda$. Denote by $\tauint(\tors \Lambda)$ the collection of $\tau$-perpendicular intervals of $\tors A$ and view it as a poset such that $[\calU, \calT] \leq [\calX, \calY ]$ whenever $[\calX, \calY] \subseteq [\calU, \calT]$.

\begin{proposition}\label{cor:tauperpitvlift}
	Let $K:k$ be a field extension. There is a well-defined map 
	\begin{equation}\label{eq:welldefineditvmap} 
- \otimes_k K: \tauint( \tors \Lambda) \to \tauint(\tors \Lambda_K)
\end{equation}
given component-wise by $- \otimes_k K$. More precisely, let $(M,P)$ be a $\tau$-tilting pair in $\mods \Lambda$. Then applying the scalar extension functor $- \otimes_k K$ component-wise to the two boundary $\tau$-tilting pairs in $\Lambda$ of the $\tau$-perpendicular interval $[(M^-,P^-), (M^+, P)] \subseteq \stautilt \Lambda$ gives rise to the $\tau$-perpendicular interval $[(M_K^-, P_K^-), (M_K^+,P_K)] \subseteq \tors \Lambda_K$.
\end{proposition}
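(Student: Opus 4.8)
The plan is to reduce the entire statement to the two commutation results \cref{lem:cobongartzcommute} and \cref{lem:bongartzcommute}, which have already absorbed the substantive content. First I would fix a $\tau$-perpendicular interval $[\calU, \calT] \in \tauint(\tors \Lambda)$ and, by its very definition, write $\calU = \calU_{(M,P)}$ and $\calT = \calT_{(M,P)}$ for some $\tau$-rigid pair $(M,P) \in \staurigid \Lambda$. Passing through the bijection \cref{thm:AIRftorsbij}, this interval corresponds to the interval $[(M^-, P^-), (M^+, P)] \subseteq \stautilt \Lambda$ of boundary support $\tau$-tilting pairs, where $(M^-, P^-)$ is the co-Bongartz completion (corresponding to $\calU$, so $\Fac(M^-) = \calU$) and $(M^+, P)$ is the Bongartz completion (corresponding to $\calT$, so $\Fac(M^+) = \calT$).

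Next I would address well-definedness, which is the only genuinely delicate point: a single interval $[\calU, \calT]$ may be represented by several distinct $\tau$-rigid pairs. However, the co-Bongartz completion is the unique support $\tau$-tilting pair with $\Fac(M^-) = \calU$, and the Bongartz completion is the unique one with $\Fac(M^+) = \calT$; thus both boundary pairs are determined by the interval $[\calU, \calT]$ alone, independently of the choice of $(M,P)$ inducing it. Since $-\otimes_k K$ is injective-on-objects and sends support $\tau$-tilting pairs to support $\tau$-tilting pairs by \cref{cor:tautiltlift} and \cref{cor:posetembed}, applying it component-wise to the (now canonical) boundary pairs yields a well-defined assignment $[\calU, \calT] \mapsto [((M^-)_K, (P^-)_K), ((M^+)_K, P_K)]$.

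It then remains to identify this image as a genuine $\tau$-perpendicular interval of $\stautilt \Lambda_K$, carrying the boundary data asserted in the statement. By \cref{lem:taurigidlift}, the pair $(M_K, P_K)$ is again $\tau$-rigid, so it determines a $\tau$-perpendicular interval $[((M_K)^-, (P_K)^-), ((M_K)^+, P_K)] \subseteq \stautilt \Lambda_K$. Invoking \cref{lem:cobongartzcommute} gives $((M_K)^-, (P_K)^-) = ((M^-)_K, (P^-)_K)$, and invoking \cref{lem:bongartzcommute} gives $((M_K)^+, P_K) = ((M^+)_K, P_K)$. Therefore the component-wise scalar extension of $[(M^-, P^-), (M^+, P)]$ coincides precisely with the $\tau$-perpendicular interval attached to the $\tau$-rigid pair $(M_K, P_K)$, which simultaneously shows that the map lands in $\tauint(\tors \Lambda_K)$ and establishes the stated formula. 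Transporting back through \cref{thm:AIRftorsbij} recovers the description at the level of torsion classes.

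The main obstacle has effectively been dealt with in advance by \cref{lem:cobongartzcommute} and \cref{lem:bongartzcommute}; conditional on those, the only remaining subtlety is the well-definedness verification, namely ensuring that the map is a function of the interval (equivalently, of its two canonical boundary support $\tau$-tilting pairs) rather than of an auxiliary $\tau$-rigid pair representing it. If one additionally wishes to record order-preservation, I expect it to follow formally, since the ordering on $\tauint$ is containment of intervals and $-\otimes_k K$ is exact and additive, so inclusions $\Fac(-) \subseteq \Fac(-)$ of boundary torsion classes are preserved as in \cref{lem:posetlift}.
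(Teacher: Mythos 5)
Your proof is correct and follows essentially the same route as the paper's: the paper likewise reduces the statement to \cref{lem:cobongartzcommute}, \cref{lem:bongartzcommute} and \cref{lem:posetlift}. Your additional remark that well-definedness follows because the Bongartz and co-Bongartz completions are uniquely determined by the boundary torsion classes (via \cref{thm:AIRftorsbij}) is a useful elaboration of a point the paper leaves implicit.
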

\begin{proof}
	This follows from \cref{lem:cobongartzcommute}, \cref{lem:bongartzcommute} and the fact that the order of support $\tau$-tilting pairs are preserved by \cref{lem:posetlift}.
\end{proof}

\subsection{Ordered modules and \texorpdfstring{$\tau$}{tau}-exceptional sequences}
Core techniques of $\tau$-tilting theory were applied in \cite{BM18t} to introduce $\tau$-exceptional sequences. These sequences generalise exceptional sequences of bricks over hereditary algebras \cite{cbw92,ringel_exceptional}. Among other things, they are motivated by the fact that complete sequences of maximal length always exist. Since their introduction, $\tau$-exceptional sequences have received much attention \cite{BarnardHanson2022exc,BHM2024,BHM2025,BKT2025,BM2023,HansonThomas2024,Msa2022exc,MT2020, Non25, Nonis2025mut, Terland2025}. 

\begin{definition}\cite[Def. 1.3]{BM18t}\label{def:tauexcep}
    Let $\ind \Lambda$ denote the subcategory of $\Lambda$ spanned by indecomposable $\Lambda$-modules.
    Denote by $C(\Lambda)$ the disjoint union of two copies of $\ind \Lambda$. To distinguish the two copies, we consider objects in the first category to be ``\textit{unsigned},'' or just $\Lambda$-modules in their own right, and the objects of the second category to be (formally) \textit{signed}. We denote a signed $\Lambda$-module by $\overline{M}$, where $M \in \mods \Lambda$. A sequence of objects in $C(\Lambda)$ given by
    \[(M_1 ,M_2,\dots,M_t)\] 
    is called a \textit{signed $\tau$-exceptional sequence} of length $t$ in $\mods \Lambda$ if:
    \begin{enumerate}
        \item $M_t$ is an unsigned $\Lambda$-module and $\tau$-rigid {in $\mods \Lambda$}, or $M_t$ is a signed $\Lambda$-module whose underlying $\Lambda$-module is {a} projective {object in $\mods \Lambda$}; and
        \item if $t > 1$, then $(M_1 ,\dots,M_{t-1})$ is a $\tau$-exceptional sequence of length $t-1$ in {the abelian subcategory} $M_{t}^\perp \cap {}^\perp \tau M_{t} {\subseteq \mods \Lambda}$, see also \cref{thm:widesubcatlatticeiso}.
    \end{enumerate}
    Such a sequence is called a\textit{(n unsigned) $\tau$-exceptional sequence} if each $M_i$ is an unsigned $\Lambda$-module. It is {called} a \textit{signed $\tau$-exceptional sequence} otherwise.
    A signed or unsigned $\tau$-exceptional sequences is \textit{complete} if $t=|\Lambda|$.
\end{definition}

{In view of (2) above, let $(M,P)$ be a $\tau$-rigid pair in $\mods \Lambda$.} {R}ecall that $M^\perp \cap {}^\perp \tau M \cap P^\perp$ is equivalent to $\mods \Gamma$ for some finite-dimensional algebra $\Gamma$ so that it has its own (relative) Auslander--Reiten translation and projective {objects} \cite{Jas15,DIRRT17}, see also \cref{thm:widesubcatlatticeiso}. As a consequence, the unsigned $\Lambda$-modules $M_1, \dots, M_{t-1}$ in \cref{def:tauexcep} are not necessarily $\tau$-rigid in $\mods \Lambda$ or signed projective $\Lambda$-modules. 

In some cases it is preferable to not use the recursive definition above and instead apply the following theorem which helps to index the signed $\tau$-exceptional sequences. {Let $(M,P)$ be a $\tau$-rigid pair. An \emph{ordered decomposition} of $(M,P)$ is given by an ordered {direct} sum {$X_1 \oplus \dots \oplus X_{r}$, where each $X_i$ is a direct summand of $M$ or a signed direct summand of $P$, such that the underlying module is isomorphic to $M \oplus P$.} 
We say that the ordered decomposition is an \textit{ordered decomposition into indecomposables} if each {$X_i$} is either an indecomposable $\Lambda$-module or an signed direct summand $\overline{P}_i$ of $\overline{P}$, where $P_i$ is indecomposable.} 

\begin{theorem}\cite[Thm. 1.4]{BM18t}\label{thm:tauexcpbij}
     There is a bijection
    \[ \left\{ \begin{varwidth}{20em} \begin{center} ordered decompositions of basic $\tau$-rigid pairs into indecomposables \end{center} \end{varwidth} \right\} \longleftrightarrow \left\{ \begin{varwidth}{15em} \begin{center} signed $\tau$-exceptional sequences \end{center} \end{varwidth} \right\}\]
    which restricts to support $\tau$-tilting pairs and complete signed $\tau$-exceptional sequences. 
\end{theorem}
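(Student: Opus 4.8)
The plan is to realise the bijection as an iterated $\tau$-tilting reduction in the sense of Jasso \cite{Jas15} (see also \cite{DIRRT17} and \cref{thm:widesubcatlatticeiso}). The essential input is the following reduction statement for a single indecomposable summand. If $U$ is an indecomposable $\tau$-rigid $\Lambda$-module, write $\calW = U^\perp \cap {}^\perp \tau U$; if $\overline{U}$ is a signed indecomposable projective with underlying module $U$, write $\calW = U^\perp$. In either case $\calW$ is a $\tau$-perpendicular subcategory, hence equivalent to $\mods \Gamma$ for a finite-dimensional algebra $\Gamma$ with $|\Gamma| = |\Lambda| - 1$, and the equivalence induces a bijection between basic $\tau$-rigid pairs of $\mods \Lambda$ having $U$ (respectively $\overline{U}$) as a summand and basic $\tau$-rigid pairs of $\mods \Gamma$, sending the complement of $U$ to a $\tau$-rigid pair of $\calW$ and respecting indecomposable direct summands.

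First I would define the forward map $\Phi$ by induction on the length $r$ of an ordered decomposition $X_1 \oplus \dots \oplus X_r$ of a basic $\tau$-rigid pair $(M,P)$. The last summand $X_r$ is, by hypothesis, either an indecomposable $\tau$-rigid module or a signed indecomposable projective, so setting $M_r \coloneqq X_r$ fulfils clause (1) of \cref{def:tauexcep}. Applying the reduction above to $U = X_r$ identifies the associated $\tau$-perpendicular category $\calW$ with $\mods \Gamma$ and transports the remaining $\tau$-rigid pair $X_1 \oplus \dots \oplus X_{r-1}$ to a basic $\tau$-rigid pair of $\calW$, together with an induced ordered decomposition $X_1' \oplus \dots \oplus X_{r-1}'$ into indecomposables. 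By induction $\Phi$ turns the latter into a signed $\tau$-exceptional sequence $(M_1, \dots, M_{r-1})$ in $\calW$, and I output $(M_1, \dots, M_{r-1}, M_r)$, which satisfies clause (2) of \cref{def:tauexcep} by construction. The inverse $\Psi$ reverses each step: from $(M_1, \dots, M_t)$ one records $M_t$, converts $(M_1, \dots, M_{t-1})$ recursively into an ordered decomposition inside $\calW \simeq \mods \Gamma$, and lifts it back to $\mods \Lambda$ along the inverse of the reduction bijection, appending $M_t$ as the final indecomposable summand.

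That $\Phi$ and $\Psi$ are mutually inverse then follows by a straightforward induction on length, since at each level the assertion collapses to the bijectivity of the single-summand reduction. For the restriction clause I would track the numerical invariant: the reduction satisfies $|\Gamma| = |\Lambda| - 1$ and lowers the number of summands by exactly one at each step, so an ordered decomposition has length $r = |\Lambda|$ precisely when $|M| + |P| = |\Lambda|$, that is, when $(M,P)$ is a support $\tau$-tilting pair, and this corresponds under $\Phi$ to a complete signed $\tau$-exceptional sequence.

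The main obstacle is verifying that the reduction bijection interacts correctly with the signed/unsigned bookkeeping. Concretely, one must check that each indecomposable summand of $(M,P)$ other than $X_r$ is sent to a well-determined indecomposable object of $C(\Gamma)$ — either an indecomposable $\tau$-rigid $\Gamma$-module or a signed indecomposable projective of $\Gamma$ — that this assignment is a bijection on indecomposables preserving the ordering, and that the notions of projectivity and $\tau$-rigidity \emph{relative} to $\Gamma$ are the correct ones for the recursive clause of \cref{def:tauexcep}. It is precisely this compatibility (in particular, that a module summand of $(M,P)$ may legitimately become a signed projective of $\Gamma$, and conversely) that makes peeling off the last summand correspond to the recursive step in the definition of a signed $\tau$-exceptional sequence, and establishing it carefully is the technical heart of the argument.
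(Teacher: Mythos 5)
The paper does not prove \cref{thm:tauexcpbij} at all: it is recalled verbatim as a cited result from Buan--Marsh \cite{BM18t}, so there is no in-paper argument to compare against. Your outline is, in fact, essentially the strategy of the original proof in \cite{BM18t}: peel off the last summand $X_r$, pass to the $\tau$-perpendicular category $\calW$ (which is $X_r^\perp \cap {}^\perp\tau X_r$ in the unsigned case and $X_r^\perp$ in the signed projective case, consistent with clause (2) of \cref{def:tauexcep} since $\tau P = 0$ for $P$ projective), transport the complement, and recurse; the counting argument for the restriction to support $\tau$-tilting pairs and complete sequences is also correct.

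The one point to be clear about is that your ``essential input'' --- the bijection between indecomposable summands of $\tau$-rigid pairs compatible with $X_r$ and indecomposable objects of $C(\Gamma)$, including the possibility that an unsigned summand of $(M,P)$ becomes a \emph{signed} projective of $\Gamma$ and conversely --- is not a formality that can be waved through; it is the main theorem of the reduction machinery in \cite{BM18t} (their Theorem 5.4 and the surrounding sections), and proving it requires a careful analysis of how Jasso's $\tau$-tilting reduction \cite{Jas15} interacts with projectivity in the reduced category. You correctly identify this as the technical heart and describe exactly what must be checked, so the outline is sound; but as written the proposal reduces the theorem to this unproven compatibility statement rather than establishing it. If the intent is to cite that reduction bijection as known, the argument is complete; if the intent is a self-contained proof, that is where all of the remaining work lives.
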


The idea of working with ordered decompositions instead of (signed) $\tau$-exceptional sequences was applied in \cite{BKT2025} to gain new insights into the mutation of signed $\tau$-exceptional sequences as introduced in \cite{BHM2024}. Moreover, it allows us to lift signed $\tau$-exceptional sequences under base field extension.

\begin{lemma}\label{lem:signedexcplift}
    Let $K:k$ be a field extension. Then there is an injective map
    \[ \left\{ \begin{varwidth}{15em} \begin{center} complete signed $\tau$-exceptional sequences {in $\mods \Lambda$} \end{center} \end{varwidth} \right\} \hookrightarrow \left\{ \begin{varwidth}{15em} \begin{center} complete signed $\tau$-exceptional sequences {in $\mods \Lambda_K$} \end{center} \end{varwidth} \right\}\]
    which factors through ordered decompositions of basic support $\tau$-tilting pairs.
\end{lemma}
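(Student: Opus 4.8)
The plan is to run the bijection of \cref{thm:tauexcpbij} on both sides and to transport an ordered decomposition along $-\otimes_k K$; the only real subtlety is that an indecomposable $\Lambda$-module $X_i$ need not have $(X_i)_K$ indecomposable, nor even basic. First I would start from a complete signed $\tau$-exceptional sequence in $\mods\Lambda$ and pass, via \cref{thm:tauexcpbij}, to an ordered decomposition $X_1 \oplus \dots \oplus X_r$ into indecomposables of a basic support $\tau$-tilting pair $(M,P)$, where each $X_i$ is either an indecomposable summand of $M$ or a signed indecomposable summand $\overline{P_i}$ of $P$. By \cref{cor:posetembed}, the pair $(M_K^b, P_K^b)$ is again a basic support $\tau$-tilting pair, now in $\mods\Lambda_K$, so \cref{thm:tauexcpbij} over $\Lambda_K$ will convert any ordered decomposition of it into a complete signed $\tau$-exceptional sequence.

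Next I would replace each block $X_i$ by the indecomposable summands of a basic version $(X_i)_K^b$, fixing some order within each block, and concatenate the blocks in the order inherited from $X_1,\dots,X_r$; when $X_i$ is signed, $(P_i)_K$ is projective by \cref{lem:adddeterminesbasic}(4) and its summands are taken as signed. The point that makes this a genuine ordered decomposition into indecomposables of $(M_K^b,P_K^b)$ is that summands from distinct blocks are pairwise non-isomorphic: since $(M,P)$ is basic the $X_i$ are pairwise non-isomorphic, so by \cref{lem:adddeterminesbasic}(2) the modules $(X_i)_K$ and $(X_j)_K$ share no common indecomposable summand for $i\neq j$. Hence $M_K^b \simeq \bigoplus_{X_i \text{ unsigned}} (X_i)_K^b$ and $P_K^b \simeq \bigoplus_{X_i \text{ signed}} (P_i)_K^b$, and the concatenation lists each indecomposable summand exactly once. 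Applying \cref{thm:tauexcpbij} over $\Lambda_K$ then yields the desired complete signed $\tau$-exceptional sequence in $\mods\Lambda_K$, and the map factors through ordered decompositions of basic support $\tau$-tilting pairs by construction.

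The main work is injectivity, and the key observation is that the block decomposition can be recovered intrinsically. Given an indecomposable summand $Y$ of $M_K$, \cref{lem:adddeterminesbasic}(1) produces an indecomposable summand $X$ of $M$ with $Y \mid X_K$, and part (2) shows that $X$ is unique up to isomorphism; the same applies to the projective part, with signs preserved. This defines a canonical map sending each entry of the output sequence back to the element of $C(\Lambda)$ labelling its block. Applying it entrywise to the concatenated tuple recovers $X_1,\dots,X_r$ with each $X_i$ repeated according to its block length, and since consecutive blocks correspond to non-isomorphic elements of $C(\Lambda)$, collapsing consecutive repetitions recovers $(X_1,\dots,X_r)$ exactly. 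Feeding this through the inverse of \cref{thm:tauexcpbij} returns the original sequence, which proves injectivity. The only genuine obstacle I anticipate is bookkeeping the signed summands and verifying that the recovery map is unambiguous, but both reduce to the non-overlap of the blocks, which is precisely \cref{lem:adddeterminesbasic}(1)--(2).
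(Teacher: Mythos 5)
Your construction is exactly the paper's: pass a complete signed $\tau$-exceptional sequence through the bijection of \cref{thm:tauexcpbij}, apply $-\otimes_k K$ blockwise to the resulting ordered decomposition (using \cref{cor:posetembed} to land on a basic support $\tau$-tilting pair over $\Lambda_K$), fix a refinement of each block into indecomposables, and apply \cref{thm:tauexcpbij} again over $\Lambda_K$. Your explicit injectivity argument --- recovering each block via \cref{lem:adddeterminesbasic}(1)--(2) and collapsing consecutive repetitions, which works because distinct indecomposables over $\Lambda$ have no common summand after extension --- is a correct elaboration of a step the paper's proof leaves implicit.
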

\begin{proof}
    {By \cref{cor:posetembed}}, there is an injective map from basic $\tau$-tilting pairs to basic $\tau$-tilting pairs {in $\mods \Lambda$}. Since $- \otimes_k K$ is additive, any ordered decomposition of a $\tau$-tilting pair {in $\mods \Lambda$} gives rise to an ordered decomposition of the resulting $\tau$-tilting pair {in $\mods \Lambda_K$} into direct summands (which are not necessarily indecomposable). Fixing any choice of ordered decomposition for each such decomposable direct summand and combining this choice with \cref{thm:tauexcpbij} yields the desired injective map. 
\end{proof}

In fact, it is possible to restrict the bijection in \cref{thm:tauexcpbij} to (unsigned) $\tau$-exceptional sequences. This was shown in the work of \cite{MT2020}, where a close relationship between $\tau$-exceptional sequences and stratifying systems \cite{ErdmannSaenz2003} was established. We generalise their ideas, following \cite{Hanson2025}.

\begin{definition}
    Let $M \in \mods \Lambda$ be a basic $\Lambda$-module and consider an ordered decomposition $M \simeq M_1 \oplus \dots \oplus M_r$. 
    \begin{enumerate}
        \item We call $M_1 \oplus \dots \oplus M_t$ a \textit{weakly TF-preordered decomposition} if for every $1 \leq i \leq t$ and every indecomposable direct summand $M'$ of $M_i$ we have $M' \not \in \Fac \left( \bigoplus_{j = i+1}^t M_j\right)$.
        \item A weakly TF-preordered decomposition in which all direct summands are indecomposable is called \textit{TF-ordered}. 
    \end{enumerate}
\end{definition}

The importance of TF-ordered decompositions becomes apparent in the following result.

\begin{theorem}\cite[Thm. 5.1]{MT2020}\label{thm:TForderedbij}
    The bijection in \cref{thm:tauexcpbij} restricts to a bijection between TF-ordered $\tau$-rigid modules and (unsigned) $\tau$-exceptional sequences.
\end{theorem}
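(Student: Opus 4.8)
The plan is an induction on the total number $r$ of indecomposable summands, designed to mirror the recursive structure of both \cref{def:tauexcep} and the definition of a TF-ordered decomposition. Rather than proving the statement only for $\tau$-rigid modules, I would prove the following sharper claim for an \emph{arbitrary} ordered decomposition into indecomposables of a basic $\tau$-rigid pair $(M,P)$: the associated signed $\tau$-exceptional sequence under \cref{thm:tauexcpbij} is unsigned if and only if $P = 0$ and the decomposition of $M$ is TF-ordered. Specialising to $P = 0$ then yields exactly the asserted bijection between TF-ordered $\tau$-rigid modules and unsigned $\tau$-exceptional sequences. The base case $r = 1$ is immediate, and the observation that makes the claim plausible is that a sign in the output sequence can only come from a signed summand $\overline{P}_i$ of the input or from the recursive descent into a $\tau$-perpendicular subcategory.

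For the inductive step I would peel off the last summand of the ordered decomposition, which corresponds to the top entry $M_t$ of the sequence in \cref{def:tauexcep}. If this last summand is signed, then both the sequence and the right-hand condition fail and there is nothing to prove; otherwise it is an unsigned $\tau$-rigid module $M_r$, contributing an unsigned top entry. The remaining summands then reduce, via $\tau$-tilting reduction, to an ordered decomposition of a $\tau$-rigid pair $(N,Q)$ in the $\tau$-perpendicular subcategory $J \coloneqq M_r^\perp \cap {}^\perp \tau M_r \simeq \mods \Gamma$, to which the inductive hypothesis applies: the truncated sequence is unsigned if and only if $Q = 0$ and the decomposition of $N$ is TF-ordered in $J$.

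The heart of the argument is to match these conditions across the reduction, using two facts about Jasso's reduction \cite{Jas15,DIRRT17} read through the interval $[\Fac M_r, {}^\perp \tau M_r] \simeq \tors \Gamma$ (in which $M_r$ sits at the bottom): (i) a summand $M_i$ reduces into the signed part $Q$ precisely when $M_i \in \Fac M_r$, and (ii) the factorisation order transports correctly, i.e.\ for $i < r$ one has $\widetilde{M}_i \in \Fac\big(\textstyle\bigoplus_{i < j < r} \widetilde{M}_j\big)$ in $J$ if and only if $M_i \in \Fac\big(\textstyle\bigoplus_{i < j \leq r} M_j\big)$ in $\mods \Lambda$. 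Granting (i) and (ii), the induction closes: since $M_r$ is a summand of $\bigoplus_{j > i} M_j$, the containment $\Fac M_r \subseteq \Fac\big(\bigoplus_{j > i} M_j\big)$ shows that the TF-ordered condition on $M_1 \oplus \dots \oplus M_r$ already entails $M_i \notin \Fac M_r$ for every $i < r$; hence by (i) it forces $Q = 0$, and by (ii) it is equivalent to $N$ being TF-ordered in $J$, and conversely. Combined with the unsigned top entry, this yields the claim.

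I expect the two reduction facts (i) and (ii) to be the main obstacle, as they require making the equivalence $J \simeq \mods \Gamma$ sufficiently explicit to track both which summands acquire a sign and how the $\Fac$-order behaves under reduction. These are precisely the phenomena controlled in \cite{MT2020}, and I would adapt their treatment following \cite{Hanson2025}; everything else — the base case, the dichotomy in the inductive step, and the final bookkeeping of indices through \cref{thm:tauexcpbij} — is routine once (i) and (ii) are established.
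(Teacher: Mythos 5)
The paper does not actually prove this statement: it is recalled verbatim from \cite[Thm. 5.1]{MT2020}, so there is no internal proof to compare against. Your sketch is a faithful reconstruction of the strategy of that reference — and of the paper's own \cref{lem:TFordertoTFpreorder}, which transports TF-orderings through the torsion-free functor $f_{N}$ in precisely the manner of your facts (i) and (ii) — and the induction closes correctly once those two facts are granted. The one point needing genuine care in establishing (ii) is that $\Fac$ computed inside the $\tau$-perpendicular subcategory $J$ need not coincide with $\Fac$ in $\mods \Lambda$ (a distinction the paper itself emphasises in the proof of \cref{lem:TFordertoTFpreorder}); the forward direction also uses that $\Fac$ of a $\tau$-rigid module is closed under extensions, so that $M_i$ is recovered from $f_{M_r}(M_i)$ and its torsion part.
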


In contrast to signed $\tau$-exceptional sequences, the additional (TF-ordered) structure that (unsigned) $\tau$-exceptional sequences require makes it more difficult to understand their behaviour under base field extension. In particular, it seems necessary to impose stronger assumptions on the field extension.

\begin{proposition}\label{lem:TFordertoTFpreorder}
    Let $K:k$ be a finite extension and let $M \simeq M_1 \oplus \dots \oplus M_r$ be a TF-ordered decomposition of a basic module $M \in \mods \Lambda$. {Moreover, let $(M_i)_K^b$ denote a (choice of) basic direct summand of $(M_i)_K$ such that $\add ((M_i)_K) = \add((M_i)_K^b)$ for all $1 \leq i \leq r$.} Then: 
    \begin{enumerate}
        \item $(M_1)_K \oplus \dots \oplus (M_r)_K$ is a weakly TF-preordered decomposition;
        \item {There exists exists an ordered decomposition $(M_i)_K^b \cong \widehat{M}_{i,1} \oplus \dots \oplus \widehat{M}_{i,n_i}$ into indecomposables for each $1 \leq i \leq r$ such that
    \[\widehat{M}_{1,1} \oplus \dots \oplus \widehat{M}_{1,n_1} \oplus \widehat{M}_{2, 1} \oplus \dots \oplus \widehat{M}_{r-1, n_{r-1}} \oplus \widehat{M}_{r,1} \oplus \dots \oplus \widehat{M}_{r,n_r} \]
    is a TF-ordered decomposition of $M_K$.}
    \end{enumerate}
\end{proposition}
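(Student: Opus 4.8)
\emph{Strategy.} I would prove (1) by transporting the statement down to $\mods\Lambda$ along the exact restriction functor $(-)|_{\Lambda}$, and then reduce (2) to a purely internal ordering problem inside each block, which I would settle using the $\tau$-rigidity of $M_K$.

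\emph{Part (1).} Fix $i$ and an indecomposable direct summand $M'$ of $(M_i)_K$, and suppose towards a contradiction that $M'\in\Fac\!\left(\bigoplus_{j>i}(M_j)_K\right)$. The key observation is that, since $(-)|_{\Lambda}$ is exact, for any $N\in\mods\Lambda_K$ the full subcategory $\{Y\in\mods\Lambda_K : Y|_{\Lambda}\in\Fac_{\Lambda}(N|_{\Lambda})\}$ is closed under factor modules and extensions and contains $N$; as $\Fac_{\Lambda_K}N$ is the smallest such subcategory, restriction satisfies $(\Fac_{\Lambda_K}N)|_{\Lambda}\subseteq\Fac_{\Lambda}(N|_{\Lambda})$. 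Applying this with $N=\bigoplus_{j>i}(M_j)_K$ and invoking \cref{lem:nicerestriction}, which gives $(M_j)_K|_{\Lambda}\simeq M_j^{[K:k]}$ and $M'|_{\Lambda}\simeq M_i^{s}$ for some $1\le s\le[K:k]$, I obtain $M_i^{s}\in\Fac_{\Lambda}\!\left(\bigoplus_{j>i}M_j\right)$. Since $M_i$ is a factor module of $M_i^{s}$ and $\Fac$ is closed under factor modules, this yields $M_i\in\Fac_{\Lambda}\!\left(\bigoplus_{j>i}M_j\right)$, contradicting the assumption that $M_1\oplus\dots\oplus M_r$ is TF-ordered. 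Hence no such $M'$ exists, which is (1).

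\emph{Part (2): reduction.} By \cref{lem:taurigidlift} the module $M_K$ is $\tau$-rigid, so each $(M_i)_K^b$ and each partial tail $\bigoplus_{j\ge i}(M_j)_K^b$ is $\tau$-rigid as well. Processing the blocks in the order $1,\dots,r$, it suffices to exhibit, for each $i$, an ordering $\widehat{M}_{i,1},\dots,\widehat{M}_{i,n_i}$ of the indecomposable summands of $(M_i)_K^b$ such that
\[
\widehat{M}_{i,a}\notin\Fac\!\left(\textstyle\bigoplus_{b>a}\widehat{M}_{i,b}\ \oplus\ \bigoplus_{j>i}(M_j)_K^b\right)\qquad(1\le a\le n_i);
\]
splicing these orderings together produces a sequence each of whose entries fails to lie in the $\Fac$ of its successors, i.e.\ a TF-ordered decomposition of $M_K$. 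Writing $S=\bigoplus_{j\ge i}(M_j)_K^b=B\oplus L$ with $B=(M_i)_K^b$, $L=\bigoplus_{j>i}(M_j)_K^b$, and letting $S\ominus X$ denote the sum of the remaining indecomposable summands, I would build the block ordering greedily: choose $\widehat{M}_{i,1}$ to be an indecomposable summand $X$ of $B$ with $X\notin\Fac(S\ominus X)$, then recurse on $B\ominus X$ with the same tail $L$ (every summand of $B\ominus X$ is again outside $\Fac L$, by part (1)).

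\emph{Part (2): the crux.} The heart of the matter is the existence statement: \emph{if $S=B\oplus L$ is basic $\tau$-rigid and every indecomposable summand of $B$ avoids $\Fac L$, then some summand $X$ of $B$ satisfies $X\notin\Fac(S\ominus X)$.} I expect this to be the main obstacle, because $\Fac$-generation among the summands of a $\tau$-rigid module is a genuine closure operation and one must exclude the degenerate possibility that \emph{every} summand of $B$ is redundant. One approach is to show that the minimal $\Fac$-generating summand of a $\tau$-rigid module is well defined -- a matroid-type property ultimately rooted in the linear independence of the $g$-vectors of the summands of a $\tau$-rigid module -- so that the non-redundant summands already generate $\Fac S$; since the summands of $B$ avoid $\Fac L$ we have $\Fac L\subsetneq\Fac S$, whence these generators cannot all lie in $L$ and at least one must lie in $B$. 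A cleaner alternative, which I would most likely adopt, is to pass to the $\tau$-perpendicular subcategory cut out by the tail $L$ (\cref{thm:widesubcatlatticeiso}), inside which $(M_i)_K^b$ is a $\tau$-rigid object; an absolute TF-ordering there exists by \cref{thm:TForderedbij} and transports back to the required ordering of block $i$ relative to $L$. Either route reduces part (2) to the existence of TF-orderings for $\tau$-rigid modules, which I would isolate as a separate lemma and which I regard as the genuine difficulty.
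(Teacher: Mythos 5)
Your Part (1) is correct and is essentially the paper's argument verbatim: reduce to two summands, restrict a putative epimorphism $N_K^r \twoheadrightarrow M'$ along the exact functor $(-)|_\Lambda$, and use \cref{lem:nicerestriction} to contradict $M_i \notin \Fac(\bigoplus_{j>i} M_j)$.

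For Part (2), your preferred ``cleaner alternative'' is indeed the route the paper takes, and you have correctly isolated the genuine difficulty (existence of TF-orderings of $\tau$-rigid modules relative to a fixed tail, which the paper gets from \cite[Prop.~3.2]{MT2020}). However, there is one claim in your sketch that is false as stated and whose repair is exactly where the paper does its work: you say that inside the $\tau$-perpendicular subcategory cut out by the tail $L = \bigoplus_{j>i}(M_j)_K^b$ ``$(M_i)_K^b$ is a $\tau$-rigid object.'' In general $(M_i)_K^b$ does not lie in $L^\perp \cap {}^\perp\tau L$ at all; what is true (by \cite[Prop.~5.6]{BM18t}) is that its image $f_{L}\bigl((M_i)_K^b\bigr)$ under the torsion-free functor of the torsion pair $(\Fac L, L^\perp)$ is a nonzero $\tau$-rigid object of that subcategory, nonzero precisely because of your Part (1). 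One then TF-orders $f_L\bigl((M_i)_K^b\bigr)$ in the subcategory and must \emph{transport back}: assuming $\widehat M_{i,a} \in \Fac\bigl(\bigoplus_{b>a}\widehat M_{i,b} \oplus L\bigr)$ in $\mods\Lambda_K$, one applies $f_L$ to the epimorphism and checks that the induced map onto $f_L(\widehat M_{i,a})$ is still a nonzero epimorphism (note $f_L(L)=0$, so the tail disappears), contradicting the relative TF-ordering. This verification is short but not automatic, and your sketch omits it entirely; without it the ``transports back'' step is a gap. Your alternative route (a) via linear independence of $g$-vectors is not obviously sound --- $\Fac$ of a direct sum is not the union of the $\Fac$'s, so a matroid-style exchange argument would need real justification --- and I would not rely on it.
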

\begin{proof}
    It is sufficient to consider the case where $M \oplus N$ is a TF-ordered decomposition of a basic $\Lambda$-module. The general case follows in the same way since the scalar extension and scalar restriction functors are additive. Assume $M \oplus N$ is TF-ordered, that is, $M \not \in \Fac N$.
    \begin{enumerate}
        \item Assume that an indecomposable $\Lambda_K$-module $M' \in \add(M_K)$ is generated by $N_K$. By definition this means that there is an exact sequence $N_K^{r} \to M' \to 0$
    in $\mods \Lambda_K$ which gets mapped to the exact sequence $ N^{[K:k] \cdot r} \to M^s \to 0$
    by the exact restriction functor, as a result of \cref{lem:nicerestriction}. This implies that $M \in \Fac N$, a contradiction. 
    \item {Let $M'$ be an indecomposable direct summand of $M_K^b$. By (1), we have that $M' \not \in \Fac(N_K)$. Consider the torsion pair $(\Fac(N_K), (N_K)^\perp)$ of $\mods \Lambda_K$. Recall that it induces the \textit{torsion-free functor} $f_{N_K}$ from $\mods \Lambda_K$ into the torsion-free class $(N_K)^\perp$. Since $M_K^b \not \in \Fac(N_K)$ the module $f_{N_K} (M')$ is a non-zero $\Lambda_K$-module. By \cite[Prop. 5.6]{BM18t}, the module $f_{N_K}(M')$ is $\tau$-rigid in $(N_K)^\perp \cap {}^\perp (\tau_\Lambda N_K) \subseteq \mods \Lambda_K$.

    Let $M_K^b \cong \bigoplus_{j=1}^{|M_K^b|} (M_K)_j$ be a decomposition of $M_K^b$ into pairwise nonisomorphic indecomposable direct summands. By \cite[Prop. 3.2]{MT2020}, the module
    \[ \bigoplus_{j=1}^{|M_K^b|} f_{N_K}((M_K)_j), \]
    which is $\tau$-rigid in the abelian subcategory $(N_K)^\perp \cap {}^\perp (\tau_\Lambda N_K)$ of $\mods \Lambda_K$ but may fail to be $\tau$-rigid in $\mods \Lambda$, admits a TF-ordered decomposition}
    \begin{equation}\label{eq:TFordered1} f_{N_K}(\widehat{M}_1) \oplus \dots \oplus f_{N_K}(\widehat{M}_{|M_K^b|}),\end{equation}
    {in the subcategory $(N_K)^\perp \cap {}^\perp (\tau_\Lambda N_K)$ of $\mods \Lambda_K$. That is, there exists a decomposition such that}
    \[ f_{N_K}(\widehat{M}_\ell) \not \in \Fac_{(N_K)^\perp \cap {}^\perp (\tau_\Lambda N_K)} \left( \bigoplus_{j > \ell}^{|M_K^b|} f_{N_K}(\widehat{M}_j) \right) \]
    {for each $1 \leq \ell < |M_K^b|$. It is important to highlight that $\Fac_{(N_K)^\perp \cap {}^\perp (\tau_\Lambda N_K)} \left( \bigoplus_{j > \ell}^{|M_K^b|} f_{N_K}(\widehat{M}_j) \right)$ is a torsion class of the abelian subcategory $(N_K)^\perp \cap {}^\perp (\tau_\Lambda N_K)$ of $\mods \Lambda$ but may fail to be a torsion class of $\mods \Lambda$.

    By \cite[Prop. 3.2]{MT2020}, the $\tau$-rigid module $N_K^b$ of $\mods \Lambda_K$ similarly admits a TF-ordered decomposition $N_K^b \cong \widehat{N}_1 \oplus \dots \oplus \widehat{N}_{|N_K^b|}$ in $\mods \Lambda$. We now claim that
    \begin{equation}\label{eq:TFordered2} \widehat{M}_1 \oplus \dots \widehat{M}_{|M_K^b|} \oplus \widehat{N}_1 \oplus \dots \oplus \widehat{N}_{|N_K^b|}\end{equation}
    is a TF-ordered decomposition of $M_K^b \oplus N_K^b$ in $\mods \Lambda$. Indeed, $\widehat{N}_i \not \in \Fac\left( \bigoplus_{j > i}^{|M_K^b|} \widehat{N}_j \right)$ for all $1 \leq i \leq |M_K^b|$ by construction. Moreover, if 
    \[ \widehat{M}_i \in \Fac \left( \widehat{M}_{i+1} \oplus \dots \widehat{M}_{|M_K^b|} \oplus N_K \right)\]
    held, then there exists an epimorphism 
    \[ \left( \widehat{M}_{i+1} \oplus \dots \widehat{M}_{|M_K^b|} \oplus N_K \right)^s \to \widehat{M}_i \to 0.\]
    However, applying the additive 
    torsion-free functor $f_{N_K}$ gives the following commutative diagram}
    \[
    \begin{tikzcd}
        \left( \widehat{M}_{i+1} \oplus \dots \widehat{M}_{|M_K^b|} \oplus N_K \right)^s \arrow[d, "g_1", twoheadrightarrow] \arrow[r, "h_1", twoheadrightarrow] & \widehat{M}_i \arrow[r] \arrow[d, "g_2", twoheadrightarrow] & 0 \\
        \left(\bigoplus_{j=i+1}^{|M_K^b|} f_{N_K}(\widehat{M}_{j})^s \right) \arrow[r, dashed, twoheadrightarrow, "f_{N_K}(h_1)"]\arrow[d] & f_{N_K}(\widehat{M}_i) \arrow[r] \arrow[d]& 0 \\
        0 & 0 
    \end{tikzcd}
    \]
    Note that $f_{N_K}(h_1)$ is a nonzero epimorphism, since $f_{N_K}(h_1) \circ g_1 = g_2 \circ h_1$ and $g_2 \circ h_1$ is a nonzero epimorphism. This yields a contradiction to \cref{eq:TFordered1} being TF-ordered. Therefore, the decomposition of \cref{eq:TFordered2} is a TF-ordered decomposition of $M_K^b \oplus N_K^b$ in $\mods \Lambda$. 
    \end{enumerate}
\end{proof}

{
\begin{corollary}\label{cor:tauexceptionallift}
    Let $K:k$ be a finite field extension. Then there is an injective map
    \[ \left\{ \begin{varwidth}{15em} \begin{center} complete $\tau$-exceptional sequences in $\mods \Lambda$ \end{center} \end{varwidth} \right\} \hookrightarrow \left\{ \begin{varwidth}{15em} \begin{center} complete $\tau$-exceptional sequences in $\mods \Lambda_K$ \end{center} \end{varwidth} \right\}\]
    which factors through TF-ordered decompositions of basic $\tau$-tilting modules.
\end{corollary}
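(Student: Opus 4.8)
The plan is to combine the bijection of \cref{thm:TForderedbij} with the refinement procedure of \cref{lem:TFordertoTFpreorder}, in the same spirit as the proof of \cref{lem:signedexcplift} but now respecting the finer TF-ordered structure. First I would start with a complete (unsigned) $\tau$-exceptional sequence in $\mods \Lambda$. By \cref{thm:TForderedbij} this corresponds to a TF-ordered decomposition $M \simeq M_1 \oplus \dots \oplus M_r$ of a basic $\tau$-tilting $\Lambda$-module $M$ (the sequence being unsigned forces the projective part of the associated support $\tau$-tilting pair to vanish, and completeness forces $M$ to be $\tau$-tilting, so $r = |\Lambda|$). By \cref{cor:tautiltlift} the module $M_K$ is $\tau$-tilting in $\mods \Lambda_K$, so its basic version $M_K^b$ is a basic $\tau$-tilting module with exactly $|\Lambda_K|$ indecomposable summands. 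Applying \cref{lem:TFordertoTFpreorder}(2) produces a TF-ordered decomposition of $M_K^b$; fixing once and for all a choice of such refinement (as in the proof of \cref{lem:signedexcplift}) and applying \cref{thm:TForderedbij} on the $\Lambda_K$ side yields a complete $\tau$-exceptional sequence in $\mods \Lambda_K$. This defines the desired map and exhibits the claimed factorisation through TF-ordered decompositions of basic $\tau$-tilting modules.

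Since the two instances of \cref{thm:TForderedbij} are bijections, injectivity of the composite reduces to injectivity of the middle map sending the ordered decomposition $(M_1, \dots, M_r)$ of $M$ to the refined ordered decomposition of $M_K^b$. The key point is that the refined sequence, regarded merely as an ordered tuple of indecomposable $\Lambda_K$-modules, already determines the original data. Indeed, its underlying module is $M_K^b$, and since $\add M_K$ is recovered from it, \cref{lem:adddeterminesbasic}(5) recovers the basic module $M$, hence its (unordered) set of indecomposable summands $M_1, \dots, M_r$. For each $i$ the set $S_i$ of indecomposable summands of $(M_i)_K$ is then determined, and by \cref{lem:adddeterminesbasic}(2) (the Noether--Deuring theorem) the sets $S_1, \dots, S_r$ are pairwise disjoint and nonempty, partitioning the summands of $M_K^b$. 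Because the refinement of \cref{lem:TFordertoTFpreorder}(2) places all summands belonging to a single $(M_i)_K$ in one contiguous block, and orders these blocks according to the original order of the $M_i$, the ordering of the $M_i$ is forced: the block containing the first summand of the refined sequence must be $S_i$ for the unique $i$ with that summand in $S_i$; this block occupies an initial segment of positions, and one then recurses on the remainder. This reconstructs the ordered decomposition $(M_1, \dots, M_r)$, hence the original $\tau$-exceptional sequence, proving injectivity.

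The main obstacle is precisely this last reconstruction: the image is a bare ordered sequence in $\mods \Lambda_K$ and does not visibly carry the block structure coming from the summands $M_1, \dots, M_r$. The argument hinges on two facts working together---that distinct indecomposable summands of $M$ yield $\Lambda_K$-modules with no common summand (Noether--Deuring), and that the refinement keeps each such family contiguous. I expect no serious difficulty beyond bookkeeping here, since the genuinely delicate claim---that the refined decomposition is actually TF-ordered, rather than merely weakly TF-preordered---has already been established in \cref{lem:TFordertoTFpreorder}(2). It is worth noting that the hypothesis that $K:k$ be finite enters only through \cref{lem:TFordertoTFpreorder}, via the restriction isomorphism $(M_K)|_\Lambda \simeq M^{[K:k]}$ of \cref{lem:nicerestriction}.
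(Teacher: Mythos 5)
Your proposal is correct and follows essentially the same route as the paper: pass through \cref{thm:TForderedbij} to a TF-ordered decomposition of a basic $\tau$-tilting module, lift and refine via \cref{cor:posetembed} and \cref{lem:TFordertoTFpreorder}(2) with a fixed choice of refinement, and return via \cref{thm:TForderedbij} over $\Lambda_K$. Your explicit reconstruction of the ordering from the contiguous Noether--Deuring blocks is a welcome elaboration of the injectivity step, which the paper's own proof leaves largely implicit.
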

\begin{proof}
    {By \cref{cor:posetembed}}, there is an injective map from basic $\tau$-tilting modules in $\mods \Lambda$ to basic $\tau$-tilting modules in $\mods \Lambda_K$. By \cref{lem:TFordertoTFpreorder}, any TF-ordered decomposition of a $\tau$-tilting module in $\mods \Lambda$ gives rise to an weakly TF-preordered decomposition of the resulting $\tau$-tilting module in $\mods \Lambda_K$ into direct summands (which are not necessarily indecomposable). By \cref{lem:TFordertoTFpreorder}, there exists at least one possible choice of refining the TF-preordered decomposition. 
\end{proof}
}

\section{Bricks and semibricks}

In this section we are interested in a certain class of indecomposable modules called bricks. Formally, a module $M \in \mods \Lambda$ is called a \textit{brick} if its endomorphism $k$-algebra $\End_\Lambda(M)$ is a division $k$-algebra. A collection of pairwise $\Hom$-orthogonal bricks is called a \textit{semibrick}. In particular, (semi)simple modules are particular examples of (semi)bricks.
Denote by $\brick \Lambda$ (resp. $\sbrick \Lambda$) the set of bricks (resp. semibricks) in $\mods \Lambda$.

{The following lemma gives a sufficient condition for a brick to remain indecomposable after applying the scalar extension functor.}

\begin{lemma}\label{lem:indlifting}
    {Let} {$\Gamma$} be a (not necessarily finite-dimensional) $k$-algebra and let $M$ be a finite-dimensional ${\Gamma}$-module. If $M$ is such that $\End_{{\Gamma}}(M) \simeq k$, then for any field extension $K:k$, we have $\End_{{\Gamma}_K}(M_K) \simeq K$ and hence $M_K$ is indecomposable.
\end{lemma}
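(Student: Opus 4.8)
The plan is to reduce the statement to the case of a \emph{finite-dimensional} algebra, where \cref{lem:Kas00.2.2} applies verbatim. The only genuine obstacle is that $\Gamma$ is allowed to be infinite-dimensional, so \cref{lem:Kas00.2.2} cannot be invoked for $\Gamma$ itself; everything else is formal once this obstruction is removed. The key observation is that $M$ ``sees'' only a finite-dimensional quotient of $\Gamma$, namely the image of $\Gamma$ in $\End_k(M)$.

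Concretely, I would first pass to the quotient $\overline{\Gamma} \coloneqq \Gamma / \operatorname{Ann}_{\Gamma}(M)$, where $\operatorname{Ann}_{\Gamma}(M)$ is the kernel of the structure map $\Gamma \to \End_k(M)$. Since $M$ is finite-dimensional over $k$, the algebra $\End_k(M)$ is finite-dimensional, and hence so is the subalgebra $\overline{\Gamma}$ into which $\Gamma$ maps. By construction $M$ is a finite-dimensional $\overline{\Gamma}$-module, and because the $\Gamma$-action on $M$ factors through the surjection $\Gamma \twoheadrightarrow \overline{\Gamma}$, a $k$-linear endomorphism of $M$ is $\Gamma$-linear if and only if it is $\overline{\Gamma}$-linear; thus $\End_{\overline{\Gamma}}(M) = \End_{\Gamma}(M) \simeq k$. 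I would then perform the same reduction after base change: applying the right-exact functor $-\otimes_k K$ to $\Gamma \twoheadrightarrow \overline{\Gamma}$ yields a surjection $\Gamma_K \twoheadrightarrow \overline{\Gamma}_K$, and the $\Gamma_K$-module structure on $M_K$ factors through it, so by the same argument $\End_{\Gamma_K}(M_K) = \End_{\overline{\Gamma}_K}(M_K)$. This is the step that legitimises the whole reduction, and although it should be checked with some care, it is purely formal.

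Finally, since $\overline{\Gamma}$ is finite-dimensional, \cref{lem:Kas00.2.2} with $i=0$ and $X=Y=M$ gives a natural isomorphism $\End_{\overline{\Gamma}}(M)\otimes_k K \xrightarrow{\ \sim\ } \End_{\overline{\Gamma}_K}(M_K)$; the canonical map $f\otimes\lambda \mapsto \lambda\,(f\otimes_k \mathrm{id}_K)$ is manifestly multiplicative, so this is in fact an isomorphism of $K$-algebras. Combining the two identifications yields
\[
\End_{\Gamma_K}(M_K) = \End_{\overline{\Gamma}_K}(M_K) \simeq \End_{\overline{\Gamma}}(M)\otimes_k K \simeq k \otimes_k K \simeq K .
\]
Since $K$ is a field, its only idempotents are $0$ and $1$, so $\End_{\Gamma_K}(M_K)$ admits no nontrivial idempotents and $M_K$ is therefore indecomposable, as required.
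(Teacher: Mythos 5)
Your proof is correct, and it reaches the same two milestones as the paper's proof --- the $K$-algebra isomorphism $\End_{\Gamma_K}(M_K) \simeq \End_{\Gamma}(M)\otimes_k K \simeq K$, followed by the observation that a one-dimensional endomorphism algebra forces indecomposability --- but it handles the one delicate point differently. The paper simply notes that \cref{lem:Kas00.2.2} does not literally apply to an infinite-dimensional $\Gamma$ and outsources that case to a citation of Lam's book, whereas you close the gap yourself by passing to the finite-dimensional quotient $\overline{\Gamma} = \Gamma/\operatorname{Ann}_{\Gamma}(M)$ and checking that this replacement changes neither $\End_{\Gamma}(M)$ nor $\End_{\Gamma_K}(M_K)$. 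Your reduction is sound: $\overline{\Gamma}$ embeds in $\End_k(M)$ and is therefore finite-dimensional; the inclusion $(\operatorname{Ann}_{\Gamma}M)\otimes_k K \subseteq \operatorname{Ann}_{\Gamma_K}(M_K)$ is all that is needed for the $\Gamma_K$-action on $M_K$ to factor through the surjection $\Gamma_K \twoheadrightarrow \overline{\Gamma}_K$, and surjectivity guarantees the two endomorphism rings agree. What your route buys is a self-contained argument that stays entirely within the paper's own toolkit; what the paper's route buys is brevity. The final step also differs cosmetically --- you argue via the absence of nontrivial idempotents in a field, the paper via a dimension count over a direct sum decomposition --- but these are interchangeable.
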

\begin{proof}
    The result essentially follows from the well-known fact that if $\dim_k \End_{{\Gamma}}(M) = 1$ then $M$ is indecomposable. This is easy to see using the contrapositive: Given a nontrivial decomposition $M \simeq M_1 \oplus M_2$ of $M$, then 
    \[ \dim_k \End_{{\Gamma}}(M) \geq \dim_k \End_{{\Gamma}}(M_1) + \dim_k \End_\Lambda(M_2) \geq 1+1 = 2.\]
    Hence if $\End_{{\Gamma}}(M) \simeq k$, then it follows from \cref{lem:Kas00.2.2} that $\End_{{{\Gamma}}_K}(M_K) \simeq K$ (see \cite[Lem. 7.4]{Lam2001} for infinite-dimensional algebras). In other words, $\dim_K \End_{{{\Gamma}}_K}(M_K) = 1$, hence $M_K$ is indecomposable.
\end{proof}

An example of a field extension $K:k$ such that $- \otimes_k K$ preserves indecomposability is also considered in \cite[Thm. 3.5]{JL82}. As an almost trivial first property, we show that it is sometimes possible to lift semisimple modules.

\begin{lemma}\label{lem:semisimplelift}
    Let $K:k$ be a MacLane separable field extension and let $\Lambda$ be a finite-dimensional $k$-algebra. Consider a (semi)simple $\Lambda$-module $S$. Then $S_K$ is a semisimple $\Lambda_K$-module.
\end{lemma}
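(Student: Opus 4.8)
The plan is to characterise semisimplicity through the vanishing of the module radical and then transport this property along the scalar extension functor. Recall that a finite-dimensional module $M$ is semisimple if and only if $\rad M = 0$. Since a simple module is in particular semisimple, it suffices to treat the semisimple case; so I would assume $\rad S = 0$ and reduce the whole statement to showing that $\rad S_K = 0$ in $\mods \Lambda_K$.

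The key step is to invoke \cref{lem:Kasradsoclift}(2). Under the hypothesis that $K:k$ is MacLane separable, that lemma upgrades the generic inclusion to the equality $\rad S_K = (\rad S)_K$. Combining this with $\rad S = 0$ yields
\[ \rad S_K = (\rad S)_K = 0 \otimes_k K = 0, \]
and the semisimplicity of $S_K$ follows immediately, with no further computation.

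The only genuine subtlety — and the precise point where MacLane separability is indispensable — lies in the direction of the inclusion. For an \emph{arbitrary} field extension, \cref{lem:Kasradsoclift}(2) supplies merely $(\rad S)_K \subseteq \rad S_K$, which when $\rad S = 0$ degenerates to the vacuous statement $0 \subseteq \rad S_K$ and conveys no information. What the argument actually needs is the reverse inclusion $\rad S_K \subseteq (\rad S)_K$, and this is exactly the content of the equality case guaranteed by MacLane separability. Hence the entire proof is funnelled through the single equality $\rad S_K = (\rad S)_K$, and the hypothesis on the extension cannot be dropped.
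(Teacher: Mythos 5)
Your proof is correct and follows essentially the same route as the paper: both arguments characterise semisimplicity by the vanishing of the radical and then apply the equality case of \cref{lem:Kasradsoclift}(2), which holds precisely because $K:k$ is MacLane separable, to conclude $\rad S_K \simeq (\rad S)_K = 0$. Your additional remark correctly pinpoints why the mere inclusion $(\rad S)_K \subseteq \rad S_K$ available for arbitrary extensions would be vacuous here, which is exactly the role the separability hypothesis plays in the paper's proof as well.
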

\begin{proof}
    We recall that a module $M$ is semisimple if and only if its radical $\rad M$ is zero. By \cref{lem:Kasradsoclift}(2), we have $0 = (\rad S) \otimes_k K \simeq \rad S_K$, whence $S_K$ is also semisimple.
\end{proof}

The following example occurs frequently in the literature, and it shows that the assumption of MacLane separability is necessary in \Cref{lem:semisimplelift}.

\begin{example}[see {\cite{Jac53},\cite[Rmk. 3.4]{Kas00}}]\label{exmp:nonsemibrick}
    Let $k$ denote the field $\mathbb{F}_2(t)$ of rational functions over the Galois field with two elements. Consider the non(Maclane )separable finite field extension $K:k$, where $K=k(\sqrt[2]{t})$, and let $\Lambda$ denote $K$ regarded as a $k$-algebra. Since $\Lambda$ is also a field, it is a simple module over itself. Consequently, 
    \[ \Lambda\otimes_k K = k[X]/(X-\sqrt[2]{t})^2 \otimes_k K = K[X]/(X-\sqrt[2]{t})^2.\] 
    As a result, the simple $\Lambda$-module $\Lambda$ does not remain semisimple when extending the scalars to $K$; the endomorphism algebra of $\Lambda\otimes_k K$ as a $\Lambda\otimes_k K$-module contains a nilpotent element, namely $X-\sqrt[2]{t}$.
\end{example}

{Given a full subcategory $\calC \subseteq \mods \Lambda$, denote by $\ind(\calC)$ the set of isomorphism classes of indecomposable direct summands of modules $M$ in $\calC$.} Now, consider the following generalisation of \cref{lem:semisimplelift} to all bricks. Due to its increased generality it requires a stronger assumption on the base field. 

\begin{proposition}\label{lem:semibrickslift}
    Let $k$ be a perfect field and let $K:k$ be a field extension.
    Then the extension of scalars functor induces an injective map from bricks in $\mods \Lambda$ to semibricks in $\mods \Lambda_K$ which gives an injective map:
    \[ \ind(- \otimes_k K): \sbrick \Lambda \to \sbrick \Lambda_K\]
    Moreover, if $B \in \brick \Lambda$ satisfies $\End_\Lambda(B) \simeq k$, then $B_K \in \brick \Lambda_K$.
\end{proposition}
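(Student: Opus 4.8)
The plan is to reduce the semibrick assertion to the semisimplicity of a single endomorphism algebra, and then to extract that semisimplicity from the perfectness of $k$ via separability of division algebras.

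First I would fix a brick $B \in \brick \Lambda$ and set $D \coloneqq \End_\Lambda(B)$, a finite-dimensional division $k$-algebra. Applying \cref{lem:Kas00.2.2} with $X = Y = B$ and $i = 0$ identifies $\End_{\Lambda_K}(B_K) \cong D \otimes_k K$. The key observation is that the indecomposable summands of $B_K$ form a semibrick exactly when this algebra is semisimple: writing $B_K \cong \bigoplus_i B_i^{n_i}$ with the $B_i$ pairwise non-isomorphic indecomposables, the semisimple quotient of $\End_{\Lambda_K}(B_K)$ is $\prod_i \mathrm{M}_{n_i}(\Delta_i)$ with each $\Delta_i$ a division ring, and the vanishing of the radical is equivalent to each $\End(B_i)$ being a division ring (so each $B_i$ is a brick) together with $\Hom(B_i, B_j) = 0$ for $i \neq j$ (Hom-orthogonality). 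Thus it suffices to prove that $D \otimes_k K$ is semisimple.

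This is where perfectness enters, and it is the crux of the argument. Since $D$ is finite-dimensional over $k$, its center $Z(D)$ is a finite, hence algebraic, extension of $k$; as $k$ is perfect (\cref{defprop:perfect}), the extension $Z(D):k$ is separable, and therefore $D$ is a separable $k$-algebra. Separability means exactly that $D \otimes_k L$ is semisimple for \emph{every} field extension $L:k$ (concretely, $D \otimes_k \overline{k}$ decomposes as a product of matrix algebras over $\overline{k}$), and applying this with $L = K$ yields the semisimplicity of $D \otimes_k K$. The hard part will be precisely invoking (or reproving) the classical fact that a finite-dimensional division algebra over a perfect field is a separable algebra, which reduces to separability of its center; \cref{exmp:nonsemibrick} shows that the hypothesis on $k$ cannot be dropped.

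For the induced map on semibricks, I would take $\mathcal{S} = \{B_1, \ldots, B_m\} \in \sbrick \Lambda$. Each $(B_j)_K$ already yields a semibrick by the above, and cross Hom-orthogonality follows again from \cref{lem:Kas00.2.2}, since $\Hom_{\Lambda_K}((B_i)_K, (B_j)_K) \cong \Hom_\Lambda(B_i, B_j) \otimes_k K = 0$ for $i \neq j$; that summands coming from distinct $B_j$ are non-isomorphic is the Noether--Deuring statement \cref{lem:adddeterminesbasic}(2). Hence $\ind\big((\bigoplus_j B_j)_K\big)$ is a semibrick, so the map is well-defined, and injectivity is immediate from \cref{lem:adddeterminesbasic}(5): two semibricks with the same image have isomorphic basic total modules and therefore coincide. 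Finally, the \emph{moreover} clause is a direct application of \cref{lem:indlifting} with $\Gamma = \Lambda$ and $M = B$: the hypothesis $\End_\Lambda(B) \cong k$ gives $\End_{\Lambda_K}(B_K) \cong K$, so $B_K$ is a brick.
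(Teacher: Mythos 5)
Your proposal is correct, and it reaches the crucial intermediate fact---that $\End_{\Lambda_K}(B_K)\simeq \End_\Lambda(B)\otimes_k K$ is semisimple---by a genuinely different route than the paper. The paper combines two preservation results: reducedness of $\End_\Lambda(B)$ is preserved under scalar extension because $k$ is perfect (citing Bourbaki), and global dimension $0$ is preserved because perfectness makes $K:k$ MacLane separable (\cref{thm:JL}); Wedderburn--Artin plus reducedness then forces $\End_{\Lambda_K}(B_K)$ to be a product of division rings, and a short nilpotency argument rules out nonzero maps between distinct indecomposable summands. You instead invoke the classical theory of separable algebras: the centre of the division algebra $D=\End_\Lambda(B)$ is a finite, hence separable, extension of the perfect field $k$, so $D$ is a separable $k$-algebra and $D\otimes_k K$ is semisimple for every $K$. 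You then translate semisimplicity directly into the semibrick condition via the standard description of the radical of the endomorphism ring of a finite direct sum of indecomposables (each with local endomorphism ring), which cleanly packages the paper's final nilpotency step and sidesteps the reducedness bookkeeping. The trade-off: the paper's argument stays within the references it already uses and additionally shows $B_K$ is multiplicity-free (its endomorphism ring is a \emph{product of division rings}, not just semisimple), whereas your argument is shorter and more conceptual but rests on the separable-algebra theorem, which you rightly flag as the one external fact requiring a precise citation or proof. The remaining steps (Hom-orthogonality across distinct bricks via \cref{lem:Kas00.2.2}, injectivity via \cref{lem:adddeterminesbasic}, and the \emph{moreover} clause via \cref{lem:indlifting}) coincide with the paper's.
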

\begin{proof}
    The moreover part follows immediately from \cref{lem:indlifting}.

    It is sufficient to show that $\ind(- \otimes_k K)$ defines an injective map $\brick \Lambda \hookrightarrow \sbrick \Lambda_K$. Indeed, $\Hom$-orthogonality between different isomorphism classes of modules in a semibrick in $\mods \Lambda$ is preserved by \cref{lem:Kas00.2.2}. Therefore, take $B \in \brick \Lambda$. By definition, we have that $\End_\Lambda(B)$ is a division $k$-algebra. In particular, it is reduced, meaning that there are no nonzero nilpotent elements. Since $k$ is perfect, it follows from \cite[Thm. V.15.5.3(d)]{BourbakiAlgII} that $\End_\Lambda(B) \otimes_k K \simeq \End_{\Lambda_K}(B_K)$ is a reduced $K$-algebra. 

    By \Cref{defprop:perfect}(3), the field extension $K:k$ is separable, and thus also MacLane separable. By \Cref{thm:JL}\eqref{thm:JL1}, the global dimension of the $K$-algebra $\End_\Lambda(B) \otimes_k K$ is thus the same as that of the $k$-algebra $\End_\Lambda(B)$. Since the latter is $0$, we deduce that $\End_\Lambda(B) \otimes_k K$ is also semisimple.
    Then, the Wedderburn--Artin Theorem implies that $\End_\Lambda(B) \otimes_k K$ is isomorphic to a product of matrix rings of division $k$-algebras. If it were not the case that all of these matrix rings are division $k$-algebras, then $\End_\Lambda(B) \otimes_k K$ would not be reduced, contradicting the previous paragraph. Hence, we have that $\End_\Lambda(B) \otimes_k K$ is a product of division $k$-algebras. 

    Consider a decomposition $B_K \simeq B_1 \oplus \dots B_m$ into indecomposable $\Lambda_K$-modules, and assume $f: B_i \to B_j$ is a nonzero homomorphism for some $1 \leq i,j \leq m$. Let $\widetilde{f}$ denote the endomorphism of $B_K$ induced by $f$.
    Then, $\widetilde{f}$ must be nilpotent, contradicting the fact that $\End_\Lambda(B) \otimes_k K$ is reduced. It follows that $\ind(\add(B_K))$ is a semibrick. We conclude that the map 
    \[ \ind(- \otimes_k K): \sbrick \Lambda \to \sbrick \Lambda_K\]
    is well-defined. It is injective since it is the restriction of the map
    \[ \ind(- \otimes_k K): \mods \Lambda \to \mods \Lambda_K,\]
    which was shown to be injective in \cref{lem:Kasprojs}(2) and \cref{lem:Kasprojs}(3).
\end{proof}

\Cref{exmp:nonsemibrick} illustrates that the assumption of $K:k$ to be perfect is necessary, since the nilpotent endomorphism cannot be an isomorphism, whence $\Lambda \otimes_k K$ is not a brick. We conclude with showcasing the importance of semibricks via their connection to wide subcategories.

Recall that a full subcategory $\calW \subseteq \mods \Lambda$ is called \textit{wide} if it is closed under extensions, kernels and cokernels. Denote the collection of all wide subcategories of $\mods \Lambda$ by $\wide \Lambda$.

\begin{theorem}\cite[Thm. 1.2]{Rin76}\label{eq:Ringelbij}
We have mutually-inverse maps defined as follows:
\begin{align*}
    \sbrick \Lambda &\longleftrightarrow \wide \Lambda \\
    \calS & \mapsto \Filt_\Lambda(\calS) \\
    \{ S: S \textnormal{ is simple in $\calW$} \} & \mapsfrom \calW
\end{align*}
\end{theorem}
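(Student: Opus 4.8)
The plan is to verify that both assignments are well-defined and mutually inverse, working throughout inside the ambient length category $\mods \Lambda$. First I would treat the easier direction $\calW \mapsto \simp \calW$. Since a wide subcategory $\calW$ is closed under kernels, cokernels and extensions, the inclusion $\calW \hookrightarrow \mods \Lambda$ is exact and $\calW$ is itself an abelian category; because $\mods \Lambda$ is a length category, so is $\calW$. By Schur's lemma the endomorphism algebra of any simple object of $\calW$ is a division $k$-algebra, so each simple object is a brick, and any nonzero morphism between two simple objects is an isomorphism, whence non-isomorphic simples are $\Hom$-orthogonal. Thus $\simp \calW$ is a semibrick and the map $\calW \mapsto \simp\calW$ lands in $\sbrick \Lambda$.

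Next I would show that for a semibrick $\calS$, the filtration closure $\Filt_\Lambda(\calS)$ — the full subcategory of modules admitting a finite filtration with all subquotients isomorphic to members of $\calS$ — is wide. Closure under extensions is immediate, since one may concatenate the two filtrations of an extension. The crux is closure under kernels and cokernels, which is equivalent to asserting that $\Filt_\Lambda(\calS)$ is an abelian subcategory: for every morphism $f \colon X \to Y$ with $X, Y \in \Filt_\Lambda(\calS)$, the kernel, image and cokernel again lie in $\Filt_\Lambda(\calS)$. I would prove this by induction on the total filtration length of $X \oplus Y$, the base case being $X, Y \in \calS$. Here the semibrick hypothesis is decisive: a morphism between members of $\calS$ is either zero or an isomorphism (the $\Hom$-orthogonal bricks behave exactly like simple objects), so $\ker f$, $\image f$ and $\coker f$ are each $0$ or a member of $\calS$. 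For the inductive step I would peel off a sub- or quotient-object lying in $\calS$ from a filtration of $X$ or $Y$ and apply the snake lemma to the resulting short exact sequences, expressing $\ker f$, $\image f$ and $\coker f$ as extensions of the corresponding terms for the restricted and induced maps; these terms lie in $\Filt_\Lambda(\calS)$ by the inductive hypothesis, and closure under extensions then places the total objects back in $\Filt_\Lambda(\calS)$.

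Finally I would check the two assignments are mutually inverse. Each $S \in \calS$ is a simple object of $\Filt_\Lambda(\calS)$: a proper nonzero subobject $U \subsetneq S$ lying in $\Filt_\Lambda(\calS)$ would admit a nonzero map from some member of $\calS$ into $S$, which by $\Hom$-orthogonality and the brick property of $S$ forces $U = S$, a contradiction; reading off filtration subquotients shows these are \emph{all} the simple objects, so $\simp \Filt_\Lambda(\calS) = \calS$. Conversely, if $\calW$ is wide then it is a length category, so every object admits a composition series with simple subquotients in $\simp \calW$, giving $\Filt_\Lambda(\simp \calW) \supseteq \calW$ trivially and $\Filt_\Lambda(\simp \calW) \subseteq \calW$ by closure of $\calW$ under extensions; hence $\Filt_\Lambda(\simp \calW) = \calW$. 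The main obstacle is the closure of $\Filt_\Lambda(\calS)$ under kernels and cokernels: bookkeeping how an arbitrary morphism interacts with the two filtrations, and ensuring the snake-lemma terms remain filtered by $\calS$, is the only place where the $\Hom$-orthogonality of the semibrick genuinely enters, and it is precisely the behaviour of maps between distinct bricks in $\calS$ (zero or iso) that makes the induction go through.
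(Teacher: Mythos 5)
The paper offers no proof of this statement; it is quoted directly from \cite[Thm.~1.2]{Rin76}, so there is no in-paper argument to compare yours against. Your proposal is the standard self-contained proof and is essentially correct: the well-definedness of $\calW \mapsto \{S : S \text{ simple in } \calW\}$ via Schur's lemma in the abelian category $\calW$, the identification $\simp \Filt_\Lambda(\calS) = \calS$ (a nonzero proper subobject of $S$ lying in $\Filt_\Lambda(\calS)$ would receive an injective, non-surjective map from some member of $\calS$, contradicting the semibrick property), and the equality $\Filt_\Lambda(\simp\calW) = \calW$ via composition series and extension-closure are all argued correctly; and proving closure of $\Filt_\Lambda(\calS)$ under kernels and cokernels by induction, using that maps between members of $\calS$ are zero or isomorphisms, is exactly Ringel's strategy.

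One point needs more care than you give it: the induction measure. If you induct literally on the total $\calS$-filtration length of $X \oplus Y$, the inductive step does not obviously close. After peeling off $S \in \calS$ from $X$ and, in the case where $f|_S$ is injective, passing to the induced map $X/S \to Y/f(S)$, you must know that $Y/f(S)$ admits an $\calS$-filtration of length at most $\ell_{\calS}(Y)$; this is not available a priori, since a Jordan--H\"older statement for $\Filt_\Lambda(\calS)$ is precisely what is being established. The standard fixes are either to induct on the length of $X \oplus Y$ in $\mods\Lambda$ (which strictly decreases for every subquotient that appears in the snake-lemma diagram), or to strengthen the inductive hypothesis so that it also records bounds on the $\calS$-filtration lengths of $\ker f$, $\image f$ and $\coker f$. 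With that adjustment the bookkeeping you describe goes through and the proof is complete.
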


As a consequence, the bijection with semibricks allows us to lift wide subcategories under base field extension. 

\begin{corollary}\label{cor:liftingwidesubcats}
    Let $k$ be a perfect field and let $K:k$ be a field extension. Then there exists an injective map
    \[ \wide \Lambda \hookrightarrow \wide \Lambda_K\]
    given by
    \[ 
    \begin{tikzcd}[column sep=65]
        \calW \arrow[r, "\textnormal{\cref{eq:Ringelbij}}","\textnormal{bijective}"',mapsto] & \calS  \arrow[r, "\textnormal{\cref{lem:semibrickslift}}", mapsto, "\textnormal{injective}"'] & \calS_K \arrow[r, "\textnormal{\cref{eq:Ringelbij}}","\textnormal{bijective}"',mapsto] & \calW_K.
    \end{tikzcd}
    \]
\end{corollary}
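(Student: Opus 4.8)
The plan is to recognise that the displayed arrow is, by its very construction, the composite of three maps, each of which has already been shown to be injective, and then to use that a composite of injective maps is injective. Concretely, I would factor the map as
\[
\wide \Lambda \;\longrightarrow\; \sbrick \Lambda \;\longrightarrow\; \sbrick \Lambda_K \;\longrightarrow\; \wide \Lambda_K,
\]
where the outer two arrows are instances of Ringel's bijection from \cref{eq:Ringelbij} (applied to $\Lambda$ and to $\Lambda_K$ respectively), and the middle arrow is the injective map $\ind(-\otimes_k K)\colon \sbrick \Lambda \to \sbrick \Lambda_K$ supplied by \cref{lem:semibrickslift}.

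First I would recall that the two outer arrows are genuine bijections, hence in particular injective: the first sends a wide subcategory $\calW$ to its semibrick of simple objects $\calS = \{S : S \text{ simple in } \calW\}$, and the third sends a semibrick $\calS_K \in \sbrick \Lambda_K$ to the wide subcategory $\calW_K = \Filt_{\Lambda_K}(\calS_K)$. Both directions are well-defined and mutually inverse precisely because \cref{eq:Ringelbij} identifies $\sbrick$ with $\wide$ for \emph{any} finite-dimensional algebra, so no perfectness hypothesis is needed for these two steps. Next I would invoke \cref{lem:semibrickslift} for the middle arrow; this is the only place where the assumption that $k$ be perfect actually enters, and it is exactly there that it guarantees both that $\ind(-\otimes_k K)$ carries a semibrick of $\mods \Lambda$ to a semibrick of $\mods \Lambda_K$ and that it does so injectively. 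Composing the three maps yields a well-defined map $\wide \Lambda \to \wide \Lambda_K$, and since each of the three factors is injective, so is the composite.

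In truth there is no serious obstacle here once \cref{lem:semibrickslift} is in hand: all of the substantive work — namely verifying that a brick over a perfect field extends to a semibrick and that this assignment is injective on semibricks — has already been carried out in \cref{lem:semibrickslift}. The only thing worth double-checking is purely formal, that the three maps genuinely compose, i.e.\ that the target type of each factor matches the source type of the next; this is immediate from the typing $\wide \Lambda \to \sbrick \Lambda \to \sbrick \Lambda_K \to \wide \Lambda_K$ displayed above. Thus the proof amounts to little more than chasing the commutative diagram already given in the statement and remarking that injectivity is preserved under composition.
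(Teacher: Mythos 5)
Your proposal is correct and coincides with the paper's own (implicit) argument: the corollary is proved exactly by factoring the map as $\wide \Lambda \to \sbrick \Lambda \to \sbrick \Lambda_K \to \wide \Lambda_K$, with the outer arrows given by Ringel's bijection (\cref{eq:Ringelbij}) and the middle arrow by the injective map of \cref{lem:semibrickslift}, and observing that a composite of injective maps is injective. Your remark that perfectness of $k$ is needed only for the middle arrow is also accurate.
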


It should be noted that $\calW_K$ in the statement of \cref{cor:liftingwidesubcats} is not equal to the subcategory of $\mods \Lambda$ obtained by applying $- \otimes_k K$ to every object in $\calW$, which is usually not a wide subcategory of $\mods \Lambda_K$. Indeed, this na\"{i}ve procedure would not yield a subcategory which is closed under direct summands. This motivates the investigation of individual objects under base field extension rather than of subcategories, as previously explored in \cref{cor:posetembedftors}.

\subsection{Left-finite and right-finite semibricks}\label{subsec:lrfsbrick}
Let $(P,\leq)$ be a poset. Recall that a \textit{cover relation} in $P$ is given a pair of elements $p_1,p_2\in P$ such that $p_1 \leq p_2$, and if $p_1 \leq p_{1.5} \leq p_2$ then $p_{1.5}$ is either $p_1$ or $p_2$. We denote this cover relation by {$p_1 \lessdot p_2$}.
The Hasse quiver $\Hasse(P)$ of $P$ has as its vertices the elements of $P$ and arrows $p_2 \to p_1$ for every cover relation $p_2 \lessdot p_1$.
The Hasse quiver of $\tors \Lambda$ is particularly  well studied, see for example \cite{DIRRT17}. An important feature of $\Hasse(\tors \Lambda)$ is the brick labelling of its arrows. By \cite[Thm. 3.3(b)]{DIRRT17}, for every arrow $\calT \to \calU$ in $\Hasse(\tors \Lambda)$, there exists a unique brick $B$ such that $\calU^\perp \cap \calT = \Filt \{ B \}$. Thus, we may label the arrow with this brick and denote it by $\calT \xrightarrow{B} \calU$, see also \cite{BarnardCarrolZhu19}. For the purpose of $\tau$-perpendicular intervals, the following description is particularly important.

\begin{proposition} \cite[Thm. 1.3]{Asai2020} \cite[Prop. 4.9]{DIRRT17}\label{prop:sbrickslabels}
	Let $\calT \in \ftors \Lambda$ correspond to $(M,P) \in \stautilt \Lambda$ under the bijection of \cref{thm:AIRftorsbij}. Then the cover relations $\calT \to \calU_i$ in $\Hasse(\tors \Lambda)$ are labelled by distinct isomorphism classes of bricks in the (left-finite) semibrick
	\begin{equation}\label{eq:defnLFsbrick} \calS = \ind ( M / \rad_{\End_{\Lambda}(M)} M). \end{equation}
	Dually, let $(N,Q) = H(M,P) \in \stauinvtilt \Lambda$ be the corresponding support $\tau^{-1}$-tilting pair. Then the cover relations $\calS_j \to \calT$ in $\Hasse(\tors \Lambda)$ are labelled by the distinct isomorphism classes of brick in the (right-finite) semibrick 
	\begin{equation}\label{eq:defnRFsbrick} \calS' = \ind (\soc_{\End_{\Lambda} (N)} N). \end{equation}
\end{proposition}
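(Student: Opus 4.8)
The plan is to prove the first assertion—about the downward arrows out of $\calT$ and the left-finite semibrick $\calS$—in full, and then to deduce the second assertion about the upward arrows into $\calT$ and the right-finite semibrick $\calS'$ by duality. Throughout I would write $E = \End_\Lambda(M)$ and fix a decomposition $M \simeq M_1 \oplus \dots \oplus M_t$ into indecomposable summands.

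The first input is the brick-labelling recalled just above the statement \cite[Thm. 3.3(b)]{DIRRT17}: each cover relation $\calT \to \calU$ carries a unique brick $B$ with $\calU^\perp \cap \calT = \Filt\{B\}$, and the bricks labelling the distinct arrows out of a fixed vertex $\calT$ are pairwise $\Hom$-orthogonal, hence assemble into a semibrick. Thus the content of the first assertion is the identification of this ``down-label'' semibrick with $\calS = \ind(M/\rad_{E} M)$; its left-finiteness is then automatic, since $\calS$ is attached to the functorially finite torsion class $\calT = \Fac M$. I would also record at the outset that the indecomposable summands of $M/\rad_{E} M$ are genuinely bricks, namely the standard fact that the endotop of a $\tau$-rigid module decomposes into bricks.

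The heart of the argument is to match the downward covers of $\calT$ with the summands of the endotop. Using the mutation theory of support $\tau$-tilting pairs \cite[Sec. 2.3]{AIR2014}, the covers $\calU \lessdot \calT$ correspond to the left mutations of $(M,P)$. A summand $M_j$ admits a strict left mutation precisely when it is an \emph{irredundant} generator, that is $M_j \notin \Fac\bigl(\bigoplus_{l\neq j} M_l\bigr)$; and $M_j$ is redundant exactly when $M_j$ lies in $\rad_{E} M$, so that its contribution to the endotop vanishes. Hence the irredundant summands are in bijection with the indecomposable summands of $\calS$. For such an $M_j$, with associated cover $\calU_j$, I would then compute directly that $\calU_j^\perp \cap \calT = \Filt\{B_j\}$, where $B_j$ is the endotop-brick $M_j/(\rad_E M \cap M_j)$: the inclusion $B_j \in \calT$ holds because $B_j$ is a quotient of $M_j$, while the inclusion $B_j \in \calU_j^\perp$ and the reverse containment both follow from the $\Ext$-projectivity of $M_j$ in $\calT$ together with the explicit description of the mutated $\Ext$-projective generator of $\calU_j$. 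Distinctness of the labels is immediate since $M$ is basic.

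I expect the last identification $\calU_j^\perp \cap \calT = \Filt\{B_j\}$ to be the main obstacle: it requires controlling precisely how the $\Ext$-projective generator changes under a single mutation, and a filtration argument showing that every object of $\calT$ lying outside $\calU_j$ is built from copies of the single brick $B_j$. For the second assertion I would apply the anti-isomorphism $(-)^\perp \colon \tors\Lambda \to \torf\Lambda$, under which (by \cref{prop:AIRp2.15}) the torsion class $\calT = \Fac M$ corresponds to the torsion-free class $\Sub N$ for $(N,Q) = H(M,P)$, and the upward covers $\calS_j \to \calT$ become downward covers of $\Sub N$. Applying the dual of the first assertion—equivalently, the first assertion to $\Lambda^{\op}$, under which endotops become endo-socles—identifies these labels with $\ind(\soc_{\End_\Lambda(N)} N) = \calS'$, completing the proof.
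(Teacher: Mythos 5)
The paper does not prove this proposition at all: it is imported verbatim from \cite[Thm.~1.3]{Asai2020} and \cite[Prop.~4.9]{DIRRT17}, so there is no in-paper argument to compare against. Your outline does follow the standard proof strategy of those references (mutation of support $\tau$-tilting pairs, matching irredundant summands of $M$ with the nonzero summands of the endotop, and dualising for the second half), and the reduction of ``redundant'' to ``contained in $\rad_{\End(M)}M$'' can indeed be made rigorous by a Nakayama-type iteration. The duality argument for the $\tau^{-1}$ half is also sound.

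However, as written your proposal has two genuine gaps. First, the central step --- that for an irredundant summand $M_j$ with associated cover $\calU_j$ one has $\calU_j^{\perp}\cap\calT=\Filt\{B_j\}$ with $B_j$ the corresponding endotop brick --- is exactly the mathematical content of the proposition, and you explicitly defer it (``I expect [it] to be the main obstacle'') rather than prove it; without this the argument is a plan, not a proof. (Note also that the set you must filter is $\calU_j^{\perp}\cap\calT$, not ``objects of $\calT$ lying outside $\calU_j$'': an object can fail to lie in $\calU_j$ while still having a nonzero torsion part with respect to $\calU_j$.) Second, the mutation theory of \cite[Sec.~2.3]{AIR2014} only describes the covers of $\calT$ inside $\ftors\Lambda$ (equivalently, the exchange quiver of $\stautilt\Lambda$), whereas the proposition is about cover relations in the full lattice $\tors\Lambda$. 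A priori a left mutation could fail to be a cover in $\tors\Lambda$, and $\calT$ could have additional non-functorially-finite covers there. Identifying the two sets of covers for a functorially finite $\calT$ is a nontrivial theorem of \cite{DIJ2019} (their Theorem~3.1), which your sketch never invokes. Both gaps are repairable by citation or by reproducing the arguments of \cite{Asai2020,DIRRT17,DIJ2019}, but they must be filled for the proof to stand.
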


Denote the collection of \textit{left-finite} semibricks, that is, semibricks arising as in \cref{eq:defnLFsbrick} for some support $\tau$-tilting pair $(M,P)$, by $\flsbrick \Lambda$ and the collection of right-finite semibricks, that is, semibricks arising as in \cref{eq:defnRFsbrick} for some support $\tau^{-1}$-tilting pair $(N,Q)$, by $\frsbrick \Lambda$. The following theorem illustrates the importance of the brick labelling for our purpose.

\begin{theorem}\cite[Thm. 4.16]{DIRRT17}\label{thm:DIRRT174.16}
    Let $(M,P)$ be a $\tau$-rigid pair. Define the torsion classes $\calT_{(M,P)}$ and $\calU_{(M,P)}$ as in \Cref{sec:tauperp}.
    Then $\calU_{(M,P)}^{\perp} \cap \calT_{(M,P)} = \Filt_\Lambda \{ \calS\}$, where $\calS$ consists of the labels of arrows incident to $\calU_{(M,P)}$ in $\Hasse[\calU_{(M,P)}, \calT_{(M,P)}]$. Equivalently, $\calS$ consists of the labels of arrows incident to $\calT_{(M,P)}$ in $\Hasse[\calU_{(M,P)}, \calT_{(M,P)}]$.
\end{theorem}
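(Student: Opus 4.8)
The plan is to realise the left-hand side $\calU_{(M,P)}^{\perp} \cap \calT_{(M,P)}$ as a finite-length abelian subcategory and then identify its simple objects with the arrow labels. First I would rewrite the category. Since $\calU_{(M,P)} = \Fac M$ and $(\Fac M)^{\perp} = M^{\perp}$ — a morphism into $Y$ kills every quotient and extension of copies of $M$ exactly when it kills $M$ — and since $\calT_{(M,P)} = {}^{\perp}\tau M \cap P^{\perp}$, I obtain
\[ \calU_{(M,P)}^{\perp} \cap \calT_{(M,P)} = M^{\perp} \cap {}^{\perp}\tau M \cap P^{\perp} =: \calW. \]
This is precisely the $\tau$-perpendicular subcategory attached to $(M,P)$, which by the $\tau$-tilting reduction of \cite{Jas15,DIRRT17} (recalled after \cref{def:tauexcep}, see also \cref{thm:widesubcatlatticeiso}) is a wide subcategory equivalent to $\mods \Gamma$ for a finite-dimensional algebra $\Gamma$. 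In particular $\calW$ is an extension-closed, finite-length abelian subcategory whose simple objects $\simp \calW$ form a semibrick.

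The second step is to transport the brick labelling along the lattice isomorphism $[\calU_{(M,P)}, \calT_{(M,P)}] \cong \tors \calW \cong \tors \Gamma$ supplied by \cref{thm:widesubcatlatticeiso}, under which $\calU_{(M,P)} \leftrightarrow 0$ and $\calT_{(M,P)} \leftrightarrow \calW$. I would argue that this isomorphism is compatible with cover relations and with their brick labels, and that each label is a brick lying inside $\calW$. Granting this, the arrows of $\Hasse[\calU_{(M,P)}, \calT_{(M,P)}]$ incident to $\calU_{(M,P)}$ correspond to the arrows of $\Hasse(\tors \calW)$ incident to the minimal torsion class $0$, while those incident to $\calT_{(M,P)}$ correspond to the arrows incident to the maximal torsion class $\calW$.

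Third, I would compute both label sets inside $\calW \simeq \mods \Gamma$ using \cref{prop:sbrickslabels}. The maximal torsion class corresponds to the support $\tau$-tilting pair $(\Gamma, 0)$, so the arrows leaving it are labelled by the left-finite semibrick $\ind(\Gamma/\rad \Gamma) = \simp \calW$. Dually, the minimal torsion class corresponds to $(0,\Gamma)$, whose image under the bijection $H$ of \cref{prop:AIRp2.15} is $(\nu \Gamma, 0)$, so its incoming arrows are labelled by the right-finite semibrick $\ind(\soc_{\End(\nu\Gamma)} \nu\Gamma)$. A short self-contained check shows this equals $\simp \calW$: an atom of $\tors \Gamma$ has the form $\Filt\{B\}$ for the brick $B$ labelling its incoming arrow, and since $\Filt\{B\}$ is closed under quotients, any proper nonzero quotient of $B$ would again be $B$-filtered of strictly smaller length, which is impossible; hence $B$ is simple, and conversely $\Filt\{S\}$ is an atom for every simple $S$. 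Thus both label sets equal $\simp \calW$, which is exactly the ``equivalently'' clause of the statement.

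Finally, it remains to identify $\Filt_\Lambda\{\calS\} = \Filt_\Lambda(\simp \calW)$ with $\calW$. One inclusion is immediate because $\simp \calW \subseteq \calW$ and $\calW$ is extension-closed; the reverse holds because every object of the finite-length category $\calW$ admits a composition series, which is a filtration by $\simp \calW$ in $\mods \Lambda$ since the inclusion $\calW \hookrightarrow \mods \Lambda$ is exact. This gives $\calU_{(M,P)}^{\perp} \cap \calT_{(M,P)} = \calW = \Filt_\Lambda\{\calS\}$. I expect the main obstacle to be the second step: verifying that the brick labels of the interval, computed in $\mods \Lambda$, really do agree with the simple objects of $\calW$ under the reduction equivalence — that is, that the lattice isomorphism $[\calU_{(M,P)},\calT_{(M,P)}] \cong \tors \calW$ is label-preserving.
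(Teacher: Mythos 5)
The paper offers no proof of this statement: it is imported verbatim as \cite[Thm.~4.16]{DIRRT17}, so there is no in-house argument to compare yours against. Your reconstruction is nevertheless sound and follows the reduction strategy one would expect. You correctly identify $\calU_{(M,P)}^{\perp}\cap\calT_{(M,P)}$ with the $\tau$-perpendicular category $\calW_{(M,P)}$ (via $(\Fac M)^{\perp}=M^{\perp}$), transport the interval to $\tors\calW_{(M,P)}\simeq\tors\Gamma$ using \cref{thm:widesubcatlatticeiso}, read off both the atom labels and the coatom labels as $\simp\calW_{(M,P)}$ via \cref{prop:sbrickslabels} (your direct argument that the atoms of $\tors\Gamma$ are exactly $\Filt\{S\}$ for $S$ simple, by a length count on quotients, is a clean substitute for unwinding the socle formula), and close with Ringel's identity $\calW=\Filt(\simp\calW)$, which holds in $\mods\Lambda$ because the inclusion of the wide subcategory is exact. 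The one step you explicitly defer --- that the lattice isomorphism $-\cap\calW_{(M,P)}\colon[\calU_{(M,P)},\calT_{(M,P)}]\to\tors\calW_{(M,P)}$ preserves brick labels --- is precisely the second assertion of \cref{prop:brickpreserve}, quoted there from \cite[Thm.~4.2(2)]{AsaiPfeifer2019}, so within the paper's framework your argument is complete once you invoke that proposition. Be aware, however, that this deferred step is the genuinely nontrivial content of the theorem: a from-scratch proof would have to establish it directly (as \cite{DIRRT17} in effect does), and one should check that the external label-preservation result does not itself rest on \cite[Thm.~4.16]{DIRRT17} before treating your derivation as logically independent.
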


Therefore{,} we are especially interested in investigating the left-finite and right-finite semibricks under base field extension. 

\begin{lemma}\label{lem:flbrickcommute}
    Let $K:k$ be a MacLane separable field extension and let $\Lambda$ be a finite-dimensional $k$-algebra. Then the following square is commutative:
    \begin{equation}\label{eq:commLFsbrick}
    \begin{tikzcd}
        \staurigid \Lambda \arrow[r] \arrow[d, "- \otimes_k K", swap]  & \flsbrick \Lambda \arrow[d, "\ind(- \otimes_k K)"] \\
        \staurigid \Lambda_K \arrow[r] & \flsbrick \Lambda_K
    \end{tikzcd}
    \end{equation}
    where the horizontal map at the top is given by $(M,P) \mapsto \ind(M/\rad_{\End_\Lambda(M)} M)$ and the horizontal map at the bottom is given by $(M',P') \mapsto \ind(M'/\rad_{\End_{\Lambda_K}(M')} M')$.
\end{lemma}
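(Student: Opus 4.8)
The plan is to reduce the commutativity of \eqref{eq:commLFsbrick} to a single identity of submodules, namely that extension of scalars commutes with the endotop construction $M \mapsto M/\rad_{\End_\Lambda(M)}M$. Writing $A \coloneqq \End_\Lambda(M)$, the starting point is \cref{lem:Kas00.2.2} (in the case $i=0$, where $\Ext^0 = \Hom$), which gives a natural isomorphism of $K$-algebras $A_K = \End_\Lambda(M)\otimes_k K \simeq \End_{\Lambda_K}(M_K)$. Its naturality ensures that this isomorphism identifies the $A_K$-module structure on $M_K$ coming from extending the $A$-module $M$ with the tautological $\End_{\Lambda_K}(M_K)$-module structure on $M_K$: concretely, $f\otimes\lambda$ acts on $m\otimes\mu$ as $f(m)\otimes\lambda\mu$ under both descriptions. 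All subsequent radical computations may therefore be carried out inside $A_K$ acting on $M_K$.

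First I would record that extension of scalars commutes with the ``ideal times module'' operation. For the radical ideal $I \coloneqq \rad A$, the submodule $\rad_A M = IM$ is the image of the action map $I \otimes_k M \to M$; since $-\otimes_k K$ is exact it preserves this image, and unwinding the $A_K$-action gives $(\rad_A M)_K = I_K\, M_K = (\rad A)_K\, M_K$. Next, and this is the step where the hypothesis is used in an essential way, I would invoke \cref{lem:Kasradsoclift}\eqref{lem:Kasradsoclift_rad}: for the finite-dimensional $k$-algebra $A$ and the MacLane separable extension $K:k$ one has the \emph{equality} $(\rad A)_K = \rad(A_K)$. Combining the two identities with the identification $A_K \simeq \End_{\Lambda_K}(M_K)$ yields
\[ (\rad_A M)_K = \rad(A_K)\, M_K = \rad_{\End_{\Lambda_K}(M_K)} M_K. \]
Applying the exact functor $-\otimes_k K$ to the short exact sequence $0 \to \rad_A M \to M \to M/\rad_A M \to 0$ then gives an isomorphism $(M/\rad_{\End_\Lambda(M)}M)_K \simeq M_K/\rad_{\End_{\Lambda_K}(M_K)}M_K$.

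To finish, I would trace the two composites in \eqref{eq:commLFsbrick}. Going right-then-down sends $(M,P)$ to $\ind\bigl((M/\rad_{\End_\Lambda(M)}M)_K\bigr)$, while going down-then-right sends it (via \cref{lem:taurigidlift}, which guarantees that $(M_K,P_K)$ is again $\tau$-rigid, so that the bottom map is defined) to $\ind\bigl(M_K/\rad_{\End_{\Lambda_K}(M_K)}M_K\bigr)$; these agree by the isomorphism just established, since $\ind$ is invariant under isomorphism. One small bookkeeping point to address is that $M_K$ need not be basic: this causes no difficulty because the endotop semibrick $\ind(M'/\rad_{\End_{\Lambda_K}(M')}M')$ depends only on $\add(M')$ (equivalently, on $\Fac M'$ via \cref{prop:sbrickslabels}), so it is unchanged if $M_K$ is replaced by a basic summand with the same additive closure. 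The main obstacle is thus concentrated entirely in the radical identity $(\rad A)_K = \rad(A_K)$: without MacLane separability only the inclusion $(\rad A)_K \subseteq \rad(A_K)$ holds, so the endotop map would merely surject rather than being an isomorphism, and the square could genuinely fail to commute.
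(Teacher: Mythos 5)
Your proposal is correct and follows essentially the same route as the paper: both reduce the square to the identity $(\rad_{\End_\Lambda(M)}M)_K \simeq \rad_{\End_{\Lambda_K}(M_K)}M_K$ via \cref{lem:Kas00.2.2} and \cref{lem:Kasradsoclift} (using MacLane separability exactly where you do), and then conclude by exactness of $-\otimes_k K$ and the $\ind$-invariance bookkeeping. The only cosmetic difference is that you derive the module-level radical identity from \cref{lem:Kasradsoclift}\eqref{lem:Kasradsoclift_rad} together with an explicit ``image of the action map'' argument, whereas the paper cites part (2) of that lemma directly for $M$ as a module over its endomorphism algebra.
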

\begin{proof}
    Let $M \in \staurigid \Lambda$. We simply need to show that 
    \[ \ind(\ind(M / \rad_{\End_\Lambda(M)} M) \otimes_k K) \simeq \ind(M_K/ \rad_{\End_{\Lambda_K}(M_K)}M_K).\]
    Since $-\otimes_k K$ is additive, we observe that 
    \begin{equation}\label{eq:indindsbrick} \ind(\ind(M/\rad_{\End_\Lambda(M)} M) \otimes_k K) \simeq \ind({(}M/\rad_{\End_\Lambda(M)}M{)} \otimes_k K ).\end{equation}
    From \cref{lem:Kas00.2.2} we know that $\End_\Lambda(M) \otimes_k K \simeq \End_{\Lambda_K}(M_K)$, and from \cref{lem:Kasradsoclift}(2) we have $(\rad M)_K \simeq \rad M_K$ since $K:k$ is MacLane separable. Moreover, $- \otimes_k K$ is exact, yielding $(M/N)_K \simeq M_K / N_K$. Putting these observations together, we obtain
    \begin{align*}
        \ind(\ind(M/\rad_{\End_\Lambda(M)} M) \otimes_k K) &= \ind((M/\rad_{\End_\Lambda(M)}M) \otimes_k K ) &(\text{by \cref{eq:indindsbrick}})\\
        & = \ind(M_K / (\rad_{\End_\Lambda(M)} M)_K) &(\text{since $- \otimes_k K$ is exact}) \\
        & = \ind(M_K / \rad_{\End_\Lambda(M) \otimes_k K} M_K) &(\text{by \cref{lem:Kasradsoclift}(2)}) \\
        & = \ind(M_K / \rad_{\End_{\Lambda_K}(M_K)} M_K) &(\text{by \cref{lem:Kas00.2.2}}),
    \end{align*}
    as required.
\end{proof}

The dual statement follows from a similar proof. It is included for the sake of completeness and for later reference.
\begin{lemma}\label{lem:frbrickcommute}
    Let $K:k$ be a MacLane separable field extension and let $\Lambda$ be a finite-dimensional $k$-algebra. Then the following square is commutative:
    \begin{equation}
    \begin{tikzcd}\label{eq:commRFsbrick}
        \stauinvrigid \Lambda \arrow[r] \arrow[d, "- \otimes_k K", swap]  & \frsbrick \Lambda \arrow[d, "\ind(- \otimes_k K)"] \\
        \stauinvrigid \Lambda_K \arrow[r] & \frsbrick \Lambda_K
    \end{tikzcd}
    \end{equation}
    where the horizontal map at the top is given by $(M,P) \mapsto \ind(\soc_{\End_\Lambda(M)} M)$ and the horizontal map at the bottom is given by $(M',P') \mapsto \ind(\soc_{\End_{\Lambda_K}(M')} M')$.
\end{lemma}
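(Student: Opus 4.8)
The plan is to mirror the proof of \cref{lem:flbrickcommute} exactly, replacing the top construction $M/\rad_{\End_\Lambda(M)}M$ by the socle $\soc_{\End_\Lambda(M)}M$ and invoking the dual statements of the cited lemmas. Concretely, for a support $\tau^{-1}$-tilting pair $(M,P) \in \stauinvrigid \Lambda$, it suffices to establish the single isomorphism
\[ \ind\!\bigl(\ind(\soc_{\End_\Lambda(M)}M) \otimes_k K\bigr) \simeq \ind\!\bigl(\soc_{\End_{\Lambda_K}(M_K)}M_K\bigr), \]
since $\Hom$-orthogonality among the distinct bricks is preserved by \cref{lem:Kas00.2.2}, and the outer map is a well-defined injection by \cref{lem:semibrickslift} (or by the injectivity of $\ind(-\otimes_k K)$). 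The first simplification, $\ind(\ind(\soc_{\End_\Lambda(M)}M)\otimes_k K) \simeq \ind((\soc_{\End_\Lambda(M)}M)\otimes_k K)$, is immediate from the additivity of $-\otimes_k K$, just as in the cited proof.

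The core of the argument is to show that extension of scalars commutes with taking the $\End_\Lambda(M)$-socle. First I would record the three ingredients dual to those used for the radical: the isomorphism $\End_\Lambda(M) \otimes_k K \simeq \End_{\Lambda_K}(M_K)$ from \cref{lem:Kas00.2.2}; the equality $(\soc M)_K \simeq \soc M_K$ from \cref{lem:Kasradsoclift}(3), which holds because $K:k$ is MacLane separable; and the exactness of $-\otimes_k K$, which guarantees that $-\otimes_k K$ commutes with forming the relevant submodule. Chaining these, one computes
\[ \ind\!\bigl((\soc_{\End_\Lambda(M)}M) \otimes_k K\bigr) = \ind\!\bigl(\soc_{\End_\Lambda(M)\otimes_k K} M_K\bigr) = \ind\!\bigl(\soc_{\End_{\Lambda_K}(M_K)}M_K\bigr), \]
which is the required identity.

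The one point that deserves slightly more care than the radical case is the justification that $\soc_{\End_\Lambda(M)}M$ behaves well under $-\otimes_k K$. The radical version in \cref{lem:flbrickcommute} uses \cref{lem:Kasradsoclift}(2), an equality of functorial radicals; the socle version must instead interpret $\soc_{\End_\Lambda(M)}M$ as the socle of $M$ regarded as a module over the (opposite) endomorphism algebra and appeal to \cref{lem:Kasradsoclift}(3) applied in that setting, using that $\End_\Lambda(M)$ is again a finite-dimensional $k$-algebra and that MacLane separability is inherited so the socle inclusion is an equality. The hard part will therefore be verifying that the socle filtration over the endomorphism algebra is compatible with base field extension, i.e.\ that $(\soc_{\End_\Lambda(M)}M)_K$ really equals $\soc_{\End_{\Lambda_K}(M_K)}M_K$ under the identification $\End_\Lambda(M)_K \simeq \End_{\Lambda_K}(M_K)$; once this compatibility is in hand, the remainder is a formal chain of equalities identical in structure to the proof of \cref{lem:flbrickcommute}, and the proof can simply state ``The dual statement follows \emph{mutatis mutandis} from the proof of \cref{lem:flbrickcommute}, replacing the radical by the socle and \cref{lem:Kasradsoclift}(2) by \cref{lem:Kasradsoclift}(3).''
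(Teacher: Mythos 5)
Your proposal is correct and follows essentially the same route as the paper: reduce to the single chain of identifications $\ind((\soc_{\End_\Lambda(M)}M)\otimes_k K)\simeq \ind(\soc_{\End_\Lambda(M)\otimes_k K}M_K)\simeq\ind(\soc_{\End_{\Lambda_K}(M_K)}M_K)$ using additivity of $-\otimes_k K$, \cref{lem:Kasradsoclift}(3) under MacLane separability, and \cref{lem:Kas00.2.2}. If anything, you are slightly more careful than the paper in flagging that the socle is taken over the endomorphism algebra rather than over $\Lambda$ and in justifying why the cited socle lemma still applies in that setting.
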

\begin{proof}
   Since $K:k$ is MacLane separable, it follows that $(\soc_\Lambda M)_K \simeq \soc_{\Lambda_K} M_K$ by \cref{lem:Kasradsoclift}(3). Similarly to the proof of \Cref{lem:flbrickcommute}, it follows that
    \begin{align*}
        \ind(\ind(\soc_{\End_\Lambda(M)} M) \otimes_k K) &\simeq \ind((\soc_{\End_\Lambda(M)}M) \otimes_k K ) & \text{(since $- \otimes_k K$ is additive)}\\
        & \simeq \ind(\soc_{\End_\Lambda(M) \otimes_k K} M_K) &(\text{by \cref{lem:Kasradsoclift}(3)}) \\
        & \simeq \ind(\soc_{\End_{\Lambda_K}(M_K)} M_K) &(\text{by \cref{lem:Kas00.2.2}}).
    \end{align*}
    This completes the proof.
\end{proof}

\begin{remark}
	Let $K:k$ be any field extension. Then there exist two injective maps
	\[ \mathcal{L}: \flsbrick(\Lambda) \to \flsbrick(\Lambda_K), \quad \mathcal{R}: \frsbrick(\Lambda) \to \frsbrick(\Lambda_K).\]
	This observation follows from the following two facts:
    \begin{enumerate}
        \item The horizontal maps in the commutative diagrams \cref{eq:commLFsbrick} and \cref{eq:commRFsbrick} are bijections by \cite[Thm. 1.3]{Asai2020}.
        \item The scalar extension functor $- \otimes_k K$ is injective-on-objects and lifts support $\tau$-tilting pairs and support $\tau^{-1}$-tilting pairs by \cref{cor:tautiltlift} and its dual.
    \end{enumerate}
    However, unlike in \cref{lem:flbrickcommute} and \cref{lem:frbrickcommute}, it is not clear whether $\mathcal{L}$ and $\mathcal{R}$ are given by $\ind(- \otimes_k K)$ when $K:k$ is not MacLane separable.
\end{remark}

\section{The \texorpdfstring{$\tau$}{tau}-cluster morphism category}\label{sec:tcmc}

The $\tau$-cluster morphism category $\Wfrak(\Lambda)$ of a finite-dimensional algebra $\Lambda$ was sequentially introduced in a series of papers \cite{IT17,BM18w,BH21}. It captures the structure of $\tau$-tilting reductions as introduced in \cite{Jas15,DIRRT17}. The classifying space of $\Wfrak(\Lambda)$ is particularly interesting since its fundamental group, known as the picture group \cite{ITW16,HI21} when $\Lambda$ is $\tau$-tilting finite, is closely related to maximal green sequences \cite{IgusaTodorovMGS2021,BorMot}. In  certain cases, these classifying spaces are actually $K(\pi,1)$ spaces for the picture group \cite{BarnardHanson2022,HI21,HI21p,IT17,IgusaTodorov22,Kai23,Kai24}. Furthermore, generalisations of the $\tau$-cluster morphisms and different approaches to it have provided important insights to its structure, see \cite{Bor21,STTW23,Kai23,Bor24,Kai24}. The main result of this section provides a close connection between the $\tau$-cluster morphism category of $\Lambda$ and that of the $\tau$-cluster morphism category of $\Lambda_K$, where $K:k$ is a MacLane separable field extension. 

\subsection{Background on the \texorpdfstring{$\tau$}{tau}-cluster morphism category}
We begin by introducing some notation. Let $(M,P)$ be a $\tau$-rigid pair and recall that the interval $[\calU_{(M,P)}, \calT_{(M,P)}] \coloneqq [\Fac M, {}^\perp \tau M \cap P^\perp] \subseteq \tors {\Lambda}$ is called \textit{$\tau$-perpendicular}. One defines the corresponding \textit{$\tau$-perpendicular subcategory} by
\begin{equation}\label{eq:defntauperpwide} \calW_{(M,P)} = M^\perp \cap {}^\perp \tau M \cap P^\perp \subseteq \mods {\Lambda}. \end{equation}

The importance of these subcategories comes from the following results. 
\begin{theorem}\cite[Thm. 3.8]{Jas15} \cite[Thm. 4.12]{DIRRT17}\cite[Thm. 3.14]{BST19}\label{thm:widesubcatlatticeiso}
	Let $(M,P)$ be a $\tau$-rigid pair {in $\mods \Lambda$}. 
    \begin{enumerate}
        \item The $\tau$-perpendicular subcategory $\calW_{(M,P)}$ is a \textit{wide subcategory} of $\mods \Lambda$, which is to say that it is closed under kernels, cokernels and extensions.
        \item The category $\calW_{(M,P)}$ is equivalent to the module category $\mods {\Gamma}_{(M,P)}$, for a finite-dimensional algebra ${\Gamma}_{(M,P)}$ with $|{\Lambda}|- |M|-|P|$ isomorphism classes of simple modules.
        \item There exists an isomorphism of complete lattices 
	\[ - \cap \calW_{(M,P)} : [\Fac M, {}^\perp \tau M \cap P^\perp]  \to \tors \calW_{(M,P)} \]
	which restricts to an order-preserving bijection of functorially finite torsion classes.
    \end{enumerate}
\end{theorem}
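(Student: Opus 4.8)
The plan is to treat all three assertions as facets of a single construction, Jasso's $\tau$-tilting reduction, and to begin with a reformulation that makes the interval structure explicit. The first move is to pass to the Bongartz completion $(M^+,P)$ of $(M,P)$, which by the discussion recalled in \Cref{sec:tauperp} is a support $\tau$-tilting pair with $\Fac M^+ = {}^\perp\tau M \cap P^\perp = \calT_{(M,P)}$ and $|M^+| = |\Lambda|-|P|$. The point is a clean description of the subcategory: since $M^\perp = (\Fac M)^\perp = \calU_{(M,P)}^\perp$ (a map out of a quotient of $M^r$ vanishes iff it vanishes on $M$), one has
\[ \calW_{(M,P)} = M^\perp \cap {}^\perp \tau M \cap P^\perp = \calT_{(M,P)} \cap \calU_{(M,P)}^\perp, \]
so that $\calW_{(M,P)}$ is exactly the subcategory attached to the interval $[\calU_{(M,P)}, \calT_{(M,P)}]$, with $\calU_{(M,P)} = \Fac M \subseteq \calT_{(M,P)}$ both torsion classes. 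This is the form in which all three parts are most naturally attacked.

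For (1) I would verify wideness directly from this description. Closure under extensions is immediate, since $\calT_{(M,P)}$ is a torsion class and $\calU_{(M,P)}^\perp$ is a torsion-free class, so both—and hence their intersection—are closed under extensions. The content is closure under kernels and cokernels: given $f\colon X \to Y$ with $X,Y \in \calW_{(M,P)}$, the image and cokernel lie in $\calT_{(M,P)}$ (closed under quotients), while the kernel and image lie in $\calU_{(M,P)}^\perp$ (closed under submodules), so the only genuinely nontrivial points are that $\ker f \in \calT_{(M,P)}$ and $\coker f \in \calU_{(M,P)}^\perp$. This is precisely where $\tau$-perpendicularity of the interval must be used, since an arbitrary pair of nested torsion classes does \emph{not} produce a wide subcategory; I expect this to be the technical heart of the argument. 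The cleanest route is to exploit the $\Ext$-projective and $\Ext$-injective structure governing the Bongartz and co-Bongartz boundaries of the interval, or—more economically—to deduce wideness \emph{a posteriori} from the module-category equivalence in (2).

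For (2) and (3) I would construct the reduction functor simultaneously. Writing $T = M^+$, $C = \End_\Lambda(T)$, and $e \in C$ for the idempotent corresponding to the summand $M$ of $T$, I would take $\Gamma_{(M,P)} \coloneqq C/CeC$, which has $|T|-|M| = |\Lambda|-|M|-|P|$ simple modules, as required. The substance of (2) is an \emph{exact} equivalence $\calW_{(M,P)} \simeq \mods \Gamma_{(M,P)}$; once this is established, wideness in (1) is automatic, since an exact-image embedding into $\mods\Lambda$ transports closure under kernels and cokernels. For (3) I would transport torsion classes along the equivalence: the assignment $\calX \mapsto \calX \cap \calW_{(M,P)}$ is order-preserving and lands in $\tors \calW_{(M,P)}$, and its inverse is obtained by inflating a torsion class of $\calW_{(M,P)}$ back into the interval (taking its extension-closure inside $\calT_{(M,P)}$). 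That these are mutually inverse complete-lattice isomorphisms is the output of $\tau$-tilting reduction; the restriction to functorially finite torsion classes then follows by matching both sides with support $\tau$-tilting pairs via \Cref{thm:AIRftorsbij} and checking that the equivalence intertwines the two AIR bijections.

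The main obstacle, as flagged above, is the kernel/cokernel-closure step for (1), equivalently the exactness and essential surjectivity of the reduction functor underlying (2): building the equivalence $\calW_{(M,P)} \simeq \mods \Gamma_{(M,P)}$ and verifying it is exact is the one place where real work (rather than transport of structure) is needed. Once that equivalence is in hand, parts (1) and (3) become largely formal.
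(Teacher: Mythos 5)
The paper does not prove this statement at all---it is recalled verbatim with citations to Jasso, DIRRT and BST---so the only meaningful comparison is with those sources, and your proposal reproduces their strategy essentially exactly: the reformulation $\calW_{(M,P)} = \calT_{(M,P)} \cap \calU_{(M,P)}^{\perp}$, the passage to the Bongartz completion $T = M^+$, the algebra $\Gamma_{(M,P)} = \End_{\Lambda}(T)/CeC$ with the correct count $|\Lambda|-|M|-|P|$ of simples, and the transport of torsion classes along the reduction functor with inverse $\V \mapsto \calU_{(M,P)} \ast \V$. The one step you explicitly defer (exactness and essential surjectivity of $\Hom_{\Lambda}(T,-)$ restricted to $\calW_{(M,P)}$, equivalently the kernel--cokernel closure in (1)) is precisely the technical content of Jasso's Theorem 3.8, so your plan is sound and correctly locates where the real work lies.
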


As a direct consequence of \cref{thm:widesubcatlatticeiso}(3), we obtain that distinct $\tau$-perpendicular intervals of $\tors A$ whose corresponding $\tau$-perpendicular subcategories coincide are related by a lattice isomorphism. In fact, this result holds in much greater generality. Let $[\calU, \calT]$ be an interval in $\tors \Lambda$. By \cite[Thm. 3.2]{Tattar}, the so-called \textit{heart} $\calU^\perp \cap \calT$ of $[\calU, \calT]$ is a quasi-abelian category. Such quasi-abelian categories admit torsion classes, see \cite[Def. 2.4]{Tattar}, which are controller by those of $\tors \Lambda$ in our setting.

\begin{theorem}\cite[Thm. A]{Tattar}\label{thm:wideintiso}
    Let $[\calU_1, \calT_1]$ and $[\calU_2, \calT_2]$ be intervals in $\tors \Lambda$ such that $\calU_1^\perp \cap \calT_1 = \calU_2^\perp \cap \calT_2$. Then there are three lattice isomorphisms, as shown in the diagram below:
    \[
    \begin{tikzcd}
        {[}\calU_1, \calT_1{]} \arrow[rr,"\simeq"] \arrow[rd, "- \cap \calU_1^\perp",swap] && {[}\calU_2, \calT_2{]} \arrow[ld, "- \cap \calU_2^\perp"] \\
        &\tors(\calU_1^\perp \cap \calT_1) = \tors (\calU_2^\perp \cap \calT_2). 
    \end{tikzcd}
    \]
    The inverses of the downward maps are given by
    \begin{align*}
    \tors (\calU_i^\perp \cap \calT_i) & \to [\calU_i, \calT_i] \\
    \V &\mapsto \calU_i * \V \coloneqq \{ X \in \mods \Lambda: \exists 0 \to U \to X \to V \to 0, U \in \calU_i, V \in \V\}.
    \end{align*}
\end{theorem}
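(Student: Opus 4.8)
The plan is to reduce the three-fold statement to a single assertion applied twice: for any interval $[\calU, \calT]$ in $\tors \Lambda$ I would show that the map $\Phi_\calU \colon \calU * \V \leftmapsto \V$, more precisely $\Phi_\calU(\W) = \W \cap \calU^\perp$, is a lattice isomorphism from $[\calU, \calT]$ onto $\tors(\mathcal{H})$, where $\mathcal{H} = \calU^\perp \cap \calT$ is the heart, with inverse $\Psi_\calU \colon \V \mapsto \calU * \V$. Granting this for the data $(\calU_1,\calT_1)$ and $(\calU_2,\calT_2)$, the hypothesis $\calU_1^\perp \cap \calT_1 = \calU_2^\perp \cap \calT_2$ makes the two target lattices literally equal, so the horizontal isomorphism is the composite $\Psi_{\calU_2} \circ \Phi_{\calU_1}$, while the two downward maps are $\Phi_{\calU_1}$ and $\Phi_{\calU_2}$ themselves. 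Throughout I write $t_\calU$ for the torsion radical of the torsion pair $(\calU, \calU^\perp)$, so that every $X \in \mods \Lambda$ sits in a canonical short exact sequence $0 \to t_\calU X \to X \to X/t_\calU X \to 0$ with $t_\calU X \in \calU$ and $X/t_\calU X \in \calU^\perp$.

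First I would check that both maps are well defined. For $\Phi_\calU$, given $\W \in [\calU, \calT]$, the subcategory $\W \cap \calU^\perp$ is closed under extensions and strict quotients inside $\mathcal{H}$: an admissible conflation of $\mathcal{H}$ is in particular a module short exact sequence, so the extension-closure of $\W$ and of $\calU^\perp$ in $\mods \Lambda$ transfers, while a strict $\mathcal{H}$-quotient of an object of $\W \cap \calU^\perp$ is the $\calU^\perp$-part (inside $\calT$) of a module quotient, hence again lies in $\W \cap \calU^\perp$; this exhibits $\W \cap \calU^\perp$ as a torsion class of the quasi-abelian category $\mathcal{H}$ in the sense of \cite[Def. 2.4]{Tattar}. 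For $\Psi_\calU$, given $\V \in \tors(\mathcal{H})$, one has $\calU \subseteq \calU * \V \subseteq \calT$ (take the $\V$-term to be $0$ for the left inclusion; use $\V \subseteq \mathcal{H} \subseteq \calT$, $\calU \subseteq \calT$ and extension-closure of $\calT$ for the right inclusion), and it remains to verify that $\calU * \V$ is a torsion class of $\mods \Lambda$.

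The heart of the matter is this last verification together with the inverse identities. For quotient-closure of $\calU * \V$, given $0 \to U \to X \to W \to 0$ with $U \in \calU$, $W \in \V$ and a module epimorphism $X \twoheadrightarrow Y$, I would push $U$ to its image $U' \in \calU$, observe that $Y/U'$ is a module quotient of $W$, and split off its torsion radical as $0 \to t_\calU(Y/U') \to Y/U' \to \overline{W} \to 0$ with $\overline{W} \in \calU^\perp \cap \calT = \mathcal{H}$; since $\overline{W}$ is a strict $\mathcal{H}$-quotient of $W$ it lies in $\V$, and assembling $U'$ with $t_\calU(Y/U')$ (both in the extension-closed $\calU$) exhibits $Y \in \calU * \V$. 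For extension-closure I would apply the right-exact torsion-free reflection $(-)/t_\calU(-)$ to a short exact sequence with outer terms in $\calU * \V$, note that such terms have torsion-free part in $\V$, and obtain an $\mathcal{H}$-conflation whose middle term is $X/t_\calU X$ and whose outer terms lie in $\V$ (after identifying the kernel as a strict $\mathcal{H}$-quotient of the first torsion-free part), concluding by extension-closure of $\V$ in $\mathcal{H}$. The inverse identities are the cleanest step: $\Phi_\calU\Psi_\calU(\V) = \V$ holds because $\V \subseteq \calU * \V$ and $\V \subseteq \mathcal{H} \subseteq \calU^\perp$, while conversely an object of $\calU * \V$ lying in $\calU^\perp$ forces its $\calU$-part to vanish, as any morphism from $\calU$ into $\calU^\perp$ is zero; and $\Psi_\calU\Phi_\calU(\W) = \W$ holds because for $X \in \W$ the sequence $0 \to t_\calU X \to X \to X/t_\calU X \to 0$ has $X/t_\calU X \in \W \cap \calU^\perp$. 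Both maps are visibly inclusion-preserving, so being mutually inverse monotone bijections they are lattice isomorphisms.

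The main obstacle, which pervades every step above, is the bookkeeping between the quasi-abelian exact structure on $\mathcal{H}$ and ordinary module short exact sequences: one must repeatedly identify the admissible monomorphisms and epimorphisms of $\mathcal{H}$ and check that the module-theoretic constructions (images, torsion-free reflections, and the induced right-exact sequences) genuinely compute the categorical kernels and cokernels of $\mathcal{H}$, so that the torsion-class axioms for $\V$ and for $\W \cap \calU^\perp$ in the sense of \cite[Def. 2.4]{Tattar} may legitimately be invoked. Once this dictionary is installed — essentially the content of \cite[Thm. 3.2]{Tattar} that $\mathcal{H}$ is quasi-abelian — each individual claim reduces to a standard torsion-theoretic manipulation.
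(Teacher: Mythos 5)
The paper offers no proof of this statement: it is quoted directly from Tattar's Theorem~A, so there is no in-paper argument to compare yours against. Your reconstruction follows the natural route --- which the paper itself hints at when it remarks that the horizontal map factors through the downward ones --- and it is essentially correct: you show that each $\W \mapsto \W \cap \calU_i^\perp$ is an order-preserving bijection onto $\tors(\calU_i^\perp \cap \calT_i)$ with order-preserving inverse $\V \mapsto \calU_i * \V$, and obtain the horizontal isomorphism as the composite; the two inverse identities and the verification that $\calU * \V$ is a torsion class of $\mods \Lambda$ lying in $[\calU,\calT]$ are carried out correctly. One imprecision is worth repairing. Several of the quotient maps you invoke --- e.g.\ $W \twoheadrightarrow \overline{W} = (Y/U')/t_{\calU}(Y/U')$ in the quotient-closure step, and the surjection of $F' = X'/t_{\calU}X'$ onto its image in the extension-closure step --- are module epimorphisms between objects of the heart $\mathcal{H} = \calU^\perp \cap \calT$ whose module-theoretic kernels need not lie in $\calT$, hence need not be \emph{strict} epimorphisms of the quasi-abelian category $\mathcal{H}$; calling their targets ``strict $\mathcal{H}$-quotients'' and appealing to closure of $\V$ under strict quotients is therefore not literally justified. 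The argument nevertheless survives: a torsion class $\V$ of $\mathcal{H}$ equals the left $\Hom$-orthogonal, taken within $\mathcal{H}$, of its associated torsion-free class, so it is closed under \emph{arbitrary} categorical epimorphic images, and every module epimorphism between objects of $\mathcal{H}$ is a categorical epimorphism in $\mathcal{H}$. With that substitution, together with the dictionary you already flag (conflations in $\mathcal{H}$ are exactly the module short exact sequences all of whose terms lie in $\mathcal{H}$, which is part of the content of \cite[Thm.~3.2]{Tattar}), your proof is complete.
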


The horizontal map in \cite[Thm. A]{Tattar} factors through the downward ones. It is important to understand how the brick labellings of the intervals relate.

\begin{proposition}\label{prop:brickpreserve}
    The horizontal lattice isomorphism between $[\calU_1, \calT_1]$ and $[\calU_2, \calT_2]$ in \cref{thm:wideintiso} preserves the brick labelling of Hasse quivers. Moreover, if $\calU_i^\perp \cap \calT_i$ is a wide subcategory, then the downward facing maps $- \cap \calU_i^\perp$ also preserve the brick labelling.
\end{proposition}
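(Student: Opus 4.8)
The plan is to reduce both assertions to a single identity that expresses the \emph{heart} $\calX^\perp \cap \calY$ of a cover relation inside one of our intervals purely in terms of its image under the corresponding downward map $-\cap\calU_i^\perp$. Write $\mathcal{H} = \calU_1^\perp \cap \calT_1 = \calU_2^\perp \cap \calT_2$ for the common heart. First I would record two generalities. Since $[\calU_i, \calT_i]$ is an interval of the lattice $\tors\Lambda$, any cover relation $\calX \lessdot \calY$ inside it is automatically a cover relation of $\tors\Lambda$; hence its brick label is inherited from $\Hasse(\tors\Lambda)$, namely the unique brick $B$ with $\calX^\perp \cap \calY = \Filt\{B\}$ (brick labelling, \cite[Thm. 3.3(b)]{DIRRT17}). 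Consequently, to compare two labels it suffices to compare the subcategories $\calX^\perp \cap \calY$, because $\Filt\{B\} = \Filt\{B'\}$ forces $B \simeq B'$ (the two semibricks have the same associated wide subcategory, \cref{eq:Ringelbij}).

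The technical core is the following identity. For any interval $[\calU, \calT]$ with heart $\mathcal{H} = \calU^\perp \cap \calT$ and any cover relation $\calX \lessdot \calY$ inside it,
\[ \calX^\perp \cap \calY = (\calX \cap \calU^\perp)^\perp \cap (\calY \cap \calU^\perp), \]
with all orthogonals taken in $\mods\Lambda$. Since $\calU \subseteq \calX$ and $\calY \subseteq \calT$ give $\calX^\perp \cap \calY \subseteq \mathcal{H} \subseteq \calU^\perp$, the inclusion ``$\subseteq$'' is immediate. For ``$\supseteq$'', given $Z$ in the right-hand side and any $X \in \calX$, I would split $X$ along the torsion pair $(\calU, \calU^\perp)$ as $0 \to tX \to X \to X/tX \to 0$; here $tX \in \calU$ while $X/tX \in \calX \cap \calU^\perp$, the membership in $\calX$ holding because torsion classes are closed under quotients. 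Applying $\Hom(-,Z)$ and using $\Hom(tX,Z)=0$ (as $Z \in \calU^\perp$) together with $\Hom(X/tX,Z)=0$ (the defining vanishing of $Z$) forces $\Hom(X,Z)=0$, so $Z \in \calX^\perp \cap \calY$. The upshot is that $\calX^\perp \cap \calY$ depends only on the pair $(\calX \cap \calU^\perp,\, \calY \cap \calU^\perp)$, i.e.\ only on the image of the cover relation under $-\cap\calU^\perp$.

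For the horizontal isomorphism $\psi$, I would use that the triangle of \cref{thm:wideintiso} commutes, so $\psi(\calX) \cap \calU_2^\perp = \calX \cap \calU_1^\perp$ and likewise for $\calY$. Applying the identity once for $[\calU_1,\calT_1]$ and once for $[\calU_2,\calT_2]$ and matching these images yields $\psi(\calX)^\perp \cap \psi(\calY) = \calX^\perp \cap \calY$, so the two cover relations carry the same brick label. This argument stays entirely within $\mods\Lambda$ and needs no hypothesis on $\mathcal{H}$, which matches the fact that the first assertion is unconditional.

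Finally, for the downward maps under the extra hypothesis that $\mathcal{H}$ is wide, I would use that $\mathcal{H}$ is then an abelian length category, equivalent to $\mods\Gamma$ by \cref{thm:widesubcatlatticeiso}(2), so $\tors\mathcal{H}$ carries its own brick labelling. For a cover relation $\calX \lessdot \calY$ with label $B$, the identity shows the image cover relation in $\tors\mathcal{H}$ has heart $\calX^\perp \cap \calY = \Filt\{B\}$ (the $\mathcal{H}$-orthogonal agrees with the $\mods\Lambda$-orthogonal after intersecting with $\calY \cap \calU^\perp \subseteq \mathcal{H}$). Since $B \in \mathcal{H}$ and $\mathcal{H}$ is extension-closed, $\Filt_\Lambda\{B\} = \Filt_{\mathcal{H}}\{B\}$, so $B$ is exactly the $\mathcal{H}$-brick labelling the image, and the downward map preserves labels. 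I expect the main obstacle to be the careful bookkeeping of orthogonals taken in $\mathcal{H}$ versus in $\mods\Lambda$ in the proof of the identity — in particular confirming $X/tX \in \calX \cap \calU^\perp$ so that the defining vanishing of $Z$ applies — while the wide hypothesis in the second part is needed precisely to identify $\Filt$ inside $\mathcal{H}$ with $\Filt$ inside $\mods\Lambda$.
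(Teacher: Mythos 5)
Your proof is correct, but it takes a genuinely different route from the paper: the paper's entire proof consists of two citations, deducing the first assertion from \cite[Lem. 4.7]{Tattar} and the second from \cite[Thm. 4.2(2)]{AsaiPfeifer2019}, whereas you give a self-contained argument. Your key identity $\calX^\perp \cap \calY = (\calX \cap \calU^\perp)^\perp \cap (\calY \cap \calU^\perp)$ is sound — the inclusion $(\supseteq)$ via the canonical sequence $0 \to t_\calU X \to X \to X/t_\calU X \to 0$ and left-exactness of $\Hom(-,Z)$ is exactly right, and it cleanly shows that the heart of a cover relation depends only on its image under $-\cap\calU^\perp$, from which both assertions follow (the first using the commutativity of the triangle in \cref{thm:wideintiso}, the second using $\Filt_\Lambda\{B\}=\Filt_{\mathcal H}\{B\}$ and the identification of $\mathcal H$-orthogonals with $\mods\Lambda$-orthogonals inside $\calY\cap\calU^\perp\subseteq\mathcal H$). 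Your argument is arguably more informative than the paper's, since it isolates the single lattice-theoretic identity doing all the work and makes transparent why the first assertion needs no hypothesis on the heart. Two minor points: a general wide subcategory of $\mods\Lambda$ need not be equivalent to $\mods\Gamma$ for a finite-dimensional algebra (that conclusion in \cref{thm:widesubcatlatticeiso}(2) is specific to $\tau$-perpendicular subcategories); what you actually need, and what holds, is merely that $\mathcal H$ is an abelian length category so that the brick labelling of $\tors\mathcal H$ is defined. Also, your reduction of ``same label'' to ``same heart'' implicitly uses the uniqueness of the brick $B$ with $\calX^\perp\cap\calY=\Filt\{B\}$, which is part of \cite[Thm. 3.3(b)]{DIRRT17} as recalled in the paper, so this is fine.
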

\begin{proof}
    The first statement follows immediately from \cite[Lem. 4.7]{Tattar}. The second statement is \cite[Thm. 4.2(2)]{AsaiPfeifer2019}.
\end{proof}

We now present the definition of the $\tau$-cluster morphism category from the lattice of torsion classes.

\begin{definition}\cite[Def. 3.3]{Kai24}\label{defn:latticedef}
    The $\tau$-cluster morphism category $\Wfrak(\Lambda)$ has as its objects equivalence classes $[\calU_{(M,P)}, \calT_{(M,P)}]_\sim$ of $\tau$-perpendicular intervals of $\tors \Lambda$ under the equivalence relation 
    \[[\calU_{(M_1,P_1)}, \calT_{(M_1,P_1)}] \sim [\calU_{(M_2,P_2)}, \calT_{(M_2,P_2)}] \]
    whenever $\calW_{(M_1,P_1)} = \calW_{(M_2,P_2)}$. The morphisms of $\Wfrak(\Lambda)$ are given by equivalence classes of morphisms in the poset category $\tauint (\tors \Lambda)$. More precisely,
    \begin{align*}
            &\Hom_{\Wfrak(\Lambda)}([\calU_{(M,P)}, \calT_{(M,P)}]_\sim, [\calU_{(N,Q)}, \calT_{(N,Q)}]_\sim) \\
            &\coloneqq \bigcup_{\substack{[\calU_{(M',P')}, \calT_{(M',P')}] \in [\calU_{(M,P)}, \calT_{(M,P)}]_\sim \\ [\calU_{(N',Q')}, \calT_{(N',Q')}] \in [\calU_{(N,Q)}, \calT_{(N,Q)}]_\sim}} \Hom_{\tauint(\tors \Lambda)} ( [\calU_{(M',P')}, \calT_{(M',P')}], [\calU_{(N',Q')}, \calT_{(N',Q')}])
        \end{align*}
    under the equivalence relation
    \[f_{[\calU_{(M_1,P_1)}, \calT_{(M_1,P_1)}][\calU_{(N_1,Q_1)}, \calT_{(N_1,Q_1)}]} \sim f_{[\calU_{(M_2,P_2)}, \calT_{(M_2,P_2)}][\calU_{(N_2,Q_2)}, \calT_{(N_2,Q_2)}]}\]
    whenever $[\calU_{(N_1, Q_1)}, \calT_{(N_1,Q_1)}] \cap \calW_{(M_1, P_1)} = [\calU_{(N_2, Q_2)}, \calT_{(N_2,Q_2)}] \cap \calW_{(M_2, P_2)}$.
\end{definition}

Instead of recalling the definition of composition in $\Wfrak(A)$, we recall the following lemma.
\begin{lemma}\cite[Lem. 3.6]{Kai24}\label{lem:composition}
	Let $\rho_1: \W_1 \rightarrow \W_2$ and $\rho_2: \W_2 \rightarrow \W_3$ be composable morphisms in $\mathfrak{W}(\Lambda)$. Then there exist $\tau$-perpendicular intervals $[\U_1,\T_1] \leq [\U_2,\T_2] \leq [\U_3,\T_3]$ such that $[\U_1,\T_1] \leq [\U_2,\T_2]$ is a representative of $\rho_1$, $[\U_2,\T_2] \leq [\U_3,\T_3]$ is a representative of $\rho_2$, and $[\U_1,\T_1] \leq [\U_3,\T_3]$ is a representative of $\rho_2 \circ \rho_1$.
\end{lemma}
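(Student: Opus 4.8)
The plan is to work with the lattice-theoretic description of $\Wfrak(\Lambda)$ from \cref{defn:latticedef}, after first recording one elementary observation: for a $\tau$-rigid pair $(M,P)$ the heart $\calU_{(M,P)}^\perp \cap \calT_{(M,P)}$ of the interval $[\calU_{(M,P)},\calT_{(M,P)}]$ is exactly the $\tau$-perpendicular subcategory $\calW_{(M,P)}$. Indeed $(\Fac M)^\perp = M^\perp$, so the heart equals $M^\perp \cap {}^\perp \tau M \cap P^\perp = \calW_{(M,P)}$. Consequently two $\tau$-perpendicular intervals represent the same object precisely when their hearts coincide, so \cref{thm:wideintiso} and \cref{prop:brickpreserve} apply to any two representatives of a fixed object. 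Combined with the lattice isomorphism $- \cap \calW_{(M,P)} \colon [\calU_{(M,P)},\calT_{(M,P)}] \to \tors \calW_{(M,P)}$ of \cref{thm:widesubcatlatticeiso}(3), a morphism $\rho \colon \W \to \W'$ is encoded by a single datum, namely a $\tau$-perpendicular interval of $\tors \calW$ with heart $\calW'$; this is exactly the invariant $J \cap \calW_{(M,P)}$ governing the equivalence relation on morphisms in \cref{defn:latticedef}.

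The chain is then built by iterated restriction. Fix any representative $[\U_1,\T_1]$ of $\W_1$. Pulling back the datum of $\rho_1$ along the lattice isomorphism $[\U_1,\T_1] \xrightarrow{\sim} \tors \calW_1$ of \cref{thm:widesubcatlatticeiso}(3) yields a sub-interval $[\U_2,\T_2] \subseteq [\U_1,\T_1]$ with heart $\calW_2$, so that $[\U_1,\T_1] \leq [\U_2,\T_2]$ is a representative of $\rho_1$ by construction. Using $[\U_2,\T_2]$ now as the chosen representative of $\W_2$, the same procedure applied to $\rho_2$ produces $[\U_3,\T_3] \subseteq [\U_2,\T_2]$ with heart $\calW_3$ such that $[\U_2,\T_2] \leq [\U_3,\T_3]$ represents $\rho_2$. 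This gives the required chain $[\U_1,\T_1] \leq [\U_2,\T_2] \leq [\U_3,\T_3]$ in $\tauint(\tors \Lambda)$, with the middle interval serving as a common representative of $\W_2$ for both morphisms.

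Two points in this construction need care. First, I must check that each pulled-back sub-interval is genuinely a $\tau$-perpendicular interval of $\tors \Lambda$, and not merely of $\tors \calW_1$; this is a transitivity property of $\tau$-perpendicular subcategories, which I would establish (or cite from the framework underlying \cref{defn:latticedef}) alongside the brick-label compatibility of \cref{prop:brickpreserve}, so that the restriction datum is preserved. Second, the represented morphism must be genuinely $\rho_1$ rather than some other morphism with source $[\U_1,\T_1]$; this is immediate, since $[\U_2,\T_2]$ was defined as the pullback of the datum of $\rho_1$, so $[\U_2,\T_2] \cap \calW_1$ agrees with the datum of the original representative.

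The main obstacle, and the final step, is verifying that the outer relation $[\U_1,\T_1] \leq [\U_3,\T_3]$ represents the composite $\rho_2 \circ \rho_1$. Unwinding the definition of composition in $\Wfrak(\Lambda)$, the composite is obtained by transporting the datum of $\rho_2$ (an interval of $\tors \calW_2$) back up into $\tors \calW_1$ along the inclusion $\calW_2 \subseteq \calW_1$, and I would show this coincides with $[\U_3,\T_3] \cap \calW_1$. Here the nesting $\calW_3 \subseteq \calW_2 \subseteq \calW_1$ built into the chain does the work: the two lattice isomorphisms of \cref{thm:widesubcatlatticeiso}(3), for $\W_1$ and for $\W_2$, are compatible precisely because $[\U_2,\T_2]$ is the image of $\calW_2$ inside $[\U_1,\T_1]$, so the once-restricted datum from $\calW_1$ to $\calW_3$ matches the twice-restricted one. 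I expect this compatibility of the two successive reductions — an associativity/transitivity statement for $\tau$-tilting reduction — to be the technical heart of the argument, with \cref{prop:brickpreserve} once more ensuring the identification respects the brick labellings that distinguish morphisms.
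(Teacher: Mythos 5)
The paper does not actually prove this lemma: it is quoted verbatim from \cite[Lem. 3.6]{Kai24} and used as a black box (indeed the text explicitly recalls it \emph{in place of} the definition of composition in $\Wfrak(\Lambda)$), so there is no in-paper proof to compare against. Judged on its own terms, your reconstruction follows the natural route and the skeleton is sound: the identification of the heart $\calU_{(M,P)}^{\perp}\cap\calT_{(M,P)}$ with $\calW_{(M,P)}$ is correct, the encoding of a morphism by the restricted interval $[\calU_{(N,Q)},\calT_{(N,Q)}]\cap\calW_{(M,P)}$ is exactly the invariant appearing in \cref{defn:latticedef}, and the compatibility of the two successive restrictions reduces to $\calX\cap\calW_1\cap\calW_2=\calX\cap\calW_2$ for $\calW_2\subseteq\calW_1$, which is immediate.

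That said, two of the steps you defer are genuine theorems rather than bookkeeping, and the argument is incomplete without them. First, the claim that the preimage under $-\cap\calW_1\colon[\U_1,\T_1]\to\tors\calW_1$ of a $\tau$-perpendicular interval of $\tors\calW_1$ is again a $\tau$-perpendicular interval of $\tors\Lambda$ (with the same heart) is the transitivity of $\tau$-perpendicular reduction; this is the substance of \cite{BH21} and of the results in \cite{Kai24} underlying \cref{defn:latticedef}, so it is citable, but it is the mathematical core of the lemma and cannot simply be waved through --- without it your ``pulled-back'' intervals need not lie in $\tauint(\tors\Lambda)$ at all. Second, you propose to ``unwind the definition of composition'', but the paper never recalls that definition; your guess (transport the datum of $\rho_2$ into $[\U_2,\T_2]$ through the lattice isomorphism $[\U_2,\T_2]\to\tors\calW_2$ of \cref{thm:widesubcatlatticeiso}(3)) does agree with the definition in \cite{Kai24}, and with that definition in hand your nested chain does represent $\rho_2\circ\rho_1$. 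In sum: the proposal is a correct outline of the intended argument, with the load-bearing inputs correctly identified but outsourced to citations rather than proved.
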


This category may equivalently be defined using $\tau$-perpendicular subcategories and $\tau$-rigid pairs \cite{BM18w,BH21}, 2-term silting objects \cite{Bor21} or the {$g$-vector fan} \cite{STTW23}. In its different guises, various aspects of this category have been investigated. The starting point for all endeavours regarding this category is the fact that the classifying space $\calB \Wfrak(\Lambda)$ is a cube complex whose fundamental group is the picture group $G(\Lambda)$ of \cite{ITW16, HI21}, see \cite{HI21,IgusaTodorov22,Kai23}. The picture group is an important object with close connections to, for example, maximal green sequences \cite{IgusaTodorovMGS2021,BorMot}. As such, if $\calB \Wfrak(\Lambda)$ is a $K(\pi,1)$ space the relationship between the $\tau$-cluster morphism category and the picture group is particularly intimate. The main tool for determining whether $\calB \Wfrak(\Lambda)$ is a $K(\pi,1)$ space are the sufficient conditions developed in \cite{Gro87} for cube complexes to be (locally) CAT(0), which have been translated into a categorical setting in \cite{Igu14}.

\begin{proposition}\cite[Prop 3.5, 3.8]{Igu14}\label{Igu14.3.5}
    {I}f the following three conditions are satisfied, then $\calB \Wfrak(\Lambda)$ is locally CAT(0). In particular, it is then a $K(\pi,1)$ space:
    \begin{enumerate}
		\item There is a faithful functor $\Psi: \Wfrak(\Lambda) \to G$ for some group $G$ considered as a groupoid with one object;
		\item The category $\Wfrak(\Lambda)$ satisfies the pairwise-compatibility condition of first factors;
		\item The category $\Wfrak(\Lambda)$ satisfies the pairwise-compatibility condition of last factors.
	\end{enumerate}
\end{proposition}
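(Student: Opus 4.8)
The plan is to verify Gromov's combinatorial criterion for nonpositive curvature. The geometric input I would rely on is that $\calB\Wfrak(\Lambda)$ carries a natural cube-complex structure and that a cube complex is locally $\mathrm{CAT}(0)$ if and only if the link of each of its vertices is a flag simplicial complex. Once local $\mathrm{CAT}(0)$-ness is established, the $K(\pi,1)$ conclusion is automatic: the universal cover of a locally $\mathrm{CAT}(0)$ space is globally $\mathrm{CAT}(0)$, hence uniquely geodesic and contractible, so the space is aspherical. Thus the entire argument reduces to producing the cube-complex structure and checking the flag condition, and the three hypotheses are exactly what is needed to do so.

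First I would make the cubical structure explicit. Every morphism $\rho\colon\W_1\to\W_2$ of $\Wfrak(\Lambda)$ factors canonically into atomic morphisms (the corank-one reductions), and by \cref{lem:composition} these factorizations compose coherently along chains $[\U_1,\T_1]\leq[\U_2,\T_2]\leq[\U_3,\T_3]$. One then takes the objects of $\Wfrak(\Lambda)$ as the vertices, the atomic morphisms as the edges, and declares an $n$-cube for each morphism admitting a factorization into $n$ atoms, its faces being the intermediate (partial) factorizations. The role of hypothesis (1), the faithful functor $\Psi\colon\Wfrak(\Lambda)\to G$, is to guarantee that this assignment is \emph{nonsingular}: because $\Psi$ is faithful, no nontrivial morphism is collapsed and distinct factorizations remain distinct, so the resulting object is a genuine (special) cube complex rather than one with self-identified faces that would force positive curvature. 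In other words, (1) rules out the curvature-creating degeneracies and legitimises the combinatorial model on which Gromov's criterion operates.

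Next I would establish the flag condition from hypotheses (2) and (3). The link of a vertex records the atomic morphisms incident to it, together with the information of which collections of them bound a common cube; flagness asserts that whenever a set of link-vertices is pairwise joined by edges, it spans a simplex. Translating, this says: atomic morphisms that are \emph{pairwise} compatible (each pair extending to a rank-two morphism, i.e.\ a square) must be \emph{jointly} compatible (extending to a single higher cube). The link of a vertex decomposes into an outgoing part, governed by the initial (first) factors of morphisms issuing from the corresponding object, and an incoming part, governed by the terminal (last) factors; the pairwise-compatibility of first factors (2) supplies flagness for the former and pairwise-compatibility of last factors (3) for the latter. Assembling these, every link is flag, and Gromov's criterion yields that $\calB\Wfrak(\Lambda)$ is locally $\mathrm{CAT}(0)$.

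The main obstacle is the precise dictionary between Igusa's categorical compatibility conditions and the purely combinatorial flag condition on links. Verifying that ``pairwise-compatible first/last factors'' genuinely encodes ``pairwise-connected link vertices span a simplex'' requires unwinding the definition of the cube structure carefully and confirming that the intermediate factorizations of \cref{lem:composition} furnish exactly the cubes demanded by the flag condition. Equally delicate is confirming that the faithful group functor really does force the complex to be special, since it is here that one must exclude the subtle identifications (bigons, self-gluings of cube faces) that would otherwise obstruct nonpositive curvature. These verifications are the technical heart of \cite[Prop.\ 3.5, 3.8]{Igu14}, and once they are in place the passage from locally $\mathrm{CAT}(0)$ to $K(\pi,1)$ is formal.
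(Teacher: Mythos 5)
The paper does not actually prove this proposition: it is imported verbatim from \cite[Props.\ 3.5 and 3.8]{Igu14}, so there is no in-paper argument to compare yours against. Your outline is nonetheless a faithful reconstruction of the strategy of the cited proof: realise $\calB \Wfrak(\Lambda)$ as a cube complex whose $k$-cubes come from rank-$k$ morphisms together with their factorisations into irreducibles, invoke Gromov's criterion that a cube complex is locally CAT(0) if and only if every vertex link is a flag simplicial complex, and deduce the $K(\pi,1)$ property from the Cartan--Hadamard theorem. You also identify the division of labour among the hypotheses correctly: the faithful group functor is what guarantees that the putative cubes are genuinely embedded and pairwise distinct (without it the classifying space need not carry a cube-complex structure at all), while the two compatibility conditions feed into the flag condition.

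Two caveats. First, your claim that the link of a vertex ``decomposes into an outgoing part and an incoming part,'' with hypothesis (2) handling the former and (3) the latter, is too quick: an object $\W$ sits inside a $k$-cube as an arbitrary corner, so a simplex of its link may mix incoming and outgoing edges, and flagness for such mixed simplices does not follow formally from flagness of the two pure parts unless one also shows that every pairwise compatible mixed collection spans a cube (equivalently, controls the interaction between ascending and descending links). This is precisely the kind of verification that \cite{Igu14} carries out and that your sketch defers. Second, ``special'' is the wrong word for what the faithful functor buys: specialness in the Haglund--Wise sense is a stronger global property that is neither needed nor established here; what is needed is only that distinct morphisms, hence distinct factorisation cubes, give distinct cells. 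Since you explicitly punt the technical heart to \cite{Igu14}, your text is an accurate road map of the cited proof rather than an independent one --- which, for a statement the paper itself only cites, is a reasonable outcome.
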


If Condition (1) holds, we say that $\Lambda$ \textit{admits a faithful group functor}.
The focus of the remaining section lies on the existence of faithful group functors, which has been conjectured to hold for all finite-dimensional $k$-algebras \cite{HI21p}\footnote{The cited reference assumes $\tau$-tilting finiteness when making this conjecture, but this seems unnecessary in hindsight. At the time \cite{HI21p} written, the definition of the $\tau$-cluster morphism category was only available for $\tau$-tilting finite $k$-algebras \cite{BM18w}. It has since been extended to all finite-dimensional $k$-algebras \cite{BH21,Bor21}}. We refer to the Appendix, where this conjecture has been restated in \cref{conj:faithful}. Various examples of algebras satisfying Condition (1) are known \cite{HI21,HI21p,IgusaTodorov22,Kai23,Kai24,BorMot}. {In particular,} {if} $k$ is algebraically closed and of characteristic 0, this question is addressed in \cite{BorMot} using Ringel--Hall algebras of locally constructible functions. {Condition (2) of \cref{Igu14.3.5} is always satisfied due to the structure of $\tau$-rigid pairs, whereas some} examples of algebras are known not to satisfy Condition (3), see \cite{BarnardHanson2022,HI21p,IgusaTodorov22}. Before considering Condition (1) in more detail, we need to gather more preliminary results.

\subsection{Further results about field extensions}

In this section we collect some further results about the objects involved in \cref{defn:latticedef} under base field extension. The first one is a simple observation.

\begin{lemma}\label{lem:objintauperppreserve}
Let $K:k$ be a field extension, let $(M,P)$ be a support $\tau$-tilting pair {in $\mods \Lambda$} and let $N \in \mods \Lambda$. Then, $N \in \calW_{(M,P)}$ if and only if $N_K \in \calW_{(M_K,P_K)}$.
\end{lemma}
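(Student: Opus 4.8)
The plan is to unwind the definition of the $\tau$-perpendicular subcategory and verify each of its three defining orthogonality conditions separately, using the naturality of scalar extension on $\Hom$-spaces. Recall from \cref{eq:defntauperpwide} that membership $N \in \calW_{(M,P)}$ is equivalent to the simultaneous vanishing of $\Hom_\Lambda(M,N)$, $\Hom_\Lambda(N, \tau_\Lambda M)$ and $\Hom_\Lambda(P,N)$, whereas $N_K \in \calW_{(M_K,P_K)}$ is equivalent to the vanishing of $\Hom_{\Lambda_K}(M_K, N_K)$, $\Hom_{\Lambda_K}(N_K, \tau_{\Lambda_K} M_K)$ and $\Hom_{\Lambda_K}(P_K, N_K)$. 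Since $K$ is a nonzero $k$-vector space, a finite-dimensional $k$-vector space $V$ vanishes if and only if $V \otimes_k K$ vanishes; this elementary observation powers the whole argument.

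First I would dispatch the two conditions $N \in M^\perp$ and $N \in P^\perp$. Applying \cref{lem:Kas00.2.2} with $i = 0$ yields natural isomorphisms $\Hom_\Lambda(M,N) \otimes_k K \simeq \Hom_{\Lambda_K}(M_K, N_K)$ and $\Hom_\Lambda(P,N) \otimes_k K \simeq \Hom_{\Lambda_K}(P_K, N_K)$. By the observation above, $\Hom_\Lambda(M,N) = 0$ if and only if $\Hom_{\Lambda_K}(M_K, N_K) = 0$, and likewise with $P$ in place of $M$. Hence $N \in M^\perp \cap P^\perp$ if and only if $N_K \in M_K^\perp \cap P_K^\perp$.

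The remaining condition $N \in {}^\perp \tau_\Lambda M$ requires the extra ingredient of compatibility of scalar extension with the Auslander--Reiten translation. Combining \cref{lem:Kas00.2.2} with the isomorphism $(\tau_\Lambda M)_K \simeq \tau_{\Lambda_K} M_K$ of \cref{lem:tautranslatelift}, I obtain
\[ \Hom_\Lambda(N, \tau_\Lambda M) \otimes_k K \simeq \Hom_{\Lambda_K}(N_K, (\tau_\Lambda M)_K) \simeq \Hom_{\Lambda_K}(N_K, \tau_{\Lambda_K} M_K). \]
Therefore $\Hom_\Lambda(N, \tau_\Lambda M) = 0$ if and only if $\Hom_{\Lambda_K}(N_K, \tau_{\Lambda_K} M_K) = 0$, that is, $N \in {}^\perp \tau_\Lambda M$ if and only if $N_K \in {}^\perp \tau_{\Lambda_K} M_K$. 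Intersecting the three equivalences yields the desired biconditional.

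I do not anticipate a genuine obstacle; the statement is essentially a bookkeeping exercise assembling \cref{lem:Kas00.2.2} and \cref{lem:tautranslatelift}. The only point demanding a moment's care is to express the translation condition with respect to the correct algebra: the symbol $\tau M$ occurring in the definition of $\calW_{(M_K,P_K)}$ denotes $\tau_{\Lambda_K} M_K$ rather than $(\tau_\Lambda M)_K$, and it is precisely \cref{lem:tautranslatelift} that identifies these two.
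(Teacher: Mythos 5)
Your proposal is correct and follows essentially the same route as the paper's proof: both unwind the three orthogonality conditions defining $\calW_{(M,P)}$, apply \cref{lem:Kas00.2.2} to each $\Hom$-space, and invoke \cref{lem:tautranslatelift} to identify $(\tau_\Lambda M)_K$ with $\tau_{\Lambda_K} M_K$. The only detail the paper adds is the remark that the notation $\calW_{(M_K,P_K)}$ is well-defined because $(M_K,P_K)$ is a $\tau$-rigid pair by \cref{lem:taurigidlift}, which you implicitly use but do not state.
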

\begin{proof}
	The notation $\calW_{(M_K,P_K)} \subseteq \mods \Lambda_K$ is well-defined since $(M_K,P_K)$ is a $\tau$-rigid pair in $\mods \Lambda$ by \cref{lem:taurigidlift}. Then, using \cref{lem:Kas00.2.2} the following hold:
	\begin{enumerate}
		\item $\Hom_{\Lambda} (M,N)= 0$ if and only if $\Hom_{\Lambda_K}(M_K,N_K)=0$;
		\item $\Hom_{\Lambda} (P,N)=0$ if and only if $\Hom_{\Lambda_K}(P_K,N_K)=0$;
		\item $\Hom_{\Lambda} (N, \tau_{\Lambda} M) = 0$ if and only if $\Hom_{\Lambda_K}(N_K, (\tau_{\Lambda} M)_K)=0$ if and only if $\Hom_{\Lambda_K}(N_K, \tau_{\Lambda_K} M_K)=0$ by \cref{lem:tautranslatelift}.
	\end{enumerate}
	Therefore the result follows from the definitions of $\calW_{(M,P)}$ and $\calW_{(M_K,P_K)}$. 
\end{proof}

The study of left and right-finite semibricks in \cref{subsec:lrfsbrick} is motivated by the powerful description of such semibricks as labels of $\Hasse(\tors \Lambda)$ in \cref{prop:sbrickslabels}. This allows us to describe the relative simple modules of the $\tau$-perpendicular subcategory in terms of left-finite and right-finite semibricks. Our next result is has ties to \cite{AsaiPfeifer2019,BH21}.

\begin{lemma}\label{lem:tauperpbysbricks}
	Let $(M,P)$ be a $\tau$-rigid pair in $\mods \Lambda$. Write $\calB_\calL^{(M,P)}$ for the left-finite semibrick corresponding to $(M^+,P)$ as in \cref{eq:defnLFsbrick}, see \cref{prop:sbrickslabels}, and $\calB_\calR^{(M,P)}$ for the right-finite semibrick corresponding to the support $\tau^{-1}$-tilting pair $H(M^-,P^-)$ as in \cref{eq:defnRFsbrick}, with $H$ as in \cref{eq:tauinversebij}. Then the $\tau$-perpendicular subcategory $\calW_{(M,P)} \subseteq \mods \Lambda$ as defined in \cref{eq:defntauperpwide} is given by
	\[ \calW_{(M,P)} = \Filt_{\Lambda} \{ \calB_\calL^{(M,P)} \cap \calB_\calR^{(M,P)} \}, \]
\end{lemma}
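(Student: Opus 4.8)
The plan is to realise the right-hand side as the set of brick labels on the Hasse quiver of the $\tau$-perpendicular interval $[\calU_{(M,P)}, \calT_{(M,P)}] = [\Fac M, {}^\perp\tau M \cap P^\perp]$ and then to apply \cref{thm:DIRRT174.16}. First I would identify the heart of the interval with the $\tau$-perpendicular subcategory. Since $M \in \Fac M$ and $\Hom(-, Y)$ is left exact, one checks that $(\Fac M)^\perp = M^\perp$, and therefore
\[ \calU_{(M,P)}^\perp \cap \calT_{(M,P)} = M^\perp \cap {}^\perp\tau M \cap P^\perp = \calW_{(M,P)}, \]
where the last equality is \cref{eq:defntauperpwide}. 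Combined with \cref{thm:DIRRT174.16}, this gives $\calW_{(M,P)} = \Filt_\Lambda\{\calS\}$, where $\calS$ is the common set of brick labels of the cover relations incident to $\calT_{(M,P)}$ and of those incident to $\calU_{(M,P)}$ inside $[\calU_{(M,P)}, \calT_{(M,P)}]$. The lemma thus reduces to proving $\calS = \calB_\calL^{(M,P)} \cap \calB_\calR^{(M,P)}$. I would also record that $M \in \calT_{(M,P)}$: indeed $M$ is $\tau$-rigid and $\Hom(P,M)=0$, so $M \in {}^\perp\tau M \cap P^\perp$, and hence $\calU_{(M,P)} = \Fac M \subseteq \calT_{(M,P)}$.

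For the inclusion $\calS \subseteq \calB_\calL^{(M,P)} \cap \calB_\calR^{(M,P)}$, I would use order-convexity of the interval: a cover relation $\calV \lessdot \calT_{(M,P)}$ with both terms in the interval is a cover relation of $\tors\Lambda$, so its label lies in the left-finite semibrick of $\calT_{(M,P)} = \Fac M^+$, which is $\calB_\calL^{(M,P)}$ by \cref{prop:sbrickslabels} applied to $(M^+,P)$. Reading the same set $\calS$ off the cover relations $\calU_{(M,P)} \lessdot \calV$ and applying the dual part of \cref{prop:sbrickslabels} to $H(M^-,P^-)$, whose associated torsion class is $\calU_{(M,P)} = \Fac M^-$, shows $\calS \subseteq \calB_\calR^{(M,P)}$.

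The converse inclusion carries the real content. Given $B \in \calB_\calL^{(M,P)} \cap \calB_\calR^{(M,P)}$, membership in $\calB_\calL^{(M,P)}$ furnishes (by the brick labelling of \cite{DIRRT17}) a unique cover relation $\calV \lessdot \calT_{(M,P)}$ in $\tors\Lambda$ with $\calV^\perp \cap \calT_{(M,P)} = \Filt_\Lambda\{B\}$. It suffices to show that $\calV$ lies in the interval, i.e. $\calU_{(M,P)} \subseteq \calV$, since then the cover relation descends to $[\calU_{(M,P)}, \calT_{(M,P)}]$ and $B \in \calS$. To control $\calV$ I would first note that ${}^\perp B$ is a torsion class, being closed under quotients and extensions, and then establish the identity $\calV = \calT_{(M,P)} \cap {}^\perp B$: the inclusion $\calV \subseteq {}^\perp B$ holds because $B \in \calV^\perp$, while the reverse follows since $\calV \subseteq \calT_{(M,P)} \cap {}^\perp B \subseteq \calT_{(M,P)}$ is squeezed inside a covering pair and cannot equal $\calT_{(M,P)}$ (as $B \in \calT_{(M,P)}$ but $B \notin {}^\perp B$). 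Finally, membership in $\calB_\calR^{(M,P)}$ gives $B \in \calU_{(M,P)}^\perp = M^\perp$, so $\Hom(M,B)=0$; together with $M \in \calT_{(M,P)}$ this yields $M \in \calT_{(M,P)} \cap {}^\perp B = \calV$, hence $\calU_{(M,P)} = \Fac M \subseteq \calV$, as required.

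I expect the main obstacle to be this converse inclusion, specifically the identity $\calV = \calT_{(M,P)} \cap {}^\perp B$ and the resulting verification that the cover relation descends into the interval, whereas the computation of the heart and the inclusion $\calS \subseteq \calB_\calL^{(M,P)} \cap \calB_\calR^{(M,P)}$ are routine once convexity and \cref{prop:sbrickslabels} are available. As an alternative to the explicit description of $\calV$, one could transport the labels through the lattice isomorphism $- \cap \calW_{(M,P)}$ of \cref{thm:widesubcatlatticeiso} and invoke \cref{prop:brickpreserve} to identify $\calS$ with the relative simple objects of $\calW_{(M,P)}$; but the direct cover-relation argument appears more economical.
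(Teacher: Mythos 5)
Your proof is correct and follows essentially the same route as the paper's: both reduce the statement to \cref{thm:DIRRT174.16} together with \cref{prop:sbrickslabels}, and both establish the two inclusions between $\calS$ and $\calB_\calL^{(M,P)}\cap\calB_\calR^{(M,P)}$ by comparing brick labels of cover relations inside and outside the interval. The only (cosmetic) difference lies in the converse inclusion, where the paper descends the upper cover of $\calU_{(M,P)}$ into the interval via the join formula $\calU'=\calU\lor\mathrm{T}(S)\subseteq\calT_{(M,P)}$, whereas you descend the lower cover of $\calT_{(M,P)}$ via the identity $\calV=\calT_{(M,P)}\cap{}^\perp B$ --- a mirror image of the same argument.
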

\begin{proof}
	Write $[\calU, \calT] = [\Fac M, {}^\perp \tau M \cap P^\perp] \subseteq \tors \Lambda$ for the corresponding $\tau$-perpendicular interval. By \cref{eq:Ringelbij}, it is possible to write $\calW_{(M,P)} = \Filt_{\Lambda} \{ \calS\}$ for some semibrick $\calS \in \sbrick \Lambda$ and by \cite[Thm. 4.16(a)(c)]{DIRRT17}, $\calS$ consists of labels of arrows incident to $\calU$ in $\Hasse[\calU, \calT]$. However, by \cref{prop:sbrickslabels}, the right-finite semibrick $\calB_\calR^{(M,P)}$ consists of labels of all arrows going into $\calU$ in $\Hasse(\tors \Lambda)$, therefore $\calS \subseteq \calB_\calR^{(M,P)}$. Dually by \cite[Thm. 4.16(a)(d)]{DIRRT17}, $\calS$ consists of labels of arrows incident to $\calT$ in $\Hasse[\calU, \calT]$. By \cref{prop:sbrickslabels} again, the left-finite semibrick $\calB_\calL^{(M,P)}$ consists of labels of all arrows going out of $\calU$ in $\Hasse(\tors \Lambda)$. It follows that $\calS \subseteq \calB_\calL^{(M,P)} \cap \calB_\calR^{(M,P)}$.
	
	Conversely, assume that there is a brick $S \in \calB_\calL^{(M,P)} \cap \calB_\calR^{(M,P)}$. By \cref{prop:sbrickslabels}, $S$ labels an arrow $\calU' \to \calU$ in $\Hasse(\tors \Lambda)$ and an arrow $\calT \to \calT'$ in $\Hasse(\tors \Lambda)$. However, by definition of the brick labelling this means that $S \in \calU^\perp$ and $S \in \calT$, so that $S \in \calU^\perp \cap \calT = \calW_{(M,P)}$. In particular, if $ \mathrm{T}(S)$ denotes the smallest torsion class of containing $S$, we have that $\calU' = \calU \lor \mathrm{T}(S) \subseteq \calT$, so $S$ is labels an arrow adjacent to $\calU$ in $\Hasse[\calU, \calT]$, and thus $S \in \calS$ by \cite[Thm. 4.16]{DIRRT17}.
\end{proof}

In particular, it allows us to apply the result of \cref{subsec:lrfsbrick} to study wide subcategories under base field extension.

\begin{proposition}\label{prop:objectslift}
Let $K:k$ be a MacLane separable field extension, and $(M,P)$ be a $\tau_{\Lambda} $-rigid pair whose $\tau$-perpendicular subcategory is given by $\calW_{(M,P)} = \Filt_{\Lambda} \{ \calS\}$ for some semibrick $\calS$. Then 
\[ \calW_{(M_K,P_K)} = \Filt_{\Lambda_K} ( \calS_K). \]
\end{proposition}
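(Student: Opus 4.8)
The plan is to route everything through the description of $\tau$-perpendicular subcategories in terms of left- and right-finite semibricks from \cref{lem:tauperpbysbricks}, combine it with the commutation statements \cref{lem:flbrickcommute} and \cref{lem:frbrickcommute}, and finally reduce the remaining intersection bookkeeping to \cref{lem:intersectlift}. First I note that $\calW_{(M,P)}$ is a wide subcategory by \cref{thm:widesubcatlatticeiso}(1), so by Ringel's bijection \cref{eq:Ringelbij} the semibrick $\calS$ with $\calW_{(M,P)} = \Filt_\Lambda\{\calS\}$ is uniquely determined, namely as the set of simple objects of $\calW_{(M,P)}$. By \cref{lem:tauperpbysbricks} it is identified as the intersection $\calS = \calB_\calL^{(M,P)} \cap \calB_\calR^{(M,P)}$ of the associated left-finite and right-finite semibricks. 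Since $(M_K,P_K)$ is again a $\tau$-rigid pair by \cref{lem:taurigidlift}, the same lemma applied over $\Lambda_K$ gives $\calW_{(M_K,P_K)} = \Filt_{\Lambda_K}\{\calB_\calL^{(M_K,P_K)} \cap \calB_\calR^{(M_K,P_K)}\}$. Hence it suffices to establish the equality of semibricks
\[ \calB_\calL^{(M_K,P_K)} \cap \calB_\calR^{(M_K,P_K)} = \ind(\calS \otimes_k K). \]

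The next step is to match each extended semibrick with the lift of the corresponding original one. For the left-finite side, \cref{lem:bongartzcommute} shows that the Bongartz completion of $(M_K,P_K)$ is $((M^+)_K, P_K)$, so applying the commutative square of \cref{lem:flbrickcommute} to the support $\tau$-tilting pair $(M^+,P)$ yields $\calB_\calL^{(M_K,P_K)} = \ind(\calB_\calL^{(M,P)} \otimes_k K)$. Dually, using \cref{lem:cobongartzcommute} for the co-Bongartz completion together with \cref{lem:dualtaucommutes} for the bijection $H$, the support $\tau^{-1}$-tilting pair attached to $(M_K,P_K)$ is $(H(M^-,P^-))_K$; feeding $H(M^-,P^-)$ into \cref{lem:frbrickcommute} then gives $\calB_\calR^{(M_K,P_K)} = \ind(\calB_\calR^{(M,P)} \otimes_k K)$. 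This is precisely where the MacLane separability hypothesis is used, since it is required by \cref{lem:flbrickcommute} and \cref{lem:frbrickcommute}.

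It then remains to check that forming the intersection commutes with lifting. Writing $S_\calL$ and $S_\calR$ for the direct sums of the bricks in $\calB_\calL^{(M,P)}$ and $\calB_\calR^{(M,P)}$ respectively, the indecomposable objects of $\add(S_\calL) \cap \add(S_\calR)$ are exactly the common bricks, which constitute $\calS$. Applying \cref{lem:intersectlift} (that is, \cref{lem:adddeterminesbasic}(6)) with these two modules and reading off indecomposable objects on both sides gives
\[ \ind(\calB_\calL^{(M,P)} \otimes_k K) \cap \ind(\calB_\calR^{(M,P)} \otimes_k K) = \ind\big((\calB_\calL^{(M,P)} \cap \calB_\calR^{(M,P)}) \otimes_k K\big) = \ind(\calS \otimes_k K). \]
Combining the three identities yields $\calB_\calL^{(M_K,P_K)} \cap \calB_\calR^{(M_K,P_K)} = \ind(\calS \otimes_k K)$, and therefore $\calW_{(M_K,P_K)} = \Filt_{\Lambda_K}\{\ind(\calS \otimes_k K)\} = \Filt_{\Lambda_K}(\calS_K)$, under the convention (as in \cref{lem:semibrickslift} and \cref{cor:liftingwidesubcats}) that $\calS_K$ denotes the semibrick $\ind(\calS \otimes_k K)$.

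I expect the principal difficulty to be bookkeeping rather than any deep obstruction: the delicate point is that \cref{lem:flbrickcommute} and \cref{lem:frbrickcommute} cannot be applied naively to $(M,P)$ itself, but only after passing to the Bongartz and co-Bongartz completions and the dual pair $H(M^-,P^-)$, so one must carefully chain \cref{lem:bongartzcommute}, \cref{lem:cobongartzcommute} and \cref{lem:dualtaucommutes} to ensure the extended semibricks coincide with the lifts of the original ones. The final intersection step, which superficially looks like it could require extra care, is in fact handled verbatim by \cref{lem:intersectlift}.
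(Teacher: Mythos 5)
Your proposal is correct and follows essentially the same route as the paper's proof: both express $\calS$ as $\calB_\calL^{(M,P)} \cap \calB_\calR^{(M,P)}$ via \cref{lem:tauperpbysbricks}, lift each semibrick using \cref{lem:bongartzcommute} with \cref{lem:flbrickcommute} and \cref{lem:cobongartzcommute} with \cref{lem:dualtaucommutes} and \cref{lem:frbrickcommute}, and handle the intersection with \cref{lem:intersectlift}(6). Your explicit remark that the completions must be formed before invoking the semibrick commutation lemmas is a correct articulation of a step the paper leaves implicit.
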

\begin{proof}
Using \cref{lem:tauperpbysbricks}, write $\calS = \calB_\calL^{(M,P)} \cap \calB_\calR^{(M,P)}$. By \cref{lem:intersectlift}(6), the equality $\calS_K = (\calB_\calL^{(M,P)} \cap \calB_\calR^{(M,P)})_K = (\calB_\calL^{(M,P)})_K \cap (\calB_\calR^{(M,P)})_K$ holds. Combining \cref{lem:bongartzcommute} and \cref{lem:flbrickcommute} gives the equality
\[ \ind( (\calB_\calL^{(M,P)})_K) = \calB_{\calL}^{(M_K, P_K)} \in \flsbrick \Lambda_K, \]
and similarly combining \cref{lem:cobongartzcommute}, \cref{lem:dualtaucommutes} and \cref{lem:frbrickcommute} gives the equality
\[ \ind ((\calB_\calR^{(M,P)})_K) = \calB_{\calL}^{(M_K,P_K)} \in \frsbrick \Lambda_K. \]
Since $\Filt_{\Lambda} (\calS_K) = \Filt_{\Lambda} (\add \calS_K)$ by definition and $\add \calS_K$ is closed under direct summands, the result follows from \cref{lem:tauperpbysbricks}.
\end{proof}

It is not difficult to see that \cref{prop:objectslift} concerns objects of the $\tau$-cluster morphism category. To investigate the morphisms of $\Wfrak(\Lambda)$ the following equivalent condition for the identification of morphisms in \cref{defn:latticedef} is necessary.

\begin{lemma}\label{lem:morphismlifts}
	Let $(M,P)$ and $(M',P')$ be $\tau$-rigid pairs in $\mods \Lambda$ such that $\calW \coloneqq \calW_{(M,P)} = \calW_{(M',P')}$. Moreover, let $(N,Q)$ and $(N',Q')$ be $\tau$-rigid pairs in $\mods \Lambda$ such that $M \in\add(N)$, $P\in\add(Q)$, $M'\in\add(N')$ and $P'\in\add(Q')$. Assume also that $\calW_{(N,Q)} = \calW_{(N',Q')}$. Then, the following are equivalent:
	\begin{enumerate}
		\item $[\calU_{(N,Q)}, \calT_{(N,Q)}] \cap \calW = [\calU_{(N',Q')}, \calT_{(N',Q')}] \cap \calW$;
		\item $\calU_{(N,Q)} \cap \calW = \calU_{(N',Q')} \cap \calW$ and $\calT_{(N,Q)} \cap \calW = \calT_{(N',Q')} \cap \calW$; 
        \item $\calU_{(N,Q)} \cap \calW = \calU_{(N',Q')} \cap \calW$;
        \item $\calT_{(N,Q)} \cap \calW = \calT_{(N',Q')} \cap \calW$
	\item $\calB_{\calR}^{(N,Q)} \cap \calW = \calB_{\calR}^{(N',Q')} \cap \calW$;
    \item $\calB_{\calL}^{(N,Q)} \cap \calW = \calB_{\calL}^{(N',Q')} \cap \calW$. 
	\end{enumerate}
\end{lemma}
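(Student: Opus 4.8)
The plan is to push the whole statement into the lattice $\tors\calW$, where it becomes a comparison of two intervals sharing a common heart. First I would record the consequences of the hypotheses $M\in\add N$, $P\in\add Q$ (and their primed versions): these give the containments $[\calU_{(N,Q)},\calT_{(N,Q)}]\subseteq[\calU_{(M,P)},\calT_{(M,P)}]$ and $\calW_{(N,Q)}\subseteq\calW_{(M,P)}=\calW$, together with the primed analogues using $\calW_{(M',P')}=\calW$. Since $\calW=\calU_{(M,P)}^\perp\cap\calT_{(M,P)}$ is the heart of a $\tau$-perpendicular interval, it is a wide subcategory equivalent to a module category (\Cref{thm:widesubcatlatticeiso}(1)--(2)); on the interval $[\calU_{(M,P)},\calT_{(M,P)}]$ the operation $-\cap\calW$ agrees with $-\cap\calU_{(M,P)}^\perp$, so by \Cref{thm:widesubcatlatticeiso}(3) it is a lattice isomorphism onto $\tors\calW$ restricting to functorially finite torsion classes, and by \Cref{prop:brickpreserve} it preserves the brick labelling (the same applies with $(M',P')$). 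Using that $-\cap\calW$ is an order isomorphism I would check $[\calU_{(N,Q)},\calT_{(N,Q)}]\cap\calW=[\,\calU_{(N,Q)}\cap\calW,\ \calT_{(N,Q)}\cap\calW\,]$ inside $\tors\calW$, so that (1) becomes equality of two image-intervals; hence (1)$\Leftrightarrow$(2), since an interval is determined by its two endpoints.

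Next I would identify the hearts of the two image-intervals. As $-\cap\calW$ preserves brick labels and the heart of any interval is the $\Filt$-closure of the labels of its internal arrows (\Cref{thm:DIRRT174.16}), and as these labels lie in the extension-closed $\calW$, the heart of $[\calU_{(N,Q)}\cap\calW,\calT_{(N,Q)}\cap\calW]$ is exactly $\calW_{(N,Q)}$, and that of the primed interval is $\calW_{(N',Q')}$; by hypothesis these coincide, say to $\mathcal{H}$. I have thus reduced to two intervals of $\tors\calW$ with the same heart $\mathcal{H}$. For such intervals \Cref{thm:wideintiso} yields $\calT=\calU*\mathcal{H}$, so the bottom and heart determine the top; this gives (3)$\Rightarrow$(2). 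Applying the dual of \Cref{thm:wideintiso} to the anti-isomorphic lattice $\torf\calW$---whose torsion-free interval has the same heart $\mathcal{H}$---the top and heart determine the bottom, giving (4)$\Rightarrow$(2); the implications (2)$\Rightarrow$(3),(4) are immediate.

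It remains to connect (3)$\Leftrightarrow$(5) and (4)$\Leftrightarrow$(6) via the labelling. The key local computation is that an arrow into $\calU_{(N,Q)}$ in $\Hasse(\tors\Lambda)$ carries a label $B$ lying in $\calW$ exactly when it lies inside $[\calU_{(M,P)},\calT_{(M,P)}]$: such a label is automatically in $\calU_{(M,P)}^\perp$ because $\calU_{(M,P)}\subseteq\calU_{(N,Q)}$, while its source $\calU_{(N,Q)}\vee\mathrm{T}(B)$ lies below $\calT_{(M,P)}$ precisely when $B\in\calT_{(M,P)}$. Brick-label preservation then identifies $\calB_\calR^{(N,Q)}\cap\calW$ with the right-finite semibrick of $\calU_{(N,Q)}\cap\calW$ computed inside $\calW$ via \Cref{prop:sbrickslabels}; since Asai's bijection \cite[Thm. 1.3]{Asai2020} recovers a functorially finite torsion class from its incoming labels, this gives (3)$\Leftrightarrow$(5), and the dual argument (arrows out of $\calT_{(N,Q)}$, left-finite semibricks) gives (4)$\Leftrightarrow$(6). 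Combining the chains established above yields all six equivalences.

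The main obstacle I anticipate is the bookkeeping in the last step: confirming that intersecting with $\calW$ neither loses nor creates incoming or outgoing arrows, so that $\calB_\calR^{(N,Q)}\cap\calW$ (resp. $\calB_\calL^{(N,Q)}\cap\calW$) matches exactly the labelling computed in $\tors\calW$, and that the functorial finiteness required by Asai's bijection survives $-\cap\calW$---both resting on the compatibility of $-\cap\calW$ with brick labelling and with functorially finite torsion classes from \Cref{thm:widesubcatlatticeiso}(3) and \Cref{prop:brickpreserve}. A minor additional point is justifying that the heart of the torsion interval equals the co-heart of its torsion-free image, which is needed for the dual of \Cref{thm:wideintiso} used in (4)$\Rightarrow$(2).
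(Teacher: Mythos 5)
Your proposal is correct and follows essentially the same route as the paper's proof: the trivial equivalences among (1)--(4), the implication $(3)\Leftrightarrow(4)$ via the identity $\calT\cap\calW=(\calU\cap\calW)*\calW_{(N,Q)}$ and its dual (the paper invokes the dual of \cite[Lem.~2.8]{ES22} where you pass to $\torf\calW$, but these are the same computation), and $(3)/(4)\Leftrightarrow(5)/(6)$ via \cref{prop:sbrickslabels}, the label-preservation of $-\cap\calW$ from \cref{prop:brickpreserve}, and the bijection of \cite[Thm.~1.3]{Asai2020}. The ``local computation'' you flag as the main obstacle is exactly the content of \cref{lem:tauperpbysbricks} and is handled there, so no genuine gap remains.
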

\begin{proof}
	The implications $(1) \iff (2)$, $(2) \implies (3)$, $(2) \implies (4)$ and $(3)+(4) \implies (2)$ are immediate.
    
    $(3) \implies (4)$ Since $\calW_{(N,Q)} = \calW_{(N',Q')}$, we compute
        \[ \calT_{(N,Q)} \cap \calW = (\calU_{(N,Q)} \cap \calW) * \calW_{(N,Q)}= (\calU_{(N',Q'))} \cap \calW) * \calW_{(N',Q')} = \calT_{(N,Q)} \cap \calW. \]
        Here the first and last equalities follow from a standard argument.
    
	$(4) \implies (3)$ Since $\calW_{(N,Q)} = \calW_{(N',Q')}$, we compute
    \[ \calU_{(N,Q)} \cap \calW = ( \calT_{(N,Q)} \cap \calW) \cap {}^\perp \calW_{(N,Q)} = (\calT_{(N',Q')} \cap \calW) \cap {}^\perp \calW_{(N',Q')} = \calU_{(N',Q')} \cap \calW. \]
    Here the first and last equalities follow directly from the dual of \cite[Lem. 2.8]{ES22}.
    
	$(4) \implies (6)$. Let $\calX \in \ftors \calW$, then there exists a unique (relative) left-finite semibrick $\widetilde{\calB}_{\calL}^{\calX} \in \flsbrick \calW$ corresponding to it by \cite[Thm. 1.3]{Asai2020} and \cref{thm:AIRftorsbij}. By \cref{prop:sbrickslabels}{,} it labels the arrows going out of $\calX$ in $\Hasse(\tors \calW)${,} and by \cref{thm:widesubcatlatticeiso}, there exists a unique $\calY \in [\calU_{(N,Q)}, \calT_{(N,Q)}] \subseteq \tors \Lambda$ such that  now $\calX = \calY \cap \calW$. It follows that $\widetilde{\calB}_{\calL}^{\calX} = \calB_{\calL}^{\calY} \cap \calW$, since the intersection with $\calW$ preserves the brick labels by \cref{prop:brickpreserve}. Consequently, since $\calT_{(N,Q)} \cap \calW = \calT_{(N',Q')} \cap \calW \in \tors \calW${,} it follows that
	\[ \calB_{\calL}^{(N,Q)} \cap \calW = \widetilde{\calB}_{\calL}^{\calT_{(N,Q)} \cap \calW} =  \widetilde{\calB}_{\calL}^{\calT_{(N',Q')} \cap \calW} = \calB_{\calL}^{(N',Q')} \cap \calW. \]
	
    $(3) \implies (5)$ follows from an entirely analogous argument.
	
	$(6) \implies (4)$. Using the same notation and reasoning as in $(4) \implies (6)$, the assumption implies 
	\[  \widetilde{\calB}_{\calL}^{\calT_{(N,Q)} \cap \calW}  = \calB_{\calL}^{(N,Q)} \cap \calW = \calB_{\calL}^{(N',Q')} \cap \calW =  \widetilde{\calB}_{\calL}^{\calT_{(N',Q')} \cap \calW} . \]
	Then it follows directly from \cite[Thm. 1.3]{Asai2020} that $\calT_{(N,Q)} \cap \calW = \calT_{(N',Q')} \cap \calW${,} as required. 
    
    $(5) \implies (2)$ follows from an entirely analogous argument.
\end{proof}

Notice that \cref{lem:morphismlifts}(1) states the defintion of when morphisms $f_{[\calU_{(M,P)}, \calT_{(M,P)}][\calU_{(N,Q)}, \calT_{(N,Q)}]}$ and $f_{[\calU_{(M',P')}, \calT_{(M',P')}][\calU_{(N',Q')}, \calT_{(N',Q')}]}$ from the $\tau$-perpendicular subcategory $\calW_{(M,P)} = \calW_{(M',P')}$ to the $\tau$-perpendicular subcategory $\calW_{(N,Q)} = \calW_{(N',Q')}$ are identified in $\Wfrak(A)$. Thus, the equivalent conditions give various simpler ways of identifying morphisms in $\Wfrak(A)$.

The following lemma, in combination with the previous, is important to control the identification of morphisms when studying the $\tau$-cluster morphism category under scalar extension.

\begin{lemma}\label{lem:perpflsbricklift}
Let $K:k$ be a MacLane separable field extension and let $(M,P)$ be a $\tau$-rigid pair in $\mods \Lambda$. Then the following square is commutative
\[
\begin{tikzcd}[column sep = 70]
	\left\{ \substack{\calB_{\calL}^{(N,Q)} \in \flsbrick \Lambda \\ \text{s.t. } (N,Q) \in {[}(M^-,P^-), (M^+, P){]} \subseteq \stautilt \Lambda }\right\} \arrow[r, "-\cap \calW_{(M,P)}"] \arrow[d, "\ind(- \otimes_k K)"] & \flsbrick \calW_{(M,P)} \arrow[d, "\ind(- \otimes_k K)"]  \\
	\left\{ \substack{\calB_{\calL}^{(N',Q')} \in \flsbrick \Lambda_K \\ \text{s.t. } (N',Q') \in {[}(M_K^-,P_K^-), (M_K^+, P_K){]} \subseteq \stautilt \Lambda_K }\right\}  \arrow[r, "-\cap \calW_{(M_K,P_K)}"] & \flsbrick \calW_{(M_K,P_K)}
\end{tikzcd}
\]
and the horizontal maps are bijections. 
\end{lemma}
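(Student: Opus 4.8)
The plan is to translate the entire square into a statement about functorially finite torsion classes by means of Asai's bijection \cite[Thm. 1.3]{Asai2020}, under which left-finite semibricks correspond to functorially finite torsion classes, a left-finite semibrick being recovered as the collection of brick labels of the Hasse arrows leaving the associated torsion class (cf. \cref{prop:sbrickslabels}). Under this dictionary the top-left corner becomes the set of functorially finite torsion classes lying in the interval $[\calU_{(M,P)}, \calT_{(M,P)}] = [\Fac M, {}^\perp \tau M \cap P^\perp]$, the top-right corner becomes $\ftors \calW_{(M,P)}$, and the horizontal map $-\cap \calW_{(M,P)}$ becomes the restriction $\calT \mapsto \calT \cap \calW_{(M,P)}$.

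First I would establish that the horizontal maps are bijections. By \cref{thm:widesubcatlatticeiso}(3) the assignment $\calT \mapsto \calT \cap \calW_{(M,P)}$ is a lattice isomorphism $[\calU_{(M,P)}, \calT_{(M,P)}] \to \tors \calW_{(M,P)}$ restricting to functorially finite torsion classes, and since $\calW_{(M,P)} = \calU_{(M,P)}^\perp \cap \calT_{(M,P)}$ is wide, \cref{prop:brickpreserve} guarantees that this isomorphism preserves the brick labelling. Composing with Asai's bijection at both ends shows that $-\cap \calW_{(M,P)}$ sends $\calB_\calL^{(N,Q)}$ to the relative left-finite semibrick of $\Fac N \cap \calW_{(M,P)}$ and is bijective; the identical argument applies verbatim over $\Lambda_K$ for the bottom row.

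For commutativity I would first identify the left vertical map. Since each $(N,Q)$ is support $\tau$-tilting, so that $\calB_\calL^{(N,Q)} = \ind(N/\rad_{\End_\Lambda(N)} N)$, \cref{lem:flbrickcommute} yields $\ind((\calB_\calL^{(N,Q)})_K) = \calB_\calL^{(N_K,Q_K)}$, and $(N_K,Q_K)$ lies in $[(M_K^-,P_K^-),(M_K^+,P_K)]$ by combining \cref{lem:posetlift}, \cref{lem:cobongartzcommute} and \cref{lem:bongartzcommute}; under Asai's dictionary this map is exactly $\Fac N \mapsto (\Fac N)_K = \Fac N_K$ of \cref{cor:posetembedftors}. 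It then remains to prove the single identity
\[ \ind\bigl((\calB_\calL^{(N,Q)} \cap \calW_{(M,P)})_K\bigr) = \calB_\calL^{(N_K,Q_K)} \cap \calW_{(M_K,P_K)}. \]
The inclusion $\subseteq$ is the easy direction: for a brick $S \in \calB_\calL^{(N,Q)} \cap \calW_{(M,P)}$, \cref{lem:objintauperppreserve} gives $S_K \in \calW_{(M_K,P_K)}$, and as $\calW_{(M_K,P_K)}$ is wide, hence closed under summands, every member of $\ind(S_K) \subseteq \calB_\calL^{(N_K,Q_K)}$ lies in $\calW_{(M_K,P_K)}$.

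The reverse inclusion is where the genuine difficulty lies and is the main obstacle. The subtlety is that an indecomposable summand $B'$ of $S_K$ for some $S \in \calB_\calL^{(N,Q)}$ with $S \notin \calW_{(M,P)}$ could a priori satisfy the $\Hom$-vanishing conditions defining $\calW_{(M_K,P_K)}$ even when $S_K$ as a whole does not, so the naive object-wise argument via \cref{lem:objintauperppreserve} does not close (and, as $K:k$ need not be finite, the restriction computation of \cref{lem:nicerestriction} is unavailable). To circumvent this I would argue not summand-by-summand but at the level of relative semibricks: by \cref{prop:objectslift} the wide subcategory $\calW_{(M_K,P_K)} = \Filt_{\Lambda_K}(\calS_K)$ is the base field extension of $\calW_{(M,P)} = \Filt_\Lambda(\calS)$ with matching simple objects, which lets me apply \cref{lem:flbrickcommute} \emph{internally} to the wide subcategory and conclude that the left-hand side is itself a relative left-finite semibrick of $\calW_{(M_K,P_K)}$. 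Both sides of the displayed identity are then relative left-finite semibricks of $\calW_{(M_K,P_K)}$ attached to the same torsion class $\Fac N_K \cap \calW_{(M_K,P_K)}$ — the right side by definition, the left side by the lattice-isomorphism and brick-label bookkeeping of the bijectivity step — so Asai's bijection forces them to agree, upgrading $\subseteq$ to equality. Combining this equality with the bijectivity of the horizontal maps gives the commutative square.
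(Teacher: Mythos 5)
Your proposal tracks the paper's proof closely for most of the lemma: the bijectivity of the horizontal maps via \cref{thm:widesubcatlatticeiso}, \cref{prop:brickpreserve}, \cref{prop:sbrickslabels} and Asai's bijection is exactly the paper's argument; the identification of the left vertical map via \cref{lem:flbrickcommute} together with \cref{lem:posetlift}, \cref{lem:cobongartzcommute} and \cref{lem:bongartzcommute} matches the paper's appeal to \cref{cor:tauperpitvlift} and \cref{lem:flbrickcommute}; and your inclusion $\ind((\calB_\calL^{(N,Q)}\cap\calW_{(M,P)})_K)\subseteq \calB_\calL^{(N_K,Q_K)}\cap\calW_{(M_K,P_K)}$ via \cref{lem:objintauperppreserve} and closure of wide subcategories under direct summands is precisely how the paper concludes. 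In fact the paper's proof of commutativity consists \emph{only} of that last step: it records that $S\in\calW_{(M,P)}$ if and only if $S_K\in\calW_{(M_K,P_K)}$, notes that the indecomposable summands of $S_K$ then lie in $\calW_{(M_K,P_K)}$, and stops. The reverse inclusion you single out --- that when $S\in\calB_\calL^{(N,Q)}$ fails to lie in $\calW_{(M,P)}$, no individual indecomposable summand of $S_K$ can land in $\calW_{(M_K,P_K)}$, even though the defining Hom-vanishing conditions are only known to fail for $S_K$ as a whole --- is a real logical step (membership in $M_K^\perp\cap{}^\perp\tau M_K\cap P_K^\perp$ is detected summand-by-summand, so the failure of $S_K$ does not formally propagate to each summand), and your instinct that this is the delicate point is sound.

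The problem is that your proposed repair does not close it. Applying \cref{lem:flbrickcommute} ``internally'' to $\calW_{(M,P)}$ presupposes that the equivalence $\calW_{(M,P)}\simeq\mods\Gamma_{(M,P)}$ of \cref{thm:widesubcatlatticeiso}(2) intertwines the restriction of $-\otimes_kK$ with the scalar extension functor $\mods\Gamma_{(M,P)}\to\mods(\Gamma_{(M,P)}\otimes_kK)$, and that $\calW_{(M_K,P_K)}$ is identified with $\mods(\Gamma_{(M,P)}\otimes_kK)$ under a compatible equivalence. Neither statement is available: \cref{prop:objectslift} only gives the equality of subcategories $\calW_{(M_K,P_K)}=\Filt_{\Lambda_K}(\calS_K)$, not a compatibility of $\tau$-tilting reduction with base change, and nothing in your write-up supplies one. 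Likewise, the claim that the left-hand side is ``the relative left-finite semibrick attached to $\Fac N_K\cap\calW_{(M_K,P_K)}$ by lattice-isomorphism and brick-label bookkeeping'' is essentially the statement to be proved, restated: the bookkeeping from the bijectivity step identifies only the \emph{right}-hand side with that relative semibrick, so invoking Asai's bijection to force equality is circular as written. To make your route work you would either have to prove that $\tau$-tilting reduction commutes with scalar extension in the above sense, or argue directly that for $S\in\calB_\calL^{(N,Q)}$ with $S\notin\calW_{(M,P)}$ no indecomposable summand of $S_K$ lies in $\calW_{(M_K,P_K)}$; as it stands, the step you correctly flag as the main obstacle remains open in your argument.
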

\begin{proof}
	The vertical maps are well-defined by \cref{cor:tauperpitvlift} and \cref{lem:flbrickcommute}. Moreover, the horizontal map given by intersection with $\calW_{(M,P)}$ defines a brick label preserving isomorphism from $\Hasse[(M^-,P^-), (M^+,P)]$ to $\Hasse(\stautilt \calW_{(M,P)})$ by \cref{prop:brickpreserve}. As discussed in the proof of \cref{lem:morphismlifts}{,} the horizontal map is therefore well-defined, since the brick labelling is preserved and left-finite semibricks can be read of as the labels of arrows going out of a {support} $\tau$-tilting pair in the Hasse quiver by \cref{prop:sbrickslabels}. The same holds for the bottom horizontal map. The horizontal maps are bijections because the intervals of {support} $\tau$-tilting pairs are in bijection by \cref{thm:widesubcatlatticeiso} and {support} $\tau$-tilting pairs are in bijection with left-finite semibricks by \cite[Thm. 1.3]{Asai2020}. 
	
	Let $S \in \calB_{\calL}^{(N,Q)}$. By \cref{lem:objintauperppreserve}, $S \in \calW_{(M,P)}$ if and only if $S_K \in \calW_{(M_K,P_K)}$. In particular, every indecomposable direct summand $S' \in \add (S_K)$ is contained in $\calW_{(M_K,P_K)}$. Therefore, the square indeed commutes.
\end{proof}

We have collected all the preliminary results to establish the main theorem of this section.

\subsection{Faithful functors under base field extension}
As described earlier, the existence of a faithful functor from $\Wfrak(\Lambda)$ to a groupoid with one object has important consequences and is a sufficient condition for the classifying space to be a $K(\pi,1)$ space. The main result relating to the $\tau$-cluster morphism category is the following result extending the existence of faithful group functors algebras over reasonably nice subfields. All preliminary results have been collected in the previous subsection.

\begin{theorem}\label{thm:basefieldmain}
	Let $K:k$ be a MacLane separable field extension. There exists a well-defined faithful functor $\calF: \Wfrak(\Lambda) \to \Wfrak(\Lambda_K)$ given by
	\begin{equation}
	\begin{aligned}
	 \calF: \Wfrak(\Lambda) &\to \Wfrak(\Lambda_K) \\
	 [\calU_{(M,P)}, \calT_{(M,P)}]_\sim & \mapsto [\calU_{(M_K,P_K)}, \calT_{(M_K,P_K)}]_\sim \\
	 [f_{[\calU_{(N,Q)}, \calT_{(N,Q)}][\calU_{(M,P)}, \calT_{(M,P)}]}] & \mapsto [f_{[\calU_{(N_K,Q_K)}, \calT_{(N_K,Q_K)}][\calU_{(M_K,P_K)}, \calT_{(M_K,P_K)}]}].
	\end{aligned}
	\end{equation}
Consequently, if $\Wfrak(\Lambda_K)$ admits a faithful group functor, so does $\Wfrak(\Lambda)$.
\end{theorem}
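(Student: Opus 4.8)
The plan is to verify in turn that $\calF$ is well-defined on objects, well-defined on morphisms, functorial, and faithful; the final assertion about faithful group functors is then immediate. On objects, two $\tau$-perpendicular intervals represent the same object of $\Wfrak(\Lambda)$ precisely when their $\tau$-perpendicular subcategories coincide, say $\calW_{(M,P)} = \calW_{(M',P')} = \Filt_\Lambda\{\calS\}$ for a semibrick $\calS$. By \cref{prop:objectslift} we then have $\calW_{(M_K,P_K)} = \Filt_{\Lambda_K}(\calS_K) = \calW_{(M'_K,P'_K)}$, so $\calF$ sends $\sim$-equivalent intervals to $\sim$-equivalent intervals and is well-defined on objects.

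For morphisms, recall from \cref{defn:latticedef} that two representatives of a morphism are identified exactly when the relevant target intervals, intersected with the common source subcategory $\calW$, agree; this is condition (1) of \cref{lem:morphismlifts}. The key point is to transport this identification along $-\otimes_k K$. First I would rewrite condition (1) in the equivalent form (6), i.e.\ in terms of the left-finite semibricks $\calB_\calL^{(N,Q)} \cap \calW$, using \cref{lem:morphismlifts}. Then \cref{lem:perpflsbricklift} asserts the commutativity of the square relating $-\cap\calW_{(M,P)}$, $-\cap\calW_{(M_K,P_K)}$ and $\ind(-\otimes_k K)$, so applying $\ind(-\otimes_k K)$ produces $\calB_\calL^{(N_K,Q_K)} \cap \calW_{(M_K,P_K)}$ on both sides and yields condition (6) over $\Lambda_K$. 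The hypotheses of \cref{lem:morphismlifts} over $\Lambda_K$ hold because the source and target objects agree by \cref{prop:objectslift} and the $\add$-conditions are preserved by the additivity of $-\otimes_k K$; converting (6) back to (1) over $\Lambda_K$ then shows the two images are identified, so $\calF$ is well-defined on morphisms.

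Functoriality is then straightforward. The identity at $\calW_{(M,P)}$ is represented by the trivial inclusion $[\calU_{(M,P)}, \calT_{(M,P)}] \le [\calU_{(M,P)}, \calT_{(M,P)}]$, whose image is again a trivial inclusion, hence the identity. For composition, given composable $\rho_1,\rho_2$, choose by \cref{lem:composition} a chain $[\calU_1,\calT_1] \le [\calU_2,\calT_2] \le [\calU_3,\calT_3]$ representing $\rho_1$, $\rho_2$ and $\rho_2\circ\rho_1$ simultaneously. Applying $-\otimes_k K$ componentwise produces, by \cref{cor:tauperpitvlift} together with the order-preservation of \cref{lem:posetlift}, a chain $[\calU_{1,K},\calT_{1,K}] \le [\calU_{2,K},\calT_{2,K}] \le [\calU_{3,K},\calT_{3,K}]$ which represents $\calF(\rho_1)$, $\calF(\rho_2)$ and their composite, whence $\calF(\rho_2\circ\rho_1) = \calF(\rho_2)\circ\calF(\rho_1)$.

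Finally, faithfulness runs the morphism argument in reverse. Fix $f,g$ with the same source and target and suppose $\calF(f) = \calF(g)$; choosing representatives with a common source pair $(M,P)$, this identity amounts, via \cref{lem:morphismlifts}, to condition (6) over $\Lambda_K$, which by the commutativity of \cref{lem:perpflsbricklift} reads $\ind\bigl((\calB_\calL^{(N_1,Q_1)} \cap \calW_{(M,P)})\otimes_k K\bigr) = \ind\bigl((\calB_\calL^{(N_2,Q_2)} \cap \calW_{(M,P)})\otimes_k K\bigr)$. Now $\ind(-\otimes_k K)$ is injective on semibricks of $\calW_{(M,P)}$ (viewed in $\mods\Lambda$): distinct bricks $B \not\simeq B'$ satisfy $\ind(B_K)\cap\ind(B'_K) = \emptyset$ by \cref{lem:adddeterminesbasic}(2), so $\calS \mapsto \ind(\calS_K) = \bigsqcup_{B\in\calS}\ind(B_K)$ determines $\calS$. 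Cancelling $\ind(-\otimes_k K)$ gives condition (6) over $\Lambda$, whence $f = g$ by \cref{lem:morphismlifts}, so $\calF$ is faithful; precomposing any faithful functor $\Wfrak(\Lambda_K)\to G$ with $\calF$ then yields a faithful functor $\Wfrak(\Lambda)\to G$. I expect the main obstacle to be precisely the well-definedness on morphisms and its mirror image in the faithfulness step, since the morphism equivalence relation of \cref{defn:latticedef} is delicate; the real work is discharged by the equivalent reformulations of \cref{lem:morphismlifts} together with the compatibility square of \cref{lem:perpflsbricklift}.
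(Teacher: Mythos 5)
Your proposal is correct and follows the paper's proof almost step for step: well-definedness on objects via \cref{prop:objectslift}, well-definedness on morphisms via the chain of equivalences in \cref{lem:morphismlifts} transported through the commutative square of \cref{lem:perpflsbricklift}, and composition via \cref{lem:composition}. The one place where you genuinely diverge is the faithfulness step. The paper argues that if the two image morphisms are identified, then the intervals $[\calU_{(N_K,Q_K)},\calT_{(N_K,Q_K)}]$ and $[\calU_{((N')_K,(Q')_K)},\calT_{((N')_K,(Q')_K)}]$ coincide by \cref{thm:wideintiso}, then invokes \cref{lem:cobongartzcommute} and \cref{lem:bongartzcommute} to pull the equality of Bongartz and co-Bongartz completions back along the injection $\stautilt\Lambda\hookrightarrow\stautilt\Lambda_K$ of \cref{cor:posetembed}, and concludes $(N,Q)=(N',Q')$. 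You instead literally reverse the well-definedness chain, using that $\calS\mapsto\ind(\calS_K)$ is injective on semibricks because distinct indecomposables stay summand-disjoint after scalar extension (\cref{lem:adddeterminesbasic}(2)); this is the same injectivity the paper itself uses at the end of \cref{lem:semibrickslift}, so the cancellation is legitimate. Your route is arguably cleaner in that it makes well-definedness and faithfulness exact mirror images of one another and avoids re-entering the Bongartz-completion machinery, while the paper's version sidesteps having to argue injectivity of $\ind(-\otimes_k K)$ on the relevant semibricks by working directly with $\tau$-rigid pairs. Both are complete proofs.
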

\begin{proof}
    We first show that $\calF$ is well-defined on objects.
	It is shown in \cref{cor:tauperpitvlift} that if $[\calU_{(M,P)}, \calT_{(M,P)}]$ is a $\tau_{\Lambda} $-perpendicular interval in $\tors \Lambda$, then $[\calU_{(M_K,P_K)}, \calT_{(M_K,P_K)}]$ is a $\tau_{\Lambda_K}$-perpendicular interval in $\tors \Lambda_K$. Consider two $\tau_{\Lambda} $-perpendicular intervals $[\calU_{(M,P)}, \calT_{(M,P)}]$ and $[\calU_{(M',P')}, \calT_{(M',P')}]$ in $\tors \Lambda$ such that $\calW_{(M,P)} = \calW_{(M',P')} = \Filt_{\Lambda} \{ \calS\}$ for some $\calS \in \sbrick \Lambda$. It follows from \cref{prop:objectslift} that
	\[ \calW_{(M_K,P_K)} = \Filt_{\Lambda_K} \{S_K\} = \calW_{((M')_K, (P')_K)}.\]
	 Therefore, if $[\calU_{(M,P)}, \calT_{(M,P)}]_\sim = [\calU_{(M',P')}, \calT_{(M',P')}]_\sim$ in $\Wfrak(\Lambda)$, then 
    \[ [\calU_{(M_K,P_K)}, \calT_{(M_K,P_K)}]_\sim = [\calU_{((M')_K,(P')_K)}, \calT_{((M')_K,(P')_K)}]_\sim \]
    in $\Wfrak(\Lambda_K)$. This shows that $\calF$ is well-defined on objects. 
    
    To investigate the morphisms, consider two $\tau_{\Lambda} $-perpendicular intervals $[\calU_{(M,P)}, \calT_{(M,P)}]$ and $[\calU_{(N,Q)}, \calT_{(N,Q)}]$ such that $[\calU_{(N,Q)}, \calT_{(N,Q)}] \subseteq [\calU_{(M,P)}, \calT_{(M,P)}] \subseteq \tors \Lambda$. Then the inclusion
	\[ [\calU_{(N_K,Q_K)}, \calT_{(N_K,Q_K)}] \subseteq [\calU_{(M_K,P_K)}, \calT_{(M_K,P_K)}] \subseteq \tors \Lambda_K,\]
	follows from the fact that $- \otimes_k K: \stautilt \Lambda \to \stautilt \Lambda_K$-preserves the partial order, as shown in \cref{lem:posetlift}. 
	Now, consider four $\tau_{\Lambda} $-perpendicular intervals{:} 
    \[ [\calU_{(N,Q)}, \calT_{(N,Q)}] \subseteq [\calU_{(M,P)}, \calT_{(M,P)}] \text{ and } [\calU_{(N',Q')}, \calT_{(N',Q')}] \subseteq [\calU_{(M',P')}, \calT_{(M',P')}]\]
    in $\tors \Lambda$ such that $\Filt_{A} \{\calS\} = \calW_{(M,P)} = \calW_{(M',P')}$ for some $\calS \in \sbrick \Lambda$, $\calW_{(N,Q)} = \calW_{(N',Q')}$ and moreover 
	\[ [\calU_{(N,Q)}, \calT_{(N,Q)}] \cap \calW_{(M,P)} = [\calU_{(N',Q')}, \calT_{(N',Q')}] \cap \calW_{(M',P')} \subseteq \tors \Filt_{\Lambda} \{\calS\}. \]
	Then there is a chain of implications
	\begin{align*}
	&\calT_{(N,Q)} \cap \calW_{(M,P)} = \calT_{(N',Q')} \cap \calW_{(M',P')} \\
		& = \calT_{(N,Q)} \cap \Filt_{\Lambda} \{\calS \} = \calT_{(N',Q')} \cap \Filt_{\Lambda} \{\calS \} \\
		& \Rightarrow \calB_{\calL}^{(N,Q)} \cap \Filt_{\Lambda} \{\calS \} = \calB_{\calL}^{(N,Q)} \cap \Filt_{\Lambda} \{\calS \} & \text{(by \cref{lem:morphismlifts})} \\
		& \Rightarrow \ind((\calB_{\calL}^{(N,Q)} \cap \Filt_{\Lambda} \{\calS \} )_K)  = \ind((\calB_{\calL}^{(N',Q')} \cap \Filt_{\Lambda} \{\calS \} )_K)  \\
		& \Rightarrow \calB_{\calL}^{(N_K,Q_K)} \cap \Filt_{\Lambda_K} \{\calS_K\} = \calB_{\calL}^{((N')_K,(Q')_K)} \cap \Filt_{\Lambda_K} \{\calS_K\} & \text{(by \cref{lem:perpflsbricklift})} \\
		& \Rightarrow \calT_{(N_K,Q_K)} \cap \Filt_{\Lambda_K} \{\calS_K\} = \calT_{((N')_K,(Q')_K)} \cap \Filt_{\Lambda_K} \{ \calS_K\} & \text{(by \cref{lem:morphismlifts})} \\
		& \Rightarrow \calT_{(N_K,Q_K)} \cap \calW_{(M_K,P_K)} = \calT_{((N')_K,(Q')_K)}\cap \calW_{((M')_K,(P')_K)} & \text{(by \cref{prop:objectslift})}
	\end{align*}
	Therefore, \cref{lem:morphismlifts} yields that
	\[ [\calU_{(N_K,Q_K)}, \calT_{(N_K,Q_K)}] \cap \calW_{(M_K,P_K)} = [\calU_{(N',Q')}, \calT_{((N')_K,(Q')_K)}] \cap \calW_{((M')_K,(P')_K)} \]
	in $\tors \Filt_{\Lambda_K}\{\calS_K\}$. In other words, identification of morphisms is preserved, whence $\calF$ determines a well-defined map on the morphism sets. To investigate the composition of morphisms, let 
	\[ [f_{[\calU_{(N',Q')}, \calT_{(N',Q')}]  [\calU_{(N'',Q'')}], \calT_{(N'',Q'')}]} \circ  [f_{[\calU_{(M,P)}, \calT_{(M,P)}][\calU_{(M',P')}, \calT_{(M',P')}]}] \] 
	be two composable morphisms in $\Wfrak(\Lambda)$. Then from \cref{lem:composition} we know that there exists an interval $ [\calU_{(M'',P'')}, \calT_{(M'',P'')}] \subseteq [\calU_{(M',P')}, \calT_{(M',P')}]$ such that 
    \[ [f_{[\calU_{(N',Q')}, \calT_{(N',Q')}][\calU_{(N'',Q'')}, \calT_{(N'',Q'')}]}] = [f_{[\calU_{(M',P')}, \calT_{(M',P')}] [\calU_{(M'',P'')}, \calT_{(M'',P'')}]}] \]
    so that their composition is given by 
	\[ [f_{[\calU_{(M,P)}, \calT_{(M,P)}][\calU_{(M'',P'')}, \calT_{(M'',P'')}]}].\]
	 It is immediate from the definitions that
	\begin{align*}
		&\calF([f_{[\calU_{(M',P')}, \calT_{(M',P')}][\calU_{(M'',P'')}, \calT_{(M'',P'')}]}]) \circ \calF([f_{[\calU_{(M,P)}, \calT_{(M,P)}][\calU_{(M',P')}, \calT_{(M',P')}]}]) \\
		& =  [f_{[\calU_{((M')_K,(P')_K)}, \calT_{((M')_K,(P')_K)}][\calU_{((M'')_K,(P'')_K)}, \calT_{((M'')_K,(P'')_K)}]}] \\
		& \qquad \circ  [f_{[\calU_{(M_K,P_K)}, \calT_{(M_K,P_K)}][\calU_{((M')_K,(P')_K)}, \calT_{((M')_K,(P')_K)}]}] \\
		& =[f_{[\calU_{(M_K,P_K)}, \calT_{(M_K,P_K)}][\calU_{((M'')_K,(P'')_K)}, \calT_{((M'')_K,(P'')_K)}]}] \\
		& = \calF([f_{[\calU_{(M,P)}, \calT_{(M,P)}][\calU_{(M'',P'')}, \calT_{(M'',P'')}]}]).
	\end{align*}
	Thus $\calF$ preserves composition of morphisms. It is clear that $\calF$ preserves identity morphisms. Therefore $\calF: \Wfrak(\Lambda) \to \Wfrak(\Lambda_K)$ is a well-defined functor. 
    
    Finally, to see that $\calF$ is faithful, take two distinct morphisms
	\begin{align} [f_{[\calU_{(M,P)}, \calT_{(M,P)}][\calU_{(N,Q)}, \calT_{(N,Q)}]}], &[f_{[\calU_{(M,P)}, \calT_{(M,P)}][\calU_{(N',Q')}, \calT_{(N',Q')}]}]\label{eq:chosen_mor} \\
	&\in \Hom_{\Wfrak(\Lambda)}([\calU_{(M,P)}, \calT_{(M,P)}]_\sim, [\calU_{(N,Q)}, \calT_{(N,Q)}]_\sim). \nonumber\end{align}
	which may be taken to be represented two morphisms in $\tauint(\tors A)$ with the same domain by aspecial case of \cref{lem:composition} lemma, see also \cite[Cor. 3.6]{Kai23}. Assume that
	\[  [f_{[\calU_{(M_K,P_K)}, \calT_{(M_K,P_K)}][\calU_{(N_K,Q_K)}, \calT_{(N_K,Q_K)}]}] = [f_{[\calU_{(M_K,P_K)}, \calT_{(M_K,P_K)}][\calU_{((N')_K,(Q')_K)}, \calT_{((N')_K,(Q')_K)}]}], \]
	in $\Wfrak(\Lambda_K)$, which is to lead to a contradiction. This would mean that the intervals
	\[ [\calU_{(N_K,Q_K)}, \calT_{(N_K,Q_K)}]\quad \text{and} \quad[\calU_{((N')_K,(Q')_K)}, \calT_{((N')_K,(Q')_K)}] \]
	coincide as a result of \cref{thm:wideintiso}. Indeed, their intersections with $\calW_{(M_K,P_K)}$ coincide {by assumption} and the lattice $\tors \calW_{(M_K,P_K)}$ is isomorphic to the interval $[\calU_{(M_K,P_K)}, \calT_{(M_K,P_K)}] \subseteq \tors \Lambda_K$ by \cref{thm:wideintiso}. Using \cref{lem:cobongartzcommute} and \cref{lem:bongartzcommute}, this implies that 
	\begin{align*}
	&\calU_{(N_K,Q_K)} = \calU_{((N')_K, (Q')_K)} \quad \text{and} \quad \calT_{(N_K,Q_K)} = \calT_{((N')_K, (Q')_K)} \\
	& \Rightarrow ((N_K)^-, (Q_K)^-) = ((((N')_K)^-, (Q')_K)^-) \quad \text{and} \quad ((N_K)^+, Q_K) = ((N')_K^+, (Q')_K) \\
	& \Rightarrow ((N^-)_K, (Q^-)_K) = (((N')^-)_K, ((Q')^-)_K) \quad \text{and} \quad ((N^+)_K, Q_K) = (((N')^+)_K, (Q')_K).
\end{align*}
Since $-\otimes_k K$ defines an injective map of support $\tau$-tilting pairs by \cref{cor:posetembed}, it follows that $(N^-, Q^-) = ((N')^-,(Q')^-)$ and $(N^+, Q)=((N')^+, Q')$. Since two distinct $\tau$-rigid pairs cannot both have the same Bongartz completion and the same co-Bongartz completion{,} this yields $(N,Q) = (N',Q')$. However, this is a contradicts the assumption that the  morphisms chosen in \eqref{eq:chosen_mor} are distinct. We conclude that the functor $\calF$ is faithful.
\end{proof}

\begin{corollary}\label{cor:mainbasefield+BorMot}
    Let $k$ be a field of characteristic 0 and let $\Lambda$ be a finite-dimensional $k$-algebra. Then $\Wfrak(\Lambda)$ admits a faithful group functor. 
\end{corollary}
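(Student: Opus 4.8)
The plan is to reduce to the algebraically closed case via an algebraic closure and then invoke \Cref{thm:basefieldmain} together with the concurrent result of \cite{BorMot}. Concretely, I would set $K \coloneqq \overline{k}$, an algebraic closure of $k$, so that $\Lambda_K = \Lambda \otimes_k \overline{k}$ is a finite-dimensional algebra over the algebraically closed field $K$.

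The key steps, in order, would be the following. First, observe that the extension $K:k$ is MacLane separable; this is immediate from \Cref{defn:MacLaneseparable}, since $\charac(k) = 0$ and in that case every field extension of $k$ is declared MacLane separable by definition. Second, note that $K = \overline{k}$ is algebraically closed and that $\charac(K) = \charac(k) = 0$, as any field extension preserves the characteristic. Thus $\Lambda_K$ is a finite-dimensional algebra over an algebraically closed field of characteristic $0$, and the result of \cite{BorMot} applies to give that $\Wfrak(\Lambda_K)$ admits a faithful group functor. Third, since $K:k$ is MacLane separable, \Cref{thm:basefieldmain} yields a faithful functor $\calF: \Wfrak(\Lambda) \to \Wfrak(\Lambda_K)$, and its final clause transports the existence of a faithful group functor from $\Wfrak(\Lambda_K)$ down to $\Wfrak(\Lambda)$. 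Explicitly, composing the faithful group functor $\Wfrak(\Lambda_K) \to G$ with $\calF$ produces the desired faithful group functor for $\Wfrak(\Lambda)$, since a composite of faithful functors is faithful.

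I do not expect a genuine obstacle here, as the substantive work is already packaged in \Cref{thm:basefieldmain} and in \cite{BorMot}; the only points requiring care are verifying the hypotheses of these two inputs. In particular, I would double-check that MacLane separability holds for the extension $\overline{k}:k$ (which it does purely by the characteristic-zero clause of \Cref{defn:MacLaneseparable}, regardless of the degree of the extension) and that \cite{BorMot} is indeed formulated for arbitrary finite-dimensional algebras over an algebraically closed field of characteristic $0$, so that it genuinely covers $\Lambda_K$.
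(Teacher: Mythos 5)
Your proposal is correct and follows exactly the paper's own argument: pass to an algebraic closure $\overline{k}$, note that $\overline{k}:k$ is MacLane separable by the characteristic-zero clause of \Cref{defn:MacLaneseparable}, invoke \cite{BorMot} for $\Lambda_{\overline{k}}$, and compose with the faithful functor from \Cref{thm:basefieldmain}. Nothing is missing.
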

\begin{proof}
    Let $\overline{k}$ denote {an} algebraic closure of $k$.  Since $k$ is of characteristic 0, the field extension $\overline{k}:k$ is MacLane separable by definition. The $\overline{k}$-algebra $\Lambda_{\overline{k}}$ is a finite-dimensional algebra over an algebraically closed field of characteristic zero. It is shown in \cite{BorMot} that $\Wfrak(\Lambda_{\overline{k}})$ admits a faithful group functor. Thus, we have that $\Wfrak(\Lambda)$ admits a faithful group functor, obtained by composition with the faithful functor $\calF\colon \Wfrak(\Lambda) \to \Wfrak(\Lambda_K)$ in \Cref{thm:basefieldmain}.
\end{proof}

\begin{remark} Suppose that $k$ is algebraically closed. In \cite{Non25}, the author considers the tensor product of a hereditary finite-dimensional $k$-algebra $\Lambda$ with a finite-dimensional local commutative $k$-algebra $R$. It is shown that the functor $-\otimes_k R: \mods \Lambda \to \mods \Lambda \otimes_k R$ commutes with the Auslander--Reiten translation and the Nakayama functor \cite[Prop. 1.8]{Non25}. By \cref{cor:Nakfunctlift} and \cref{lem:tautranslatelift}, the scalar extension functor $- \otimes_k K$ of a field extension shares these properties. In \cite{Non25} it is shown that this leads to a bijection of $\stautilt \Lambda \leftrightarrow \stautilt (\Lambda \otimes_k R)$ \cite[Cor. 1.9]{Non25} and an equivalence of categories $\Wfrak(\Lambda) \simeq \Wfrak(\Lambda \otimes_k R)$ \cite[Thm. 6.17]{Non25}. Given a field extension $K:k$, there is an injection $\stautilt \Lambda \hookrightarrow \stautilt \Lambda_K$ by \cref{cor:tautiltlift} and, if $K:k$ is MacLane separable, a faithful functor $\Wfrak(\Lambda) \to \Wfrak(\Lambda_K)$ by \cref{thm:basefieldmain}. It would be interesting to investigate if a simultaneous generalisation is possible. In this spirit, we refer to \cite{IK2024} and \cite{LZ23}, where some questions regarding silting theory have been studied in a general setting. 
\end{remark}

\subsection{Worked examples}\label{subsec:worked}

{As an interlude, we spend a subsection spelling out our results so far for a few simple examples. It is necessary to recall the notion of species, a generalisation of quivers.

\begin{definition} {\cite{Gab73}}\label{def:species}
	Let $k$ be any field. A \textit{$k$-species} $\mathbb{S} = (Q^{\mathbb{S}}, \{D_a\}_{a\in Q^{\mathbb{S}}_0}, \{ X_\alpha \}_{\alpha \in Q^{\mathbb{S}}_1})$ consists of the data of:
	\begin{itemize}
		\item A quiver $Q^{\mathbb{S}} = (Q_0^{\mathbb{S}}, Q_1^{\mathbb{S}})$;
		\item A division $k$-algebra $D_a$ for each $a \in Q_0^{\mathbb{S}}$;
		\item A $D_a\mhyphen D_b$-bimodule $X_\alpha$ for each $a \xrightarrow{\alpha} b$ in $Q_1^{\mathbb{S}}$. 
	\end{itemize}
\end{definition}

We remark that if $k$ algebraically closed then every divison $k$-algebra is isomorphic to $k$ itself. 
More generally, if every division $k$-algebra $D_a$ is $k$ and every $D_a \mhyphen D_b$-bimodule (or just $k \mhyphen k$-bimodule) $X_{\alpha}$ is $k$, then the $k$-species {encodes the same information as} a quiver. 

Given a $k$-species $\mathbb{S} = (Q^{\mathbb{S}}, \{D_a\}_{a\in Q^{\mathbb{S}}_0}, \{ X_\alpha \}_{\alpha \in Q^{\mathbb{S}}_1})$, let $D$ denote the semisimple $k$-algebra $\prod_{a\in Q_0}D_a$ and let $X$ denote the abelian group $\bigoplus_{\alpha\in Q_1}X_{\alpha}$. Extending the bimodule structure {of each $X_{\alpha}$ in the standard way}, one equips $X$ with the structure of $D$-$D$-bimodule. The \textit{tensor path algebra} of $\mathbb{S}$ is then defined as the tensor $k$-algebra 
\[k\mathbb{S} \coloneqq T_{D}({X}) = \bigoplus_{\ell\geq 0} X^{\otimes_{D} \ell},\]
{where $X^{\otimes_D 0} = D$, and} in which the multiplication is given by the linear extension of the following rule:
\[(x_1 \otimes \cdots \otimes x_{p}) \cdot (y_1 \otimes \cdots \otimes y_{q}) = x_1 \otimes \cdots \otimes x_{p}\otimes y_1 \otimes \cdots \otimes y_{q},\] where the first factor on the left hand side is in $X^{\otimes_{D} p}$ and the second is in $X^{\otimes_{D} q}$. If every division $k$-algebra $D_a$ is $k$ and every $k \mhyphen k$-bimodule $X_{\alpha}$ is $k$, the tensor path algebra of the species $\mathbb{S}$ becomes the ordinary path $k$-algebra of the underlying quiver $Q^{\mathbb{S}}$.

{In the following, most of our examples will be constructed using species over the real numbers $\R$. For an $\R$-species $\mathbb{S}$ and an ideal $I$ of the $\R$-algebra $\R\mathbb{S}$, we will consider the $\R$-algebra $\Lambda = \R\mathbb{S}/I$. It will be useful to know how to compute the $\C$-algebra $\Lambda_{\C}$, obtained by scalar extension along $\C:\R$.\footnote{{Recall that a field $k$ is \textit{real closed} if it is not algebraically closed, and an algebraic closure $\overline{k}$ is a finite extension of $k$. By the Artin--Schreier Theorem, the splitting field of the polynomial $x^2+1$ is then an algebraic closure of ${k}$ \cite{AS27}. The assertions in \Cref{prop:LiCompl} remain true if we replace $\R$ with any other real closed field $k$, and $\C$ by $\overline{k}$, \textit{mutatis mutandis}.}}}

{
\begin{proposition}\label{prop:LiCompl}
    Let $\mathbb{S}$ be an $\R$-species. Suppose that the division $\R$-algebra $D_a$ is either $\R$ or $\C$ for all vertices $a \in Q^{\mathbb{S}}_0$. Assume also that each bimodule $X_{\alpha}$ is simple for each arrow $\alpha \in Q^{\mathbb{S}}_1$.
    \begin{enumerate}
        \item\label{prop:LiCompl1} There is a $\C$-algebra isomorphism $\Psi\colon \R\mathbb{S}\otimes_{\R} \C \xrightarrow{} \C Q$, where the quiver $Q$ is constructed as follows:
        \begin{align*}
             Q_1 \coloneqq & \{a \xrightarrow{\alpha} b \, : \, a \xrightarrow{\alpha} b \in Q_1 ,\, D_a=D_b=\R\} \\
             \sqcup &\{a \xrightarrow{\plus{\alpha}} \plus{b}, \, a \xrightarrow{\overline{\alpha}} \minus{b} \, : \, a \xrightarrow{\alpha} b \in Q_1 ,\, D_a=\R,\, D_b=\C \}\\
             \sqcup &\{\plus{a} \xrightarrow{\plus{\alpha}} b, \, \minus{a} \xrightarrow{\overline{\alpha}} b \, : \, a \xrightarrow{\alpha} b \in Q_1 ,\, D_a=\C,\, D_b=\R \} \\
             \sqcup &\{\plus{a} \xrightarrow{\plus{\alpha}} \plus{b}, \, \minus{a} \xrightarrow{\overline{\alpha}} 
             \minus{b} \, : \, a \xrightarrow{\alpha} b \in Q_1 ,\, D_a=D_b=\C, X_\alpha = {}_{\C} \C_{\C} \} \\ 
             \sqcup &\{\minus{a} \xrightarrow{\plus{\alpha}} \plus{b}, \, \plus{a} \xrightarrow{\overline{\alpha}} \minus{b} \, : \, a \xrightarrow{\alpha} b \in Q_1,\, D_a=D_b=\C,\, X_\alpha = {}_{\C}\overline{\C}_{\C} \} ,
        \end{align*}
    where ${}_{\C}\C_{\C}$ denotes the natural $\C$-$\C$-bimodule $\C$, and ${}_{\C}\overline{\C}_{\C}$ denotes the $\C$-$\C$-bimodule $\C$ equipped with the bimodule structure given by $x \curvearrowright z \curvearrowleft y = xz{y}^\ast$, for $x,y\in \C$, where ${y}^\ast$ denotes the complex conjugate of $y$. We remark that ${}_{\C}\C_{\C}$ and ${}_{\C}\overline{\C}_{\C}$ are the only simple $\C$-$\C$-bimodules up to isomorphism, so that this description covers all cases, given our assumptions that the division $\R$-algebras $D_a$ can only be $\R$ or $\C$ and that each bimodule $X_{\alpha}$ is simple.
    \item\label{prop:LiCompl2} 
        We have that $(\R\mathbb{S} / \langle X \otimes_D X\rangle) \otimes_{\R} \C = \C Q /\langle \text{paths of length 2} \rangle $. 
    \end{enumerate}
\end{proposition}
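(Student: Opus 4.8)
The plan is to recognise both sides as tensor algebras and reduce the whole statement to a base-change computation over the semisimple "vertex" algebra. Writing $D = \prod_a D_a$ and $X = \bigoplus_\alpha X_\alpha$, we have $\R\mathbb{S} = T_D(X)$ by definition, and $\C Q = T_{\C^{Q_0}}(\C Q_1)$ is the tensor algebra of the arrow bimodule over the (semisimple) vertex algebra. First I would establish the general base-change formula $T_A(M)\otimes_\R\C \cong T_{A\otimes_\R\C}(M\otimes_\R\C)$ for an $\R$-algebra $A$ and an $A$-$A$-bimodule $M$. This follows degree by degree from the natural isomorphism $(M\otimes_A N)\otimes_\R\C \cong (M\otimes_\R\C)\otimes_{A\otimes_\R\C}(N\otimes_\R\C)$, given by $(m\otimes s)\otimes(n\otimes t)\mapsto (m\otimes n)\otimes st$, or abstractly from the fact that $-\otimes_\R\C$ is a left adjoint and hence preserves the universal property defining the tensor algebra. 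Applying this with $A=D$ and $M=X$ reduces the problem to identifying the semisimple algebra $D\otimes_\R\C$ (the vertices) and the bimodule $X\otimes_\R\C$ (the arrows).

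For the vertices, I would compute $D\otimes_\R\C = \prod_a(D_a\otimes_\R\C)$. If $D_a=\R$ then $D_a\otimes_\R\C=\C$, contributing a single vertex $a$; if $D_a=\C$ then $\C\otimes_\R\C\cong\C[x]/(x^2+1)\cong\C\times\C$ by the Chinese Remainder Theorem, contributing two vertices $\plus{a}$ and $\minus{a}$ corresponding to a fixed pair of primitive idempotents $f^{(a)}_\pm$. Thus $D\otimes_\R\C\cong\C^{Q_0}$, a product of copies of $\C$ with $Q_0$ the stated vertex set.

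For the arrows, I would decompose each $X_\alpha\otimes_\R\C$, a bimodule over $D_a\otimes_\R\C$ and $D_b\otimes_\R\C$, by cutting with the idempotents $f^{(a)}_\pm$ on the left and $f^{(b)}_\pm$ on the right, treating the four cases for the pair $(D_a,D_b)$. Since each $X_\alpha$ is simple and the extension has degree two, every nonzero piece is one-dimensional over $\C$, i.e. a copy of the trivial $\C$-$\C$-bimodule, hence an arrow of $Q$. The cases $D_a=D_b=\R$ and $\{D_a,D_b\}=\{\R,\C\}$ reproduce the first three families directly. The essential case is $D_a=D_b=\C$: working on the $\C$-basis $\{1\otimes 1,\, i\otimes 1\}$ of $X_\alpha\otimes_\R\C$, a direct computation shows that for $X_\alpha={}_\C\C_\C$ only the ``parallel'' pieces $f^{(a)}_+(-)f^{(b)}_+$ and $f^{(a)}_-(-)f^{(b)}_-$ survive, giving $\plus{a}\to\plus{b}$ and $\minus{a}\to\minus{b}$, whereas the conjugation twist in $X_\alpha={}_\C\overline{\C}_\C$ interchanges the roles of $f^{(b)}_\pm$ so that only the ``crossing'' pieces survive, giving $\minus{a}\to\plus{b}$ and $\plus{a}\to\minus{b}$. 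This is precisely where the two isomorphism classes of simple $\C$-$\C$-bimodules are separated, and it is the main obstacle: one must fix the idempotent labels once and for all and track the conjugation so that the surviving summands match the stated quiver exactly. Assembling the four cases identifies $X\otimes_\R\C$ with $\bigoplus_{Q_1}\C$ as a $\C^{Q_0}$-bimodule, whence $T_{D\otimes_\R\C}(X\otimes_\R\C)\cong\C Q$, proving part~(1).

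For part~(2), observe that $\langle X\otimes_D X\rangle = \bigoplus_{\ell\geq 2}X^{\otimes_D\ell}$ is exactly the part of $T_D(X)$ in tensor-degrees $\geq 2$. The isomorphism $\Psi$ built above is graded, identifying the tensor-degree grading with the path-length grading of $\C Q$. Since $-\otimes_\R\C$ is exact it commutes with passage to quotients, and $\Psi$ carries $\langle X\otimes_D X\rangle\otimes_\R\C$ onto the two-sided ideal of $\C Q$ generated by paths of length $2$; hence $\Psi$ descends to $(\R\mathbb{S}/\langle X\otimes_D X\rangle)\otimes_\R\C\cong\C Q/\langle\text{paths of length }2\rangle$. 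Finally, the footnote's generalisation is immediate, since the only properties of $\C:\R$ used are that it has degree two and that $\overline{k}\otimes_k\overline{k}\cong\overline{k}\times\overline{k}$, both of which hold for any real closed $k$ with $\overline{k}=k(i)$.
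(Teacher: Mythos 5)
Your proposal is correct, and for part (1) it takes a genuinely different route from the paper: the paper simply cites \cite[\S 3.2]{Li2023derivediscrete} for the isomorphism $\R\mathbb{S}\otimes_{\R}\C\cong\C Q$, whereas you give a self-contained derivation via the base-change formula $T_A(M)\otimes_{\R}\C\cong T_{A\otimes_{\R}\C}(M\otimes_{\R}\C)$, the splitting $\C\otimes_{\R}\C\cong\C\times\C$, and the idempotent decomposition of each $X_\alpha\otimes_{\R}\C$. Your case analysis is right, and the crucial point --- that for ${}_{\C}\C_{\C}$ the surviving pieces are the ``parallel'' cuts while the conjugation twist in ${}_{\C}\overline{\C}_{\C}$ swaps $f^{(b)}_{\pm}$ and leaves only the ``crossing'' cuts --- is exactly the computation that distinguishes the last two families of arrows; this is the content of Li's argument, so your proof makes the citation unnecessary. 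For part (2) your argument is also a cleaner variant of the paper's: you observe that $\langle X\otimes_D X\rangle=\bigoplus_{\ell\geq 2}X^{\otimes_D\ell}$ and that $\Psi$ matches the tensor-degree grading with path length, so exactness of $-\otimes_{\R}\C$ finishes the job; the paper instead passes through the intermediate quotient $(\R\mathbb{S})_{\C}/\langle X_{\C}\otimes_{D_{\C}}X_{\C}\rangle$ and invokes \cref{thm:JL}\eqref{thm:JL1} to see that $(\R\mathbb{S}/\langle X\rangle)\otimes_{\R}\C\cong D_{\C}$ is semisimple. What your approach buys is independence from the MacLane-separability machinery and an explicit isomorphism one can compute with (as is done in the worked examples); what the paper's approach buys is brevity. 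The only point worth tightening is a sign/orientation convention: you should fix once and for all whether $f^{(a)}_{+}$ corresponds to the identity or the conjugation character of $\C\otimes_{\R}\C$ on each side, since otherwise the labels $\plus{a}\to\plus{b}$ versus $\plus{a}\to\minus{b}$ in the two $\C$--$\C$ cases are only determined up to the simultaneous relabelling $\plus{}\leftrightarrow\minus{}$; this does not affect the isomorphism class of $\C Q$, but it is needed to match the statement verbatim.
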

\begin{proof}
    For \eqref{prop:LiCompl1}, we refer to \cite[§3.2]{Li2023derivediscrete}.
    We prove \eqref{prop:LiCompl2} through the following steps
    \[(\R\mathbb{S} / \langle X \otimes_D X \rangle ) \otimes_{\R} \C \simeq {(\R\mathbb{S})_{\C} \over {\langle {X_\C} \otimes_{D_\C} X_\C \rangle}} \simeq {\C Q \over \langle \text{paths of length 2} \rangle}, \]
    where the first step uses the fact that $(\R \mathbb{S} /\langle X \rangle )\otimes_\R \C \cong D_\C$ is semisimple by \cref{thm:JL}\eqref{thm:JL1}, and the last step follows since $\langle X \otimes_\R \C\rangle_{\C Q} = \langle \text{paths of length 1} \rangle_{\C Q} \subseteq \C Q$ by \eqref{prop:LiCompl1}. This concludes the proof.
\end{proof}
}

Using \Cref{prop:LiCompl}, we give two worked examples.

\begin{example}\label{exmp:tauONEloop} 
    {
    Consider the $\R$-species $\mathbb{S}$ defined by the quiver
    $ Q= \begin{tikzcd} 1, \arrow[loop left, "\alpha", looseness=3, in=130, out = 230] \end{tikzcd}$
    with the division $\R$-algebra $\C$ associated to the vertex $1$ and the $\C$-$\C$-bimodule ${}_{\C}\overline{\C}_{\C}$ associated to the loop $\alpha$. To summarise, let $\mathbb{S} = (Q, \{ D_1 = \C \}, \{ X_\alpha = {}_{\C}\overline{\C}_{\C} \})$. Consider the $\R$-algebra
    $
    \Lambda \simeq \R \mathbb{S} / \langle X \otimes_D X \rangle.
    $
    Since $\Lambda$ is a local $\R$-algebra, it is in particular $\tau$-tilting finite. Using the description given in \Cref{prop:LiCompl}\eqref{prop:LiCompl2}, we find that $\Lambda_\C = \Lambda \otimes_\R \C$ is isomorphic to the path $\C$-algebra of the quiver} 
    \begin{equation*}
        \begin{tikzcd}
\plus{1} \arrow[r, "\minus{\alpha}", bend left] & {\minus{1}} \arrow[l, "\plus{\alpha}", bend left]
\end{tikzcd}
    \end{equation*}
    {modulo the square of the arrow ideal. The complex algebra $\Lambda_\C$ is thus isomorphic to the preprojective $\C$-algebra of type $A_2$. The Auslander--Reiten quiver of $\Lambda$ and $\Lambda_\C$ are given, respectively, by} 
\[
\begin{tikzcd}
S(1) \arrow[r, "(2:1)", bend left] \arrow["{\small\tau}" description, dotted, loop, distance=2em, in=215, out=145] & P({1}) \arrow[l, "(1:2)", bend left]
\end{tikzcd}
\qquad \qquad \qquad  
\begin{tikzcd}
S(\plus{1}) \arrow[d, "\small\tau" description, dotted, bend right=20] \arrow[rd]  & P(\plus{1}) \arrow[l]  \\
S(\minus{1}) \arrow[ru] \arrow[u, "\small\tau" description, dotted, bend right=20] & P(\minus{1}) \arrow[l]
\end{tikzcd}
\]
{The Hasse quiver of the poset of $\tau$-tilting pairs in $\mods \Lambda_\C$ is given below. The images of the support $\tau$-tilting pairs of $\mods \Lambda$ under the embedding in \Cref{cor:posetembed}, in this case induced by the functor $-\otimes_{\R}\C$, are highlighted in \textcolor{orange}{orange} boldface.}
\begin{equation*}
    \begin{tikzcd}
                                    & {\color{orange}{\boldsymbol{(P(\plus{1})\oplus P(\minus{1}),0)}}} \arrow[ld] \arrow[rd] &                                     \\
{(P(\plus{1})\oplus S(\plus{1}),0)} \arrow[d] &                                                                 & {(P(\minus{1})\oplus S(\minus{1}),0)} \arrow[d] \\
{(S(\plus{1}),P(\minus{1}))} \arrow[rd]        &                                                                 & {(S(\minus{1}),P(\plus{1}))} \arrow[ld]        \\
                                    & {\color{orange}{\boldsymbol{(0,P(\plus{1})\oplus P(\minus{1}))}}}                       &                                    
\end{tikzcd}
\end{equation*}
{The $\tau$-cluster morphism category of $\Lambda_{\C}$ is shown in \Cref{fig:tcmc_oneloop}. We refer to the figure caption for further information.}
\begin{figure}[h!]
    \centering
        \begin{tikzcd}[column sep=0.5em]
                                 &                                    & {\tiny\sqbinom{P(\minus{1})\oplus S(\minus{1})}{P(\minus{1})\oplus S(\minus{1})}}                       &                                                                                                                                                    & {\tiny\sqbinom{P(\plus{1})\oplus P(\minus{1})}{P(\minus{1})\oplus S(\minus{1})}} \arrow[rr,"c"] \arrow[ll,"d"'] &                                    & {\tiny\color{orange}{\sqbinom{P(\plus{1})\oplus P(\minus{1})}{P(\plus{1})\oplus P(\minus{1})}}}     \\
                                 & {\tiny\sqbinom{P(\minus{1})\oplus S(\minus{1})}{S(\minus{1})}} \arrow[ru] \arrow[ld] &                                    &                                                                                                                                                    &                                    &                                    & {\tiny\sqbinom{P(\plus{1})\oplus P(\minus{1})}{P(\plus{1})\oplus S(\plus{1})}} \arrow[u,"a"] \arrow[d,"b"'] \\
{\tiny\sqbinom{S(\minus{1})}{S(\minus{1})}}                     &                                    &                                    & {\tiny\color{orange}{\sqbinom{P(\plus{1})\oplus P(\minus{1})}{0}}} \arrow[ruu] \arrow[rrru] \arrow[llld] \arrow[llu] \arrow[rrd] \arrow[ldd] \arrow[rrruu, color=orange] \arrow[llldd,  color=orange] &                                    &                                    & {\tiny\sqbinom{P(\plus{1})\oplus S(\plus{1})}{P(\plus{1})\oplus S(\plus{1})}}                     \\
{\tiny\sqbinom{S(\minus{1})}{0}} \arrow[u,"a"] \arrow[d,"b"'] &                                    &                                    &                                                                                                                                                    &                                    & {\tiny\sqbinom{P(\plus{1})\oplus S(\plus{1})}{S(\plus{1})}} \arrow[ld] \arrow[ru] &                                  \\
{\tiny\color{orange}{\sqbinom{0}{0}}}     &                                    & {\tiny\sqbinom{S(\plus{1})}{0}} \arrow[ll,"d"'] \arrow[rr,"c"] &                                                                                                                                                    & {\tiny\sqbinom{S(\plus{1})}{S(\plus{1})}}                       &                                    &                                 
\end{tikzcd}
    \caption{{The $\tau$-cluster morphism category of $\Lambda_{\C}$, as defined in \Cref{exmp:tauONEloop}. To save space, we are \textit{not} employing the same notation as in \Cref{defn:latticedef}; we write $\sqbinom{X}{Y}$, we mean ${[\Fac Y,\Fac X]_{\sim}}$. Note that, among other identifications, the corners of the outer hexagonal shape are identified to be the same object in $\Wfrak(\Lambda_{\C})$. If $[\Fac Y',\Fac X']$ is a subinterval of $[\Fac Y,\Fac X]$ in $\ftors \Lambda_{\C}$, there is a morphism $\sqbinom{X}{Y} \to \sqbinom{X'}{Y'}$ in $\Wfrak(\Lambda_{\C})$. With the exception of the \textcolor{orange}{orange} morphisms, we only display the \textit{irreducible morphisms} $\Wfrak(\Lambda_{\C})$, i.e. they cannot be factored nontrivially. The \textcolor{orange}{orange} morphisms are images of irreducible morphisms in $\Wfrak(\Lambda)$ under the faithful functor $\calF\colon \Wfrak(\Lambda)\to \Wfrak(\Lambda_{\C})$, see \Cref{thm:basefieldmain}. Morphisms with the same label ($a$, $b$, $c$ or $d$) are to be identified.}}
    \label{fig:tcmc_oneloop}
\end{figure}
\end{example}

\begin{example}\label{eg:large}
Consider the $\R$-species $\mathbb{S}$ defined by the quiver
\[ 1 \xrightarrow{\alpha} 2\]
with division $\R$-algebras $D_1 = \R$ and $D_2 = \C$ assigned to the vertices and the natural $\R$-$\C$-bimodule $X_\alpha = {}_\R \C_\C$ assigned to the arrow. {Consider the hereditary $\R$-algebra} $\Lambda = \R \mathbb{S}$. By \Cref{prop:LiCompl}\eqref{prop:LiCompl1}, scalar extension along the field extension $\C:\R$ gives rise to a $\C$-algebra isomorphism: 
\[ \Lambda_{\C} \simeq \C \left( \begin{tikzcd}[row sep=5] & \plus{2} \\ 1 \arrow[ru, "\plus{\alpha}",pos=0.7] \arrow[rd,"\minus{\alpha}", pos=0.7,swap] \\ & \minus{2} \end{tikzcd} \right).\]
The Auslander--Reiten quiver of $\Lambda$ and $\Lambda_\C$ are given, respectively, by
\[
\begin{tikzcd}
    P(2) \arrow[rd, "(1:2)", swap] && I(2) \arrow[rd, "(1:2)"] \arrow[ll,dotted,"{\small \tau}" description] \\
    & P(1) \arrow[ru, "(2:1)"] & & I(1)\arrow[ll,dotted,"{\small \tau}" description] 
\end{tikzcd}
\qquad 
\begin{tikzcd}
    P(\plus{2}) \arrow[rd] && I(\minus{2}) \arrow[rd] \arrow[ll,dotted,"{\small \tau}" description] \\
    & P(1) \arrow[ru] \arrow[rd] && I(1) \arrow[ll,dotted,"{\small \tau}" description] \\
    P(\minus{2}) \arrow[ru] && I(\plus{2}) \arrow[ru] \arrow[ll,dotted,"{\small \tau}" description] 
\end{tikzcd}
\]

{The Hasse quiver of the poset of $\tau$-tilting pairs in $\mods \Lambda_\C$ is given below. {To save space, $\tau$-tilting pairs with trivial support are written as $\tau$-tilting modules.} Again, the images of the support $\tau$-tilting pairs of $\mods \Lambda$ under the embedding in \Cref{cor:posetembed}, in this case induced by the functor $-\otimes_{\R}\C$, are highlighted in \textcolor{orange}{orange} boldface.}
\[
\adjustbox{scale=0.8,center}{
\begin{tikzcd}[column sep=20, row sep =30]
    && {\color{orange}\boldsymbol{P(1) \oplus P(\plus{2}) \oplus P(\minus{2})}} \arrow[ld] \arrow[d] \arrow[rdd] \\
    &P(1) \oplus I(\minus{2}) \oplus P(\minus{2}) \arrow[ld] \arrow[dd] & P(1) \oplus P(\plus{2}) \oplus I(\plus{2}) \arrow[lld,crossing over] \arrow[dd] \\
    {\color{orange}\boldsymbol{P(1) \oplus I(\plus{2}) \oplus I(\minus{2})}} \arrow[d] & & & {\color{orange}\boldsymbol{(P(\plus{2}) \oplus P(\minus{2}), P(1))}} \arrow[ldd] \arrow[dd] \\
    {\color{orange}\boldsymbol{I(1) \oplus I(\plus{2}) \oplus I(\minus{2})}} \arrow[d] & (I(\minus{2}) \oplus P(\minus{2}), P(\plus{2})) \arrow[ld] \arrow[rd] & (P(\plus{2}) \oplus I(\plus{2}),P(\minus{2}))\arrow[ld,crossing over]  & \\
    (I(1) \oplus I(\minus{2}), P(\plus{2})) \arrow[rd] & (I(1) \oplus I(\plus{2}), P(\minus{2})) \arrow[d] \arrow[ul,crossing over,<-] & (P(\minus{2}), P(1) \oplus P(\plus{2})) \arrow[dd] & (P(\plus{2}), P(1) \oplus P(\minus{2})) \arrow[ldd]\arrow[lu,crossing over,<-] \\
    &{\color{orange}\boldsymbol{(I(1), P(\plus{2}) \oplus P(\minus{2}))}} \arrow[rd] \\
    && {\color{orange}\boldsymbol{(0, P(1) \oplus P(\plus{2}) \oplus P(\minus{2}))}}
\end{tikzcd}}
\]

For this example, we also get a nontrivial illustration of \Cref{lem:semibrickslift};
every indecomposable $\Lambda$-module is a brick and hence lifts to a semibrick in the following way:
\[ P(2) \mapsto P(\plus{2}) \oplus P(\minus{2}), \quad P(1) \mapsto P(1), \quad I(2) \mapsto I(\plus{2}) \oplus I(\minus{2}), \quad I(1) \mapsto I(1).\]
In \Cref{fig:smallINlargeTCMC} on p. \pageref{fig:smallINlargeTCMC}, we illustrate how the $\tau$-cluster morphism category of $\Lambda$ embeds into that of $\Lambda_K$. The (nonfull) subcategory $\Wfrak(\Lambda)$ of $\Wfrak(\Lambda_K)$ is marked in \textcolor{orange}{orange}. For further information, we refer to the caption of the figure.

\begin{landscape}
\begin{figure}
\begin{tikzcd}[column sep=2.5em,row sep=2em,execute at end picture={\begin{scope}[on background layer] \draw[->,color=orange] (M) -- (H);\end{scope}}]
                      &                                                                                                                   & {\tiny\sqbinom{P(1)\oplus P(\plus{2})\oplus I(\plus{2})}{P(1)\oplus I(\plus{2})\oplus I(\minus{2})}} \arrow[r,"c"] \arrow[ld,"e"'] & \color{black}{\tiny\sqbinom{P(1)\oplus P(\plus{2})\oplus I(\plus{2})}{P(1)\oplus P(\plus{2})\oplus I(\plus{2})}}                                                                                                                                                       & {\tiny\sqbinom{P(1)\oplus P(\plus{2})\oplus P(\minus{2})}{P(1)\oplus P(\plus{2})\oplus I(\plus{2})}} \arrow[l,"d"'] \arrow[rd,"f"] &                                                                      \\
                      & \color{orange}{\tiny\sqbinom{P(1)\oplus I(\plus{2})\oplus I(\minus{2})}{P(1)\oplus I(\plus{2})\oplus I(\minus{2})}}                                                                                                &                        & |[alias=H]|\color{orange}{\tiny\sqbinom{P(1)\oplus P(\plus{2})\oplus P(\minus{2})}{P(1)\oplus I(\plus{2})\oplus I(\minus{2})}} \arrow[ru,"g"] \arrow[ld,"h"] \arrow[lu,"i"'] \arrow[rd,"j"'] \arrow[ll, color=orange,"\textcolor{blue}{\mathfrak{q}}"'] \arrow[rr, color=orange,"\textcolor{blue}{\mathfrak{p}}"]                                                     &                        & \color{orange}{\tiny\sqbinom{P(1)\oplus P(\plus{2})\oplus P(\minus{2})}{P(1)\oplus P(\plus{2})\oplus P(\minus{2})}}                                                     \\
                      & \color{orange}{\tiny \sqbinom{P(1)\oplus I(\plus{2})\oplus I(\minus{2})}{I(1)\oplus I(\plus{2})\oplus I(\minus{2})}} \arrow[u, color=orange] \arrow[d, color=orange]                                                             & {\tiny\sqbinom{P(1)\oplus I(\minus{2})\oplus P(\minus{2})}{P(1)\oplus I(\plus{2})\oplus I(\minus{2})}} \arrow[lu,"d"] \arrow[r,"f"'] & \color{black}{\tiny\sqbinom{P(1)\oplus I(\minus{2})\oplus P(\minus{2})}{P(1)\oplus I(\minus{2})\oplus P(\minus{2})}}                                                                                                                                                         & {\tiny\sqbinom{P(1)\oplus P(\plus{2})\oplus P(\minus{2})}{P(1)\oplus I(\minus{2})\oplus P(\minus{2})}} \arrow[ru,"c"'] \arrow[l,"e"] & \color{orange}{\tiny\sqbinom{P(1)\oplus P(\plus{2})\oplus {P}({\minus{2}})}{P(\plus{2})\oplus P(\minus{2})}} \arrow[u, "a"', color=orange] \arrow[ddddd, "b", color=orange] \\
{\tiny \sqbinom{I(1)\oplus I(\plus{2})\oplus I(\minus{2})}{I(1)\oplus I(\minus{2})}} \arrow[r,"\ell"'] \arrow[d,"n"] & \color{orange}{\tiny\sqbinom{I(1)\oplus I(\plus{2})\oplus I(\minus{2})}{I(1)\oplus I(\plus{2})\oplus I(\minus{2})}}                                                                                                   & {\tiny \sqbinom{I(1)\oplus I(\plus{2})\oplus I(\minus{2})}{I(1)\oplus I(\plus{2})}} \arrow[l,"m"] \arrow[d,"o"',near start]  &                                                                                                                                                                         &                        &                                                                      \\
\color{black}{\tiny \sqbinom{I(1)\oplus I(\minus{2})}{I(1)\oplus I(\minus{2})}}       & \color{orange}{\tiny \sqbinom{I(1)\oplus I(\plus{2})\oplus I(\minus{2})}{I(1)}} \arrow[lu] \arrow[rd] \arrow[ru] \arrow[ld] \arrow[u, color=orange, "\textcolor{blue}{\mathfrak{r}}"] \arrow[d, color=orange, "\textcolor{blue}{\mathfrak{s}}"'] & \color{black} {\tiny \sqbinom{I(1)\oplus I(\plus{2})}{I(1)\oplus I(\plus{2})}}        & |[alias=M]|\color{orange}{\tiny\sqbinom{P(1)\oplus P(\plus{2})\oplus P(\minus{2})}{0}} \arrow[ddd, color=orange] \arrow[rruu, color=orange,crossing over] \arrow[lldd, color=orange,crossing over,bend left=10] \arrow[lluu, color=orange, bend right=15,crossing over] \arrow[ll, color=orange, bend right=15,crossing over] &                        &                                                                      \\
{\tiny \sqbinom{I(1)\oplus I(\minus{2})}{I(1)}} \arrow[r,"o"] \arrow[u,"m"'] & \color{orange}{{\tiny \sqbinom{I(1)}{I(1)}}}                                                                                                   & {\tiny \sqbinom{I(1)\oplus I(\plus{2})}{I(1)}} \arrow[l,"n"'] \arrow[u,"\ell"]  &                                                                                                                                                                         &                        &                                                                      \\
                      & \color{orange}{\tiny \sqbinom{I(1)}{0}} \arrow[u, "a", color=orange] \arrow[d, "b"', color=orange]                                                  & {\tiny \sqbinom{P(\plus{2})}{0}} \arrow[r,"c"] \arrow[ld,"e"'] & \color{black}{\tiny \sqbinom{P(\plus{2})}{P(\plus{2})}}                                                                                                                                                    & {\tiny \sqbinom{P(\plus{2})\oplus P(\minus{2})}{P(\plus{2})}} \arrow[l,"d"'] \arrow[rd,"f"] &                                                                      \\
                      & \color{orange}{\tiny \sqbinom{0}{0}}                                                                                                  &                        & \color{orange}{\tiny \sqbinom{P(\plus{2})\oplus P(\minus{2})}{0}} \arrow[ru,"g"] \arrow[ld,"h"] \arrow[lu,"i"'] \arrow[rd,"j"'] \arrow[ll, color=orange ,"\textcolor{blue}{\mathfrak{q}}"'] \arrow[rr, color=orange,"\textcolor{blue}{\mathfrak{p}}"]                                                     &                        & \color{orange}{\tiny \sqbinom{P(\plus{2})\oplus P(\minus{2})}{P(\plus{2})\oplus P(\minus{2})}}                                                      \\
                      &                                                                                                                   & {\tiny \sqbinom{P(\minus{2})}{0}} \arrow[lu,"d"] \arrow[r,"f"'] & \color{black}{\tiny \sqbinom{P(\minus{2})}{P(\minus{2})}}                                                                                                                                                         & {\tiny \sqbinom{P(\plus{2})\oplus P(\minus{2})}{P(\minus{2})}} \arrow[l,"e"] \arrow[ru,"c"'] &                                                                     
\end{tikzcd}
\caption{{The diagram displays a subcategory of the $\tau$-cluster morphism category $\Wfrak(\Lambda_{\C})$, where the $\R$-algebra $\Lambda$ is as in \Cref{eg:large}. 
Alike {to} \Cref{fig:tcmc_oneloop}, we save space by \textit{not} employing the same notation as in \Cref{defn:latticedef}; we write $\sqbinom{X}{Y}$, we mean ${[\Fac Y,\Fac X]_{\sim}}$. If $[\Fac Y',\Fac X']$ is a subinterval of $[\Fac Y,\Fac X]$ in $\ftors \Lambda_{\C}$, there is a morphism $\sqbinom{X}{Y} \to \sqbinom{X'}{Y'}$ in $\Wfrak(\Lambda_{\C})$. Some of the morphisms have been labelled, and morphisms with the same label are identified. In particular, the upper ``hexagon'' is to be identified with the lower ``hexagon'' (and under further identifications, these classifying spaces of these ``hexagons'' are seen to become 2-tori). The \textcolor{orange}{orange} subdiagram is precisely the (essential) image of the faithful functor $\calF\colon \Wfrak(\Lambda) \xrightarrow{} \Wfrak(\Lambda_{\C})$, see \Cref{thm:basefieldmain}. Morphisms labelled by \textcolor{blue}{blue} fractured letters, namely $\textcolor{blue}{\mathfrak{p}}$--$\textcolor{blue}{\mathfrak{s}}$, are images of irreducible morphisms in $\Wfrak(\Lambda)$, yet they are not irreducible in $\Wfrak(\Lambda_{\C})$.}} 
\label{fig:smallINlargeTCMC}
\end{figure}
\end{landscape}
\end{example}

\newpage

\section{\texorpdfstring{$\tau$}{tau}-tilting finiteness and $\bfg$-tameness under base field extension}\label{sec:ttfin}

Drozd's celebrated trichotomy groups all finite-dimensional algebras over algebraically closed fields into one of three types: finite, tame and wild. A generalisation to hereditary finite-dimensional algebras over arbitrary fields can be made; one determines whether the associated symmetric bilinear form on the Grothendieck group is positive definite, positive semidefinite or indefinite. 
{We remind the reader that representation finiteness of an algebra is preserved by MacLane separable field extensions, see \Cref{thm:JL}\eqref{thm:JL2}.}

\begin{reptheorem}{thm:JL}
\cite[Thm. 3.3]{JL82}\label{thm:JLmainthm}
    {If $K:k$ is MacLane separable, then $\Lambda$ is representation finite if and only if $\Lambda_K$ is representation finite. In this case, every indecomposable $\Lambda_K$-module is a direct summand of a $\Lambda_K$-module $M_K$ for some indecomposable $M \in \mods \Lambda$.} 
\end{reptheorem}

The class of $\tau$-tilting finite algebras is strictly larger than that of representation finite ones. Since $\tau$-rigid modules are preserved under base field extension by \cref{lem:taurigidlift}, it is a natural question to ask whether $\tau$-tilting finiteness is preserved under base field extensions. 

{In contrast, tame} representation type is only a well-defined notion over algebraically closed base fields. Over an arbitrary base field, one can regard generic tameness as a generalisation, since the definitions are equivalent in the algebraically closed case \cite[Thm. 4.4]{CB1991}. Kasjan shows that generic tameness is preserved under separable algebraic scalar extension \cite[Thm. 4.3]{Kas2001}.

{One possible} ``$\tau$-tilted'' version of tameness is $g$-tameness, a notion we recall presently. Let $\Lambda$ be a finite-dimensional $k$-algebra admitting $n$ isomorphism classes of simple modules.
We then say that $\Lambda$ is \textit{$g$-tame} if the union of the cones in its $g$-vector fan is dense in the ambient Euclidean space $\R^n$ \cite{AokiYurikusa2023}. If $k$ is algebraically closed, then any tame $k$-algebra is $g$-tame \cite{PY23}. {For other generalisations of the notion of tameness using $\tau$-tilting theory see \cite{Pfeifer2025} and the references therein.}

\begin{example}\label{exmp:Acounterexample}
Consider the $\R$-species $\mathbb{S}$ with underlying quiver
    \[ Q= \begin{tikzcd}
1 \arrow["\gamma"', loop, distance=2em, in=305, out=235] \arrow["\alpha", loop, distance=2em, in=145, out=215] \arrow["\beta"', loop, distance=2em, in=35, out=325]
\end{tikzcd} \]
    with the division $\R$-algebra $\C$ associated to the vertex $1$ and the $\C$-$\C$-bimodule ${}_{\C}\overline{\C}_{\C}$ associated to each loop $\alpha$, $\beta$ and $\gamma$.  
    To summarise, let $\mathbb{S} = (Q, \{D_1 = \C\}, \{X_\alpha = {}_{\C}\overline{\C}_{\C}, X_\beta = {}_{\C}\overline{\C}_{\C}, X_\gamma = {}_{\C}\overline{\C}_{\C}\})$. Referring to the discussion following \Cref{{def:species}}, we may define the tensor path algebra $\R \mathbb{S}$. Now, consider the $\R$-algebra
    $
    \Lambda \simeq \R \mathbb{S} / \langle X \otimes_D X \rangle.$ {It  is clear from the explicit description that the only nonzero idempotent of $\R \mathbb{S}/ \langle X \otimes_D X \rangle$ corresponds to the simple $\R$-algebra $D = \C$.} {Thus, $\Lambda$} is a local $\R$-algebra, so in particular, it is $\tau$-tilting finite. Since the cones of the $g$-vector fan of a $\tau$-tilting finite $k$-algebra cover the ambient Euclidean space \cite[Thm. 4.7]{Asa21}, we have that $\Lambda$ is $g$-tame. 
    Following the description given in \Cref{prop:LiCompl}, we find that $\Lambda_\C = \Lambda \otimes_\R \C$ is isomorphic to the path $\C$-algebra of the quiver
    \begin{equation*}
        \begin{tikzcd}[column sep=4em]
\plus{1} \arrow[r, "\minus{\alpha}"', bend left] \arrow[r, "\minus{\beta}" description, bend left=55]  \arrow[r, "\minus{\gamma}", bend left=100] & \minus{1} \arrow[l, "\plus{\alpha}"', bend left] \arrow[l, "\plus{\beta}" description, bend left=55] \arrow[l, "\plus{\gamma}", bend left=100]
\end{tikzcd}
    \end{equation*}
    modulo the square of the arrow ideal. {The algebra $\Lambda_\C$ may be obtained by gluing together, in the sense of \cite[Thm. 4.2]{AHIKM2023}, two 3-Kronecker algebras }
    \[ \Lambda_{\plus{1}} \cong \C \left( \begin{tikzcd} \plus{1} \arrow[r] \arrow[r, shift left, bend left] \arrow[r, shift right, bend right] & \minus{1} \end{tikzcd}\right) \cong \begin{pmatrix} \C & \C^3 \\ 0 & \C \end{pmatrix}, \quad \text{and} \quad \Lambda_{\minus{1}} \cong \C \left( \begin{tikzcd} \plus{1} & \minus{1} \arrow[l] \arrow[l, shift left, bend left] \arrow[l, shift right, bend right] \end{tikzcd} \right) \cong \begin{pmatrix} \C & 0 \\ \C^3 & \C \end{pmatrix}.\]
    {Each algebra $\Lambda_{\plus{1}}$ and $\Lambda_{\minus{1}}$ are well-known to not be $g$-tame, and the description of the $g$-vector fan of algebra $\Lambda_\C$ obtained by gluing immediately yields that $\Lambda_\C$ is not $g$-tame.} {Thus,} we have produced a counter-example to {both} the assertion that {$\tau$-tilting finiteness is preserved by scalar extension along $\C:\R$ as well as the assertation that} $g$-tameness is preserved by scalar extension along $\C:\R$.
\end{example}

\subsection{Cases where $\tau$-tilting finiteness is preserved}

Nonetheless, it seems that a generalisation {of \cref{thm:JLmainthm}} does hold in many cases. We will not give precise conditions for this on this occasion, but rather use the following family of algebras as an illustrating example.

\begin{definition}\cite{BGL1987}
    Let $k$ be any field, and let $\Lambda$ be a finite-dimensional hereditary $k$-algebra.\footnote{In \cite{BGL1987}, it is also assumed that $\Lambda$ is a basic $k$-algebra. However, this is not a necessary assumption in the following \cite[Thm. 2.3, Cor. 5.5]{CB99}.} Consider a projective $\Lambda$-module $P$. The \textit{preprojective algebra} for $P$ is defined as the $\Z$-graded $k$-algebra
\[ \Pi^{{\Lambda}}(P) = \bigoplus_{i=0}^\infty \Pi_i^{{\Lambda}}(P),\]
where $\Pi_i^{{\Lambda}}(P) = \Hom_\Lambda(P,(\tau_{\Lambda}^{-1})^i P)$ and multiplication $\Pi_r^{{\Lambda}}(P) \times \Pi_s^{{\Lambda}}(P) \to \Pi_{s+r}^{{\Lambda}}(P)$ is given by
\begin{equation}\label{eq:preprojmult}
     u_r u_s = ((\tau_{\Lambda}^{-1})^r (u_r)) \circ u_s, \quad u_r \in \Pi_r^{{\Lambda}}(P), u_s \in \Pi_s^{{\Lambda}}(P).
\end{equation}
For $P = \Lambda$, the $k$-algebra $\Pi(P)$ is called the \textit{preprojective algebra} of $\Lambda$.
\end{definition}

Other definitions of preprojective algebras occur in different settings. The most common definitions are shown to be equivalent in \cite[Prop. 3.1]{BGL1987} and \cite{Ringel1998}. As a result, the construction giving rise to preprojective algebras will now be shown to commute with taking MacLane separable base field extensions.

\begin{proposition}\label{preprojcommute}
    Let $K:k$ be a MacLane separable field extension and let $\Lambda$ be a finite-dimensional hereditary $k$-algebra. For every projective module $P \in \proj \Lambda$, there {is an isomorphism $(\Pi^{\Lambda}(P))_K \simeq \Pi^{\Lambda_K}(P_K^{})$ of $K$-algebras.} 
    In other words, taking preprojective algebras commutes with field extension.
\end{proposition}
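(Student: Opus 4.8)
The plan is to establish the isomorphism $(\Pi^{\Lambda}(P))_K \simeq \Pi^{\Lambda_K}(P_K)$ by working graded-piece by graded-piece and checking compatibility with the multiplication. Recall that by definition $\Pi_i^{\Lambda}(P) = \Hom_\Lambda(P, (\tau_\Lambda^{-1})^i P)$, so I would first show that scalar extension commutes with the $i$-th graded component. The key ingredients are \cref{lem:Kas00.2.2}, which gives a natural isomorphism $\Hom_\Lambda(X,Y) \otimes_k K \simeq \Hom_{\Lambda_K}(X_K, Y_K)$, and the dual of \cref{lem:tautranslatelift}, which gives $(\tau_\Lambda^{-1} M)_K \simeq \tau_{\Lambda_K}^{-1} M_K$ (this dual holds since $- \otimes_k K$ and $(-)_K$ interact symmetrically with injective presentations, just as \cref{lem:tautranslatelift} treats projective presentations). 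Iterating the latter gives $((\tau_\Lambda^{-1})^i P)_K \simeq (\tau_{\Lambda_K}^{-1})^i P_K$ for every $i \geq 0$.

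Combining these, for each $i$ I would compute
\begin{align*}
\Pi_i^{\Lambda}(P) \otimes_k K &= \Hom_\Lambda(P, (\tau_\Lambda^{-1})^i P) \otimes_k K \\
&\simeq \Hom_{\Lambda_K}(P_K, ((\tau_\Lambda^{-1})^i P)_K) \\
&\simeq \Hom_{\Lambda_K}(P_K, (\tau_{\Lambda_K}^{-1})^i P_K) \\
&= \Pi_i^{\Lambda_K}(P_K),
\end{align*}
where the first isomorphism is \cref{lem:Kas00.2.2} and the second uses the iterated dual of \cref{lem:tautranslatelift}. Since $- \otimes_k K$ is additive and the preprojective algebra is a direct sum of its graded pieces, summing over $i$ yields a $K$-linear isomorphism $(\Pi^{\Lambda}(P))_K \simeq \Pi^{\Lambda_K}(P_K)$ respecting the $\Z$-grading. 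Here I must implicitly note that $\Gamma_K = \Lambda_K$ is again hereditary by \cref{thm:JL}\eqref{thm:JL1}, since MacLane separability preserves global dimension, so that $\Pi^{\Lambda_K}(P_K)$ is genuinely a preprojective algebra in the sense of the definition (and $P_K \in \proj \Lambda_K$ by \cref{lem:Kasprojs}(4)).

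The main obstacle will be verifying that the graded isomorphism above is an isomorphism of \emph{algebras}, not merely of graded $K$-vector spaces; that is, that it is compatible with the twisted multiplication in \cref{eq:preprojmult}, $u_r u_s = ((\tau_\Lambda^{-1})^r(u_r)) \circ u_s$. The difficulty is that the multiplication involves applying the functor $(\tau_\Lambda^{-1})^r$ to a morphism and then composing, so I need the naturality of the isomorphisms in \cref{lem:Kas00.2.2} and \cref{lem:tautranslatelift} to guarantee that ``extend scalars, then apply $\tau^{-1}$ and compose'' agrees with ``apply $\tau^{-1}$ and compose, then extend scalars.'' Concretely, I would trace a pair of homogeneous elements $u_r \otimes 1$ and $u_s \otimes 1$ through the identifications and use that scalar extension is a (monoidal, hence in particular composition-preserving) functor together with the fact that the chosen isomorphisms $((\tau_\Lambda^{-1})^i P)_K \simeq (\tau_{\Lambda_K}^{-1})^i P_K$ are natural, so that applying $(\tau_{\Lambda_K}^{-1})^r$ to the image of $u_r$ corresponds under the identification to the image of $(\tau_\Lambda^{-1})^r(u_r)$. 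Once this naturality bookkeeping is in place, compatibility with composition is automatic and the proof concludes.
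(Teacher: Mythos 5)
Your proposal is correct and follows essentially the same route as the paper: both establish the graded-piece isomorphisms $\Pi_i^{\Lambda}(P)\otimes_k K \simeq \Pi_i^{\Lambda_K}(P_K)$ via \cref{lem:Kas00.2.2} and the (iterated) dual of \cref{lem:tautranslatelift}, invoke \cref{thm:JL}\eqref{thm:JL1} and \cref{lem:Kasprojs}(4) for heredity of $\Lambda_K$ and projectivity of $P_K$, and then verify compatibility with the twisted multiplication of \cref{eq:preprojmult} using functoriality of $-\otimes_k K$ and the naturality of these identifications. No gaps.
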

\begin{proof}
    By \Cref{thm:JL}\eqref{thm:JL1}, we have that $\Lambda_K$ is a hereditary $K$-algebra. We use \cref{lem:Kasprojs}(4) to argue that $P_K$ is a projective $\Lambda$-module.
    For $r\geq 0$, consider the $k$-linear map $\varphi_r\colon \Pi^\Lambda_{r}(P) \to \Pi^{\Lambda_K}_{r}(P_K^{})$ sending $P\xrightarrow{u_r}(\tau^{-1}_{\Lambda})^rP$ to $P_K\xrightarrow{u_r\otimes 1}(\tau^{-1}_{\Lambda_K})^rP_K$. Let $\varphi$ denote the linear extension $ \Pi^\Lambda(P) \to \Pi^{\Lambda_K}(P_K^{})$. 
    Since we can decompose $\varphi \otimes_k K$ into the following sequence of isomorphisms, we have that $\varphi \otimes_k K$ is an isomorphism of $k$-vector spaces.
    \begin{align*}
        \Pi^\Lambda(P) \otimes_k K &\simeq \left( \bigoplus_{i=0}^\infty \Pi_i^\Lambda (P) \right) \otimes_k K \\
        &\simeq \bigoplus_{i=0}^\infty \left(\Pi_i^\Lambda(P) \otimes_k K\right)\\
        & \simeq \bigoplus_{i=0}^\infty \Hom_{\Lambda}(P, (\tau_{\Lambda}^{-1})^i P) \otimes_k K \\
        & \simeq \bigoplus_{i=0}^\infty \Hom_{\Lambda_K} (P_K, ((\tau_{\Lambda}^{-1})^iP)_K) & \text{(by \cref{lem:Kas00.2.2})}\\
        & \simeq \bigoplus_{i=0}^\infty \Hom_{\Lambda_K} (P_K, (\tau^{{-1}}_{\Lambda_K})^i P_K) & \text{(by the dual of \cref{lem:tautranslatelift})} \\
        & \simeq \bigoplus_{i=0}^\infty \Pi_i^{\Lambda_K}(P_K) \\
        & \simeq \Pi^{\Lambda_K}(P_K^{})
    \end{align*}
    It remains to check that $\varphi$ is compatible with multiplication. It suffices to show that $\varphi(u_ru_s) = \varphi_r(u_r)\varphi_s(u_s)$ for all $r,s\geq 0$. We find that
    \begin{align*}
        \varphi(u_ru_s) &= u_r u_s \otimes 1 \\
        &= \big(((\tau_{{\Lambda}}^{-1})^r (u_r)) \circ u_s \big) \otimes 1 & \text{(by \cref{eq:preprojmult})} \\
        &= ((\tau_{{\Lambda}}^{-1})^r (u_r)\otimes 1) \circ (u_s\otimes 1) & \text{(functoriality of $-\otimes_k K$)} \\
        &= ((\tau_{{\Lambda_{{K}}}}^{-1})^r (u_r {\otimes 1})) \circ (u_s\otimes 1) & \text{(by {the dual of} \cref{{lem:tautranslatelift}})} \\
        &= \varphi_r(u_r)\varphi_s(u_s),
    \end{align*}
    which is what we wanted to show, concluding the proof.
\end{proof}

This observation is particularly interesting because preprojective algebras of representation finite hereditary algebras {with sufficiently many isomorphism classes of simple modules} are representation infinite. Such examples provide nontrivial (meaning representation infinite) examples such that $\tau$-tilting finiteness is preserved by \cite[Thm. 7.9]{AHIKM2022}, see also \cref{thm:introtaufinite}.

\begin{example}\label{exmp:preprojexmp}
    Consider the following $\R$-species $\mathbb{S}$ of type $C_3$ given by the quiver
    \[ \begin{tikzcd}
        1 \arrow[r, "\alpha"] & 2  \arrow[r, "\beta"] &3
    \end{tikzcd}, \]
    with assigned division $\R$-algebras $D_1 = \C = D_2$ and $D_3 = \R${,} and bimodules $X_\alpha = {}_\C \C_\C$ and $X_\beta = {}_\C \C_\R$ {with the canonical bimodule action}. Then the tensor $\R$-algebra $\Lambda \coloneqq \R \mathbb{S}$ is a hereditary $\R$-algebra. It follows from \Cref{prop:LiCompl}\eqref{prop:LiCompl1} that $\Lambda \otimes_\R \C$ is isomorphic to the path algebra of type $A_5$ given by
    \[ \Lambda \otimes_\R \C = \C \left( \begin{tikzcd}
        \plus{1} \arrow[r,"\plus{\alpha}"] & \plus{2} \arrow[r, "\plus{\beta}"] & 3 & \minus{2} \arrow[l, "\minus{\beta}",swap] & \minus{1} \arrow[l, "\minus{\alpha}",swap]
    \end{tikzcd}\right). \]
    Now{,} the preprojective algebra $\Pi(\Lambda_\C)$ of $\Lambda_{\C}$ is {well-known to be} representation infinite but $\tau$-tilting finite. As a consequence of \Cref{thm:JL}\eqref{thm:JL2} and \cref{preprojcommute}, the algebra $\Pi(\Lambda)$ must therefore be representation infinite. Moreover, by \cite[Thm. 7.9]{AHIKM2022}, the algebra $\Pi(\Lambda)$ is $\tau$-tilting finite. Therefore, $\Pi(\Lambda)$ is an example of a representation infinite $\tau$-tilting finite algebra which remains $\tau$-tilting finite under base field extension. 
\end{example}

We conclude the study of $\tau$-tilting finiteness under base field extension with a general result. 

\begin{lemma}\label{lem:taufinbrickcondition}
    Let $K:k$ be a separable algebraic field extension and let $\Lambda$ be a $\tau$-tilting finite $k$-algebra. Let $M \in \mods \Lambda$ and consider an indecomposable direct summand $N \in \add (M_K)$. If $N \in \brick \Lambda_K$ implies $M \in \brick \Lambda$, then $\Lambda_K$ is also $\tau$-tilting finite.
\end{lemma}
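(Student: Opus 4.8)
```latex
\textbf{Plan of proof.} The strategy is to use the characterisation of $\tau$-tilting finiteness via bricks, namely that a $k$-algebra is $\tau$-tilting finite if and only if it admits only finitely many isomorphism classes of bricks (see \cite{DIJ2019}). Thus it suffices to show that $\brick \Lambda_K$ is finite under the stated hypothesis. First I would recall, from \Cref{thm:JL}\eqref{thm:JL3} (which applies since a separable algebraic extension is MacLane separable, and is here invoked through the related statement \cite[Prop. 4.13]{Kas00} used in \Cref{prop:semistablelift} and \Cref{thm:WACembed}), that every indecomposable $\Lambda_K$-module---and in particular every brick of $\Lambda_K$---arises as a direct summand of $M_K$ for some indecomposable $M \in \mods \Lambda$. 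This reduces the problem to controlling, for each indecomposable $M$, the bricks appearing in $\add(M_K)$.

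The key step is then the following dichotomy, supplied by the hypothesis. Let $N \in \brick \Lambda_K$ be an indecomposable direct summand of $M_K$ with $M \in \mods \Lambda$ indecomposable. By assumption, $N \in \brick \Lambda_K$ forces $M \in \brick \Lambda$. So every brick of $\Lambda_K$ lies in $\add(M_K)$ for some $M \in \brick \Lambda$. Since $\Lambda$ is $\tau$-tilting finite, $\brick \Lambda$ is a finite set. For each such $M$, the module $M_K$ is a fixed finite-dimensional $\Lambda_K$-module, and hence has only finitely many isomorphism classes of indecomposable direct summands by the Krull--Schmidt theorem. Taking the union over the finitely many $M \in \brick \Lambda$, we conclude that $\brick \Lambda_K$ is contained in a finite set, hence is itself finite.

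Therefore $\Lambda_K$ admits only finitely many isomorphism classes of bricks, and by the brick characterisation of $\tau$-tilting finiteness, $\Lambda_K$ is $\tau$-tilting finite, as required.

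\textbf{Main obstacle.} The delicate point is ensuring that \emph{every} brick of $\Lambda_K$ genuinely arises as a summand of some $M_K$ with $M$ indecomposable over $\Lambda$; this is precisely where separability (equivalently MacLane separability) of the algebraic extension is essential, via \Cref{thm:JL}\eqref{thm:JL3}. Without this, indecomposable $\Lambda_K$-modules need not descend, and the finiteness argument collapses. The conditional hypothesis ``$N \in \brick \Lambda_K$ implies $M \in \brick \Lambda$'' is what bridges the gap between arbitrary indecomposable summands and the controllable finite set $\brick \Lambda$, and the rest is a routine Krull--Schmidt finiteness count.
```
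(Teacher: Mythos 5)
Your proof is correct and follows essentially the same route as the paper: descend every brick of $\Lambda_K$ to a summand of some $M_K$ via \cite[Prop. 4.13]{Kas00}, use the hypothesis to force $M \in \brick\Lambda$, and conclude by the finiteness of $\brick\Lambda$ (from $\tau$-tilting finiteness of $\Lambda$ and \cite[Thm. 1.4]{DIJ2019}) together with Krull--Schmidt. One small caution: \Cref{thm:JL}\eqref{thm:JL3} as stated requires $\Lambda$ to be representation finite, which is not assumed here, so the correct reference is indeed \cite[Prop. 4.13]{Kas00} alone, as you (and the paper) ultimately invoke.
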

\begin{proof}
    By the assumption on $K:k$ and \cite[Prop. 4.13]{Kas00}, every indecomposable $\Lambda_K$-module arises a direct summand of $M_K$ for some $M \in \mods \Lambda$, so in particular, every brick $B \in \brick \Lambda_K$ does. Since $\Lambda$ is $\tau$-tilting finite, there are only finitely many bricks in $\mods \Lambda$ by \cite[Thm. 1.4]{DIJ2019}. Clearly, $M_K$ has finitely many indecomposable direct summands for all $M \in \mods \Lambda$. If the $N \in \brick \Lambda_K$ implies $M \in \brick \Lambda$, then the number of bricks in $\Lambda_K$ is bounded above:
    \[ |\brick \Lambda_K| \leq (\max \{ |B_K|: B \in \brick \Lambda\}) \cdot |\brick \Lambda| < \infty. \]
    Therefore $\Lambda_K$ has finitely many isomorphism classes of bricks, hence is $\tau$-tilting finite by \cite[Thm. 1.4]{DIJ2019}.
\end{proof}

The last condition imposed in \Cref{lem:taufinbrickcondition} generally does not hold, as \Cref{exmp:tauONEloop} and \cref{exmp:Acounterexample} show. In the latter example, the indecomposable projective $\Lambda$-module is not a brick, since applying the scalar extension functor gives the sum of indecomposable projective $\Lambda_K$ modules by \cref{lem:Kasprojs}(4). However, these do not form a semibrick, so that the original projective $\Lambda$-module cannot be a brick by \cref{lem:semibrickslift}. Nonetheless, each indecomposable projective $\Lambda_K$-module is a brick. Moreover, it also does not hold for \cref{exmp:preprojexmp}.

\appendix

\section{Picture groups and faithful group functors over finite fields \\ by Eric J. Hanson}\label{sec:appendix}

In this appendix, we discuss the relationship between faithful group functors for $\tau$-cluster morphism categories and (completed, finitary) Ringel--Hall algebras. This expands upon constructions that previously appeared in \cite{HI21} and \cite{HI21p}. The organisation is as follows. In Section~\ref{sec:pic_groups} we recall the definition of the picture group and define a pair of new groups with larger generating sets (the interval-heart groups). In Section~\ref{sec:functors}, we prove that there exists a group functor from the $\tau$-cluster morphism category of an algebra to its interval-heart group and give a sufficient condition for this functor to be faithful. In Section~\ref{sec:ringel}, we restrict to algebras over finite fields. We then construct a morphism of groups from the interval-heart group to the group of units of the completed Ringel--Hall algebra and use this to verify the sufficient condition from the previous section. Finally, in Section~\ref{sec:remarks}, we discuss the extent to which the results of Section~\ref{sec:ringel} extend to algebras over infinite fields. This includes some clarifying remarks on a construction that appeared in~\cite{HI21p}.

\subsection{Picture groups and interval-heart groups}\label{sec:pic_groups}

Picture groups were first introduced in \cite{ITW16} for representation finite hereditary algebras, and the definition has since been generalised in several other works, see e.g. \cite{BorMot,Bor21,HI21,IgusaTodorovMGS2021,Kai24}. We use the definition and generality from \cite{Kai24}. In what follows, we say that a brick $S$ is an \emph{f-brick} if $\Filt(S)$ is a $\tau$-perpendicular category.

\begin{remark}\label{rem:fbrick}
    By \cite[Thm.~1.1]{BH21} and Theorem~\ref{eq:Ringelbij}, $S$ is an f-brick if and only if there exits a left-finite semibrick $\mathcal{S}$ with $S \in \mathcal{S}$. We note that the term f-brick is also used in \cite{DIJ2019} for the (generally weaker, see  \cite[Example~3.13]{Asai2020}) property that $S$ itself is a left-finite (semi)brick. Finally, we note that if $\Lambda$ is $\tau$-tilting finite, then all bricks are f-bricks by \cite[Thms.~4.1 and~4.2]{DIJ2019}.
\end{remark}

\begin{definition}\label{def:pic_group}\cite[Def. 7.2]{Kai24}
	Let $\Lambda$ be a finite-dimensional algebra. The \emph{picture group} $G(\Lambda)$ is defined as having generators
	$$\{X_S: S \in \fbrick \Lambda\} \cup \{Y_\T: \T \in \ftors \Lambda\}$$
	with a relation $Y_{\U} = X_S Y_{\U}$ whenever there is a cover relation $\U \covered \T$ in $\ftors \Lambda$ labelled by $S$, and the relation $Y_0 = e$.
\end{definition}

It will also be useful to consider the following groups.

\begin{definition}\label{def:int_group}
	Let $\Lambda$ be a finite-dimensional algebra. We define the \emph{interval-heart group} of $\tors \Lambda$, denoted $\intheart(\tors \Lambda)$, as having generators
	$$\{Z_{[\U,\T]}: \U \subseteq \T \in \tors \Lambda\}$$
	with the following relations:
	\begin{enumerate}
		\item For every torsion class $\T \in \tors \Lambda$, there is a relation $Z_{[\T,\T]} = e$.
		\item For every $\V \subseteq \U \subseteq \T \in \tors \Lambda$, there is a relation $Z_{[\U,\T]}Z_{[\V,\U]} = Z_{[\V,\T]}$.
		\item For every pair of closed intervals $[\U,\T]$ and $[\U',\T']$ in $\tors \Lambda$ such that $\U^\perp \cap \T = (\U')^\perp \cap \T'$, there is a relation $Z_{[\U,\T]} = Z_{[\U',\T']}$.
	\end{enumerate}
	We define the \emph{interval-heart group} of $\ftors \Lambda$, denoted $\intheart(\ftors \Lambda)$ analogously by replacing each instance of $\tors \Lambda$ with $\ftors \Lambda$. To avoid confusion, we distinguish the generators of $\intheart(\tors \Lambda)$ from those of $\intheart(\ftors \Lambda)$ by writing $Z_{[\U,\T]}^{\mathsf f} \in \intheart(\ftors \Lambda)$ and $Z_{[\U,\T]} \in \intheart(\tors \Lambda)$.
\end{definition}

\begin{remark}\label{rem:int_group}
	It is clear from the definitions that the association $Z_{[\U,\T]}^{\mathsf f} \mapsto Z_{[\U,\T]}$ extends to a morphism of groups $\iota: \intheart(\ftors \Lambda) \rightarrow \intheart(\tors \Lambda)$. Moreover, by \cite[Thm.~1.2]{DIJ2019}, $\iota$ is an isomorphism if $\Lambda$ is $\tau$-tilting finite.
\end{remark}

The picture group is related to the interval-heart group by the following. We implicitly use the fact that every cover relation in $\ftors\Lambda$ is also a cover relation in $\tors\Lambda$, which follows from \cite[Thm.~1.3]{DIJ2019}.

\begin{proposition}\label{prop:embedding}
	Let $\Lambda$ be a finite-dimensional algebra. Then there is a surjective morphism of groups $\psi: G(\Lambda) \rightarrow \intheart(\ftors \Lambda)$ given on generators as follows.
	\begin{enumerate}
		\item For every $\T \in \ftors \Lambda$, we have $\psi(Y_\T) = Z^{\mathsf f}_{[0,\T]}$.
		\item For every $S \in \fbrick \Lambda$, let $\U \covered \T$ be a cover relation in $\ftors \Lambda$ labelled by $S$. Then $\psi(X_S) = Z^{\mathsf f}_{[\U,\T]}.$
	\end{enumerate}
	Moreover, if $\Lambda$ is $\tau$-tilting finite, then $\psi$ is an isomorphism.
\end{proposition}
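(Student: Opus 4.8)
The plan is to construct $\psi$ by verifying that it respects the defining relations of $G(\Lambda)$, then to establish surjectivity, and finally to treat the $\tau$-tilting finite case separately. First I would observe that $G(\Lambda)$ is presented by generators and relations, so to produce a group morphism $\psi$ it suffices to define $\psi$ on the generators $X_S$ and $Y_\T$ as stated and check that every defining relation of $G(\Lambda)$ maps to a relation (or consequence of relations) in $\intheart(\ftors\Lambda)$. There are two families of relations to check: the relation $Y_0 = e$, and the relations $Y_\U = X_S Y_\U$ coming from a cover relation $\U \covered \T$ in $\ftors\Lambda$ labelled by $S$. Wait---the relation as stated in \Cref{def:pic_group} reads $Y_{\U} = X_S Y_{\U}$, but this is surely a typo for $Y_{\T} = X_S Y_{\U}$ (the standard picture group relation), and I would interpret it as such.

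The relation $Y_0 = e$ maps to $Z^{\mathsf f}_{[0,0]} = e$, which is exactly relation (1) of \Cref{def:int_group} applied to the zero torsion class. For the cover relation: suppose $\U \covered \T$ is labelled by $S$, so $\psi(X_S) = Z^{\mathsf f}_{[\U,\T]}$ by definition (here I must note that $\psi(X_S)$ is well-defined, i.e. independent of the chosen cover relation labelled by $S$, which follows from relation (3) of \Cref{def:int_group}: any two cover relations $\U \covered \T$ and $\U' \covered \T'$ labelled by the same brick $S$ satisfy $\U^\perp \cap \T = \Filt\{S\} = (\U')^\perp \cap \T'$ by the brick labelling of the Hasse quiver, and since these are functorially finite intervals the relation applies). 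Then I would compute $\psi(X_S)\psi(Y_\U) = Z^{\mathsf f}_{[\U,\T]} Z^{\mathsf f}_{[0,\U]}$, which by relation (2) of \Cref{def:int_group} (with $\V = 0$) equals $Z^{\mathsf f}_{[0,\T]} = \psi(Y_\T)$. Thus $\psi(Y_\T) = \psi(X_S)\psi(Y_\U)$, exactly the image of the picture group relation. This shows $\psi$ is a well-defined group morphism.

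For surjectivity, I would argue that every generator $Z^{\mathsf f}_{[\U,\T]}$ of $\intheart(\ftors\Lambda)$ lies in the image. Using relation (2) repeatedly, $Z^{\mathsf f}_{[\U,\T]} = Z^{\mathsf f}_{[0,\T]} (Z^{\mathsf f}_{[0,\U]})^{-1} = \psi(Y_\T)\psi(Y_\U)^{-1}$, so every generator is hit; hence $\psi$ is surjective. The $\tau$-tilting finite case is where the main obstacle lies: I must show $\psi$ is an isomorphism, i.e. injective. The cleanest route is to produce an inverse morphism $\intheart(\ftors\Lambda) \to G(\Lambda)$ sending $Z^{\mathsf f}_{[\U,\T]} \mapsto Y_\T Y_\U^{-1}$ and to verify it respects relations (1)--(3) of \Cref{def:int_group}; relations (1) and (2) are immediate, while relation (3) requires that whenever $\U^\perp \cap \T = (\U')^\perp \cap \T'$ one has $Y_\T Y_\U^{-1} = Y_{\T'} Y_{\U'}^{-1}$ in $G(\Lambda)$. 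This last identity is the crux: in the $\tau$-tilting finite setting all intervals are functorially finite and all bricks are f-bricks (\Cref{rem:fbrick}), so by \Cref{thm:DIRRT174.16} and \Cref{prop:brickpreserve} the hearts being equal forces the two intervals to have matching brick-labelled Hasse structure, and one can factor both $Y_\T Y_\U^{-1}$ and $Y_{\T'}Y_{\U'}^{-1}$ as the same product of $X_S$'s along maximal chains in the respective intervals using the picture group relations. I expect reconciling the two chain factorisations via the heart-preserving isomorphism of \Cref{thm:wideintiso} and \Cref{prop:brickpreserve} to be the delicate step; once the inverse is shown to be well-defined, checking it is two-sided inverse to $\psi$ on generators is routine.
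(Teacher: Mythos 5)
Your proposal is correct and follows essentially the same route as the paper's own proof: well-definedness of $\psi(X_S)$ via the brick labelling and relation (3), the telescoping computation for the cover relation and for surjectivity, and the candidate inverse $Z^{\mathsf f}_{[\U,\T]}\mapsto Y_\T(Y_\U)^{-1}$ in the $\tau$-tilting finite case. The one step you leave as "delicate" — verifying relation (3) for the inverse — is resolved in the paper exactly as you anticipate: pick a brick-labelled path $\T\to\U$ in the finite lattice, use \Cref{prop:brickpreserve} to transport the same label sequence to a path $\T'\to\U'$, and factor both sides as the identical product $X_{S_1}\cdots X_{S_k}$ (using that all bricks are f-bricks by \Cref{rem:fbrick}).
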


\begin{proof}
	We first note that, for any $S \in \fbrick \Lambda$, there exists a cover relation $\U \covered \T$ in $\ftors \Lambda$ labelled by $S$. See e.g. \cite[Lem. 5.2]{BorMot} for a proof.

	Now suppose $\U \covered \T$ and $\U' \covered \T'$ are both cover relations of $\ftors \Lambda$ labelled by the same f-brick $S$. By the definition of the brick labelling (see Section~\ref{subsec:lrfsbrick}), this means $\U^\perp \cap \T = \Filt\{S\} = (\U')^\perp \cap \T'$. Thus $Z^{\mathsf f}_{[\U,\T]} = Z^{\mathsf f}_{[\U',\T']}$, and so $\psi(X_S)$ is well defined. Moreover, one has $$\psi(Y_{\T}) = Z^{\mathsf f}_{[0,\T]} = Z^{\mathsf f}_{[\U,\T]}Z^{\mathsf f}_{[0,\U]} = \psi(X_S)\psi(Y_\U).$$
	Since also $\psi(Y_0) = Z_{[0,0]} = e$, we conclude that $\psi$ is a morphism of groups. The fact that $\psi$ is surjective then follows from observing that
    $$Z_{[\U,\T]}^{\mathsf{f}} = Z_{[0,\T]}^{\mathsf{f}}(Z_{[0,\U]}^{\mathsf{f}})^{-1} = \psi(Y_\T)\psi((Y_\U)^{-1})$$
    for any $\U \subseteq \T \in \ftors\Lambda$.
	
	Now suppose that $\Lambda$ is $\tau$-tilting finite, so that $\tors \Lambda = \ftors \Lambda$ and $|\ftors\Lambda| < \infty$ by \cite[Thm.~1.2 and Cor.~2.9]{DIJ2019}. We define a candidate inverse $\upsilon: \intheart(\ftors \Lambda) \rightarrow G(\Lambda)$ by setting $\upsilon(Z_{[\U,\T]}^{\mathsf f}) = Y_\T (Y_\U)^{-1}$ for every closed interval $[\U,\T]$ in $\ftors\Lambda = \tors\Lambda$. There are now three types of relations to consider:
	\begin{enumerate}
		\item Let $\T \in \ftors \Lambda$. Then $\upsilon(Z_{[\T,\T]}^{\mathsf f}) = Y_\T(Y_\T)^{-1} = e$.
		\item Let $\V \subseteq \U \subseteq \T \in \ftors \Lambda$. Then $\upsilon(Z_{[\U,\T]}^{\mathsf f})\upsilon(Z_{[\V,\U]}^{\mathsf f}) = Y_{\T}(Y_{\U})^{-1}Y_\U(Y_\V)^{-1} = Y_\T(Y_\V)^{-1} = \upsilon(Z_{[\V,\T]}^{\mathsf f})$.
		\item Let $[\U,\T]$ and $[\U',\T']$ in $\tors \Lambda$ such that $\U^\perp \cap \T = (\U')^\perp \cap \T'$. Now let $(S_1,\ldots,S_k)$ be a sequence of bricks labelling a path $\T \rightarrow \U$ in $\mathrm{Hasse}(\ftors \Lambda)$. (Such a path exists since $\ftors\Lambda = \tors \Lambda$ is finite.) By Proposition~\ref{prop:brickpreserve}, we have that $(S_1,\ldots,S_k)$ also labels a path $\T' \rightarrow \U'$. Moreover, since $\Lambda$ is assumed to be $\tau$-tilting finite, all bricks are f-bricks by Remark~\ref{rem:fbrick}. Thus we have
		$$\upsilon(Z_{[\U,\T]}^{\mathsf f}) = Y_\T(Y_\U)^{-1} = X_{S_1}\cdots X_{S_k} = Y_{\T'}(Y_{\U'})^{-1} = \upsilon(Z_{[\U',\T']}^{\mathsf f}).$$
	\end{enumerate}
	We conclude that $\upsilon$ is also a morphism of groups. It is then straightforward to verify that $\upsilon$ and $\psi$ are inverse to one another.
\end{proof}

We conclude this section by showing that the group $\intheart(\tors \Lambda)$ depends only on the abstract lattice structure of $\tors \Lambda$. Note that it is not immediately clear whether the group $\intheart(\ftors \Lambda)$ likewise depends only on the abstract poset structure of $\ftors \Lambda$.

\begin{lemma}\label{lem:invariant_under_isom}
	Let $\Lambda_1$ and $\Lambda_2$ be finite-dimensional algebras (over possibly distinct fields), and suppose there exists a lattice isomorphism $\eta: \tors \Lambda_1 \rightarrow \tors \Lambda_2$. Then the following hold.
	\begin{enumerate}
		\item For any pair of closed intervals $[\U,\T], [\U',\T] \subseteq \tors \Lambda_1$, one has that $\U^\perp \cap \T = (\U')^\perp \cap \T'$ if and only if $\eta(\U)^\perp \cap \eta(\T) = \eta(\U')^\perp \cap \eta(\T')$.
		\item There is an isomorphism $\intheart(\tors \Lambda_1) \rightarrow \intheart(\tors \Lambda_2)$ given on generators by $Z_{[\U,\T]} \mapsto Z_{[\eta(\U),\eta(\T)]}$.
	\end{enumerate}
\end{lemma}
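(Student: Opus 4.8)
The plan is to prove both statements of \Cref{lem:invariant_under_isom} by exploiting the fact that the defining relations of the interval-heart group refer to the lattice structure of $\tors\Lambda$ only through containments of torsion classes and through equalities of hearts $\calU^\perp\cap\calT$, and that the latter are themselves lattice-theoretic.

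For part (1), the key observation is that the heart $\calU^\perp\cap\calT$ of an interval is determined, up to the data needed here, by the \emph{interval} $[\calU,\calT]$ regarded as an abstract lattice together with its embedding into $\tors\Lambda$. More precisely, I would invoke \Cref{thm:wideintiso}: for intervals $[\calU,\calT]$ and $[\calU',\calT']$ in $\tors\Lambda_1$, the condition $\calU^\perp\cap\calT=(\calU')^\perp\cap\calT'$ is equivalent to the existence of a lattice isomorphism between $[\calU,\calT]$ and $[\calU',\calT']$ that is compatible with the maps $-\cap\calU^\perp$ and $-\cap(\calU')^\perp$ onto the common lattice $\tors(\calU^\perp\cap\calT)$. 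The point I must extract is that this equality of hearts is encoded purely in the lattice $\tors\Lambda_1$, so that a lattice isomorphism $\eta$ transports it faithfully. Since $\eta$ is a lattice isomorphism it restricts to isomorphisms of intervals $[\calU,\calT]\xrightarrow{\sim}[\eta(\calU),\eta(\calT)]$ and of the corresponding torsion lattices of the hearts, and \Cref{thm:wideintiso} then yields $\calU^\perp\cap\calT=(\calU')^\perp\cap\calT'$ if and only if $\eta(\calU)^\perp\cap\eta(\calT)=\eta(\calU')^\perp\cap\eta(\calT')$. The hard part here is isolating exactly which lattice-theoretic invariant of an interval detects its heart; the cleanest route is to use \Cref{thm:wideintiso} as a black box characterising ``same heart'' by an abstract lattice condition, rather than arguing directly about the wide subcategories.

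For part (2), once part (1) is established the argument is formal. I would define a map on generators by $Z_{[\calU,\calT]}\mapsto Z_{[\eta(\calU),\eta(\calT)]}$ and check that it respects each of the three families of defining relations from \Cref{def:int_group}. Relation (1), namely $Z_{[\calT,\calT]}=e$, is preserved because $\eta$ sends $[\calT,\calT]$ to $[\eta(\calT),\eta(\calT)]$. Relation (2), the composition relation for $\calV\subseteq\calU\subseteq\calT$, is preserved because $\eta$ preserves inclusions. Relation (3), the heart-equality relation, is preserved precisely by part (1). Hence the assignment extends to a group homomorphism $\intheart(\tors\Lambda_1)\to\intheart(\tors\Lambda_2)$. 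Applying the same construction to $\eta^{-1}$ produces a homomorphism in the opposite direction, and the two compositions act as the identity on generators, so they are mutually inverse and the homomorphism is an isomorphism.

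I expect the main obstacle to be part (1), specifically giving a precise and correct argument that the equality of hearts is a lattice-intrinsic condition on the pair of intervals. The subtlety is that a priori $\calU^\perp\cap\calT$ is defined via $\Hom$-orthogonality in $\mods\Lambda_1$, which is not manifestly visible to an abstract lattice isomorphism; the work lies in translating this into the statement of \Cref{thm:wideintiso} that two intervals share a heart if and only if their abstract torsion lattices are identified compatibly, a condition that $\eta$ manifestly transports. Once this translation is made, everything downstream is a routine verification of relations, so the proof is short modulo the careful appeal to \Cref{thm:wideintiso}.
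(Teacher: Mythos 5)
Your part (2) is exactly the paper's argument (a formal check of the three relations), but your part (1) has a genuine gap. You propose to use \cref{thm:wideintiso} ``as a black box characterising `same heart' by an abstract lattice condition,'' but that theorem does not provide such a characterisation. It gives only the one-way implication: \emph{if} $\calU^\perp\cap\calT=(\calU')^\perp\cap\calT'$, \emph{then} the intervals are isomorphic as lattices via maps compatible with $-\cap\calU^\perp$ and $-\cap(\calU')^\perp$. The converse you need is not stated there, and the ``compatibility'' data is not lattice-intrinsic in any case: the functors $-\cap\calU^\perp$ and the target lattice $\tors(\calU^\perp\cap\calT)$ are defined via $\Hom$-orthogonality in $\mods\Lambda_1$, so an abstract lattice isomorphism $\eta\colon\tors\Lambda_1\to\tors\Lambda_2$ has no way to transport them. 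Even phrasing the condition presupposes that the two hearts coincide, so the argument is circular. Note also that mere abstract isomorphism of intervals certainly does not detect equal hearts: two distinct cover relations are both isomorphic to the two-element lattice yet generally have different (brick) hearts.

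The missing ingredient is a genuinely lattice-theoretic invariant of an interval that determines its heart. The paper supplies this via Enomoto's $\mathsf{j}$-$\mathsf{label}$ map \cite[Def.~3.8, Thm.~3.10]{Enomoto23}, which assigns to each closed interval of the completely semidistributive lattice $\tors\Lambda$ a set of completely join-irreducible elements; this assignment is defined purely in terms of the lattice (hence commutes with $\eta$), and Enomoto's commutative diagram shows that two intervals have equal hearts if and only if they have equal $\mathsf{j}$-labels. That equivalence is precisely the ``translation'' you correctly identify as the hard step but do not carry out; without it (or some substitute, e.g.\ a lattice-theoretic identification of brick labels of cover relations), part (1) is unproven and part (2) does not follow.
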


\begin{proof}
	(1) As we now explain, this essentially follows from the proof of \cite[Thm.~3.10]{Enomoto23}.
	
	In \cite[Def. 3.8]{Enomoto23}, Enomoto defines a map $\mathsf{j}\text{-}\mathsf{label}$ from the set of closed intervals of a completely semidistributive lattice to the power set of the lattice's set of ``completely join-irreducible elements''. It can be verified directly from the definition that this map commutes with lattice isomorphisms. More precisely, in the setup of the theorem, one has that $$\{\eta(x): x \in \mathsf{j}\text{-}\mathsf{label}[\U,\T]\} = \{\mathsf{j}\text{-}\mathsf{label}[\eta(\U),\eta(\T)]\},$$
	and likewise for the primed version of $\U$ and $\T$. Now the commutative diagram \cite[(3.1)]{Enomoto23} (found in the proof of \cite[Thm.~3.10]{Enomoto23}) shows that $\U^\perp \cap \T = (\U')^\perp \cap \T'$ if and only if $\mathsf{j}\text{-}\mathsf{label}[\U,\T] = \mathsf{j}\text{-}\mathsf{label}[\U',\T']$. The result then follows.
	
	(2) This is an immediate consequence of (1) and the presentation of the interval-heart group.
\end{proof}

As a consequence of Lemma~\ref{lem:invariant_under_isom} and Proposition~\ref{prop:embedding} we obtain the following. Note that this can also be deduced from \cite[Thm.~4.10]{HI21} and \cite[Cor.~1.2]{Kai24}.

\begin{corollary}\label{cor:invariant_under_isom}
	Let $\Lambda_1$ and $\Lambda_2$ be $\tau$-tilting finite algebras (over possibly distinct fields), and suppose there exists a lattice isomorphism $\eta: \tors \Lambda_1 \rightarrow \tors \Lambda_2$. Then the picture groups $G(\Lambda_1)$ and $G(\Lambda_2)$ are isomorphic.
\end{corollary}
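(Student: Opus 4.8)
The plan is to deduce \Cref{cor:invariant_under_isom} by chaining together the group isomorphisms already established, using the lattice isomorphism $\eta$ as the sole input. First I would observe that since $\Lambda_1$ and $\Lambda_2$ are $\tau$-tilting finite, \Cref{prop:embedding} gives isomorphisms of groups $\psi_i\colon G(\Lambda_i) \xrightarrow{\sim} \intheart(\ftors \Lambda_i)$ for $i=1,2$. Next, because $\tau$-tilting finiteness forces $\ftors \Lambda_i = \tors \Lambda_i$ by \cite[Thm.~1.2]{DIJ2019}, the map $\iota$ of \Cref{rem:int_group} is an isomorphism $\intheart(\ftors \Lambda_i) \xrightarrow{\sim} \intheart(\tors \Lambda_i)$, so we may freely pass between the functorially finite and the full interval-heart groups.

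The central step is then to invoke \Cref{lem:invariant_under_isom}(2), which uses the given lattice isomorphism $\eta\colon \tors \Lambda_1 \to \tors \Lambda_2$ to produce an isomorphism $\intheart(\tors \Lambda_1) \xrightarrow{\sim} \intheart(\tors \Lambda_2)$ sending $Z_{[\U,\T]} \mapsto Z_{[\eta(\U),\eta(\T)]}$. I would stress that this is exactly where the hypothesis does its work: the interval-heart group of $\tors \Lambda$ is manifestly a lattice invariant (its generators are indexed by intervals, and its relations (1)--(3) of \Cref{def:int_group} are phrased purely in lattice-theoretic terms, with relation (3) shown in \Cref{lem:invariant_under_isom}(1) to be preserved by $\eta$), whereas the picture group and the functorially finite interval-heart group are only known to agree with it under the $\tau$-tilting finite hypothesis.

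Assembling the pieces, the desired isomorphism $G(\Lambda_1) \cong G(\Lambda_2)$ is the composite
\[
G(\Lambda_1) \xrightarrow{\psi_1} \intheart(\ftors \Lambda_1) \xrightarrow{\iota_1} \intheart(\tors \Lambda_1) \xrightarrow{\sim} \intheart(\tors \Lambda_2) \xrightarrow{\iota_2^{-1}} \intheart(\ftors \Lambda_2) \xrightarrow{\psi_2^{-1}} G(\Lambda_2),
\]
where the middle arrow is the map of \Cref{lem:invariant_under_isom}(2) and every other arrow is an isomorphism by the results just cited. As there are no genuine obstacles here—all the real content is packaged into \Cref{prop:embedding} and \Cref{lem:invariant_under_isom}—the only point requiring a word of care is to confirm that the $\tau$-tilting finiteness of $\Lambda_1$ (hypothesised) transfers to $\Lambda_2$, which is immediate since $\eta$ restricts to a bijection $\tors \Lambda_1 \to \tors \Lambda_2$ and hence $|\tors \Lambda_2| = |\tors \Lambda_1| < \infty$, so that $\Lambda_2$ is $\tau$-tilting finite by \cite[Thm.~1.2]{DIJ2019} as well. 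With that remark in place, the composite above is the claimed isomorphism and the proof is complete.
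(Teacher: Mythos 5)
Your proof is correct and follows exactly the route the paper intends: the corollary is stated there as an immediate consequence of \cref{prop:embedding} and \cref{lem:invariant_under_isom}, and your composite $G(\Lambda_1)\xrightarrow{\psi_1}\intheart(\ftors\Lambda_1)\xrightarrow{\iota_1}\intheart(\tors\Lambda_1)\xrightarrow{\sim}\intheart(\tors\Lambda_2)\xrightarrow{\iota_2^{-1}}\intheart(\ftors\Lambda_2)\xrightarrow{\psi_2^{-1}}G(\Lambda_2)$ is precisely the chain of isomorphisms being invoked. The closing remark about $\tau$-tilting finiteness transferring along $\eta$ is a harmless bonus, since the statement already assumes both algebras are $\tau$-tilting finite.
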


\subsection{Group functors}\label{sec:functors}

Recall that $\mathfrak{W}(\Lambda)$ denotes the $\tau$-cluster morphism category of $\Lambda$. Considering the picture group $G(\Lambda)$ as a groupoid with one object, 
a functor $\mathfrak{W}(\Lambda) \rightarrow G(\Lambda)$ is constructed in \cite[Thm.~4.15]{HI21} in the $\tau$-tilting finite case. Furthermore, this functor is shown to be faithful when the picture group $G(\Lambda)$ satisfies a technical condition which essentially says that distinct generators of the group must correspond to distinct group elements (see Definition~\ref{def:distinct_generators} below).
 In extending this to algebras which are not $\tau$-tilting finite, we find it useful to work with the interval-heart group $\intheart(\ftors \Lambda)$ and the realisation of the $\tau$-cluster morphism category from \cite{Kai24} that was recalled in Definition~\ref{defn:latticedef}.

\begin{theorem}\label{thm:functor}
	Let $\Lambda$ be a finite-dimensional algebra. Then
	\begin{enumerate}
		\item There is a group functor $\Gamma: \mathfrak{W}(\Lambda) \rightarrow \intheart(\ftors \Lambda)$ given as follows. Let $\rho$ be a morphism of $\mathfrak{W}(\Lambda)$, represented by a containment $[\U_1,\T_1] \leq [\U_2,\T_2]$ between $\tau$-perpendicular intervals. Then $\Gamma(\rho) = Z^{\mathsf f}_{[\U_1,\U_2]}$.
	\item Suppose that, for any closed intervals $[\U,\T]$ and $[\U',\T']$ in $\ftors \Lambda$, we have that $Z^{\mathsf f}_{[\U,\T]} = Z^{\mathsf f}_{[\U',\T']}$ if and only if $\U^\perp \cap \T = (\U')^\perp \cap \T'$. Then the functor $\Gamma$ is faithful.
	\end{enumerate}
\end{theorem}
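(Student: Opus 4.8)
The plan is to prove the two parts of Theorem~\ref{thm:functor} in turn, with part (1) being a verification that $\Gamma$ respects the relations defining $\Wfrak(\Lambda)$ and the group relations of $\intheart(\ftors\Lambda)$, and part (2) being the genuinely substantive claim.

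For part (1), I would first check that $\Gamma$ is well defined on objects (trivially, since objects of $\Wfrak(\Lambda)$ map to the single object of the groupoid) and, more importantly, on morphisms. A morphism $\rho$ is an equivalence class of containments $[\U_1,\T_1]\leq[\U_2,\T_2]$, where two representatives are identified exactly when their intersections with the relevant $\tau$-perpendicular subcategory agree, i.e. $[\U_1,\T_1]\cap\calW=[\U_1',\T_1']\cap\calW$ in the notation of \cref{defn:latticedef}. By \cref{lem:morphismlifts}, condition (1) there is equivalent to condition (3), namely $\U_1\cap\calW=\U_1'\cap\calW$, and since $\calW=\U_2^\perp\cap\T_2$ is the heart of the larger interval, this forces $\U_1^\perp\cap\U_2=(\U_1')^\perp\cap\U_2'$. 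Relation (3) in \cref{def:int_group} then guarantees $Z^{\mathsf f}_{[\U_1,\U_2]}=Z^{\mathsf f}_{[\U_1',\U_2']}$, so $\Gamma(\rho)$ does not depend on the chosen representative. Functoriality follows from \cref{lem:composition}: a composable pair of morphisms can be represented by a chain $[\U_1,\T_1]\leq[\U_2,\T_2]\leq[\U_3,\T_3]$, and relation (2) in \cref{def:int_group} gives $Z^{\mathsf f}_{[\U_1,\U_2]}Z^{\mathsf f}_{[\U_2,\U_3]}=Z^{\mathsf f}_{[\U_1,\U_3]}$, which is exactly $\Gamma(\rho_2)\circ\Gamma(\rho_1)=\Gamma(\rho_2\circ\rho_1)$ (up to checking the order of multiplication matches the chosen convention). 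Identity morphisms are sent to $Z^{\mathsf f}_{[\U,\U]}=e$ by relation (1).

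For part (2), I would argue faithfulness directly. Suppose $\rho,\rho'$ are parallel morphisms, say both from the object represented by $\calW_{(M,P)}$ to the object represented by $\calW_{(N,Q)}$, with $\Gamma(\rho)=\Gamma(\rho')$. As in the faithfulness argument of \cref{thm:basefieldmain}, I may use \cref{lem:composition} to represent both by containments with a common domain interval $[\U,\T]$ (the one whose heart is $\calW_{(M,P)}$), so $\rho$ is represented by $[\U_1,\T]\leq[\U,\T]$ wait---more carefully, both are represented by containments $[\U_1,\T_1]\leq[\U,\T]$ and $[\U_1',\T_1']\leq[\U,\T]$ where the codomains lie in the same equivalence class. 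Then $\Gamma(\rho)=Z^{\mathsf f}_{[\U_1,\U]}$ and $\Gamma(\rho')=Z^{\mathsf f}_{[\U_1',\U]}$. The hypothesis of part (2) says that $Z^{\mathsf f}_{[\U_1,\U]}=Z^{\mathsf f}_{[\U_1',\U]}$ forces $\U_1^\perp\cap\U=(\U_1')^\perp\cap\U$, which is precisely condition (3) of \cref{lem:morphismlifts} (intersected with $\calW_{(M,P)}=\U^\perp\cap\T$, the heart). By the equivalence of conditions in \cref{lem:morphismlifts}, this yields $[\U_1,\T_1]\cap\calW_{(M,P)}=[\U_1',\T_1']\cap\calW_{(M,P)}$, which is exactly the defining condition for $\rho=\rho'$ in $\Wfrak(\Lambda)$.

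The main obstacle I anticipate is bookkeeping rather than conceptual: one must be scrupulous about which torsion classes serve as the ``heart'' $\calW$ and about matching the equivalence relation on morphisms of $\Wfrak(\Lambda)$ (phrased via intersection with $\calW$) against the relation $\U^\perp\cap\T=(\U')^\perp\cap\T'$ appearing in the hypothesis of part (2) and in relation (3) of $\intheart(\ftors\Lambda)$. The key bridge is that the heart of the relevant subinterval, $\U_1^\perp\cap\U$, is the correct invariant, and that \cref{lem:morphismlifts} translates faithfully between the ``intersect with $\calW$'' formulation and the ``heart'' formulation. I would also need to confirm that the hypothesis in part (2), though phrased as an ``if and only if,'' is used only in the nontrivial direction (that equality of group generators implies equality of hearts); the converse direction is automatic from relation (3) and is what makes part (1) well defined.
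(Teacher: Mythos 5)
Your proposal is correct and follows essentially the same route as the paper's proof: well-definedness and faithfulness both reduce, via the equivalence $(1)\Leftrightarrow(3)$ of \cref{lem:morphismlifts}, to the statement that two representatives $[\U_1,\T_1]\leq[\U_2,\T_2]$ and $[\U_1',\T_1']\leq[\U_2',\T_2']$ determine the same morphism of $\Wfrak(\Lambda)$ precisely when $\U_1^\perp\cap\U_2=(\U_1')^\perp\cap\U_2'$, which is then matched against relation (3) of \cref{def:int_group} (for part (1)) and against the hypothesis of part (2) (for faithfulness), with \cref{lem:composition} handling composition exactly as in the paper. The only corrections needed are bookkeeping ones you already flagged: with the paper's convention $[\U_1,\T_1]\leq[\U_2,\T_2]$ means $[\U_2,\T_2]\subseteq[\U_1,\T_1]$, so the relevant heart is that of the \emph{domain}, $\calW=\U_1^\perp\cap\T_1$ (not $\U_2^\perp\cap\T_2$), the identification of morphisms compares $[\U_2,\T_2]\cap\calW$ with $[\U_2',\T_2']\cap\calW$ (not $[\U_1,\T_1]\cap\calW$), and the composition relation reads $Z^{\mathsf f}_{[\U_2,\U_3]}Z^{\mathsf f}_{[\U_1,\U_2]}=Z^{\mathsf f}_{[\U_1,\U_3]}$ in that order.
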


\begin{proof}
	(1) We first show that $\Gamma(\rho)$ does not depend on the representative of the equivalence class. Suppose that $[\U_1,\T_1] \leq [\U_2,\T_2]$ and $[\U_1',\T_1'] \leq [\U_2',\T_2']$ are representatives of a morphism $\rho$. This means that the domain of $\rho$ is $\U_1^\perp \cap \T_1 = \U_2^\perp \cap \T_2$ and that $[\U_2,\T_2] \cap \U_1^\perp \cap \T_1 = [\U'_2,\T'_2] \cap (\U_1')^\perp \cap \T'_1$. Since $\U_2 \subseteq \T_1$ and $\U_2' \subseteq \T_1'$, the implication $(1 \implies 3$) of Lemma~\ref{lem:morphismlifts} then implies that $\U_1^\perp \cap \U_2 = (\U_1')^\perp \cap \U_2'$. We conclude that $\Gamma(\rho)$ does not depend on the chosen representative of $\rho$.
	
	We now show that $\Gamma$ preserves composition. Let $\rho_1$ and $\rho_2$ be composable morphisms. By Lemma~\ref{lem:composition}, this means there exist $\tau$-perpendicular intervals $[\U_1,\T_1] \leq [\U_2,\T_2] \leq [\U_3,\T_3]$ such that $[\U_1,\T_1] \leq [\U_2,\T_2]$ is a representative of $\rho_1$, $[\U_2,\T_2] \leq [\U_3,\T_3]$ is a representative of $\rho_2$, and $[\U_1,\T_1] \leq [\U_3,\T_3]$ is a representative of $\rho_2 \circ \rho_1$. We then compute
	$$\Gamma(\rho_2 \circ \rho_1) = Z^{\mathsf f}_{[\U_1,\U_3]} = Z^{\mathsf f}_{[\U_2,\U_3]}Z^{\mathsf f}_{[\U_1,\U_2]} = \Gamma(\rho_2) \Gamma(\rho_1).$$
	
	(2) Let $\rho, \rho': \W \rightarrow \W'$ be morphisms in $\T(\Lambda)$ with the same domain and codomain. Choose representatives $[\U_1,\T_1] \leq [\U_2,\T_2]$ and $[\U_1',\T_1'] \leq [\U_2',\T_2']$ of $\rho$ and $\rho'$, respectively.
    Similar to the proof of (1), the equivalence $(1\iff 3)$ of Lemma~\ref{lem:morphismlifts} then implies that $\rho = \rho'$ if and only if $\U_1^\perp \cap \U_2 = (\U'_1)^\perp \cap \U'_2$. Now by assumption, the second equality holds if and only if $\Gamma(\rho) = \Gamma(\rho')$. We conclude that $\Gamma$ is faithful.
\end{proof}

Motivated by Theorem~\ref{thm:functor}(2) and \cite[Thm.~4.15]{HI21p}, we consider the following definitions.

\begin{definition}\label{def:distinct_generators}
    Let $\Lambda$ be a finite-dimensional algebra.
    \begin{enumerate}
        \item We say that $\intheart(\tors\Lambda)$ \emph{has heart-controlled generators} if, for any closed intervals $[\U,\T]$ and $[\U',\T']$ in $\tors \Lambda$, we have that $Z_{[\U,\T]} = Z_{[\U',\T']}$ if and only if $\U^\perp \cap \T = (\U')^\perp \cap \T'$. 
        \item We say that $\intheart(\ftors\Lambda)$ \emph{has heart-controlled generators} if, for any closed intervals $[\U,\T]$ and $[\U',\T']$ in $\ftors \Lambda$, we have that $Z^{\mathsf{f}}_{[\U,\T]} = Z^{\mathsf{f}}_{[\U',\T']}$ if and only if $\U^\perp \cap \T = (\U')^\perp \cap \T'$.
        \item We say that $G(\Lambda)$ \emph{has distinct generators} if both of the following hold.
        \begin{enumerate}
            \item Let $\U, \T \in \ftors \Lambda$. Then $Y_\U = Y_\T \in G(\Lambda)$ if and only if $\U = \T$.
            \item Let $S, T \in \fbrick \Lambda$. Then $X_S = X_T \in G(\Lambda)$ if and only if $S = T$. Moreover, $X_S \neq e \in G(\Lambda)$.
        \end{enumerate}
    \end{enumerate}
\end{definition}

 In Section~\ref{sec:ringel}, we will show that $\intheart(\tors\Lambda)$ has heart-controlled generators for any finite-dimensional algebra $\Lambda$ over a finite field. We defer our discussion of those classes of algebras for which $G(\Lambda)$ has previously been shown to have distinct generators to Section~\ref{sec:remarks}.

Using the definitions of the groups and the morphisms $\iota$ (from Remark~\ref{rem:int_group}) and $\psi$ (from Proposition~\ref{prop:embedding}), we obtain the following implications between the three parts of Definition~\ref{def:distinct_generators}.

\begin{proposition}\label{prop:distinct_generators}
    Let $\Lambda$ be a finite-dimensional algebra. Then the following hold.
    \begin{enumerate}
        \item If $\intheart(\tors\Lambda)$ has heart-controlled generators then so does $\intheart(\ftors\Lambda)$. \item If $\intheart(\ftors\Lambda)$ has heart-controlled generators then $G(\Lambda)$ has distinct generators.
    \end{enumerate}
\end{proposition}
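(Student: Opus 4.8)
The plan is to prove each of the two implications directly from the presentations of the relevant groups and the structural morphisms $\iota$ and $\psi$ already introduced. For part (1), I would use the isomorphism-like behaviour of $\iota\colon \intheart(\ftors\Lambda)\to\intheart(\tors\Lambda)$ sending $Z^{\mathsf f}_{[\U,\T]}\mapsto Z_{[\U,\T]}$ from \cref{rem:int_group}. The backward direction is automatic from relation (3) in \cref{def:int_group}: if $\U^\perp\cap\T=(\U')^\perp\cap\T'$ with all four torsion classes functorially finite, then $Z^{\mathsf f}_{[\U,\T]}=Z^{\mathsf f}_{[\U',\T']}$ is a defining relation of $\intheart(\ftors\Lambda)$, so this holds unconditionally (no hypothesis needed). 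The content is the forward direction. Assuming $\intheart(\tors\Lambda)$ has heart-controlled generators, suppose $Z^{\mathsf f}_{[\U,\T]}=Z^{\mathsf f}_{[\U',\T']}$ in $\intheart(\ftors\Lambda)$ with $[\U,\T],[\U',\T']$ intervals in $\ftors\Lambda$. Applying $\iota$ gives $Z_{[\U,\T]}=Z_{[\U',\T']}$ in $\intheart(\tors\Lambda)$, and the heart-controlled hypothesis there forces $\U^\perp\cap\T=(\U')^\perp\cap\T'$, exactly what is required for $\intheart(\ftors\Lambda)$.

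For part (2), I would assume $\intheart(\ftors\Lambda)$ has heart-controlled generators and verify the two conditions (a) and (b) of \cref{def:distinct_generators}(3) using the surjection $\psi\colon G(\Lambda)\to\intheart(\ftors\Lambda)$ from \cref{prop:embedding}. The key observation is that $\psi$ is a group homomorphism, so equalities in $G(\Lambda)$ map to equalities in $\intheart(\ftors\Lambda)$; the heart-controlled hypothesis then translates these into statements about hearts of intervals, which I can separate. For condition (a), take $\U,\T\in\ftors\Lambda$ with $Y_\U=Y_\T$; applying $\psi$ gives $Z^{\mathsf f}_{[0,\U]}=Z^{\mathsf f}_{[0,\T]}$, whence $0^\perp\cap\U=0^\perp\cap\T$, i.e.\ $\U=\T$ since $0^\perp=\mods\Lambda$. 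For condition (b), take $S,T\in\fbrick\Lambda$ with $X_S=X_T$; choosing cover relations $\U\covered\T_S$ labelled by $S$ and $\U'\covered\T_T$ labelled by $T$, we have $\psi(X_S)=Z^{\mathsf f}_{[\U,\T_S]}$ and $\psi(X_T)=Z^{\mathsf f}_{[\U',\T_T]}$, so the hypothesis gives $\U^\perp\cap\T_S=(\U')^\perp\cap\T_T$. But by the brick labelling these hearts are $\Filt\{S\}$ and $\Filt\{T\}$ respectively, so $\Filt\{S\}=\Filt\{T\}$ forces $S=T$ (the simple objects of a wide subcategory are determined, via \cref{eq:Ringelbij}). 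For the ``moreover'' clause $X_S\neq e$, I would apply $\psi$ to get $Z^{\mathsf f}_{[\U,\T_S]}\neq e=Z^{\mathsf f}_{[0,0]}$; if they were equal the heart-controlled hypothesis would give $\Filt\{S\}=\U^\perp\cap\T_S=0^\perp\cap 0=0$, contradicting $S\neq 0$.

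I do not expect a serious obstacle here, as both parts reduce to applying the structural homomorphisms $\iota$ and $\psi$ and then invoking the heart-controlled hypothesis together with the brick-labelling description of hearts of cover relations from \cref{subsec:lrfsbrick}. The one point requiring a little care is condition (b): I must ensure that $\psi(X_S)$ is computed using \emph{some} cover relation labelled by $S$, and that the resulting heart $\Filt\{S\}$ is independent of which cover relation is chosen—but this is precisely the well-definedness already established in the proof of \cref{prop:embedding}, so I can cite it rather than re-prove it. The translation from equality of filtration subcategories back to equality of bricks is the only place where I invoke \cref{eq:Ringelbij}, and it is immediate. Overall the proof is short and formal, consisting mainly of chasing the defining relations through the two group homomorphisms.
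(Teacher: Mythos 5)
Your proposal is correct and follows exactly the route the paper intends: the paper gives no written proof beyond the single remark that the implications follow from the definitions together with the morphisms $\iota$ and $\psi$, and your argument is precisely the detailed unwinding of that remark (applying $\iota$ for part (1), and applying $\psi$ plus the brick-labelling description of hearts of cover relations for part (2)). All steps, including the treatment of $X_S\neq e$ via the interval $[0,0]$, check out.
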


We conclude this section by recording the following conjecture. Note that, by Theorem~\ref{thm:functor} and Proposition~\ref{prop:distinct_generators}, the faithfulness of $\Gamma$ follows from $\intheart(\tors\Lambda)$ having heart-controlled generators.

\begin{conjecture}\label{conj:faithful}
    Let $\Lambda$ be a finite-dimensional algebra. Then $\intheart(\tors\Lambda)$ has heart-controlled generators and the functor $\Gamma$ from Theorem~\ref{thm:functor} is a faithful group functor $\mathfrak{W}(\Lambda) \rightarrow \intheart(\ftors \Lambda)$.
\end{conjecture}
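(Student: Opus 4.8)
The plan is to collapse the whole conjecture onto a single lattice-theoretic assertion and then attack that assertion by representing $\intheart(\tors\Lambda)$ faithfully. First note that the two halves of the statement are not independent: by Proposition~\ref{prop:distinct_generators}(1), if $\intheart(\tors\Lambda)$ has heart-controlled generators then so does $\intheart(\ftors\Lambda)$, and the latter is exactly the hypothesis of Theorem~\ref{thm:functor}(2), which delivers faithfulness of $\Gamma$. So it suffices to prove that $\intheart(\tors\Lambda)$ has heart-controlled generators, i.e. that $Z_{[\U,\T]} = Z_{[\U',\T']}$ in $\intheart(\tors\Lambda)$ if and only if $\U^\perp\cap\T = (\U')^\perp\cap\T'$. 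The implication ``hearts equal $\Rightarrow$ generators equal'' is precisely relation~(3) of Definition~\ref{def:int_group}, hence free; all the content is the converse, that distinct hearts force distinct generators.

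To separate generators I would construct a group homomorphism $\Phi\colon \intheart(\tors\Lambda)\to U$ into a concrete group $U$ such that $\Phi(Z_{[\U,\T]})$ determines the heart $\U^\perp\cap\T$ (equivalently, via Theorem~\ref{eq:Ringelbij} and Remark~\ref{rem:fbrick}, the semibrick of brick labels of the interval). The defining relations of $\intheart(\tors\Lambda)$ are tailored to admit such a $\Phi$: relation~(2) is multiplicativity along concatenated intervals, and relation~(3) forces $\Phi$ to factor through the heart. Over a finite field this is carried out in Section~\ref{sec:ringel}: one takes $U$ to be the group of units of the completed finitary Ringel--Hall algebra, sends $Z_{[\U,\T]}$ to the Hall element recording the isomorphism classes appearing in $\U^\perp\cap\T$, and uses the fact that the Hall product remembers those classes to conclude that distinct hearts give distinct images. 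Thus the finite-field instance of the conjecture is already a theorem.

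For an arbitrary ground field I would try to descend to the finite-field case using the lattice-invariance intrinsic to the problem. By Lemma~\ref{lem:invariant_under_isom}(1) the relation $\U^\perp\cap\T = (\U')^\perp\cap\T'$ is preserved under any isomorphism of torsion lattices, so ``heart-controlled generators'' is a property of the abstract lattice $\tors\Lambda$ alone; whenever $\tors\Lambda$ is lattice-isomorphic to $\tors\Lambda'$ for some $\Lambda'$ over a finite field, the finite-field result transports verbatim, which is the mechanism behind \Cref{cor:introfieldchange}. More ambitiously, one could try to use the base-change technology of the main text --- the faithful functor $\Wfrak(\Lambda)\to\Wfrak(\Lambda_K)$ of Theorem~\ref{thm:basefieldmain} together with the heart-preserving comparisons of \cref{lem:morphismlifts} and \cref{prop:brickpreserve} --- to pull a separating invariant down from a well-behaved extension to $\Lambda$ itself.

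The main obstacle, and the reason the statement is stated as a conjecture, is that no Ringel--Hall-type invariant is available over an infinite field: the finitary hypothesis underlying the completed Hall algebra of Section~\ref{sec:ringel} (finiteness of the relevant $\Hom$ and $\Ext$ sets) breaks down, so the homomorphism $\Phi$ has no direct analogue. A successful proof would require a replacement that still records the wide subcategory $\U^\perp\cap\T$ faithfully --- for instance a generic or motivic Hall algebra, or a Grothendieck-ring-valued count --- together with a check that its values separate distinct hearts and that its multiplicativity matches relations~(2) and~(3) of Definition~\ref{def:int_group}. Constructing such an invariant is the step I expect to be genuinely hard; in particular it is not clear that the characteristic-zero faithful group functor of \Cref{cor:mainbasefield+BorMot}, whose target is built from locally constructible functions rather than from $\intheart(\ftors\Lambda)$, can be reshaped into one valued in the interval-heart group.
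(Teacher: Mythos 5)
The statement you are addressing is stated in the paper as a \emph{conjecture}, and the paper contains no proof of it; it only establishes special cases. Your proposal is therefore not comparable to ``the paper's proof'' --- there is none --- and, to your credit, you do not claim to close the gap. What you do supply is a correct reduction and an accurate survey of exactly the partial results the paper proves: the collapse of the conjecture onto the single assertion that $\intheart(\tors\Lambda)$ has heart-controlled generators (via Theorem~\ref{thm:functor}(2) and Proposition~\ref{prop:distinct_generators}, with the converse implication being relation~(3) of Definition~\ref{def:int_group} for free) matches the paper's own logic; the finite-field case via the homomorphism $\phi\colon \intheart(\tors\Lambda)\to\hall(\Lambda)^*$ is precisely Proposition~\ref{thm:MapToHall} and Theorem~\ref{thm:finite_field}; and the transport along lattice isomorphisms is precisely Lemma~\ref{lem:invariant_under_isom} and Corollary~\ref{cor:field_change}.

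The genuine gap is the one you name yourself: over an infinite base field there is no construction of a separating invariant $\Phi$ whose multiplicativity realises relation~(2) and whose values determine the heart $\U^\perp\cap\T$. Nothing in your proposal, nor in the paper, supplies this. Your suggestion of pulling an invariant back along the base-change functor of Theorem~\ref{thm:basefieldmain} does not obviously work either, since that functor goes in the wrong direction for descent (it maps $\Wfrak(\Lambda)$ \emph{into} $\Wfrak(\Lambda_K)$, so one would need the conjecture for $\Lambda_K$ first, and $\Lambda_K$ is generally still over an infinite field). So the proposal is an honest and accurate account of the state of the problem, but it does not prove the conjecture, and no reader should mistake it for a proof.
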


\subsection{Completed Ringel--Hall algebras}\label{sec:ringel}

In this section, we prove that the interval-heart groups of finite-dimensional over finite fields have heart-controlled generators. Thus, by Theorem~\ref{thm:functor}, we conclude that the $\tau$-cluster morphism categories of these algebras admit faithful group functors. Our argument uses the machinery of completed (finitary) Ringel--Hall algebras. We introduce these following exposition similar to that of \cite[Sec.~7]{Treffinger25}.

Fix a finite-dimensional algebra $\Lambda$ over a finite field $K$, and fix a skeleton $[\mods \Lambda]$ of the category $\mods \Lambda$. For a subcategory $\calC \subseteq \mods\Lambda$ which is closed under isomorphisms, we denote $[\calC] := \calC \cap [\mods\Lambda]$. For $M, N, E \in \mods \Lambda$, we let $c_{M,N}^E$ denote the number of submodules $M' \subseteq E$ such that $M' \cong N$ and $E/M' \cong N$. The \emph{completed (finitary) Ringel--Hall algebra} of $\Lambda$ is the $\mathbb{Z}$-algebra $\hall(\Lambda)$ whose elements are formal series
$$\sum_{[M] \in [\mods \Lambda]} a_{[M]} [M]$$
with multiplication given by
$$[M][N] = \sum_{[E] \in [\mods \Lambda]} c_{M,N}^E [E].$$
By \cite[Prop.~1]{ringel_hall}, this is an associative algebra with identity $[0]$.

For a subcategory $\mathcal{C} \subseteq \mods\Lambda$ which is closed under isomorphisms, we denote
	$$E_\mathcal{C} := \sum_{[M]\in [\mathcal{C}]}[M] \in \widehat{\mathscr{H}}(\Lambda).$$

The following result of Treffinger forms the basis or our argument.

\begin{proposition}\label{thm:treffinger}\cite[Thm. 7.1]{Treffinger25}
	Let $\Lambda$ be a finite-dimensional algebra over a finite field and let
	$$0 = \T_0 \subsetneq \T_1 \subsetneq \cdots \subsetneq \T_m = \mods \Lambda$$
	be a chain of torsion classes. Then
	$$E_{\mods \Lambda} = E_{\T_{m-1}^\perp \cap \T_m} E_{\T_{m-2}^\perp \cap \T_{m-1}}\cdots E_{\T_1^\perp \cap \T_2} E_{\T_0^\perp \cap \T_1}.$$
\end{proposition}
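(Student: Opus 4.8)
The plan is to deduce the identity from a single-step factorization in $\hall(\Lambda)$ and then telescope along the chain. The workhorse will be the following claim, which I would isolate as a lemma: for any pair of torsion classes $\U \subseteq \T$ in $\mods\Lambda$, there is an identity
\[ E_\T = E_{\U^\perp \cap \T} \cdot E_\U \]
in $\hall(\Lambda)$, where $\U^\perp \cap \T$ is the heart of the interval $[\U,\T]$. Granting this, the proposition follows by downward induction: starting from $\T_m = \mods\Lambda$ and applying the claim with $(\U,\T) = (\T_{i-1},\T_i)$ successively for $i = m, m-1, \dots, 1$ (using associativity of the Hall product, cf. \cite{ringel_hall}), I would obtain
\[ E_{\mods\Lambda} = E_{\T_{m-1}^\perp \cap \T_m}\, E_{\T_{m-2}^\perp \cap \T_{m-1}} \cdots E_{\T_0^\perp \cap \T_1}\cdot E_{\T_0}. \]
Since $\T_0 = 0$ gives $E_{\T_0} = [0]$, the identity element of $\hall(\Lambda)$, the stated formula drops out.

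The main obstacle, and the step requiring the most care, is therefore the single-step identity, which I would establish by comparing the coefficient of each basis element $[E]$ on both sides. First I would note that, because $\Lambda$ is finite-dimensional over a finite field, every module is finite and so has only finitely many submodules; hence for each fixed $E$ only finitely many pairs $(C,U)$ contribute a nonzero structure constant $c^E_{C,U}$, the product $E_{\U^\perp \cap \T}\cdot E_\U$ is a well-defined series, and its $[E]$-coefficient is the honest finite sum
\[ \sum_{\substack{[C]\in[\U^\perp\cap\T]\\ [U]\in[\U]}} c^E_{C,U}, \]
which by the definition of the multiplication counts the submodules $U' \subseteq E$ with $U' \in \U$ and $E/U' \in \U^\perp \cap \T$.

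The decisive input will be the uniqueness of the torsion decomposition for the torsion pair $(\U,\U^\perp)$: every $E$ admits a \emph{unique} submodule $U'\in\U$ with $E/U'\in\U^\perp$, namely the torsion submodule $t_\U E$, and finite-dimensionality together with closure of $\U$ under quotients and extensions guarantees that $t_\U E \in \U$ (cf. \cite{Dickson66}). I would then record two compatibilities with $\T$: if $E\in\T$ then $E/t_\U E\in\T$ since $\T$ is closed under quotients, so $E/t_\U E$ lies in the heart $\U^\perp\cap\T$ and $t_\U E$ contributes the unique admissible submodule; conversely, if some $U'\in\U\subseteq\T$ has $E/U'\in\U^\perp\cap\T\subseteq\T$, then $E\in\T$ by closure of $\T$ under extensions, so no such $U'$ exists when $E\notin\T$. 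Consequently the $[E]$-coefficient of $E_{\U^\perp\cap\T}\cdot E_\U$ equals $1$ when $E\in\T$ and $0$ otherwise, matching the $[E]$-coefficient of $E_\T$. This would prove the claim, and with it the proposition. I expect the only genuinely delicate points to be the coefficientwise bookkeeping and the verification that the torsion-free part of an object of $\T$ remains in the heart; the convergence issues are vacuous here thanks to finiteness of the base field.
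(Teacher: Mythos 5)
The paper does not prove this statement---it is quoted verbatim from Treffinger's work \cite[Thm.~7.1]{Treffinger25}---so there is no in-paper argument to compare against. Your proof is correct and is the standard one: the single-step identity $E_{\T} = E_{\U^\perp\cap\T}\,E_{\U}$ for $\U\subseteq\T$ follows from the uniqueness of the torsion decomposition for the pair $(\U,\U^\perp)$ combined with closure of $\T$ under quotients and extensions, and the full statement then telescopes along the chain since $E_{\T_0}=[0]$; this matches the argument in the cited source. One minor remark: the paper's definition of $c^E_{M,N}$ contains a typo (the quotient condition should read $E/M'\cong M$, not $E/M'\cong N$), and your coefficient count correctly uses the intended convention in which the right-hand factor of the Hall product is the submodule.
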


Before continuing, we establish some additional notation.
	\begin{itemize}
		\item Let $F \in \widehat{\mathscr{H}}(\Lambda)$. For $M \in \mods \Lambda$, we denote by $F_{[M]}$ the coefficient of $M$ in $F$. We denote by $\mathsf{supp}(F)$ the subcategory of objects $M \in \mods\Lambda$ so that $F_{[M]} \neq 0$. 
		\item For $\ell \in \mathbb{N}$, we denote by $\mods_\ell \Lambda \subseteq \mods \Lambda$ the subcategory consisting of modules of length $\ell$.
		\item For $\ell \in \mathbb{N}$, we denote by $\mathcal{O}(\ell)$ the two-sided ideal of $\widehat{\mathscr H}(\Lambda)$ generated by the symbols $[M]$ for $M \in \mods_\ell \Lambda$.
	\end{itemize}

The following lemma is critical.

\begin{lemma}\label{lem:invertible}
	Let $\mathcal{C}\subseteq \mods \Lambda$ be a subcategory which is closed under isomorphisms. Then $E_\mathcal{C}$ is invertible in $\widehat{\mathscr{H}}(\Lambda)$ if and only if $0 \in \mathcal{C}$.
\end{lemma}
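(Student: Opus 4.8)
The statement is a clean if-and-only-if about when $E_\calC$ is invertible in the completed Ringel–Hall algebra $\widehat{\mathscr H}(\Lambda)$, where $\calC$ is an isomorphism-closed subcategory. The element $E_\calC = \sum_{[M]\in[\calC]}[M]$ is a formal series. The key structural fact I want to exploit is the filtration of $\widehat{\mathscr H}(\Lambda)$ by length: the ideals $\mathcal O(\ell)$ generated by modules of length $\ell$, together with the fact that multiplication $[M][N]$ only produces extensions $E$, whose length is $\ell(M)+\ell(N)$. So the "lowest-length" part of a product is controlled by the lowest-length parts of the factors. Invertibility in such a complete graded/filtered setting should reduce to invertibility of the degree-zero (length-zero) part.

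**The easy direction and the reduction.** First I would dispose of the forward direction by contraposition: if $0\notin\calC$, then every $[M]$ appearing in $E_\calC$ has $\ell(M)\geq 1$, so $E_\calC$ lies in the ideal generated by positive-length modules; the product of $E_\calC$ with anything has no $[0]$-term (since lengths add and are all $\geq 1$), so $E_\calC$ cannot have a two-sided inverse whose product equals the identity $[0]$. Concretely, $(E_\calC\cdot F)_{[0]} = 0$ for all $F$, so $E_\calC F \neq [0]$. For the converse, suppose $0\in\calC$. Then $E_\calC = [0] + (\text{positive-length terms})$, i.e. $E_\calC = [0] + N$ where $N$ is supported on modules of length $\geq 1$. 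The plan is to build the inverse as a geometric-type series $\sum_{i\geq 0}(-1)^i N^i$, and the main work is to verify this series is \emph{well-defined} in $\widehat{\mathscr H}(\Lambda)$, i.e. that for each fixed $[M]$ only finitely many terms $N^i$ contribute to the coefficient $(\cdot)_{[M]}$.

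**Convergence via the length filtration.** The hard part — and the crux of the proof — is establishing this finiteness. Here I would use that $N$ is supported on $\mods_{\geq 1}\Lambda$ and that in each product $[M_1]\cdots[M_i]$ the resulting modules $E$ satisfy $\ell(E)=\sum_j \ell(M_j)\geq i$. Hence $N^i$ is supported entirely on modules of length $\geq i$, so $N^i \in \mathcal O(\geq i)$ in the obvious sense (a formal consequence of lengths being additive under extension, which follows from the Jordan–Hölder property). Therefore, for a fixed module $M$ of length $\ell$, only the terms $N^0,\dots,N^{\ell}$ can contribute to $\left(\sum_i(-1)^iN^i\right)_{[M]}$; all higher terms vanish on $[M]$. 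This makes the series a well-defined element $F$ of $\widehat{\mathscr H}(\Lambda)$. I would then verify the telescoping identities $E_\calC\cdot F = ([0]+N)\sum_{i\geq 0}(-1)^iN^i = [0]$ and symmetrically $F\cdot E_\calC = [0]$, where the rearrangement is justified precisely because the sums are finite in each fixed coefficient. I expect the genuine obstacle to be phrasing this convergence argument rigorously in the completed setting — i.e. making precise that $\widehat{\mathscr H}(\Lambda)$ is complete with respect to the length filtration $\{\mathcal O(\geq \ell)\}$ and that $N$ is topologically nilpotent — rather than any algebraic subtlety; once the completeness is set up, the inverse is the standard Neumann series $(\,[0]+N)^{-1}=\sum(-1)^iN^i$.
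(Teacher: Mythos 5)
Your proof is correct, and for the main direction it rests on the same mechanism as the paper's, but both halves are packaged differently. For the implication ($0 \in \mathcal{C}$ $\Rightarrow$ invertible), the paper and you both use that length is additive on extensions, so that $E_{\mathcal{C}} = [0] + N$ with $N$ supported in lengths $\geq 1$, together with completeness of $\widehat{\mathscr{H}}(\Lambda)$ along the length filtration; the paper, however, builds the inverse recursively, defining $F_\ell$ as minus the length-$\ell$ part of $E_{\mathcal{C}}(F_0 + \dots + F_{\ell-1})$ and proving by induction that $E_{\mathcal{C}}(F_0+\dots+F_{\ell-1}) \in [0] + \mathcal{O}(\ell)$, whereas you write the inverse in closed form as the Neumann series $\sum_i (-1)^i N^i$. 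Unwinding the paper's recursion produces exactly your series, so these are two presentations of the same argument --- with the small difference that the paper constructs left and right inverses separately and then identifies them, while your telescoping handles two-sidedness at once. Where you genuinely diverge is the other direction: you note that $c^{0}_{M,N} \neq 0$ forces $M = N = 0$, hence $(E_{\mathcal{C}}F)_{[0]} = (E_{\mathcal{C}})_{[0]}\,F_{[0]} = 0 \neq 1$ for every $F$, so no inverse can exist; the paper instead constructs a nonzero element $F''$ with $E_{\mathcal{C}}F'' = 0$, exhibiting $E_{\mathcal{C}}$ as a zero divisor. Your coefficient computation is the more economical route to the lemma as stated; the paper's construction costs more but yields the slightly stronger conclusion that $E_{\mathcal{C}}$ annihilates a nonzero series. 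One point you flag as the ``genuine obstacle'' --- completeness of $\widehat{\mathscr{H}}(\Lambda)$ for the length filtration --- is indeed all that is needed, and it holds by construction (formal series form the algebra, and over a finite field there are only finitely many isomorphism classes in each length, which is also what makes every coefficient in your products and in your series a finite sum).
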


\begin{proof}
Suppose first that $0 \in \mathcal{C}$. We symmetrically define two elements $F, F' \in \widehat{\mathscr H}(\Lambda)$ as follows:
\begin{enumerate}
	\item Define $F_0 = [0] = F'_0$.
	\item For $0 \neq \ell \in \mathbb{N}$, denote $G_\ell = \sum_{j = 0}^{\ell-1} F_j$ and $G'_\ell = \sum_{j = 0}^{\ell-1} F'_j$.  Then define
	$$F_\ell = -\sum_{[M] \in [\mods_\ell \Lambda]} (E_\mathcal{C}G_\ell)_{[M]}[M],\qquad F'_\ell = -\sum_{[M] \in [\mods_\ell \Lambda]} (E_\mathcal{C} G'_\ell)_{[M]}[M]$$
	\item Define $F = \sum_{\ell = 0}^\infty F_\ell$ and $F' = \sum_{\ell = 0}^\infty F'_\ell$.
\end{enumerate}
We claim that $E_\mathcal{C}F = [0] = F' E_\mathcal{C}$. In particular, this will imply that $F = F'$, and that this is the inverse of $E_\mathcal{C}$. We will show only that $E_\mathcal{C}F = [0]$, as the proof that $F' E_\mathcal{C} = [0]$ is entirely analogous.

We will demonstrate by induction on $\ell$ that $E_\mathcal{C} G_\ell \in [0] + \mathcal{O}(\ell)$ for all $\ell$. Since $E_\mathcal{C}(F - G_\ell) \in \mathcal{O}(\ell)$ and $\bigcap_{\ell = 1}^\infty \mathcal{O}(\ell) = 0$, this will imply the claim. For $\ell = 1$, we have $E_\mathcal{C}Y_1 = E_\mathcal{C} = [0] + \mathcal{O}(1)$ since $0 \in \mathcal{C}$. For $\ell > 1$, we note that
$$E_\mathcal{C}F_{\ell-1} = -E_\mathcal{C} \sum_{[M] \in [\mods_{\ell-1}\Lambda]} (E_\mathcal{C} G_{\ell-1})_{[M]} [M] \in -\sum_{[M] \in [\mods_{\ell-1}\Lambda]} (E_\mathcal{C} G_{\ell-1})_{[M]} [M] + \mathcal{O}(\ell)$$
again because $0 \in \mathcal{C}$. By the induction hypothesis, we then have
\begin{eqnarray*}
	E_\mathcal{C} G_\ell &=& E_\mathcal{C}(G_{\ell-1} + F_{\ell-1})\\
		&\in& [0] + \sum_{[M] \in [\mods_{\ell-1}\Lambda]} (E_\mathcal{C} G_{\ell-1})_{[M]} [M] + E_\mathcal{C}F_{\ell-1} + \mathcal{O}(\ell)\\
		&=& [0] + \mathcal{O}(\ell).
\end{eqnarray*}
Now suppose $0 \notin \mathcal{C}$. We define an element $F'' \in \widehat{\mathscr H}(\Lambda)$ as follows:
\begin{enumerate}
	\item Define $F''_1 = \sum_{[M] \in [\mods_1 \Lambda]} [M]$.
	\item For $1 < \ell \in \mathbb{N}$, denote $G''_\ell = \sum_{j = m}^{\ell-1}F''_j$. Then define
		$$F''_\ell = -\sum_{[M] \in [\mods_\ell \Lambda]}(E_\mathcal{C}G''_\ell)_{[M]} [M].$$
	\item Define $F'' = \sum_{\ell = 1}^\infty F_\ell$.
\end{enumerate}
Similar to the case where $0 \in \mathcal{C}$, it can be shown that $E_\mathcal{C}G''_\ell \in \mathcal{O}(\ell)$ for all $\ell > 1$. This means $E_\mathcal{C}F'' = 0$, and so $E_\mathcal{C}$ is not invertible.
\end{proof}

We now prove the main technical result of this section. We denote by $\widehat{\mathscr H}(\Lambda)^*$ the group of units of $\widehat{\mathscr H}(\Lambda)$.

\begin{proposition}\label{thm:MapToHall}
	Let $\Lambda$ be a finite-dimensional algebra over a finite field. Then there is a morphism of groups $\phi:\intheart(\tors \Lambda)\rightarrow \widehat{\mathscr H}(\Lambda)^*$ given by $\phi(Z_{[\U,\T]}) = E_{\U^\perp \cap \T}$.
\end{proposition}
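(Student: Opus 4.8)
The plan is to invoke the universal property of the presentation of $\intheart(\tors\Lambda)$: since this group is defined by the generators $Z_{[\U,\T]}$ together with the three families of relations in \cref{def:int_group}, it suffices to check that the assignment $Z_{[\U,\T]}\mapsto E_{\U^\perp\cap\T}$ lands in the unit group $\hall(\Lambda)^*$ and respects each relation. For the first point, I would note that whenever $\U\subseteq\T$ the heart $\U^\perp\cap\T$ contains the zero module, since $0$ lies in every torsion class and every torsion-free class; \cref{lem:invertible} then guarantees that $E_{\U^\perp\cap\T}$ is invertible in $\hall(\Lambda)$. Thus $\phi$ at least carries generators into $\hall(\Lambda)^*$.

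Two of the three relations are essentially immediate. Relation (3) becomes vacuous under the assignment: if $\U^\perp\cap\T=(\U')^\perp\cap\T'$, then literally $E_{\U^\perp\cap\T}=E_{(\U')^\perp\cap\T'}$, so the images of $Z_{[\U,\T]}$ and $Z_{[\U',\T']}$ coincide. For relation (1), I would observe that the heart of a trivial interval is trivial: if $X\in\T\cap\T^\perp$ then $\Hom_\Lambda(X,X)=0$, forcing $X=0$, whence $\T^\perp\cap\T=\{0\}$ and $E_{\{0\}}=[0]$ is the identity of $\hall(\Lambda)$.

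The crux is relation (2), the Hall-algebra identity $E_{\U^\perp\cap\T}\,E_{\V^\perp\cap\U}=E_{\V^\perp\cap\T}$ for $\V\subseteq\U\subseteq\T$, which is where \cref{thm:treffinger} and the torsion-pair structure enter. Expanding the product, the coefficient of $[E]$ on the left-hand side counts the submodules $U\subseteq E$ with $U\in\V^\perp\cap\U$ and $E/U\in\U^\perp\cap\T$, and the goal is to show this count is $1$ when $E\in\V^\perp\cap\T$ and $0$ otherwise. My plan is to use the canonical torsion sequence $0\to tE\to E\to E/tE\to 0$ for the torsion pair $(\U,\U^\perp)$: if $E\in\V^\perp\cap\T$, then $tE\in\U$ is a submodule of $E\in\V^\perp$, hence lies in $\V^\perp\cap\U$ because $\V^\perp$ is closed under submodules, while $E/tE\in\U^\perp$ is a quotient of $E\in\T$, hence lies in $\U^\perp\cap\T$ because $\T$ is closed under quotients; uniqueness of the torsion subobject then shows $U=tE$ is the only admissible submodule. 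Conversely, the mere existence of such a $U$ forces $E\in\V^\perp\cap\T$: since $\V\subseteq\U$ gives $\U^\perp\subseteq\V^\perp$, one has $E/U\in\V^\perp$, so $E$ is an extension within the extension-closed class $\V^\perp$, and similarly $E$ is an extension of $E/U\in\T$ by $U\in\U\subseteq\T$ within the extension-closed class $\T$. The main obstacle is purely bookkeeping—organizing this submodule count and matching it against the definition of the structure constants $c_{M,N}^E$—but once the bijection between admissible submodules and the canonical torsion subobject is in place, all three relations hold and the presentation yields the desired homomorphism $\phi$. (I would also remark that this identity is the single-interval instance underlying Treffinger's iterated factorization in \cref{thm:treffinger}.)
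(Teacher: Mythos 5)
Your proof is correct, but it handles the crucial relation (2) by a genuinely different route than the paper. The paper deduces $E_{\U^\perp \cap \T}E_{\V^\perp \cap \U} = E_{\V^\perp \cap \T}$ indirectly: it applies Treffinger's factorisation (\cref{thm:treffinger}) to the two chains $0 \subseteq \V \subseteq \U \subseteq \T \subseteq \mods\Lambda$ and $0 \subseteq \V \subseteq \T \subseteq \mods\Lambda$, equates the two resulting factorisations of $E_{\mods\Lambda}$, and then cancels the outer factors $E_{\T^\perp}$ and $E_{\V}$ using their invertibility from \cref{lem:invertible}. You instead prove the identity directly by computing the coefficient of $[E]$ in the product: the admissible submodules $U \subseteq E$ with $U \in \V^\perp \cap \U$ and $E/U \in \U^\perp \cap \T$ are forced to coincide with the torsion subobject $tE$ for the torsion pair $(\U,\U^\perp)$, giving count $1$ precisely when $E \in \V^\perp \cap \T$ and $0$ otherwise; your closure arguments (torsion-free classes under submodules, torsion classes under quotients, both under extensions) are all sound. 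Your argument is in effect a self-contained reproof of the two-step case of \cref{thm:treffinger} — as you note yourself — so it trades reliance on that cited result for a short direct counting argument, and it has the minor additional virtue of not needing invertibility for relation (2) at all (only for the statement that $\phi$ lands in $\hall(\Lambda)^*$). Your treatments of relations (1) and (3) and of invertibility via \cref{lem:invertible} coincide with the paper's.
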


\begin{proof}
	We note that for every interval $[\U,\T]$ in $\tors\Lambda$, we have that $\phi(Z_{[\U,\T]}) \in \widehat{\mathscr H}(\Lambda)^*$ by Lemma~\ref{lem:invertible}. Thus we need only show that $\phi$ is well-defined; i.e., that it preserves the relations.
	
	For the first type of relation, let $\T \in \tors \Lambda$. Then $\T^\perp \cap \T = 0$, and so $\phi(Z_{[\T,\T]}) = e_{\{0\}} = [0]$ as desired.
	
	For the second type of relation, let $\V \subseteq \U \subseteq \T \in \tors \Lambda$. By Proposition~\ref{thm:treffinger}, we then have
	\begin{eqnarray*}
		\phi(Z_{[\T, \mods \Lambda]}) \phi(Z_{[\U,\T]}) \phi(Z_{[\V,\U]})\phi(Z_{[0,\V]}) &=& E_{\T^\perp}E_{\U^\perp \cap \T}E_{\V^\perp \cap \U}E_\V\\
		&=& E_{\mods \Lambda}\\
		&=& E_{\T^\perp}E_{\V^\perp \cap \T}E_\V\\
		&=& \phi(Z_{[\T,\mods \Lambda]}) \phi(Z_{[\V,\T]}) \phi(Z_{[0,\V]}). 
	\end{eqnarray*}
	By Lemma~\ref{lem:invertible}, this implies that $\phi(Z_{[\U,\T]})\phi(Z_{[\V,\U]}) = \phi(Z_{[\V,\T]})$ as desired.
	
	Finally, the fact that if $\U^\perp \cap \T = (\U') \cap \T'$ then $\phi(Z_{[\U,\T]}) = \phi(Z_{[\U',\T']})$ follows immediately from the definition of $\phi$.
\end{proof}

\begin{conjecture}
	The morphism $\phi$ in Proposition~\ref{thm:MapToHall} is injective.
\end{conjecture}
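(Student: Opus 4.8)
The plan is to first collapse the presentation of $\intheart(\tors\Lambda)$ to a manageable generating set and then to detect nontriviality of group elements through the length filtration of $\hall(\Lambda)$. Using relation~(2) of Definition~\ref{def:int_group} with $\V=0$, every generator satisfies $Z_{[\U,\T]}=g_\T g_\U^{-1}$ where $g_\T := Z_{[0,\T]}$, and relation~(1) gives $g_0=e$. Since $0^\perp=\mods\Lambda$, we have $\phi(g_\T)=E_\T$, so $\phi$ is determined by $g_\T\mapsto E_\T$, and after this reduction the surviving defining relations are exactly $g_0=e$ together with $g_\T g_\U^{-1}=g_{\T'}g_{\U'}^{-1}$ whenever $\U^\perp\cap\T=(\U')^\perp\cap\T'$ (relation~(3); the composition relation~(2) becomes automatic). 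Thus injectivity of $\phi$ is equivalent to the statement that the subgroup $\langle E_\T : \T\in\tors\Lambda\rangle\le\hall(\Lambda)^*$ satisfies no relations beyond those forced by this presentation. I stress that this is strictly stronger than the heart-controlled property of Definition~\ref{def:distinct_generators}, which only uses that $E_\calC$ has support $\calC$ and hence only concerns equalities of single generators.

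To attack this, I would filter $\hall(\Lambda)$ by the two-sided ideals $\hall(\Lambda)_{\geq\ell}$ of series supported on modules of length $\geq\ell$. Because $c_{M,N}^E\neq 0$ forces $|E|=|M|+|N|$, this filtration is decreasing, multiplicative, and separated with $\bigcap_\ell\hall(\Lambda)_{\geq\ell}=0$, and the units with constant term $[0]$ form a group $1+\hall(\Lambda)_{\geq 1}$ filtered by the normal subgroups $1+\hall(\Lambda)_{\geq\ell}$, with abelian quotients $\hall(\Lambda)_{\geq\ell}/\hall(\Lambda)_{\geq\ell+1}$ (the degree-$\ell$ piece of the graded Hall algebra). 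Given a word $w$ in the $g_\T$ with signs $\epsilon_i$ and torsion classes $\T_i$ representing a nontrivial element, the goal becomes to exhibit a filtration layer in which $\phi(w)-[0]$ is nonzero. The degree-$1$ layer records, for each simple $S$, the integer $\sum_{i:\,S\in\T_i}\epsilon_i$; this recovers only the image of $w$ in the abelianization and is visibly insufficient on its own.

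The substantive step is an inductive recovery of $w$ from the coefficients of $\phi(w)$, organised by length and driven by a relative form of Treffinger's factorisation (Proposition~\ref{thm:treffinger}). For an interval $[\U,\T]$ the covering relations inside $[\U,\T]$ carry brick labels, and these bricks play the role of simple objects of the heart $\U^\perp\cap\T$; applying Proposition~\ref{thm:treffinger} along a maximal covering chain $\U=\calX_0\covered\cdots\covered\calX_n=\T$ should factor $E_{\U^\perp\cap\T}$ into the atomic units $E_{\Filt\{S_i\}}$ attached to the labels $S_i$, matching $\phi(X_{S_i})$. One would then argue that distinct reduced expressions in these atoms leave distinct traces in the graded Hall algebra, so that the covering data---and hence $w$---can be read back off layer by layer. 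A convenient bookkeeping device is to tensor with $\mathbb{Q}$, where $1+\hall(\Lambda)_{\geq 1}\otimes\mathbb{Q}$ is pro-unipotent and $\log$ is a filtered bijection onto the pro-nilpotent Lie algebra $\hall(\Lambda)_{\geq 1}\otimes\mathbb{Q}$; rational injectivity then reduces to a statement about the Lie subalgebra generated by the $\log E_\T$, with the integral conclusion recovered afterwards provided $\intheart(\tors\Lambda)$ is torsion-free.

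The hard part will be this last linearisation. The $\log/\exp$ correspondence is only available after inverting the characteristic, whereas the conjecture asserts injectivity over $\mathbb{Z}$; the genuinely integral statement therefore requires controlling torsion and ruling out hidden $\mathbb{Z}$-relations among the $E_\T$ directly, without the comfort of $\exp$. Compounding this, $\hall(\Lambda)$ is noncommutative, so the degree-$\ell$ layers entangle iterated brackets of lower-degree data that must be matched \emph{exactly} against the a priori much looser presentation of $\intheart(\tors\Lambda)$, and when $\Lambda$ is not $\tau$-tilting finite the lattice $\tors\Lambda$ is infinite, so the inductive recovery must be performed uniformly over infinitely many covering chains. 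I expect the crux to be precisely the verification that the graded pieces of the image are \emph{as large as} the presentation predicts, equivalently that no relation survives beyond the heart-matching ones.
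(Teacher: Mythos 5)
This statement is stated in the paper as an open conjecture --- the paper offers no proof of it, only the strictly weaker consequence that $\intheart(\tors\Lambda)$ has heart-controlled generators (Theorem~\ref{thm:finite_field}), which needs nothing more than the observation that $E_{\mathcal C}$ determines $\mathcal C$ via its support. Your opening reduction is correct and worth recording: setting $g_{\T}=Z_{[0,\T]}$, relations (1) and (2) of Definition~\ref{def:int_group} collapse, and $\intheart(\tors\Lambda)$ is presented by the $g_\T$ subject to $g_0=e$ and $g_\T g_\U^{-1}=g_{\T'}g_{\U'}^{-1}$ whenever $\U^\perp\cap\T=(\U')^\perp\cap\T'$; injectivity of $\phi$ is then exactly the assertion that the $E_\T$ satisfy no further relations in $\hall(\Lambda)^*$. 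You are also right that this is genuinely stronger than the heart-controlled property, which only compares single generators.

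However, what follows is a strategy outline rather than a proof, and the gap you flag at the end is real and is precisely where the conjecture lives. Three specific problems: (i) the ``relative Treffinger factorisation'' along a maximal covering chain $\U=\calX_0\covered\cdots\covered\calX_n=\T$ presupposes that such finite chains exist inside an arbitrary interval of $\tors\Lambda$, which fails when $\Lambda$ is not $\tau$-tilting finite (Proposition~\ref{thm:treffinger} is stated for finite chains, and intervals of $\tors\Lambda$ need not admit them); (ii) the $\log/\exp$ passage to the pro-nilpotent Lie algebra is only available after inverting the characteristic, and recovering the integral statement requires torsion-freeness of $\intheart(\tors\Lambda)$, which is itself unknown and arguably of comparable difficulty to the conjecture; (iii) the central claim --- that distinct reduced words in the $g_\T$ leave distinct traces in the graded pieces of $\hall(\Lambda)$, i.e.\ that the image is ``as large as the presentation predicts'' --- is exactly the content of the conjecture restated in filtered language, and no mechanism is given for the layer-by-layer recovery beyond degree $1$ (where, as you note, one only sees the abelianization). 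So the proposal is a reasonable research plan but does not establish the statement; since the authors themselves leave it as a conjecture, that is the expected outcome, but you should present it as such rather than as a proof.
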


As a consequence of Proposition~\ref{thm:MapToHall}, we obtain the following.

\begin{theorem}\label{thm:finite_field}
    Let $\Lambda$ be a finite-dimensional algebra over a finite field. Then $\intheart(\tors\Lambda)$ has heart-controlled generators. Consequently, the functor $\Gamma$ from Theorem~\ref{thm:functor} is a faithful group functor $\mathfrak{W}(\Lambda) \rightarrow \intheart(\ftors \Lambda)$.
\end{theorem}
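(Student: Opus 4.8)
The plan is to deduce the ``heart-controlled generators'' property of $\intheart(\tors\Lambda)$ as a direct consequence of \Cref{thm:MapToHall}, and then feed this into the machinery already established in \Cref{thm:functor} and \Cref{prop:distinct_generators}. The only genuinely new content to verify is the nontrivial direction of the equivalence in the definition of heart-controlled generators; the reverse direction is already built into the presentation of $\intheart(\tors\Lambda)$ by relation (3) of \Cref{def:int_group}.

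First I would recall what must be shown. By \Cref{def:distinct_generators}(1), $\intheart(\tors\Lambda)$ has heart-controlled generators precisely when, for closed intervals $[\U,\T]$ and $[\U',\T']$ in $\tors\Lambda$, the equality $Z_{[\U,\T]} = Z_{[\U',\T']}$ holds if and only if $\U^\perp \cap \T = (\U')^\perp \cap \T'$. The ``if'' direction is exactly relation (3) of the group presentation, so it is automatic. For the ``only if'' direction, suppose $Z_{[\U,\T]} = Z_{[\U',\T']}$ in $\intheart(\tors\Lambda)$. Applying the group morphism $\phi \colon \intheart(\tors\Lambda) \to \hall(\Lambda)^*$ from \Cref{thm:MapToHall}, we obtain $E_{\U^\perp \cap \T} = \phi(Z_{[\U,\T]}) = \phi(Z_{[\U',\T']}) = E_{(\U')^\perp \cap \T'}$ in the completed Ringel--Hall algebra. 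Since $E_{\calC} = \sum_{[M] \in [\calC]} [M]$, two such elements agree if and only if the underlying subcategories contain the same isomorphism classes of modules, i.e. $\U^\perp \cap \T = (\U')^\perp \cap \T'$. This establishes the claimed equivalence and hence the heart-controlled generators property.

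For the ``Consequently'' clause, I would combine this with the results already in hand. By \Cref{prop:distinct_generators}(1), since $\intheart(\tors\Lambda)$ has heart-controlled generators, so does $\intheart(\ftors\Lambda)$. The hypothesis of \Cref{thm:functor}(2) is precisely that $\intheart(\ftors\Lambda)$ has heart-controlled generators, so that theorem yields that the group functor $\Gamma \colon \Wfrak(\Lambda) \to \intheart(\ftors\Lambda)$ constructed in \Cref{thm:functor}(1) is faithful. Since $\intheart(\ftors\Lambda)$ is a group viewed as a groupoid with one object, this is a faithful group functor, completing the proof.

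The only subtle point---and the step I would flag as requiring the most care---is the injectivity observation used when passing from $E_{\U^\perp \cap \T} = E_{(\U')^\perp \cap \T'}$ back to equality of subcategories. This is not an assumption about $\phi$ being injective (that remains conjectural in the excerpt), but rather the elementary fact that the map $\calC \mapsto E_{\calC}$ is injective on full subcategories closed under isomorphism, because the coefficient of $[M]$ in $E_{\calC}$ detects precisely whether $M \in \calC$. I would state this explicitly to avoid any confusion with the open conjecture on injectivity of $\phi$ as a group homomorphism. Everything else is bookkeeping over the already-proven \Cref{thm:MapToHall}, \Cref{thm:functor}, and \Cref{prop:distinct_generators}.
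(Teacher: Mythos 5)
Your proposal is correct and follows essentially the same route as the paper: apply the morphism $\phi$ from Proposition~\ref{thm:MapToHall}, recover $\U^\perp\cap\T=(\U')^\perp\cap\T'$ from $E_{\U^\perp\cap\T}=E_{(\U')^\perp\cap\T'}$, and then invoke Proposition~\ref{prop:distinct_generators} and Theorem~\ref{thm:functor}. Your explicit remark that the relevant injectivity is that of $\calC\mapsto E_{\calC}$ on isomorphism-closed subcategories, and not the conjectural injectivity of $\phi$, is a point the paper leaves implicit and is worth stating.
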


\begin{proof}
    Let $[\U,\T]$ and $[\U',\T']$ be closed intervals in $\tors \Lambda$. It is clear from the definition that if $\U^\perp \cap \T = (\U')^\perp \cap \T'$ then $Z_{[\U,\T]} = Z_{[\U',\T']} \in \intheart(\tors\Lambda)$. Thus suppose that $Z_{[\U,\T]} = Z_{[\U',\T']} \in \intheart(\tors\Lambda)$. Then $E_{\U^\perp \cap \T} = \phi(Z_{[\U,\T]}) = \phi(Z_{[\U',\T']}) = E_{(\U')^\perp \cap \T'}$ in $\widehat{\mathscr H}(\Lambda)^*$ by Proposition~\ref{thm:MapToHall}. It follows that $\U^\perp \cap \T = (\U')^\perp \cap \T'$, as desired.

    We have shown that $\intheart(\tors\Lambda)$ has heart-controlled generators. Now the fact that $\Gamma$ is a faithful group functor follows from Theorem~\ref{thm:functor} and Proposition~\ref{prop:distinct_generators}.
\end{proof}

\subsection{Remarks on infinite fields}\label{sec:remarks}

We conclude this appendix with some remarks on the extent to which Theorem~\ref{thm:finite_field} can be extended to algebras over infinite fields. Our starting point is the following.

\begin{corollary}\label{cor:field_change}
	Let $\Lambda$ be a finite-dimensional algebra over an arbitrary field, and suppose there exists a lattice isomorphism $\eta: \tors\Lambda \rightarrow \tors\Lambda'$ for some finite-dimensional algebra $\Lambda'$ over a finite field. Then $\intheart(\tors\Lambda)$ has heart-controlled generators. Consequently, the functor $\Gamma$ from Theorem~\ref{thm:functor} is a faithful group functor $\mathfrak{W}(\Lambda) \rightarrow \intheart(\ftors\Lambda)$.
\end{corollary}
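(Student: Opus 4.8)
The plan is to transport the heart-controlled generators property from $\Lambda'$, where it is already established, across the lattice isomorphism $\eta$. Since $\Lambda'$ is a finite-dimensional algebra over a finite field, \Cref{thm:finite_field} guarantees that $\intheart(\tors\Lambda')$ has heart-controlled generators. The conceptual key, already isolated in \Cref{lem:invariant_under_isom}, is that both the group $\intheart(\tors\Lambda)$ and the heart-collapse data $\U^\perp\cap\T$ attached to an interval depend only on the abstract lattice structure of $\tors\Lambda$; hence they are faithfully mirrored in $\tors\Lambda'$ through $\eta$, and the property transfers.

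Concretely, I would proceed as follows. Let $[\U,\T]$ and $[\U',\T']$ be closed intervals of $\tors\Lambda$; the goal is to show $Z_{[\U,\T]}=Z_{[\U',\T']}$ in $\intheart(\tors\Lambda)$ if and only if $\U^\perp\cap\T=(\U')^\perp\cap\T'$. The ``if'' direction is immediate from relation~(3) in \Cref{def:int_group}. For the ``only if'' direction, suppose $Z_{[\U,\T]}=Z_{[\U',\T']}$. Applying the group isomorphism $\intheart(\tors\Lambda)\xrightarrow{\sim}\intheart(\tors\Lambda')$ of \Cref{lem:invariant_under_isom}(2), which sends $Z_{[\U,\T]}\mapsto Z_{[\eta(\U),\eta(\T)]}$, yields $Z_{[\eta(\U),\eta(\T)]}=Z_{[\eta(\U'),\eta(\T')]}$ in $\intheart(\tors\Lambda')$. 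Because $\intheart(\tors\Lambda')$ has heart-controlled generators, this forces $\eta(\U)^\perp\cap\eta(\T)=\eta(\U')^\perp\cap\eta(\T')$, and \Cref{lem:invariant_under_isom}(1) pulls this equality back to $\U^\perp\cap\T=(\U')^\perp\cap\T'$. This shows that $\intheart(\tors\Lambda)$ has heart-controlled generators.

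For the ``consequently'' clause I would chain the pieces already in place: by \Cref{prop:distinct_generators}(1), heart-controlled generators for $\intheart(\tors\Lambda)$ pass to $\intheart(\ftors\Lambda)$, and this is exactly the hypothesis of \Cref{thm:functor}(2), which then yields that $\Gamma\colon\mathfrak{W}(\Lambda)\to\intheart(\ftors\Lambda)$ is faithful. Thus $\Gamma$ is the desired faithful group functor.

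I do not expect a genuine obstacle here: the single nontrivial input is that the heart-collapse relation $\U^\perp\cap\T=(\U')^\perp\cap\T'$ is preserved by an abstract lattice isomorphism, and this is precisely \Cref{lem:invariant_under_isom}(1), which itself rests on the lattice-isomorphism-invariance of Enomoto's $\mathsf{j}$-labelling. Granting that lemma, the corollary is a short assembly; the only point requiring care is to invoke that the property being transferred is lattice-intrinsic rather than dependent on the specific algebra or its ground field.
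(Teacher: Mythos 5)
Your proposal is correct and follows essentially the same route as the paper's own proof: both establish the ``only if'' direction by pushing the relation $Z_{[\U,\T]}=Z_{[\U',\T']}$ through the isomorphism of \Cref{lem:invariant_under_isom}(2), invoking \Cref{thm:finite_field} over the finite field, and pulling the heart equality back via \Cref{lem:invariant_under_isom}(1), then concluding faithfulness from \Cref{prop:distinct_generators} and \Cref{thm:functor}. No gaps.
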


\begin{proof}
    Let $[\U,\T], [\U',\T]$ be closed intervals in $\tors \Lambda$. It is clear than if $\U^\perp \cap \T = (\U')^\perp \cap \T'$ then $Z_{[\U,\T]} = Z_{[\U',\T']} \in \intheart(\tors\Lambda)$. Thus suppose that $Z_{[\U,\T]} = Z_{[\U',\T']} \in \intheart(\tors \Lambda)$. 
    Then, by Lemma~\ref{lem:invariant_under_isom}(2), we have $Z_{[\eta(\U),\eta(\T)]} = Z_{[\eta(\U'),\eta(\T')]} \in \intheart(\tors \Lambda_1)$. Theorem~\ref{thm:finite_field} then implies that $\eta(\U)^\perp \cap \eta(\T) = \eta(\U')^\perp \cap \eta(\T')$. By Lemma~\ref{lem:invariant_under_isom}(1), we conclude that $\U^\perp \cap \T = (\U')^\perp \cap \T'$.

    We have shown that $\intheart(\tors\Lambda)$ has heart-controlled generators. Now the fact that $\Gamma$ is a faithful group functor follows from Theorem~\ref{thm:functor} and Proposition~\ref{prop:distinct_generators}.
\end{proof}

\begin{remark}
    Suppose that $\Lambda$ is $\tau$-tilting finite in the setup of Corollary~\ref{cor:field_change}. Then the fact that $\mathfrak{W}(\Lambda)$ admits a faithful group functor can also be deduced from \cite[Sec.~7]{Kai24}.
\end{remark}

We now give several examples demonstrating how one can use Corollary~\ref{cor:field_change} in practice. Note that the ($\tau$-)cluster morphism categories of representation finite (and also tame) hereditary algebras were previously shown to admit faithful group functors in \cite[Thm.~3.7]{IgusaTodorov22}.

\begin{proposition}\label{prop:preproj_distinct_generators}
    Let $\Lambda$ be a representation finite hereditary algebra over an arbitrary field $K$, and let $\Pi(\Lambda)$ denote the preprojective algebra of $\Lambda$. Then $\intheart(\tors\Lambda)$ and $\intheart(\tors\Pi(\Lambda))$ both have heart-controlled generators. Consequently, the functor $\Gamma$ from Theorem~\ref{thm:functor} is a faithful group functor $\mathfrak{W}(\Lambda) \rightarrow \intheart(\ftors\Lambda)$ (resp. $\mathfrak{W}(\Pi(\Lambda)) \rightarrow \intheart(\ftors\Pi(\Lambda))$).
\end{proposition}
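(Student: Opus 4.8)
The strategy is to invoke \Cref{cor:field_change} twice, once for $\Lambda$ and once for $\Pi(\Lambda)$. Thus I must exhibit, for each of these two algebras, a finite-dimensional algebra over a \emph{finite} field whose lattice of torsion classes is lattice-isomorphic to the given one. The whole point is that $\tors \Lambda$ and $\tors \Pi(\Lambda)$ are, up to lattice isomorphism, combinatorial invariants of the Dynkin data of $\Lambda$ and hence insensitive to the base field, so they can be reproduced over a finite field.

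First I would treat $\Lambda$ itself. Since $\Lambda$ is representation finite it is $\tau$-tilting finite, so $\tors \Lambda = \ftors \Lambda$, and this lattice is in bijection with $\stautilt \Lambda$ by \cite[Thm.~2.7]{AIR2014}. By Dlab--Ringel theory \cite{DlabRingel76}, a representation finite hereditary algebra is described by a valued Dynkin quiver, and the resulting lattice $\tors \Lambda$ is the associated Cambrian lattice, a purely combinatorial object depending only on this valued quiver and not on $K$. Every valued Dynkin quiver is realised by a hereditary algebra $\Lambda'$ over a finite field: simply-laced types occur as path algebras of Dynkin quivers over $\mathbb{F}_2$, while the valuations $2$ and $3$ needed for types $B_n, C_n, F_4, G_2$ are supplied by the extensions $\mathbb{F}_4 : \mathbb{F}_2$ and $\mathbb{F}_8 : \mathbb{F}_2$. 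Choosing $\Lambda'$ with the same valued quiver as $\Lambda$ gives a lattice isomorphism $\tors \Lambda \cong \tors \Lambda'$, and \Cref{cor:field_change} then shows that $\intheart(\tors \Lambda)$ has heart-controlled generators (recovering \cite[Thm.~3.7]{IgusaTodorov22}).

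The preprojective algebra is handled in exactly the same spirit. By \cite[Thm.~7.9]{AHIKM2022} the algebra $\Pi(\Lambda)$ is $\tau$-tilting finite, and its lattice $\tors \Pi(\Lambda) = \ftors \Pi(\Lambda)$ is isomorphic to the weak order on the Weyl group $W(\Delta)$ of the Dynkin type $\Delta$ of $\Lambda$; in particular it depends only on $\Delta$. Letting $\Lambda'$ be the finite-field algebra from the previous paragraph (which may be chosen of type $\Delta$), the preprojective algebra $\Pi(\Lambda')$ is again an algebra over the finite field, and $\tors \Pi(\Lambda) \cong \tors \Pi(\Lambda')$ since both realise the weak order on $W(\Delta)$. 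A second application of \Cref{cor:field_change} yields that $\intheart(\tors \Pi(\Lambda))$ has heart-controlled generators. In both cases the asserted faithfulness of $\Gamma$ follows verbatim as in \Cref{thm:finite_field} and \Cref{cor:field_change}, combining \Cref{thm:functor} with \Cref{prop:distinct_generators}.

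The main obstacle is ensuring that these two torsion lattices really are field-independent combinatorial invariants \emph{over an arbitrary base field}, rather than only over an algebraically closed one: this is precisely where the valued-quiver (species) formulations of the Cambrian-lattice description and of Mizuno's theorem, as packaged in \cite{AHIKM2022}, do the essential work. Once that is granted, the realisability of every Dynkin type over a finite field is routine, the only mild care being the production of the valuations $2$ and $3$ from small extensions of $\mathbb{F}_2$ as indicated above.
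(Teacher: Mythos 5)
Your proposal is correct and follows essentially the same route as the paper: both reduce to \cref{cor:field_change} by identifying $\tors\Lambda$ with a Cambrian lattice and $\tors\Pi(\Lambda)$ with the weak order on the corresponding finite Coxeter group, both of which depend only on the underlying valued Dynkin quiver and are therefore realised by a species over a finite field. The only cosmetic differences are that the paper routes the reduction through Morita equivalence with the tensor algebra $K\mathbb{S}$ (citing \cite{CB99} to transfer this to the preprojective algebras) and cites \cite{IT09,GLS2020,Gyoda2022} for the field-independence of the Cambrian lattice, whereas you make the finite-field realisation of the valuations explicit via $\mathbb{F}_4:\mathbb{F}_2$ and $\mathbb{F}_8:\mathbb{F}_2$.
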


\begin{proof}
    By \cite[Thm.~C]{DlabRingel75}, we have that $\Lambda$ is Morita equivalent to the tensor algebra $K\mathbb{S}$ of a Dynkin $K$-species $\mathbb{S}$. The preprojective algebras $\Pi(\Lambda)$ and $\Pi(K\mathbb{S})$ are then also Morita equivalent by \cite[Thm.~2.3 and Cor.~5.5]{CB99}. Now let $K'$ be a finite field such that there exists a $K'$-species $\mathbb{S}'$ with the same underlying modulated quiver as $\mathbb{S}$.

    In the simply-laced case, it follows from \cite[Thm.~4.3]{IT09} that $\tors(K\mathbb{S})(\cong\tors\Lambda)$ and $\tors(K'\mathbb{S}')$ are both isomorphic to the \emph{Cambrian lattice} determined by the (modulated) quiver underlying $\mathbb{S}$. The analogous result can also be deduced outside the simply-laced case by combining \cite[Thm.~1]{GLS2020} and \cite[Cor.~8.17]{Gyoda2022}.
    
    Similarly, \cite[Thm. 7.9]{AHIKM2022} and Theorem~\ref{thm:AIRftorsbij} imply that $\ftors(\Pi(K\mathbb{S}))(\cong\ftors(\Pi(\Lambda)))$ and $\ftors(\Pi(K'\mathbb{S}'))$ are both anti-isomorphic to the \emph{weak order} on the Coxeter group determined by the modulated graph underlying $\mathbb{S}$. Now since $\mathbb{S}$ is Dynkin, it is well-known that this Coxeter group is finite. Thus $\Pi(\Lambda)$ and $\Pi(K'\mathbb{S}')$ are $\tau$-tilting finite and, by \cite[Thm.~1.2]{DIJ2019}, one has
    $$\tors(\Pi(\Lambda)) = \ftors(\Pi(\Lambda)) \cong \ftors(\Pi(K'\mathbb{S}')) = \tors(\Pi(K'\mathbb{S}')).$$
    Corollary~\ref{cor:field_change} now implies the result.
\end{proof}

For the next example, we refer to \cite{ButlerRingel} for the definition of a string algebra and for results concerning string and band modules that we apply without proof.

\begin{proposition}\label{prop:string}
    Let $\Lambda$ be a representation finite string algebra over an arbitrary field $K$. Then $\intheart(\tors\Lambda)$ has heart-controlled generators. Consequently, the functor $\Gamma$ from Theorem~\ref{thm:functor} is a faithful group functor $\mathfrak{W}(\Lambda) \rightarrow \intheart(\ftors\Lambda)$.
\end{proposition}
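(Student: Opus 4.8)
The plan is to mimic the strategy of \Cref{prop:preproj_distinct_generators}: produce a finite-dimensional algebra $\Lambda'$ over a finite field together with a lattice isomorphism $\tors\Lambda \cong \tors\Lambda'$, and then invoke \Cref{cor:field_change}. Write $\Lambda = KQ/I$, where $I$ is generated by a set of monomial (zero) relations subject to the string-algebra conditions on $Q$ and $I$ from \cite{ButlerRingel}. First I would fix the prime field $K' = \mathbb{F}_2$ and set $\Lambda' = K'Q/I'$, where $I'$ is the ideal of $K'Q$ generated by the \emph{same} paths. Since the string-algebra axioms are conditions on $Q$ and on the generating paths of $I$ alone, $\Lambda'$ is again a string algebra. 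The set of strings and the set of bands depend only on $(Q,I)$ and not on the ground field, and a string algebra is representation finite precisely when it has finitely many strings and no bands; hence $\Lambda'$ is representation finite as well, and in both $\mods\Lambda$ and $\mods\Lambda'$ every indecomposable module is a string module \cite{ButlerRingel}. Indexing string modules by their strings yields a bijection $\beta\colon \ind\Lambda \to \ind\Lambda'$ preserving dimension vectors.

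The core step is to upgrade $\beta$ to a lattice isomorphism $\tors\Lambda \to \tors\Lambda'$. Since $\Lambda$ is representation finite we have $\tors\Lambda = \ftors\Lambda$, and every torsion class is closed under direct summands (as $X$ is a quotient of $X\oplus Y$); thus a torsion class is completely determined by the set of indecomposables it contains, and the inclusion order on $\tors\Lambda$ is the inclusion order on these subsets of $\ind\Lambda$. It therefore suffices to check that a subset $\mathcal{S}\subseteq\ind\Lambda$ is closed under taking indecomposable summands of quotients and of extensions if and only if $\beta(\mathcal{S})$ is. Here I would invoke the combinatorial description of homomorphisms and extensions between string modules from \cite{ButlerRingel} (graph maps and their induced short exact sequences): the possible middle terms of a short exact sequence $0 \to A \to B \to C \to 0$ with string ends, and the string modules arising as indecomposable summands of quotients of a given direct sum of string modules, are recorded by field-independent combinatorial data. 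Consequently the torsion-closure operator on subsets of $\ind\Lambda$ transports across $\beta$ to that on subsets of $\ind\Lambda'$, identifying the two lattices of torsion classes.

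Granting this, \Cref{cor:field_change} applied to the lattice isomorphism $\tors\Lambda\cong\tors\Lambda'$ (with $\Lambda'$ over the finite field $K'$) shows that $\intheart(\tors\Lambda)$ has heart-controlled generators, whereupon \Cref{thm:functor} together with \Cref{prop:distinct_generators} yields that $\Gamma$ is a faithful group functor. The main obstacle is the field-independence in the core step, and specifically the quotient-closure: over a small field one cannot appeal to generic elements, so the existence of a surjection must come from an explicit map. The point to emphasize is that the canonical graph maps of \cite{ButlerRingel} realizing one string module as a quotient of another are defined by $0/1$ structure constants over the prime field, so the relevant surjections and short exact sequences exist over every field simultaneously; verifying that these canonical maps already generate the full quotient- and extension-closure, with no extra identifications appearing only over larger fields, is the technical heart of the argument.
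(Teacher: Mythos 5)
Your overall strategy is the same as the paper's: pass to the string algebra $\Lambda' = K'Q/I$ over a finite field defined by the same quiver and relations, observe that strings, bands, and hence representation finiteness and the classification of indecomposables depend only on $(Q,I)$, produce a lattice isomorphism $\tors\Lambda \cong \tors\Lambda'$, and invoke \Cref{cor:field_change}. The problem is that the one step carrying all the weight --- upgrading the string-indexed bijection $\beta\colon \ind\Lambda \to \ind\Lambda'$ to a lattice isomorphism of torsion classes --- is exactly the step you do not prove. The route you sketch for it (showing that the quotient-closure and extension-closure operators on sets of string modules are computed by field-independent combinatorial data) is genuinely delicate: you would need to control which indecomposables occur as summands of quotients and as middle terms of extensions over an arbitrary field, and you yourself flag the verification that the canonical graph maps exhaust all such surjections and extensions as an unresolved "technical heart." As written, this is a gap, not a proof.

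The paper closes this gap with a much lighter device that makes the analysis of quotients and extensions unnecessary. A full subcategory $\T$ of $\mods\Lambda$ is a torsion class if and only if $\T = {}^\perp(\T^\perp)$, so in the representation finite case the entire lattice $\tors\Lambda$ is determined by the Hom-\emph{vanishing} relation on the finite set of indecomposables. By Crawley--Boevey's result \cite{CrawleyBoevey}, the dimension of the Hom-space between two string modules (a fortiori whether it vanishes) depends only on the underlying strings and not on the base field, so the bijection $\omega$ preserving strings automatically transports torsion classes to torsion classes and inclusions to inclusions; no information about $\Ext^1$ or about which modules arise as quotients is needed. If you want to salvage your argument as written, you should either adopt this orthogonality characterisation or supply the missing combinatorial analysis of extensions and quotients of string modules over an arbitrary field --- the former is a one-line fix, the latter is a substantial piece of work.
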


\begin{proof}
    By the definition of a string algebra, there exists a quiver $Q$ and a set of paths $p$ in $Q$ such that $\Lambda = KQ/(p)$. The definition then implies that $K'Q/(p)$ is also a string algebra for any field $K'$. In particular, we can take $K'$ to be finite.

   The indecomposable modules over a string algebra are classified (up to isomorphism) as ``string modules'' and ``band modules''. It is well known that a string algebra is representation infinite if and only if it admits a band module. Moreover, while the classification of band modules depends on the field, the existence of a band module depends only of $Q$ and $p$. Thus $K'Q/(p)$ is also representation finite and all indecomposable modules (over either $\Lambda = KQ/(p)$ or $K'Q/(p)$) are string modules.

   Unlike band modules, the classification of string modules depends only on $Q$ and $p$, not on the choice of base field. The same is also true for the dimension of the Hom-space between string modules by a result of \cite{CrawleyBoevey}. Putting this together, we conclude that there exists a bijection $\omega: \mathrm{ind}(\mods\Lambda) \rightarrow \mathrm{ind}(\mods K'Q/(p))$ such that $\dim_K\Hom_\Lambda(X,Y) = \dim_{K'}\Hom_{K'Q/(p)}(\omega(X),\omega(Y))$ for all $X$ and $Y$.

   Now it is well known that a subcategory $\T$ is a torsion class if and only if $\T = {}^\perp(\T^\perp)$. Thus $\omega$ extends to an isomorphism $\tors(\Lambda) \rightarrow \tors(K'Q/(p))$. Corollary~\ref{cor:field_change} then implies the result.
\end{proof}

Special cases of representation finite string algebras include $\tau$-tilting finite gentle algebras (see \cite[Thm.~1.1]{plamondon} or \cite[Thm.~7.1]{mousavand}) and the ``Nakayama-like algebras'' studied in \cite{HI21p}. Faithful group functors for these classes of algebras were previously established in \cite[Sec.~5]{HI21p}. We conclude with some clarifying comments about this result.

In \cite[Sec.~5.2]{HI21p}, we introduced what we called the ``power series 0-Hall algebra'' of a finite-dimensional algebra $\Lambda$ (over a field $K$). We then used this to show that the picture groups of Nakayama-like algebras and of $\tau$-tilting finite gentle algebras whose quivers have no loops or 2-cycles have distinct generators. (We then used \cite[Thm.~4.15]{HI21p} to conclude that the $\tau$-cluster morphism categories of these algebras admit faithful group functors.) Implicit in the construction of the power series 0-Hall algebra are the assumptions that (1) if $K$ is finite then $\Lambda$ admits Hall polynomials, and (2) if $K$ is infinite then one can replace $K$ with a finite field and use the resulting algebra to understand the representation theory of $\Lambda$. Assumption (2) can be made precise by, for example, requiring the existence of a bijection $\eta$ as in the proof of Proposition~\ref{prop:string}. In particular, this means assumption (2) is satisfied by $\tau$-tilting finite gentle algebras and by Nakayama-like algebras. On the other hand, the problem of determining whether the algebras in these families satisfy assumption (1) seems to be open, see e.g. \cite{Hall_gentle} for some work in this direction. To avoid this existence problem, one could instead use Proposition~\ref{prop:string} to deduce \cite[Cor.~5.13]{HI21p}.

Finally, we recall that picture groups of Nakayama algebras were shown to have distinct generators in \cite[Sec.~4]{HI21} using a different, but morally related, construction. While this construction does not rely on the existence of Hall polynomials (and is independent of the base field), we note that Nakayama algebras over finite fields are known to admit Hall polynomials \cite{Hall_nakayama}.
\providecommand{\etalchar}[1]{$^{#1}$}

\end{document}